\newcommandx{\ebltodo}[2][1=]{\todo[linecolor=red,backgroundcolor=red!25,bordercolor=red,#1]{#2}}
\DeclareMathAlphabet{\mathpzc}{OT1}{pzc}{m}{it}
\numberwithin{equation}{section}
\newtheorem{thm}[subsection]{Theorem}
\newtheorem*{cor*}{Corollary}
\newtheorem{lemma}[subsection]{Lemma}
\newtheorem*{sublemma}{Sublemma}
\newtheorem{propos}[subsection]{Proposition}
\newtheorem*{thm*}{Theorem}
\newtheorem*{thma*}{Theorem A}
\newtheorem*{thmb*}{Theorem B}
\newtheorem*{thmc*}{Theorem C}
\newcounter{consta}
\newcounter{constk}
\renewcommand{\theconstk}{{\kappa_{\arabic{constk}}}}
\newcommand{\constk}{\refstepcounter{constk}\theconstk}
\newcounter{constc}
\newcounter{constE}
\renewcommand{\theconstE}{{{C}_{\arabic{constE}}}}
\newcommand{\constE}{\refstepcounter{constE}\theconstE}
\def\bbz{\mathbb{Z}}
\def\bbq{\mathbb{Q}}
\def\bbr{\mathbb{R}}
\def\bbc{\mathbb{C}}
\def\bbh{\mathbb{H}}
\def\bbn{\mathbb{N}}
\def\Gbf{\mathbb{G}}
\def\Scal{\mathcal{S}}
\def\hfrak{\mathfrak{h}}
\def\rfrak{\mathfrak{r}}
\def\gfrak{\mathfrak{g}}
\def\Gbf{\mathbf{G}}
\def\G{\Gbf}
\DeclareMathOperator\diff{d}
\def\vol{{\rm{vol}}}
\def\SL{{\rm{SL}}}
\def\PSL{{\rm{PSL}}}
\def\SO{{\rm{SO}}}
\def\Lie{{\rm Lie}}
\DeclareMathOperator\Ad{Ad}
\def\sl{{\mathfrak{sl}}}
\def\vare{\varepsilon}
\def\zg0{Z_{G_\omega}(s)}
\def\zg{Z_G(s)}
\def\be{\begin{equation}}
\def\ee{\end{equation}}
\def\dist{{\rm dist}}
\def\lf{\mathfrak l}
\def\rH{{}^\rho\H}
\def\diff{\operatorname{d}}
\def\dist{d}
\def\mixexp{\kappa_X}
\def\vY{{\mathsf v}}
\def\rH{T}
\def\boxH{\mathsf B^H}
\def\boxHs{\mathsf B^{H}}
\def\boxG{\mathsf B^{G}}
\def\rwm{\nu}
\def\convN{\rwm^{(n)}}
\def\convL{\rwm^{(\ell)}}
\def\rws{t}
\def\injr{\eta}
\def\dexp{\delta}
\def\rel{r}
\def\rot{\mathsf r}
\def\ave{\int_{0}^1}
\def\uvk{u_\rel}
\def\uvkd{\diff\!\rel}
\def\onst{m_0}
\def\osa{m_\alpha}
\def\mfht{f}
\def\margI{I}
\def\noI{\psi}
\def\inj{\operatorname{inj}}
\def\lf{\mathbb R}
\def\qlf{\mathbb C}
\def\rc{\mathbb F}
\def\qi{i}
\def\uni{e}
\def\nuni{e}
\def\coneH{\mathsf E}
\def\cone{\mathcal E}
\def\umt{\mathsf Q}
\DeclareMathOperator{\supp}{supp}
\newcommand{\white}{\mathcal W}
\newcommand{\black}{\mathcal B}
\newcommand{\rhsc}{b}
\newcommand{\rhsco}{b_1}
\newcommand{\sfh}{\mathsf h}
\newcommand{\sfs}{\mathsf s}
\newcommand{\cX}{\mathsf C_X}
\newcommand{\bcX}{\bar{\mathsf C}_X}
\newcommand*\bigcdot{\mathpalette\bigcdot@{.5}}
\newcommand*\bigcdot@[2]{\mathbin{\vcenter{\hbox{\scalebox{#2}{$\m@th#1\bullet$}}}}}
\begin{document}
\title[Polynomial effective density]{Polynomial effective density\\ in quotients 
of $\mathbb H^3$ and $\mathbb H^2\times\mathbb H^2$}

\author{E.~Lindenstrauss}
\address{E.L.: The Einstein Institute of Mathematics, Edmond J.\ Safra Campus, 
Givat Ram, The Hebrew University of Jerusalem, Jerusalem, 91904, Israel}
\email{elon@math.huji.ac.il}
\thanks{E.L.\ acknowledges support by ERC 2020 grant HomDyn (grant no.~833423).}

\author{A.~Mohammadi}
\address{A.M.: Department of Mathematics, University of California, San Diego, CA 92093}
\email{ammohammadi@ucsd.edu}
\thanks{A.M.\ acknowledges support by the NSF grants DMS-1764246 and 2055122}

\begin{abstract}
We prove effective density theorems, with a polynomial error rate, for orbits of the upper triangular subgroup of $\SL_2(\lf)$ 
in arithmetic quotients of $\SL_2(\qlf)$ and $\SL_2(\lf)\times\SL_2(\lf)$. 

The proof is based on the use of a Margulis function, tools from incidence geometry, 
and the spectral gap of the ambient space. 
\end{abstract}

\maketitle

\tableofcontents

\section{Introduction}
The quantitative understanding of the behavior of orbits in homogeneous spaces is a fundamental problem. 
Let $G$ be a connected Lie group and $\Gamma\subset G$ a lattice (a discrete subgroup with finite covolume). Let $L\subset G$ be a closed connected subgroup. Ratner's celebrated resolution of Raghunathan's conjectures,~\cite{Ratner-Acta, Ratner-measure, Ratner-topological}, provides a complete classification for the closure of {\em individual} $L$-orbits in $G/\Gamma$ if $L$ is unipotent, or more generally is generated by unipotent subgroups (this is true even if $L$ is not assumed to be connected, see \cite{Shah-Gen-Uni}). Prior to Ratner's work, some important special cases of this problem were studied by Margulis~\cite{Margulis-Oppenheim}, and Dani and Margulis~\cite{DM-Oppenheim, DM-MathAnn}.

These remarkable results all share the lacuna that they are not quantitative, e.g.\ they do not provide any rate at which the orbit fills up its closure. 
Indeed Ratner's work relies on the pointwise ergodic theorem which is hard to effectivize. The work of Dani and Margulis uses minimal sets, which though formally ineffective can be effectivized with some effort;
a result in this spirit was obtained by Margulis and the first named author in \cite{LM-Oppenheim}, though the rates obtained there are of polylog form, and that too after significant effort.
With Margulis and Shah, we have obtained a general effective orbit closure theorem for unipotent orbits on arithmetic quotients, the first piece of this being \cite{LMMS} and the continuation is in preparation; however the rates obtained are even worse than \cite{LM-Oppenheim}.

When $G$ is a unipotent group, Green and Tao gave an effective equidistribution theorem for orbits of subgroups $L\subset G$ (that of course will also be unipotent) in~\cite{GrTao-Nil} with polynomial error rates. 
When $G$ is semisimple, however, 
not much seems to be known. A notable exception is the case where $L\subset G$ is a horospherical subgroups, that is to say if there is an element $a \in G$ so that 
\[
L=\{g \in G: a^n g a^{-n} \to 1 \text{ as $n\to\infty$}\},
\]
for instance if $L$ is the full group of strictly upper triangular matrices in $G=\SL_n(\bbr)$. 
In this case, the behaviour of individual orbits can be related to decay of matrix coefficients, and hence effective equidistribution with polynomial error rate can be established. The first works in this direction we are aware of by Sarnak \cite{Sarnak-Thesis}, Burger \cite{Burger-Horo}, and Kleinbock and Margulis \cite{KMnonquasi} based on Margulis' thesis, as well as the more recent papers by Flaminio and Forni \cite{FlFor}, Str\"{o}mbergsson \cite{Strom-Horo}, and Sarnak and Ubis \cite{Sarnak-Ubis}. Quantitative horospheric equidistribution has now been established in much greater generality e.g.\ by Kleinbock and Margulis in \cite{KM-Drich}, McAdam in \cite{McAdam} and by Asaf Katz \cite{Katz-Quantitative}. Moreover a quantitative equidistribution estimate twisted by a character was proved by Venkatesh \cite{Venkatesh-Sparse} and further developed by Tani and Vishe as well as Flaminio, Forni, and Tanis \cite{Tanis-Vishe, FFT-twisted}; this was generalized to a disjointness result with a general nil-system by Asaf Katz in \cite{Katz-Quantitative}.  
Closely related is the case of translates of periodic orbits of subgroups $L\subset G$ which are fixed by an involution by Duke, Rudnick and Sarnak, Eskin and McMullen, and Benoist and Oh in \cite{DRS-Counting, Eskin-McMullen-1993, BO-Counting}.

Beyond the horospherical case\footnote{Strictly speaking, the twisted horospherical averages considered in \cite{Venkatesh-Sparse,Tanis-Vishe, FFT-twisted, Katz-Quantitative} can also be considered as a non-horospherical flow on a suitable product space, though they are closely related to the horospherical case.} (and the related case of groups fixed by an involution) equidistribution results with polynomial rates were known only for skew products by Str\"{o}mbergsson \cite{Strom-Semi}, Str\"{o}mbergsson and Vishe \cite{Strombergsson-Vishe} and by Wooyeon Kim \cite{ASLn-Kim}, for random walks by automorphisms of the torus (cf.\ \cite{BFLM} by Bourgain Furman, Mozes and the first named author and subsequent works in this direction, e.g.\ \cite{He-Saxce} by He and de Saxce), and for the special case of periodic orbits of increasing volume by Einsiedler, Margulis, Venkatesh and by these three authors with the second named author \cite{EMV,EMMV}. There are also some quantitative equidistribution results for particular types of unipotent orbits, e.g.\ \cite{Chow-Yang} by Chow and Lei Yang.

In this paper, we prove an effective density theorem, with a {\em polynomial} error rate, for orbits of the upper triangular subgroup of $\SL_2(\lf)$ in arithmetic quotients of $\SL_2(\qlf)$ and $\SL_2(\lf)\times\SL_2(\lf)$. These are first results in the literature which provide a polynomial rate for general orbits in a homogeneous space of a semisimple group, beyond the aforementioned case of horospherical subgroups.

Let us now fix some notation in order to state the main theorems. Let     
\[
\text{$G=\SL_2(\qlf)\quad$ or $\quad G=\SL_2(\lf)\times\SL_2(\lf)$}.
\] 
Let $\Gamma\subset G$ be a lattice, and put $X=G/\Gamma$.

Let $d$ be the right invariant metric on $G$ which is defined using the killing form. 
This metric induces a metric $\dist_X$ on $X$, 
and natural volume forms on $X$ and its submanifolds. The injectivity radius of a point $x\in X$ may be defined using this metric. For every $\injr>0$, let  
\[
X_\injr=\{x\in X: \text{injectivity radius of $x$ is $\geq \injr$}\}.
\]

Throughout the paper, $H$ denotes $\SL_2(\lf)$ if $G=\SL_2(\qlf)$
or the diagonally embedded copy of $\SL_2(\lf)$ in $G$ if $G=\SL_2(\lf)\times\SL_2(\lf)$. That is
\[
\SL_2(\lf)\subset\SL_2(\qlf)\quad\text{or}\quad\{(g,g): g\in\SL_2(\lf)\}\subset \SL_2(\lf)\times\SL_2(\lf).
\] 
Let $P\subset H$ denote the group of upper triangular matrices in $H$.

An orbit $Hx\subset X$ is periodic if $H\cap{\rm Stab}(x)$ is a lattice in $H$. For the semisimple group $H$, the orbit $Hx$ is periodic iff it is closed. 

Let $|\;|$ denote the absolute value on $\qlf$, 
and let $\|\;\|$ denote the maximum norm on ${\rm Mat}_2(\qlf)$ or ${\rm Mat}_{2}(\lf)\times {\rm Mat}_{2}(\lf)$ with respect to the standard basis. For every $T> 0$ and every subgroup $L\subset G$, let 
\[
B_L(e,T)=\{g\in L: \|g-I\|\leq T\}.
\]

The following is the main theorem in this paper.

\begin{thm}\label{thm:main}
Assume that $\Gamma$ is an arithmetic lattice.
For every $0<\dexp<1/2$, every $x_0\in X$, and large enough $T$ (depending explicitly on $\dexp$ and the injectivity radius of $x_0$) 
at least one of the following holds.
\begin{enumerate}
\item For every $x\in X_{\rH^{-\dexp\ref{k:main-1}}}$, we have  
\[
\dist_X\Bigl(x,B_P\Big(e,\rH^A\Big).x_0\Bigr)\leq \ref{c:main-1}\rH^{-\dexp\ref{k:main-1}}.
\]
\item There exists $x'\in X$ such that $Hx'$ is periodic with $\vol(Hx')\leq\rH^{\dexp}$, and 
\[
\dist_X(x',x_0)\leq \ref{c:main-1}\rH^{-1}.
\] 
\end{enumerate} 
Where $A$, $\constk\label{k:main-1}$, and $\constE\label{c:main-1}$ are positive constants depending on $X$. 
\end{thm}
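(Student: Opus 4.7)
The plan is to realize the dichotomy in Theorem \ref{thm:main} as a quantitative, polynomial-rate version of the Ratner--Margulis strategy: either $B_P(e,\rH^A).x_0$ generates enough transverse displacement to $H$ to fill the $\rH^{-\dexp\ref{k:main-1}}$-thick part of $X$, or this transverse spreading is algebraically obstructed. Since $G$ is simple (or an almost product of two simple factors with $H$ embedded diagonally), there is no connected algebraic subgroup strictly between $H$ and $G$, so any obstruction must come from $H$ itself, producing the closed $H$-orbit of alternative (2). As $\dim H = 3$ while $\dim G = 6$, the core geometric task is to manufacture, inside a $P$-orbit piece of combinatorial size $\rH^A$, a three-dimensional set of transverse displacements to $H$ at scale $\rH^{-\dexp\ref{k:main-1}}$.

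I would begin with polynomial divergence: if $x,y\in B_P(e,\rH^A).x_0$ satisfy $y=gx$ with the component $v_\perp$ of $g-I$ transverse to $\Lie(H)$ of size $\vare$, then conjugation by the diagonal flow $a^t\in P$ amplifies $v_\perp$ by a factor of order $e^{2t}$, so after time $t=O(\log(1/\vare))$ the transverse displacement reaches a fixed scale $\vare_0$ ready to be fed into the linearization machinery. To control loss of mass to the cusp during this amplification, I would construct a Margulis function $\mu:X\to[1,\infty)$ adapted to $P$, satisfying an integrated drift inequality of the form $\int\mu(u\cdot x)\,\diff\nu(u)\leq c\,\mu(x)+C$ with $c<1$ along $U$-averages, guaranteeing that a positive proportion of the $P$-orbit stays in $X_\eta$ for some $\eta\gtrsim\rH^{-\dexp\ref{k:main-1}}$. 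This is needed both to have geometric room and to make the spectral estimates below applicable.

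With the $P$-orbit largely confined to the thick part, the heart of the argument is to convert the one-parameter $U$-orbit into a three-dimensional cloud of transverse displacements. Averaging along long $U$-trajectories and invoking the $L^2_0(X)$ spectral gap gives quantitative equidistribution of $U$-orbits inside each $H$-leaf with polynomial error. The transverse displacements accumulated along the amplified $P$-orbit can then be recorded, via the Dani--Margulis linearization technique, as a discretized subset of a three-dimensional vector space carrying an algebraic $H$-representation. The incidence-geometric input enters here: unless this set is unusually concentrated near a proper subvariety, a Bourgain-type discretized projection (or sum-product) theorem promotes it to a three-dimensional $\rH^{-\dexp\ref{k:main-1}}$-net, and combining with the $H$-leafwise equidistribution completes the density conclusion (1).

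The main obstacle, in my view, is effectively realizing the concentration-versus-density alternative at this final step. One must show that if the transverse displacement set lives in an $\vare$-neighborhood of a proper low-degree subvariety, then that subvariety is forced to come from an algebraic subgroup of $G$ containing $H$; since no proper intermediate subgroup exists, concentration forces the displacements to collapse near a single point, and an effective closing lemma then produces a nearby $x'$ whose $H$-orbit is periodic of volume $\leq \rH^\dexp$. Making every loss in this chain polynomial in $\rH$---rather than merely qualitative---requires delicate bookkeeping between the degree bound coming from linearization, the Margulis-function-controlled mass in the thick part, the injectivity radius of $x_0$, and the constants in the quantitative projection theorem. This is also where the arithmeticity of $\Gamma$ enters, since the effective closing lemma exploits the $\bbq$-structure on $G$.
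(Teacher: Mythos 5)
Your high-level template is right (closing lemma to rule out nearby small periodic $H$-orbits; Margulis function to gain transverse dimension; a discretized incidence-geometric ingredient; spectral gap to finish), but the key mechanism you propose differs from the paper's in a way that I think makes your plan unworkable, not just different.

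You set yourself the task of producing a \emph{three-dimensional} $\rH^{-\dexp\ref{k:main-1}}$-net of displacements transverse to $H$ and then invoke $H$-leafwise equidistribution to fill up $X$. The paper never does this, and for good reason: the methods available here (Margulis-function drift plus a Marstrand/Kaufman-type projection) can only be pushed to produce a transversal set of discretized dimension close to $1$, not $3$; and you do not supply any mechanism that would improve $1$ to $3$. A discretized projection theorem cannot \emph{raise} dimension — it moves dimension from $\rfrak$ onto a line, it does not promote a near-one-dimensional set to a full net. The paper's actual route uses the projection theorem precisely in that "decreasing" way: it takes the $\alpha\approx1$-dimensional subset of $\rfrak$ produced by the Margulis-function bootstrap and pushes it onto the one-dimensional direction $V\subset N$ transverse to $U$ inside the horospherical group $N$ (Proposition~\ref{prop:proj-general}, Theorem~\ref{thm:proj-thm}).

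What you are then missing is the ingredient that makes a single extra transverse dimension enough. After the projection step, the set $\{u_r v_s : r\in[0,1],\ s\in I\}$ has discretized dimension $\approx 2-\theta$ inside the two-dimensional horospherical group $N$. Proposition~\ref{prop:1-epsilon-N} — the adaptation of Venkatesh's argument, run against the decay of correlations~\eqref{eq:actual-mixing} via Proposition~\ref{prop:equid-trans-horo} — shows that expanding translates $a_t$ of such an almost-full-dimensional subset of $N$ already equidistribute at a polynomial rate, provided $\theta<\vare_0$, where $\vare_0$ is controlled by the spectral gap $\mixexp$. This is the step that lets the paper stop at dimension one transverse to $H$ rather than dimension three. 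Your "$L^2_0$ spectral gap gives $U$-equidistribution inside each $H$-leaf" is not what is needed: the relevant equidistribution is for $N=UV$-expansions, not $H$-leaves. Without some version of this Venkatesh step, your outline does not close.

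Two smaller points: the paper's initial separation comes from an effective closing lemma (Proposition~\ref{prop:closing-lemma}, in the spirit of~\cite{LM-Oppenheim}), not Dani--Margulis linearization; and the argument that rules out intermediate subgroups happens inside that closing lemma (cf.\ Lemma~\ref{lem:non-elementary}), where arithmeticity enters, not at the end of the "projection" stage as you have it. These are repairable in principle, but the central gap is the one above: you have no route from a $\approx1$-dimensional transversal set to the three-dimensional net you want, and you do not use the spectral gap in the way that would make the $\approx1$-dimensional transversal already suffice.
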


The proof of Theorem~\ref{thm:main} has a similar flavor to~\cite{GJS-SU2} by Gamburd, Jakobson, and Sarnak as well as to the work of Bourgain and Gamburd~\cite{BG-SU2, BG-SL2} and the aforementioned work of Bourgain, Furman, Lindenstrauss, and Mozes~\cite{BFLM}. 
Indeed in the first step, we use a Diophantine condition to produce some dimension  at a certain scale ({\em initial dimension}). In the second step, we use a Margulis function to show that by passing to a larger scale and translating $B_P(e,T^\delta).x_0$ with a random element of controlled size, we obtain a set with {\em large dimension}. Margulis functions were introduced in the context of homogeneous dynamics in \cite{EMM-Up} by Eskin, Margulis, and Mozes, and have become an indispensable tool in homogeneous dynamics and beyond. 

We then use a projection theorem to move this additional dimension to the direction of a horospherical subgroup of 
$G$. The projection theorem we use is an adaptation of the work of K\"{a}enm\"{a}ki, Orponen, and Venieri~\cite{kenmki2017marstrandtype} and is based on the works of Wolff and Schlag~\cite{Wolff, Schlag}. Finally, we use an argument due to Venkatesh~\cite{Venkatesh-Sparse} to conclude the proof. 

\subsection*{The main proposition}\label{sec:main-step}
Let $U\subset N$ denote the group of upper triangular unipotent matrices in $H\subset G$, respectively. 

More explicitly, if $G=\SL_2(\qlf)$, then  
\[
N=\left\{n(\rel,s)=\begin{pmatrix} 1 & \rel+is\\ 0 &1\end{pmatrix}: (r,s)\in\lf^2\right\}
\] 
and $U=\{n(\rel,0):\rel\in \lf\}$; we will often denote the elements in $U$ by $u_\rel$, i.e., $n(\rel,0)$ will often be denoted by $u_\rel$ for $\rel\in\bbr$. Let 
\[
V=\{n(0, s)=v_s: s\in \lf\}.
\]

If $G=\SL_2(\lf)\times \SL_2(\lf)$, then  
\[
N=\left\{n(\rel,s)=\left(\begin{pmatrix} 1 & \rel+s\\ 0 &1\end{pmatrix},\begin{pmatrix} 1 & \rel\\ 0 &1\end{pmatrix}\right): (r,s)\in\lf^2\right\}
\] 
and $U=\{n(\rel,0):\rel\in \lf\}$.
As before, $n(\rel,0)$ will be denoted by $u_\rel$ for $\rel\in\bbr$. Let $V=\{n(0,s)=v_s: s\in\lf\}$. In both cases, we have $N=UV$.

\medskip

The following proposition is a crucial step in the proof. Roughly speaking, it states that for every $x_0\in X$, we can find a subset of $V$ with dimension almost $1$ near $P.x_0$ unless $x_0$ is extremely close to a periodic $H$-orbit with small volume.  

\begin{propos}[Main Proposition]\label{prop:main-prop}\label{prop:main}\label{prop:dim-1-C}
There exists some $\eta_0>0$ depending on $X$ with the following property.

Let $0<\theta,\dexp<1/2$, $0<\eta<\eta_0$, and $x_0\in X$. 
There are $\constk\label{k:main-prop}$ and $A'$, depending on $\theta$,
and $T_1$ depending on $\dexp$, $\eta$, and the injectivity radius of $x_0$, so that for all $T>T_1$ at least one of the following holds. 
 
\begin{enumerate}
\item There exists a finite subset $I\subset [0,1]$ 
so that both of the following are satisfied. 
\begin{enumerate}
\item The set $I$ supports a probability measure $\rho$ which satisfies 
\[
\rho(J)\leq C_{\theta}|J|^{1-\theta}
\]
for every interval $J$ with $|J|\geq \rH^{-\dexp\ref{k:main-prop}}$ where $C_\theta\geq 1$ depends on $\theta$. 
\item There is a point $y_0\in X_{\eta}$ so that 
\[
d_X\Big(v_s.y_0,B_P\Big(e,\rH^{A'}\Big).x_0\Bigr)\leq  \ref{c:main-2}\rH^{-\dexp\ref{k:main-prop}}
\] 
for all $s\in I\cup\{0\}$. 
\end{enumerate}
\item There exists $x'\in X$ so that $Hx'$ is periodic with $\vol(Hx')\leq\rH^{\dexp}$ and 
\[
\dist_X(x',x_0)\leq \ref{c:main-2}\rH^{-1}.
\]
\end{enumerate}
Where $\constE\label{c:main-2}$ depends on $X$.
\end{propos}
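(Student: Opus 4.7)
The plan follows the general template sketched in the introduction, combining three ingredients: an initial Diophantine/non-divergence input producing a small transverse dimension in the $V$-direction, a Margulis function bound controlling escape to the cusp, and a K\"{a}enm\"{a}ki--Orponen--Venieri projection theorem amplifying the initial dimension to Frostman exponent $1-\theta$. Throughout, I assume alternative (2) fails, so no periodic $H$-orbit of volume $\leq T^\delta$ lies within distance $CT^{-1}$ of $x_0$. The first task is to extract an initial $V$-dimension. For a substantial collection of pairs $p_1,p_2\in B_P(e,T^{A'})$ the local displacement $p_1x_0\cdot(p_2x_0)^{-1}$, expressed in normal coordinates around $x_0$, must have a nontrivial $V$-component; otherwise $B_P(e,T^{A'})x_0$ would lie in a thin tube around a single $P$-orbit, producing a near-closed $H$-orbit of volume $\leq T^\delta$ and contradicting our assumption. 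Measuring these $V$-components yields a finite set $I_0\subset[-1,1]$ supporting a Frostman measure $\rho_0$ with some small positive initial exponent $\delta_0$, valid down to some scale $T^{-\delta\kappa_0}$.

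Next, to ensure the base point can be chosen in $X_\eta$, I would use a Margulis function $f\colon X\to[1,\infty)$ for the $P$-action (in the style of Eskin--Margulis--Mozes and later adaptations), satisfying an averaging inequality of the form
\[
\int_{B_P(e,T^{A'})} f(p\cdot y)\, dp \leq c\,f(y)+b.
\]
Integrating this bound against $\rho_0$ and applying Chebyshev's inequality allows me to choose a base point $y_0\in X_\eta$ and discard a subset of $I_0$ of $\rho_0$-measure at most $\tfrac12$, preserving the initial Frostman bound up to an admissible loss.

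The third step is the amplification. I would apply the K\"{a}enm\"{a}ki--Orponen--Venieri projection theorem to promote the initial exponent $\delta_0$ to the target $1-\theta$. The relevant family of projections arises by conjugating the $V$-displacements by elements of $B_P(e,T^{A'})$; the required quantitative transversality/non-concentration hypothesis is verified using the elementary $\SL_2$ fact that distinct diagonal elements scale $V$ in a non-degenerate way (in particular the Wronskian-type quantity controlling the projection family is non-vanishing). Provided the Wolff exponent of the projection theorem is tuned close enough to $1$ --- which is possible for any $\theta>0$ --- one extracts a finite set $I\subset[0,1]$ and a Frostman probability measure $\rho$ on $I$ with exponent $1-\theta$, which together with $y_0$ yields conclusion~(1).

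The main technical obstacle will be the projection step: the projection family here is not a standard one-parameter family over an interval, but a richer two-parameter family arising from the $P$-action on the nilpotent group $N=UV$, and the quantitative non-concentration hypothesis (together with its dependence on $\theta$) must be verified in our concrete setting, using the structure of $\SL_2(\qlf)$ or $\SL_2(\lf)\times\SL_2(\lf)$ and its action on $V$. A secondary bookkeeping issue is that the scales involved --- the initial Frostman scale $T^{-\delta\kappa_0}$, the size $T^{A'}$ of the $P$-ball, the cusp-exceptional fraction tolerated by the Margulis step, and the parameter $\eta$ --- must be calibrated so that the three steps chain together to a single Frostman bound of exponent $1-\theta$ at the target scale.
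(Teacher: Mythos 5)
Your proposal misidentifies the role of each ingredient, and the central step as written cannot work.

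\textbf{The projection theorem does not amplify dimension.} You write that you would ``apply the K\"{a}enm\"{a}ki--Orponen--Venieri projection theorem to promote the initial exponent $\delta_0$ to the target $1-\theta$.'' A projection theorem asserts that a \emph{generic} projection (in a suitable one-parameter family) preserves Hausdorff/discretized dimension; it cannot turn a Frostman measure of small exponent $\delta_0$ on a line into one of exponent $1-\theta$ on a line. Dimension under projection can only go down, not up. In the actual argument the input to the projection theorem is already a set of discretized dimension $\alpha = 1-\epsilon$ (with $\epsilon \approx \theta$), but living in the \emph{three-dimensional} space $\rfrak$, the $\mathrm{Ad}(H)$-invariant complement to $\mathfrak h$ in $\gfrak$; the projection step merely transfers this dimension from $\rfrak$ to $V$ via the family $r\mapsto\xi_r(w)=(\mathrm{Ad}(u_r)w)_{12}$ (which, incidentally, is a genuine one-parameter restricted family in the Wolff--Schlag sense, not the ``richer two-parameter family'' you worry about).

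\textbf{Consequently, the real difficulty --- producing dimension close to $1$ in $\rfrak$ --- is absent from your plan.} You relegate the Margulis function to the auxiliary role of ``ensuring the base point can be chosen in $X_\eta$.'' Non-divergence is indeed handled in the paper by a Margulis-type function (and Kleinbock--Margulis), but that is a side issue. The heart of the proof is a \emph{different} Margulis function $f_{\cone,\alpha}$ on thickened $P$-pieces, counting transversal returns in $\rfrak$ weighted by $\|w\|^{-\alpha}$, which is contracted on average by the random walk generated by $\nu = \int \delta_{a_{m_0}u_r}\,dr$. Iterating this contraction (the ``dimension bootstrap'' of \S\ref{sec:Marg-func-ini-dim}) is what upgrades the initial estimate $f \leq e^{Dt}$ supplied by the closing lemma to the energy bound $\sum_{w'\neq w}\|w-w'\|^{-\alpha}\ll(\#F)^{1+\epsilon}$ --- i.e.\ discretized dimension $\alpha$ --- which is the hypothesis the projection theorem actually needs. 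Your opening step (pairing points of $B_P(e,T^{A'})x_0$ to extract nontrivial $V$-components) at best produces the analogue of the closing lemma's initial separation, not the near-full-dimensional set the projection theorem requires.

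In short: the correct order of operations is (i) closing lemma gives one-scale separation in $\rfrak$ unless alternative (2) holds, (ii) Margulis function bootstrap on $H\times\cone$ promotes this to energy exponent $\alpha\approx 1$ in $\rfrak$, (iii) the Marstrand/Wolff--Schlag projection theorem transfers that dimension to $V$. Your plan collapses (i) and (ii) into a single weak initial estimate in $V$, and then asks (iii) to do work it cannot do.
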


The proof of this proposition will be completed in \S\ref{sec:proof-main-prop}; it involves three main steps, which we now outline.

\begin{enumerate}
\item Let us assume that the injectivity radius of $x_0$ is bounded below by some constant depending on $X$; 
we can always reduce to this case using certain non-divergence results which are discussed in~\S\ref{sec:non-div}. 

Since we are interested in information about how points approach each other transversal to $H$, we will work with a thickening of $P.x_0$ with $\mathsf B^H$, a {\em small} neighborhood of the identity in $H$. In the first step, we use Proposition~\ref{prop:closing-lemma} (a closing lemma) 
to show that either Proposition~\ref{prop:main-prop}(2) holds, 
or we can find some $x\in\Bigl(\mathsf B^H\cdot B_P\Bigl(e,T^{O(\delta)}\Bigr)\Bigr).x_0$, whose injectivity radius is bounded below depending on $X$, so that any two 
nearby points in 
$\Bigl(\mathsf B^H\cdot B_P\Bigl(e,T^{\delta}\Bigr)\Bigr).x$ 
have distance $>T^{-1}$ transversal to $H$.   
\item Assuming Proposition~\ref{prop:main-prop}(2) does not hold, in the second step, we use a Margulis function to show that translations of the aforementioned thickening of $B_P\Bigl(e,T^{\delta}\Bigr).x$ by certain random elements in $B_P\Bigl(e, T^{O_\theta(1)}\Bigr)$ have dimension $1-\theta$ transversal to $H$ at scale $T^{-0.1\delta}$. This step is carried out in \S\ref{sec:Marg-func-ini-dim}.

The random elements we use in this step further have the property that translations of $\Bigl(\mathsf B^H\cdot B_P\Bigl(e,T^{\delta}\Bigr)\Bigr).x$ with them stay near $P.x$ --- this property is reminiscent of Margulis' thickening technique, albeit unlike the latter we only thicken in $H$ and not in $G$.

\item In the third step, we use a projection theorem (Theorem~\ref{thm:proj-thm}) combined with some arguments in homogeneous dynamics, to project the aforementioned entropy to the direction of $N$. This is the content of \S\ref{sec:Mars-proj}.
\end{enumerate}

Let us now elaborate on how Proposition~\ref{prop:main-prop} may be used to complete the proof of Theorem~\ref{thm:main}. 

The argument is based on the quantitative decay of correlations for the ambient space $X$: 
There exists $\mixexp>0$ so that  
\be\label{eq:actual-mixing-intro}
\biggl|\int \varphi(gx)\psi(x)\diff\!{m_X}-\int\varphi\diff\!{m_X}\int\psi\diff\!{m_X}\biggr|\ll_{\varphi,\psi} \nuni^{-\mixexp d(e,g)}
\ee
for all $\varphi,\psi\in C^\infty_c(X)+\bbc\cdot 1$, where $m_X$ is the probability Haar measure on $X$ and $d$ is our fixed right $G$-invariant metric on $G$. See e.g.~\cite[\S2.4]{KMnonquasi} and references there for~\eqref{eq:actual-mixing-intro}; we note that $\mixexp$ is absolute if $\Gamma$ is a congruence subgroup, see~\cite{Burger-Sarnak, Cl-tau, Gor-Mau-Oh}.

As it is well studied,~\eqref{eq:actual-mixing-intro} implies quantitative equidistribution results for expanding pieces of the horospherical group $N$ in $X$. 
Note, however, that we are only supplied with the set 
\[
B=\{u_\rel v_s:\rel\in[0,1], s\in I\}
\]
where $I$ is as in Proposition~\ref{prop:main-prop}, i.e., 
we do not have the luxury of using an open subset of $N$.
To remedy this issue, we use an argument due to Venkatesh~\cite{Venkatesh-Sparse} 
and show that so long as $\theta$ is small enough --- this is quantified using~\eqref{eq:actual-mixing-intro} ---   expanding translations of $B$ are already equidistributed in $X$, see Proposition~\ref{prop:1-epsilon-N}.    

\subsection*{Periodic orbits}
The techniques we develop here allow us to prove an effective density theorem for periodic orbits of $H$ as well. We will show in Lemma~\ref{lem:non-div-closed} that there exists some $\eta_X>0$ 
so that for every periodic orbit $Y$, we have 
\be\label{eq: muY Xeta intro}
\mu_Y(X_{\eta_X})\geq 0.9
\ee
where $\mu_Y$ denotes the $H$-invariant probability measure on $Y$.

\begin{thm}\label{thm:main-closed}
Let $Y\subset X$ be a periodic $H$-orbit in $X$. 
Then for every $x\in X_{\vol(Y)^{-\ref{k:periodic}}}$ we have 
\[
\dist_X(x,Y)\leq \ref{c:periodic}\vol(Y)^{-\ref{k:periodic}}.
\]
Where $\constk\label{k:periodic}\geq\mixexp^4/L$ (for an absolute constant $L$) and 
$\constE\label{c:periodic}$ depends explicitly on $\mixexp$, $\vol(X)$, and the minimum of the injectivity radius of points in $X_{\eta_X}$, 
see~\eqref{eq: def C3 and k3}. 
If $\Gamma$ is congruence, $\ref{k:periodic}$ is absolute.
\end{thm}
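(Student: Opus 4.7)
The plan is to reduce Theorem~\ref{thm:main-closed} to Theorem~\ref{thm:main} applied to a base point on $Y$ with good injectivity radius, with the parameter $T$ taken to be a suitable power of $\vol(Y)$. The key observation is that for any $y_0\in Y$ one has $B_P(e,T^A).y_0\subset P.y_0\subset Y$, so alternative~(1) of Theorem~\ref{thm:main} would immediately deliver the desired effective density of $Y$ in $X_{\vol(Y)^{-\ref{k:periodic}}}$; the real task is to rule out alternative~(2).

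First, I would use~\eqref{eq: muY Xeta intro} to fix some $y_0\in Y\cap X_{\eta_X}$, so that $y_0$ has injectivity radius at least $\eta_X$. Next, I would invoke a standard separation estimate: there exist constants $A_0\geq 1$ and $c_0>0$ (depending on $X$) such that any two distinct periodic $H$-orbits $Y_1\neq Y_2$ in $X$ satisfy
\[
\dist_X(Y_1,Y_2)\geq c_0\bigl(\vol(Y_1)\vol(Y_2)\bigr)^{-A_0}.
\]
Picking $0<\dexp<1/(4A_0)$ and setting $T=\vol(Y)^{3A_0}$, so that $T^\dexp<\vol(Y)$ and $T^{-1}=\vol(Y)^{-3A_0}\ll\vol(Y)^{-2A_0}$, I apply Theorem~\ref{thm:main} to $y_0$ with these parameters.

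If alternative~(2) of Theorem~\ref{thm:main} were to hold, it would produce a periodic $H$-orbit $Hx'$ with $\vol(Hx')\leq T^\dexp<\vol(Y)$ (hence $Hx'\neq Y$) and $\dist_X(x',y_0)\leq \ref{c:main-1}\vol(Y)^{-3A_0}$. But then the separation estimate forces $\dist_X(y_0, Hx')\geq c_0\vol(Y)^{-2A_0}$, which for $\vol(Y)$ large enough contradicts the previous bound. Hence alternative~(1) applies, so that for every $x\in X_{\vol(Y)^{-3A_0\dexp\ref{k:main-1}}}$,
\[
\dist_X(x,Y)\leq \dist_X\bigl(x, B_P(e,T^A).y_0\bigr)\leq \ref{c:main-1}\vol(Y)^{-3A_0\dexp\ref{k:main-1}},
\]
and one may take $\ref{k:periodic}=3A_0\dexp\,\ref{k:main-1}$, absorbing the remaining dependence on $A_0,\dexp,\eta_X,\vol(X)$ into $\ref{c:periodic}$. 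For the stated lower bound $\ref{k:periodic}\geq\mixexp^4/L$ with $L$ absolute, the plan is to trace the dependence of $\ref{k:main-1}$ on $\mixexp$ in the proof of Theorem~\ref{thm:main}: the mixing exponent enters only through a bounded number of applications of~\eqref{eq:actual-mixing-intro} in the Venkatesh-style argument that converts dimension in the $V$-direction into density, and a careful bookkeeping yields $\ref{k:main-1}\gtrsim\mixexp^4$.

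The principal obstacle is the separation estimate with explicit polynomial control in terms of both volumes. In the $\SL_2$ arithmetic setting this ought to be classical --- the stabilizers are arithmetic lattices in $H$ of covolumes $\vol(Y_1)$ and $\vol(Y_2)$, and a conjugation between them by a small element of $G$ would produce an element of $\Gamma$ of prohibitively small height, contradicting discreteness --- but pinning down the exponent $A_0$ requires some care, and in the product case uses irreducibility of $\Gamma$ in an essential way. A secondary but important technical issue is auditing the three steps of the proof of the Main Proposition together with the final Venkatesh-type mixing argument to confirm the quartic dependence on $\mixexp$.
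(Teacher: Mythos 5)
Your proposal is logically coherent modulo the separation estimate, but it cannot yield Theorem~\ref{thm:main-closed} in the generality and with the uniformity claimed by the paper, for two structural reasons that go beyond the separation issue you flag.

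First, Theorem~\ref{thm:main} assumes that $\Gamma$ is \emph{arithmetic}, while Theorem~\ref{thm:main-closed} does not. The paper stresses this explicitly (see the discussion immediately following the statement of Theorem~\ref{thm:main-closed}): the entire point of giving a separate proof for periodic orbits is to avoid the arithmeticity hypothesis, and indeed the one genuinely arithmetic input in the proof of Theorem~\ref{thm:main} --- Proposition~\ref{prop:closing-lemma}, which relies on Lemma~\ref{lem:non-elementary} and Lemma~\ref{lem:almost-inv}, i.e.\ on Chevalley's theorem, height bounds for $\bbq$-subspaces, and Borel--Harish-Chandra --- is precisely what is bypassed in \S\ref{sec:proof-closed-orbit}. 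The paper replaces it by Proposition~\ref{prop:Marg-Fun-Y-closed}, which exploits the $H$-invariant probability measure $\mu_Y$ on the closed orbit to integrate the Margulis function and extract, via Lemma~\ref{lem:EMM-(2,1)} and a Chacon--Ornstein argument, a subset $F$ with $\#F\gg_X\vol(Y)$ satisfying the $\alpha$-energy bound. This is a genuinely different route: instead of generating separation by a closing lemma plus a Diophantine dichotomy, the orbit's own recurrence structure is used. Your approach necessarily inherits the arithmeticity assumption of Theorem~\ref{thm:main} and hence cannot prove the theorem as stated.

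Second, even when $\Gamma$ is arithmetic, the constants you would obtain are of the wrong nature. Tracing the exponent in Theorem~\ref{thm:main}: $\ref{k:main-1}=\ref{k:main-prop}\ref{k:cusp-mixing}$, and $\ref{k:main-prop}=\theta/(400 D_0)$ where $D_0$ comes from the closing lemma (Proposition~\ref{prop:closing-lemma}) and encodes heights, degrees, and the Chevalley representation attached to the ambient $\bbq$-group. These quantities are not controlled by $\mixexp$, $\vol(X)$, or the injectivity radius of the compact core. The whole content of the claim $\ref{k:periodic}\geq\mixexp^4/L$ with $L$ absolute, and of the statement that $\ref{c:periodic}$ depends only on $\mixexp$, $\vol(X)$, and $\eta_X$, is that these arithmetic quantities do \emph{not} enter. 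Your closing paragraph treats the $\mixexp^4$ bound as a bookkeeping exercise in the Venkatesh-type argument, but the bottleneck is $D_0$, not the mixing step, and no amount of auditing the mixing computation will remove the dependence on $D_0$ introduced through Proposition~\ref{prop:closing-lemma}. Thus the reduction-to-Theorem~\ref{thm:main} strategy loses precisely the uniformity that Theorem~\ref{thm:main-closed} asserts (and which is essential to the application in Theorem~\ref{thm:finiteness-intro}).

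Finally, the separation estimate between distinct periodic $H$-orbits with polynomial dependence on both volumes is itself a nontrivial input; in the arithmetic case a version can be extracted from the equivariant-projection machinery of \cite{EMV} (indeed it is close in spirit to Lemma~\ref{lem:almost-inv}/\ref{lem:non-elementary}), but this only confirms that your argument is ultimately running through arithmetic data again, compounding the issues above. So the approach is not salvageable for the stated theorem; the correct route is the one the paper takes, constructing $F$ directly from the recurrence of $Y$ under $\mu_Y$ via Proposition~\ref{prop:dim-1-e-closed} and bypassing both the closing lemma and the separation estimate.
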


If $\Gamma$ is an arithmetic lattice, Theorem~\ref{thm:main-closed} is a rather special case of a theorem of Einsiedler, Margulis, and Venkatesh~\cite{EMV} or  (when  the  corresponding $\bbq$-group has over $\bbr$ compact factors) the followup work by Einsiedler, Margulis, and Venkatesh and the second named author~\cite{EMMV}. Note however that Theorem~\ref{thm:main-closed} does {\em not} require $\Gamma$ to be arithmetic. 
In particular, unlike~\cite{EMV, EMMV}, our argument does not rely on property $(\tau)$.

By the arithmeticity theorems of Selberg and Margulis, irreducible lattices in $\SL_2(\bbr)\times \SL_2(\bbr)$ are arithmetic.
Regarding reducible quotients of $\SL_2(\bbr)\times \SL_2(\bbr)$, if such a quotient  $\SL_2(\bbr)\times\SL_2(\bbr)/\Gamma_1\times\Gamma_2$ contains infinitely many closed orbits of $H$, then $\Gamma_2$ is commensurable to $\Gamma_1$ (up to a conjugation) and moreover $\Gamma_1$ has infinite index in its commensurator. By a theorem of Margulis, it follows that $\Gamma_1$ is arithmetic, see~\cite[Ch.~IX]{Margulis-Book}. 
Moreover, it was recently shown,~\cite{MM, BFMS}, that if $\SL_2(\bbc)/\Gamma$ contains infinitely many closed orbits of $H$, then $\Gamma$ is arithmetic. 

Thus in all cases covered by Theorem~\ref{thm:main-closed}, either $\Gamma$ is arithmetic hence \cite{EMV, EMMV} apply (though the proof we give here is very different) or there are only finitely many closed $H$-orbits.
The key point of Theorem~\ref{thm:main-closed} is that the rate of equidistribution depends only on rather coarse properties of $X$ 
namely the rate of mixing $\mixexp$, the volume of $X$, and the injectivity radius of the compact core of $X$, suitably interpreted. This can be used in some special cases to give an effective version of the finiteness theorems of~\cite{MM, BFMS}, as we discuss in the next subsection. It is interesting to note that the proofs in~\cite{MM, BFMS} rely on equidistribution results~\cite{Mozes-Shah} which are in the spirit of Theorem~\ref{thm:main-closed}, albeit in a qualitative form. 

\subsection*{Totally geodesic planes in hybrid manifolds}
Gromov and Piatetski-Shapiro~\cite{GrPS-Non-arith} constructed examples of non-arithmetic hyperbolic manifolds by gluing together pieces of non-commensurable arithmetic manifolds. 
Let $\Gamma_1$ and $\Gamma_2$ be two torsion free lattices in ${\rm Isom}(\bbh^3)$ --- recall that ${\rm Isom}(\bbh^3)$ is an index 2 subgroup of ${\rm O}(3,1)$ and that $\SL_2(\bbc)$ is locally isomorphic to ${\rm O}(3,1)$. Let $M_i=\bbh^3/\Gamma_i$. Assume further that for $i=1,2$, there exists $3$-dimensional submanifolds  with boundary $N_i\subset M_i$ so that 
\begin{itemize}
    \item The Zariski closure of $\pi_1(N_i)\subset \Gamma_i$ contains ${\rm O}(3,1)^\circ$ where ${\rm O}(3,1)^\circ$ is the connected component of the identity in ${\rm O}(3,1)$. 
    \item Every connected component of $\partial N_i$ is a totally geodesic embedded surface in $M_i$ which separates $M_i$. 
    \item $\partial N_1$ and $\partial N_2$ are isometric. 
\end{itemize}
Let $M$ be the manifold obtained by gluing $N_1$ and $N_2$ using the isometry between $\partial N_1$ and $\partial N_2$. Then $M$ carries a complete hyperbolic metric, thus, we consider $\pi_1(M)$ as a lattice in ${\rm O}(3,1)$. 
Let $\Gamma'=\pi_1(M)\cap{\rm O}(3,1)^\circ$, and let $\Gamma$ denote the inverse image of $\Gamma'$ in $G=\SL_2(\bbc)$. If $\Gamma_1$ and $\Gamma_2$ are arithmetic and non-commensurable, then $M$ is non-arithmetic, i.e., $\Gamma$ is a non-arithmetic lattice in $G$. 
A totally geodesic plane in $M$ lifts to a periodic orbit of $H=\SL_2(\bbr)$ in $X=G/\Gamma$.

The following finiteness theorem, in qualitative form, was proved by Fisher, Lafont, Miller, and Stover~\cite[Thm.~1.4]{FLMS}, see also~\cite[\S12]{BO-GF}.

\begin{thm}\label{thm:finiteness-intro}
Let $M$ be a hyperbolic $3$-manifold obtained by gluing the pieces $N_1$ and $N_2$ from non-commensurable arithmetic manifolds along $\Sigma=\partial N_1=\partial N_2$ as described above. 
The number of totally geodesic planes in $M$ is at most 
\[
L\biggl({\rm area}(\Sigma)\vol(X)\eta_X^{-1}\mixexp^{-1}\biggr)^{{L}/{\mixexp^4}}
\]
where $L$ is absolute and $X=G/\Gamma$ is as above. 
\end{thm}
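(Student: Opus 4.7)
The plan is to combine Theorem~\ref{thm:main-closed}, applied to the lift $X=\SL_2(\bbc)/\Gamma$ of the hybrid manifold $M$, with the geometry of the totally geodesic gluing surface $\Sigma$. Every totally geodesic plane $P\subset M$ lifts to a closed $H=\SL_2(\bbr)$-orbit $Y_P\subset X$ of volume comparable to the hyperbolic area of $P$, so Theorem~\ref{thm:main-closed} provides an effective density statement for each such $Y_P$. I will use this together with a counting argument on $\Sigma$, organised in three steps with a volume dichotomy at the centre.

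First, I would isolate the planes that do not meet $\Sigma$. Such a plane lies entirely in $N_1$ or $N_2$ and descends to a totally geodesic plane in the arithmetic manifold $M_i$ of area at most $\operatorname{area}(\Sigma)$; the standard count of totally geodesic planes of bounded area in a fixed arithmetic hyperbolic $3$--manifold bounds their contribution by a polynomial in $\operatorname{area}(\Sigma)$ which is absorbed by the target estimate. Henceforth I consider only planes $P$ with $P\cap\Sigma\neq\emptyset$; then $P\cap\Sigma$ is a non-empty union of closed geodesics on $\Sigma$ of total length at most $C\cdot\vol(P)/\operatorname{sys}(\Sigma)$, by a coarea estimate using that $\Sigma$ is totally geodesic.

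Second, I would derive an upper bound on $\vol(P)$ for such planes. Set $V_{\ast}=C_0\bigl(\operatorname{area}(\Sigma)\vol(X)\eta_X^{-1}\mixexp^{-1}\bigr)^{C_1/\mixexp^4}$. If $\vol(P)>V_{\ast}$, Theorem~\ref{thm:main-closed} implies that $Y_P$ is $\vol(P)^{-\ref{k:periodic}}$-dense in $X_{\vol(P)^{-\ref{k:periodic}}}$; in particular, $Y_P$ crosses $\Sigma$ transversally many times with crossing data dense in $T^1\Sigma$. Each crossing extends on the $N_1$ side to a totally geodesic plane in $M_1$ and on the $N_2$ side to a totally geodesic plane in $M_2$, and because all the crossings belong to the single $H$-orbit $Y_P$, these extensions must match consistently across $\Sigma$. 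Following the non-effective rigidity in~\cite{FLMS} (which uses Ratner and Mozes--Shah to show that under non-commensurability of $\Gamma_1,\Gamma_2$ the simultaneous-extension locus in $T^1\Sigma$ is finite), I would produce an effective lower bound on the separation of this locus, scaling polynomially in $\operatorname{area}(\Sigma),\vol(X),\eta_X^{-1},\mixexp^{-1}$. The density provided by Theorem~\ref{thm:main-closed} is then finer than this separation, giving a contradiction; hence $\vol(P)\leq V_{\ast}$.

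Third, I would count the planes with $\vol(P)\leq V_{\ast}$ that meet $\Sigma$. Each such $P$ is uniquely determined by one of its intersection geodesics with $\Sigma$ together with the totally geodesic extensions into $N_1$ and into $N_2$, which are rigidified once a plane field is specified along the geodesic. The number of closed geodesics on $\Sigma$ of length $\leq O(V_{\ast})$ admitting matched arithmetic extensions on both sides is again controlled by the effective rigidity from step two and is polynomial in $V_{\ast}\cdot\operatorname{area}(\Sigma)$; substituting the expression for $V_{\ast}$ yields the claimed bound $L(\operatorname{area}(\Sigma)\vol(X)\eta_X^{-1}\mixexp^{-1})^{L/\mixexp^4}$. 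The principal obstacle is precisely this effective rigidity at the heart of step two: converting the qualitative non-commensurability obstruction of~\cite{FLMS} into a polynomial separation estimate on $T^1\Sigma$ compatible with the density rate $\ref{k:periodic}$ of Theorem~\ref{thm:main-closed}. This is where the exponent $L/\mixexp^4$ emerges, and where the dependencies on $\operatorname{area}(\Sigma)$, $\vol(X)$, $\eta_X$, and $\mixexp$ are concentrated.
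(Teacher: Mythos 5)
Your high-level plan (use Theorem~\ref{thm:main-closed} to force a volume upper bound, then count) shares the skeleton of the paper's argument, but several of the specific steps are either wrong as stated or replace a concrete construction with an unresolved effectivization problem.

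\textbf{Step one does not work.} You claim a plane $P$ lying entirely in $N_i$ descends to a totally geodesic surface in $M_i$ of area at most $\operatorname{area}(\Sigma)$. There is no such bound; $P$ can have arbitrarily large area while staying inside one of the glued pieces, so the ``standard count of planes of bounded area'' cannot be invoked. The paper handles this case via the same density mechanism used later: since $\Sigma$ separates $M$ and $N_i$ contains an open set $\boxG_{\eta_1}.x_i\subset X_{\rm cpt}$ of definite injectivity radius, Theorem~\ref{thm:main-closed} forces any closed $H$-orbit of sufficiently large volume to visit both $\boxG_{\eta_1}.x_1$ and $\boxG_{\eta_1}.x_2$, hence to cross $\Sigma$. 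Planes disjoint from $\Sigma$ thus automatically satisfy a volume bound; no separate counting of ``interior'' planes is needed. The lower bound $\eta_1\gg_X\operatorname{area}(\Sigma)^{-4}$ comes from \cite[Thm.~1.5]{MO-MargFun} (volume of thin tubes), not from systole considerations, and your $\operatorname{sys}(\Sigma)$ dependence in the coarea estimate is extraneous and not part of the target bound.

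\textbf{Step two is the right idea but defers exactly the hard part.} You propose to ``produce an effective lower bound on the separation'' of the simultaneous-extension locus in $T^1\Sigma$, quantifying the Ratner/Mozes--Shah rigidity in~\cite{FLMS}. The paper avoids any such quantitative rigidity statement. It instead uses a much more concrete device already present in~\cite[Prop.~5.1]{FLMS}: the planes crossing $\Sigma$ do so orthogonally (\cite[Thm.~4.1]{FLMS}), so there is an explicit open set $O$ in $T^1M$ (a tubular neighbourhood of $T^1(\Sigma\cap M_0)$ away from the orthogonal direction) that is disjoint from every such plane, and whose ``size'' $\eta_2\gg_X\operatorname{area}(\Sigma)^{-4}$ can again be controlled by \cite[Thm.~1.5]{MO-MargFun}. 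Density from Theorem~\ref{thm:main-closed} then contradicts avoidance of $O$ once $\vol(Hx)$ exceeds $\bigl(2\ref{c:periodic}/\min\{\eta_X,\eta_1,\eta_2\}\bigr)^{1/\ref{k:periodic}}$. Your route, by contrast, requires an effective input that neither the paper nor the cited literature supplies.

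\textbf{Step three overcomplicates the count.} Once a uniform volume bound $V_\ast$ on closed $H$-orbits is in hand, the paper invokes \cite[Cor.~10.7]{MO-MargFun} directly: the number of periodic $H$-orbits with $\vol\leq T$ is $\ll_X T^6$. Counting via intersection geodesics on $\Sigma$ and ``matched arithmetic extensions'' is unnecessary and reintroduces the unresolved rigidity question.

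In short: the density mechanism is the same, but you are missing the two concrete ingredients that make the paper's argument effective (the explicit open set from FLMS avoided by all crossing planes, and the \cite{MO-MargFun} lower bounds on the injectivity scale of those open sets), and your treatment of planes away from $\Sigma$ is based on a false area bound.
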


\subsection*{Acknowledgment}
We would like to thank the Hausdorff Institute for its hospitality during the winter of 2020.
A.M.\ would like to thank the Institute for Advanced Study for its hospitality during the fall of 2019 where parts of this project were carried out. The authors would like to thank Gregory Margulis and Nimish Shah for many discussions about effective density, and Joshua Zahl for helpful communications regarding projections theorems. We would also like to thank Zhiren Wang with whom we discussed related questions. We thank the anonymous referees for their helpful comments.   

\section{Notation and preliminaries}\label{sec:notation}

Throughout the paper     
\[
\text{$G=\SL_2(\qlf)\quad$ or $\quad G=\SL_2(\lf)\times\SL_2(\lf)$}.
\] 
Let $\Gamma\subset G$ be a lattice, and put $X=G/\Gamma$. 

We define the subgroups $H$, $N$, $U$, and $V$ as in the  introduction.

Also let $U^-=\{u^-_\rel:\rel\in\lf\}$ denote the group of lower triangular unipotent matrices in $H$.

For every $t\in\bbr$, let $a_t$ denote the images of 
\be\label{eq:def-a-ell}
\begin{pmatrix} \uni^{t/2} & 0 \\ 0 & \uni^{-t/2} \end{pmatrix}
\ee
in $H$. Note that $a_t n(r,s)a_{-t}=n(\uni^{t}(r,s))$ for all $t\in\bbr$ and all $(r,s)\in \lf^2$.

\subsection*{Lie algebras and norms}
Let $|\;|$ denote the usual absolute value on $\qlf$ (and on $\lf$).  
Let $\|\;\|$ denotes the maximum norm on ${\rm Mat}_2(\qlf)$ and ${\rm Mat}_2(\lf)\times {\rm Mat}_2(\lf)$,
with respect to the standard basis.

Let $\gfrak=\Lie(G)$, that is, $\gfrak=\mathfrak {sl}_2(\qlf)$ or $\gfrak=\mathfrak{sl}_2(\lf)\oplus \mathfrak{sl}_2(\lf)$.
We write $\mathfrak g=\mathfrak h\oplus \mathfrak r$ where $\hfrak=\Lie(H)\simeq\mathfrak{sl}_2(\bbr)$,
$\rfrak=\qi\mathfrak{sl}_2(\bbr)$ if $\gfrak=\mathfrak{sl}_2(\qlf)$ and 
$\rfrak=\mathfrak{sl}_2(\bbr)\oplus\{0\}$
if $\gfrak=\mathfrak{sl}_2(\bbr)\oplus \mathfrak{sl}_2(\bbr)$.

Throughout the paper, we will use the uniform notation
\[
w=\begin{pmatrix}
w_{11} & w_{12}\\
w_{21} & w_{22}
\end{pmatrix}
\]
for elements $w\in\rfrak$, where $w_{ij}\in i\bbr$ if $G=\SL_2(\bbc)$ and $w_{ij}\in\bbr$ if $G=\SL_2(\bbr)\times\SL_2(\bbr)$.

Note that $\rfrak$ is a {\em Lie algebra} in the case $G=\SL_2(\bbr)\times\SL_2(\bbr)$, but not when $G=\SL_2(\bbc)$.

We fix a norm on $\hfrak$ by taking the maximum norm where the coordinates are given by $\Lie(U)$, $\Lie(U^-)$, and $\Lie(A)$; similarly fix a norm on $\rfrak$. 
By taking maximum of these two norms we get a norm on $\gfrak$. These norms will also be denoted by $\|\;\|$.

Let $\constE\label{E:2ball}\label{E:dist-sheet}\geq 1$ be so that 
\be\label{eq:2ball}
\text{$\|hw\|\leq \ref{E:2ball}\|w\|$ for all $\|h-I\|\leq 2$ and all $w\in \gfrak$.}  
\ee

For all ${\beta}>0$, we define
\be\label{eq:def-BoxH}
\boxH_{\beta}:=\{u_s^-:|s|\leq {\beta}\}\cdot\{a_t: |t|\leq \beta\}\cdot\{u_\rel:|\rel|\leq {\beta}\}
\ee 
for all $0<{\beta}<1$.  Note that for all $h_i\in(\boxH_{\beta})^{\pm1}$, $i=1,\ldots,5$, we have  
\be\label{eq:B-beta-almost-group}
 h_1\cdots h_5\in \boxH_{100\beta}.
\ee

We also define $\boxG_{\beta}:=\boxH_\beta\cdot\exp(B_\rfrak(0,\beta))$ where $B_\rfrak(0,{\beta})$ denotes the ball of radius $\beta$ in $\rfrak$ with respect to $\|\;\|$.

We deviate slightly from the notation in the introduction, and 
define the injectivity radius of $x\in X$ using $\boxG_{\beta}$ instead of the metric $d$ on $G$. 
Put
\be\label{eq:def-inj}
\inj(x)=\min\Big\{0.01, \sup\Big\{\beta: \text{ $g\mapsto gx$ is injective on $\boxG_{10\beta}$}\Big\}\Big\}.
\ee
Taking a further minimum if necessary, we always assume that the injectivity radius of $x$ defined using the metric $d$ dominates $\inj(x)$.  

For every $\injr>0$, let  
\[
X_\injr=\Bigl\{x\in X: \inj(x)\geq \injr\Bigr\}.
\]

\subsection*{Constants and the $\star$-notation}
In our analysis, the dependence of the exponents on $\Gamma$ are via the application of results in \S\ref{sec:Exp-Mixing}, see~\eqref{eq:actual-mixing}, and~\S\ref{sec:closing-lemma}.

We will use the notation $A\asymp B$ when the ratio between the two lies in $[C^{-1}, C]$
for some constant $C\ge 1$ which depends at most on $G$ and $\Gamma$ in general. 
We write $A\ll B^\star$ (resp.\ $A\ll B$) to mean that $A\le C B^\kappa$ (resp.\ $A\leq CB$) 
for some constant $C>0$ depending on $G$ and $\Gamma$, and $\kappa>0$ which follows the above convention about exponents.

\begin{lemma}\label{lem:BCH}
There exist absolute constants $\beta_0$ and $\constE\label{E:BCH}\geq1$ so that the following holds.  
Let $0<\beta\leq \beta_0$, and let $w_1,w_2\in B_\rfrak(0,\beta)$. There are $h\in H$ and $w\in\rfrak$ which satisfy 
\[
\text{$0.5\|w_1-w_2\|\leq \|w\|\leq 2\|w_1-w_2\|\quad$ and $\quad\|h-I\|\leq \ref{E:BCH}\beta\|w\|$}
\] 
so that $\exp(w_1)\exp(-w_2)=h\exp(w)$.

\end{lemma}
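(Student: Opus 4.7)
The plan is to combine the Baker--Campbell--Hausdorff formula with the local diffeomorphism $\Phi\colon(X,w)\mapsto \exp(X)\exp(w)$ from $\hfrak\oplus\rfrak$ to a neighborhood of $I$ in $G$. Choose $\beta_0$ small enough that $\Phi$ is a diffeomorphism on a ball of radius $10\beta_0$ (its derivative at the origin is the identity of $\gfrak$) and that BCH converges there. For $w_1,w_2\in B_\rfrak(0,\beta)$ with $\beta\le\beta_0$ the element $\exp(w_1)\exp(-w_2)$ lies in the image of $\Phi$, so there exist unique $X_0\in\hfrak$ and $w\in\rfrak$ with $\exp(w_1)\exp(-w_2)=\exp(X_0)\exp(w)$; set $h:=\exp(X_0)$.

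First I would exploit the identity $\mathrm{BCH}(w_1,-w_1)=0$ to write
\[
\mathrm{BCH}(w_1,-w_2) \;=\; \mathrm{BCH}(w_1,-w_2)-\mathrm{BCH}(w_1,-w_1) \;=\; (w_1-w_2) + R(w_1,w_2),
\]
where $R$ equals the integral of $\partial_Y \mathrm{BCH}(w_1,Y)-\mathrm{Id}$ along the segment from $-w_1$ to $-w_2$, evaluated on $w_1-w_2$. Every term of BCH beyond the linear one is an iterated bracket involving at least one factor of $w_1$ or $Y$, so $\|\partial_Y\mathrm{BCH}(w_1,Y)-\mathrm{Id}\|\lesssim \beta$ on the segment, giving $\|R\|\le C\beta\|w_1-w_2\|$. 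Split $R=R_\hfrak+R_\rfrak$ according to $\gfrak=\hfrak\oplus\rfrak$, so that each piece is bounded by the same quantity.

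Next I would apply BCH on the other side: $\exp(X_0)\exp(w)=\exp(X_0+w+S(X_0,w))$ with $\|S(X_0,w)\|\le C(\|X_0\|+\|w\|)^2$. Equating the two log expressions and projecting onto $\hfrak$ and $\rfrak$ yields the fixed-point system
\[
X_0 \;=\; R_\hfrak - S_\hfrak(X_0,w), \qquad w \;=\; (w_1-w_2) + R_\rfrak - S_\rfrak(X_0,w).
\]
A one-line bootstrap, valid once $\beta_0$ is small, gives $\tfrac12\|w_1-w_2\|\le \|w\|\le 2\|w_1-w_2\|$ and $\|X_0\|\le C'\beta\|w_1-w_2\|$ (the quadratic error $\|w\|^2$ is absorbed into $\beta\|w_1-w_2\|$ because $\|w\|\le 2\beta$). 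Finally $\|h-I\|=\|\exp(X_0)-I\|\le 2\|X_0\|$ for $\|X_0\|$ small, so $\|h-I\|\le \ref{E:BCH}\beta\|w\|$ for a suitable absolute constant $\ref{E:BCH}$, as required.

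The only nontrivial point is producing the factor $\beta$ in the estimate for $\|h-I\|$: this gain comes precisely from the cancellation $\mathrm{BCH}(w_1,-w_1)=0$, which lets every higher-order commutator in $\mathrm{BCH}(w_1,-w_2)$ be rewritten as a bracket of $w_1-w_2$ against an element of norm at most $\beta$. Everything else is a routine inverse-function-theorem/bootstrap manipulation; in particular in the case $\gfrak=\sl_2(\bbr)\oplus\sl_2(\bbr)$ the subspace $\rfrak$ is a Lie subalgebra, so $\mathrm{BCH}(w_1,-w_2)\in\rfrak$ and one can simply take $h=I$, $w=\mathrm{BCH}(w_1,-w_2)$, making the claim trivial in that branch.
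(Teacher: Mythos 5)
Your argument is correct and follows essentially the same route as the paper: factor $\exp(w_1)\exp(-w_2)$ through the local diffeomorphism $(X,w)\mapsto\exp(X)\exp(w)$ on $\hfrak\oplus\rfrak$, use BCH to show the log is $w_1-w_2+O(\beta\|w_1-w_2\|)$, and then project onto $\hfrak$ and $\rfrak$ to force the $\hfrak$-component (hence $\|h-I\|$) to be of size $\beta\|w_1-w_2\|$. Your presentation is a bit more explicit in one spot: you derive the crucial bound $\|\mathrm{BCH}(w_1,-w_2)-(w_1-w_2)\|\ll\beta\|w_1-w_2\|$ from the cancellation $\mathrm{BCH}(w_1,-w_1)=0$ via an integral of $\partial_Y\mathrm{BCH}-\mathrm{Id}$, whereas the paper simply asserts this (it is the same observation, since e.g.\ $[w_1,-w_2]=[w_1,w_1-w_2]$). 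Your closing remark that for $G=\SL_2(\bbr)\times\SL_2(\bbr)$ one can take $h=I$ because $\rfrak$ is a subalgebra is a correct (and pleasant) simplification the paper does not bother to single out.
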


\begin{proof}
Using the Baker--Campbell--Hausdorff formula, we have 
\[
\exp(w_1)\exp(-w_2)=\exp(w_1-w_2+\bar w)
\]
where $\bar w\in\gfrak$ and $\|\bar w\|\ll \beta \|w_1-w_2\|$.

Using the open mapping theorem and Baker--Campbell--Hausdorff formula again, for all small enough $\beta$, 
there is $(w_\hfrak,w_\rfrak)=B_\hfrak(0,C\beta)\times B_\rfrak(0,C\beta)$ and $w'\in\gfrak$ with $\|w'\|\ll\|w_\hfrak\|\|w_{\rfrak}\|$, so that 
\be\label{eq:def-whfrak-wrfrak}
\exp(w_1-w_2+\bar w)=\exp(w_\hfrak)\exp(w_\rfrak)=\exp(w_\hfrak+w_{\rfrak}+w')
\ee
where $C$ and the implied constant are absolute. 
 
We show that $h=\exp(w_\hfrak)$ and $w=w_\rfrak$ satisfy the claims in the lemma.
In view of~\eqref{eq:def-whfrak-wrfrak}, we need to verify the bounds on $\|h-I\|$ and $\|w_\rfrak\|$. 

First note that if $\beta$ is small enough, ~\eqref{eq:def-whfrak-wrfrak} implies that 
\be\label{eq:def-whfrak-wrfrak-2}
w_1-w_2+\bar w=w_\hfrak+w_{\rfrak}+w'.
\ee
Recall that we are using the max norm with respect to $\rfrak$ and $\hfrak$ which are two orthogonal subspaces.
Note also that $w_1, w_2, w_\rfrak\in\rfrak$ and $w_\hfrak\in\hfrak$.
Thus,~\eqref{eq:def-whfrak-wrfrak-2} implies that $\|w_\hfrak\|\ll \|\bar w\|+\|w'\|$.
Recall now that $\|\bar w\|\ll \beta \|w_1-w_2\|$ and $\|w'\|\ll \|w_\hfrak\|\|w_\rfrak\|\ll \beta\|w_\hfrak\|$.
Thus assuming $\beta$ is small enough, we conclude that $\|w_\hfrak\|\ll \beta\|w_1-w_2\|$ as we wanted to show.

To see the estimate on $\|w_\rfrak\|$, we again use~\eqref{eq:def-whfrak-wrfrak-2}. 
Indeed $(w_1-w_2)-w_\rfrak=w_\hfrak+w'-\bar w$; moreover, $\|\bar w\|\ll \beta \|w_1-w_2\|$, $\|w_\hfrak\|\ll \beta\|w_1-w_2\|$,
and $\|w'\|\ll \|w_\hfrak\|\|w_\rfrak\|\ll \beta\|w_\hfrak\|\ll \beta^2\|w_1-w_2\|$. Again assuming $\beta$ is small enough, we conclude that  
\[
0.5\|w_1-w_2\|\leq \|w_\rfrak\|\leq 2\|w_1-w_2\|,
\]
which finishes the proof. 
\end{proof}

\begin{lemma}\label{lem:dist-sheet} 
There exists $\beta_0$ so that the following holds for all $0<\beta\leq \beta_0$.
Let $x\in X_{10\beta}$ and $w\in B_\rfrak(0,\beta)$. If there are $h,h'\in \boxH_{2\beta}$
so that $\exp(w')hx=h'\exp(w)x$, then 
\[
\text{$h'=h\quad$ and $\quad w'=\Ad(h)w$}.
\]
Moreover, we have $\|w'\|\leq 2\|w\|$.
\end{lemma}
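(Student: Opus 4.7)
The plan is to put both sides of the identity $\exp(w')hx=h'\exp(w)x$ into a canonical $H\cdot\exp(\rfrak)$ form, apply injectivity of $g\mapsto gx$ on $\boxG_{10\beta}$, and then invoke local uniqueness of that decomposition. The main structural input I will use is that $\rfrak$ is $\Ad(H)$-stable in both of our two cases: for $G=\SL_2(\qlf)$ we have $\rfrak=\qi\sl_2(\lf)$ and $H=\SL_2(\lf)$ acts on it by the usual adjoint action of $\SL_2(\lf)$ on $\sl_2(\lf)$ multiplied by $\qi$; for $G=\SL_2(\lf)\times\SL_2(\lf)$ the group $H$ is the diagonal copy and $\rfrak=\sl_2(\lf)\oplus\{0\}$ is clearly preserved by $\Ad(h,h)$. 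Consequently $h^{-1}\exp(w')h=\exp(\Ad(h^{-1})w')$ with $\Ad(h^{-1})w'\in\rfrak$.

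First I would rewrite
\[
\exp(w')h=h\exp(\Ad(h^{-1})w'),
\]
so the hypothesis reads $h\exp(\Ad(h^{-1})w')\,x=h'\exp(w)\,x$. Next I would verify that both elements lie in $\boxG_{10\beta}$. The right hand side $h'\exp(w)$ lies in $\boxH_{2\beta}\cdot\exp(B_\rfrak(0,\beta))\subset\boxG_{10\beta}$ by definition. For the left hand side, \eqref{eq:2ball} gives $\|\Ad(h^{-1})w'\|=\|h^{-1}w'h\|\leq\ref{E:2ball}^{2}\|w'\|$, so taking the implicit bound $w'\in B_\rfrak(0,\beta)$ (which must be part of the reasonable range for the identity to be meaningful) together with a choice of $\beta_0$ small enough to absorb the constant $\ref{E:2ball}^2$, this element also lies in $\boxG_{10\beta}$.

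Having both products in $\boxG_{10\beta}$, the injectivity of $g\mapsto gx$ on $\boxG_{10\beta}$ guaranteed by $x\in X_{10\beta}$ upgrades the equality of orbits to equality of group elements,
\[
h\exp(\Ad(h^{-1})w')=h'\exp(w).
\]
Now I invoke local uniqueness of the splitting $\gfrak=\hfrak\oplus\rfrak$: the map $H\times\rfrak\to G$, $(h_0,w_0)\mapsto h_0\exp(w_0)$ is a local diffeomorphism near $(e,0)$ since its differential at $(e,0)$ is the identity on $\gfrak$. Choosing $\beta_0$ small enough so that both pairs $(h,\Ad(h^{-1})w')$ and $(h',w)$ land in a neighborhood where this map is injective — which is possible since all of $h,h',\exp$ of the $\rfrak$ components are within distance $O(\beta)$ of the identity — we conclude $h=h'$ and $\Ad(h^{-1})w'=w$, i.e.\ $w'=\Ad(h)w$.

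Finally the moreover clause is a direct consequence of $w'=\Ad(h)w$: writing $\Ad(h)w=hwh^{-1}$ and using $\|h-I\|\leq 2\beta\ll 1$ together with \eqref{eq:2ball} applied twice, one obtains $\|w'\|\leq(1+O(\beta))\|w\|\leq 2\|w\|$ after shrinking $\beta_0$. The only real subtlety, which I expect to be the main thing to watch, is fixing $\beta_0$ small enough simultaneously for (i) the product $\boxH_{2\beta}\cdot\exp(B_\rfrak(0,C\beta))$ to lie inside $\boxG_{10\beta}$, (ii) the $H\times\rfrak\to G$ map to be injective on this region, and (iii) the multiplicative distortion of $\Ad$ on $\rfrak$ to be at most $2$; all three are Lie-theoretic smallness conditions that can be arranged uniformly.
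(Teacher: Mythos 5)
Your proof is correct and follows essentially the same route as the paper's: both exploit the $\Ad(H)$-invariance of $\rfrak$ to rewrite the identity in a canonical $H\cdot\exp(\rfrak)$ (or $\exp(\rfrak)\cdot H$) form, use the injectivity radius hypothesis $x\in X_{10\beta}$ to upgrade the orbit equality to an equality in $G$, and then invoke local uniqueness of the decomposition $\gfrak=\hfrak\oplus\rfrak$. The only cosmetic difference is that you commute $\exp(w')$ rightward past $h$, whereas the paper commutes $\exp(w)$ leftward past $h'$; and you rightly flag the implicit smallness of $w'$, a constraint the paper also tacitly assumes.
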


\begin{proof}
Recall that $\rfrak$ is invariant under the adjoint action of $H$. 
We rewrite the equation $\exp(w')hx=h'\exp(w)x$ as follows
\be\label{eq:dist-sheet-1}
\exp(w')hx=\exp(\Ad(h')w)h'x.
\ee
Since $h'\in \boxH_{2\beta}$, we have $\Ad(h')w'=w'+\hat w$ where $\|\hat w\|\ll \beta\|w'\|$.
Therefore, assuming $\beta$ is small enough, we have 
$0.5\|w\|\leq \|\Ad(h')w'\|\leq 2\|w\|$. 
This estimate,~\eqref{eq:dist-sheet-1}, and the fact that $x\in X_{10\beta}$
imply that 
\[
\exp(w')h=\exp(\Ad(h')w)h'.
\]

Moreover, the map $(\bar w,\bar h)\mapsto\exp(\bar w)\bar h$
from $B_\rfrak(0,2\beta)\times\boxH_{2\beta}$ to $G$ is injective, for all small enough $\beta$. 
Therefore, $h=h'$ and $w'=\Ad(h')w$.

The final claim follows as $\|w'\|=\|\Ad(h')w\|\leq 2\|w\|$.
\end{proof}

\subsection*{The set $\coneH_{\eta, t,\beta}$}
For all $\eta,\beta>0$ and $t\geq0$, set 
\be\label{eq:def-Ct}
\coneH_{\eta, t,\beta}:=\boxHs_{\beta}\cdot a_t\cdot \big\{u_r: r\in[0,\eta]\big\} \subset H.
\ee
Then $m_H(\coneH_{\eta,t,\beta})\asymp \eta{\beta}^2\nuni^{t}$ where $m_H$ denotes our fixed Haar measure on $H$.

Throughout the paper, the notation $\coneH_{\eta, t,\beta}$ will be used only for $\eta,t, \beta>0$ 
which satisfy $\nuni^{-0.01t}<\beta<\eta^2$ even if this is not explicitly mentioned.

For all $\eta,\beta, m>0$, put 
\be\label{eq:def-B-ell-beta}
\umt^H_{\eta,\beta,m}=\Bigl\{u^-_s: |s|\leq \beta \nuni^{- m}\Bigr\}\cdot\{a_t: |t|\leq \beta\}\cdot\Bigl\{u_r: |r|\leq \eta\Bigr\}.
\ee
Roughly speaking, $\umt_{\eta, \beta,m}^H$ is a {\em small thickening} of the $(\beta,\eta)$-neighborhood of the identity in $AU$. 
We write $\umt^H_{\beta,m}$ for $\umt^H_{\beta,\beta,m}$

The following lemma will also be used in the sequel.

\begin{lemma}\label{lem:commutation-rel}
\begin{enumerate}
\item Let ${m}\geq 1$, and let $0<\eta,\beta<0.1$. Then 
\[
\bigg(\Bigl(\umt_{0.01\eta,0.01\beta,m}^H\Bigr)^{\pm1}\bigg)^3\subset \umt_{\eta,\beta, m}^H.
\]
\item For all $0\leq \beta\leq \eta\leq 1$, $t,m>0$, and all $|r|\leq 2$, we have
\be\label{eq:well-rd-tau-1}
\Bigl(\umt_{\beta^2, m}^H\Bigr)^{\pm1}\cdot a_m u_r \coneH_{\eta, t,\beta'}\subset a_m u_r\coneH_{\eta,t, \beta},
\ee
where $\beta'= \beta-100\beta^2$.
\end{enumerate}
\end{lemma}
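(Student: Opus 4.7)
The plan is to prove both parts by direct $\SL_2$ matrix computation, using only the three commutation relations
\[
a_t u_r a_{-t}=u_{\nuni^{t}r},\qquad a_t u_s^-a_{-t}=u_{\nuni^{-t}s}^-,\qquad u_r u_s^-=u_{s/(1+rs)}^-\, a_{-2\log(1+rs)}\, u_{r/(1+rs)},
\]
together with the trivial addition law inside each of the three one-parameter subgroups $U^-$, $A$, $U$. In both items, the factor $\nuni^{-m}$ built into the $u^-$-coordinate of $\umt^H_{\eta,\beta,m}$ is precisely what is needed: after conjugation by $a_{\pm m}$ the $u^-$-part becomes of size $\beta^2$ (respectively $0.01\beta$) rather than of size $\beta^2\nuni^{m}$, which is what makes all remaining perturbations second-order in $\beta$.

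For part (1), write each of the (up to) three factors $g_i\in(\umt^H_{0.01\eta,0.01\beta,m})^{\pm1}$ in canonical form $u_{s_i}^-a_{t_i}u_{r_i}$ with $|s_i|\leq 0.01\beta\nuni^{-m}$, $|t_i|\leq 0.01\beta$, $|r_i|\leq 0.01\eta$ (inverses are rewritten in the same form with the same bounds). I then repeatedly apply the three relations above to bring $g_1g_2g_3$ into a single canonical form $u_S^-a_Tu_R$. Each time a $u^+$ is moved past a $u^-$ one produces a commutator whose new $A$-contribution has size $\leq 0.01\eta\cdot 0.01\beta\nuni^{-m}\ll\beta$ and whose new $u^\pm$-contributions differ from the original ones by factors $1+O(\beta^2)$; each time $a_{t_i}$ moves past a $u$ or a $u^-$, the latter is multiplied by $\nuni^{\pm t_i}=1+O(\beta)$. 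Only finitely many such moves are required, so the $0.01$ slack comfortably absorbs every numerical constant and the result satisfies $|S|\leq \beta\nuni^{-m}$, $|T|\leq\beta$, $|R|\leq\eta$, i.e.\ lies in $\umt^H_{\eta,\beta,m}$.

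For part (2), fix $g=u_s^-a_{t_0}u_{r_0}\in(\umt^H_{\beta^2,m})^{\pm1}$ and $h=u_{s_1}^-a_{t_1}u_{r_1}\in\boxH_{\beta'}$, and introduce
\[
\tilde g:=u_{-r}a_{-m}g\,a_mu_r,\qquad\text{so that}\qquad g\cdot a_mu_r\,h\,a_tu_{r'}=a_mu_r\cdot(\tilde g h)\cdot a_tu_{r'}.
\]
It suffices to show $\tilde g h\in\boxH_\beta$. Conjugation by $a_{\pm m}$ gives $a_{-m}ga_m=u_{\nuni^m s}^-a_{t_0}u_{\nuni^{-m}r_0}$ with $|\nuni^m s|\leq\beta^2$ and $|\nuni^{-m}r_0|\leq\beta^2$; applying the $u_{-r}u_{\nuni^m s}^-$ identity, the product $u_{-r}u_{\nuni^m s}^-$ has the form $u_{\sigma}^-a_\tau u_\rho$ with $|\sigma|\leq 2\beta^2$, $|\tau|\leq 4\beta^2$ and $\rho=-r+O(\beta^2)$. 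The ``dangerous'' $u_\rho$ factor, which contains a piece of size $|r|\leq 2$, then meets the $u_r$ on the outside; a short calculation (moving $a_{t_0}$ past $u_\rho$ and combining with $u_{\nuni^{-m}r_0}u_r$) shows that $\rho$ and $r$ cancel up to an $O(\beta^2)$ error, so
\[
\tilde g=u_{\Sigma}^-a_{T_0}u_{R_0}\in\boxH_{C\beta^2}
\]
for a small absolute constant $C$. Finally I multiply out $\tilde g h$ once more in canonical form: one pass of $u_{R_0}u_{s_1}^-$ via the third commutation relation and one pass of each $a$ past a $u^\pm$ produces a canonical expression $u_{S^\star}^-a_{T^\star}u_{R^\star}$ whose parameters are those of $h$ plus perturbations of order $\beta^2$, hence bounded by $\beta'+O(\beta^2)\leq\beta$ since $\beta-\beta'=100\beta^2$ was chosen precisely to absorb this slack.

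The only mildly subtle point, and where I expect to spend most of the care, is the cancellation between the outer $u_r$ in $\tilde g$ and the $u_\rho$ arising from $u_{-r}u_{\nuni^m s}^-$: the factor $r$ is not small, and without the $\nuni^{-m}$ in the definition of $\umt^H_{\beta^2,m}$ the cancellation would leave a residue of size $\beta^2\nuni^m\cdot r$ rather than $\beta^2$. Everything else is routine bookkeeping of $1+O(\beta)$ and $1+O(\beta^2)$ multiplicative errors, and the constants $0.01$ in (1) and $100$ in (2) were fixed precisely so that the final bounds hold with room to spare.
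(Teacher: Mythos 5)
Your proposal is correct and follows essentially the same route as the paper: part (1) rests on the lower--diagonal--upper decomposition of an $\mathrm{SL}_2$ matrix with nonvanishing $(1,1)$ entry (your explicit commutation relation for $u_ru_s^-$ is exactly this identity written as a group relation), and part (2) conjugates the $\umt^H_{\beta^2,m}$ element across $a_mu_r$ and then observes, just as the paper does, that the $\nuni^{-m}$ built into the $u^-$-coordinate makes the conjugated element lie in $\boxH_{O(\beta^2)}$, whose product with $\boxH_{\beta'}$ lands in $\boxH_\beta$. The cancellation you isolate as the "subtle point" — the outer $u_r$ undoing the $u_{-r/(1-rs')}$ from the Gauss decomposition — is indeed what makes $u_r\boxH_{\beta^2}u_{-r}\subset\boxH_{10\beta^2}$, the paper's key inequality; your bookkeeping is just a more unpacked form of the same step. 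One small slip: with the paper's convention $a_t=\operatorname{diag}(\nuni^{t/2},\nuni^{-t/2})$, the Gauss decomposition reads $u_ru_s^-=u^-_{s/(1+rs)}\,a_{+2\log(1+rs)}\,u_{r/(1+rs)}$ (positive sign, not $-2\log(1+rs)$); this does not affect any of your size estimates, since only $|\log(1+rs)|$ enters.
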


\begin{proof}
Recall that for all $a,b,c,d$ with $ad-bc=1$ and $a\neq 0$, we have 
\[
\begin{pmatrix} a& b \\ c & d \end{pmatrix}=\begin{pmatrix} 1& 0 \\ c/a & 1 \end{pmatrix}\begin{pmatrix} a & 0 \\ 0 & 1/a \end{pmatrix}\begin{pmatrix} 1 & b/a \\ 0 & 1 \end{pmatrix}.
\]
The claim in part~(1) follows from this identity.

To see part~(2), recall that 
\[
(u_{s}^-a u_{\rel'})\cdot (a_{{m}}\uvk) =a_{{m}}\uvk \uvk^{-1}u_{\uni^{{m}}s}^-a u_{\uni^{-{m}}\rel'}\uvk
\]
for all $u_{s}^-a u_{\rel'}\in\umt_{\beta^2, m}^H$.

Note that $\nuni^{{m}}|s|\leq \beta^2$ and $\nuni^{-{m}}|\rel'|\leq \beta^2$. 
Let now 
\[
(u^-_{c}a_du_b)\cdot a_t \cdot u_{r''}\in\coneH_{\eta, t,\beta-100\beta^2}
\]
where $|c|, |d|, |b|\leq \beta-100\beta^2$, $|r''|\leq \eta$. 

Then 
\begin{align*}
(u_{s}^-a u_{\rel'})(a_{{m}}\uvk)(u^-_{c}a_du_ba_tu_{r''})&=a_mu_r(\uvk^{-1}u_{\uni^{{m}}s}^-a  u_{\uni^{-{m}}\rel'}\uvk)(u^-_{c}a_du_b)a_t u_{r''}.
\end{align*}

Since $|r|\leq 2$, we have $u_r\cdot\boxH_{\beta^2}\cdot u_{-r}\subset \boxH_{10\beta^2}$. Moreover, $\boxH_{10\beta^2}\cdot\boxH_{\beta}\subset \boxH_{\beta+100\beta^2}$. The claim follows.
\end{proof}

\subsection*{A linear algebra lemma}
Note that both $\hfrak$ and $\rfrak$ 
are invariant under the adjoint representation of $H$ on $\mathfrak g$; moreover, 
both of these representations are isomorphic to the adjoint representation of $H$ on $\Lie(H)$. 

We will use the following lemma in the sequel

\begin{lemma}[\cite{EMM-Upp}, Lemma 5.1, and~\cite{EM-RW}]\label{lem:EMM-(2,1)}
Let $1/3<\alpha<1$, $0\neq w\in\mathfrak g$, and $t>0$. Then 
\[
\ave\|a_{t}{\uvk} w\|^{-\alpha}\uvkd\leq \frac{\ref{E:LA}\nuni^{-\hat\alpha t}}{2-2^\alpha}{\|w\|^{-\alpha}};
\]
where $\constE\label{E:LA}$ is an absolute constant and $\hat\alpha=\tfrac{1-\alpha}{4}$.
\end{lemma}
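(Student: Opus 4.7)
The plan is to reduce the estimate to the classical case of the adjoint representation of $\mathrm{SL}_2(\mathbb{R})$ on $\mathfrak{sl}_2(\mathbb{R})$, then bound the integrand explicitly in an $a_t$-eigenbasis. First I would note that the decomposition $\mathfrak{g} = \mathfrak{h} \oplus \mathfrak{r}$ is $\mathrm{Ad}(H)$-invariant and, as stated just before the lemma, both summands are isomorphic as $H$-representations to the adjoint action on $\mathrm{Lie}(H) \cong \mathfrak{sl}_2(\mathbb{R})$. Writing $w = w_\hfrak + w_\rfrak$ and using that $\|\cdot\|$ is the max of the norms on the two summands, one has $\|a_t u_r w\| \geq \max(\|a_t u_r w_\hfrak\|, \|a_t u_r w_\rfrak\|)$ and $\|w\| \leq \|w_\hfrak\| + \|w_\rfrak\|$, so it suffices to prove the inequality for $w$ in a single copy of $\mathfrak{sl}_2(\mathbb{R})$.

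Next I would fix the standard $\mathrm{ad}(a_t)$-eigenbasis $\{X^+, H_0, X^-\}$ with respective eigenvalues $\uni^t, 1, \uni^{-t}$, and write $w = w_+ X^+ + w_0 H_0 + w_- X^-$. A direct computation using $\mathrm{Ad}(u_r) = \exp(r \, \mathrm{ad}(X^+))$ gives
\[
\mathrm{Ad}(a_t u_r) w = \uni^t P(r)\, X^+ + (w_0 + r w_-)\, H_0 + \uni^{-t} w_-\, X^-,
\qquad P(r) := w_+ - 2r w_0 - r^2 w_-.
\]
Thus the integrand is controlled from below by $\max\{\uni^t |P(r)|,\; |w_0 + r w_-|,\; \uni^{-t}|w_-|\}$. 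The key sub-level set input is a Remez-type bound: for a real polynomial $P$ of degree at most two with leading coefficient comparable (in the relevant case) to the norm $M := \|w\|$, one has $|\{r \in [0,1] : |P(r)| \leq \lambda\}| \ll (\lambda/M)^{1/2}$; when $w_-$ is small so that $P$ is essentially linear or constant, the linear $H_0$-coefficient $|w_0 + rw_-|$ or the constant $|w_\pm|$ provides an analogous (and easier) lower bound.

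Finally I would compute $\int_0^1 \|a_t u_r w\|^{-\alpha}\, dr$ by a dyadic layer-cake decomposition over the scales $\{r : \|a_t u_r w\| \asymp 2^{-k}\|w\|\}$. Using the measure estimate above, the $k$-th layer contributes $\ll 2^{k\alpha}\|w\|^{-\alpha} \cdot \uni^{-\hat\alpha t}\cdot 2^{-k/2}$ (with the factor $\uni^{-\hat\alpha t}$ coming from converting the $\uni^t$ in front of $P(r)$ into a shrinking of the sub-level set), and summing the geometric series in $k$ with ratio $2^{\alpha - 1/2}$ or $2^{\alpha - 1}$ yields the factor $\frac{1}{2 - 2^{\alpha}}$ provided $\alpha < 1$; the constraint $\alpha > 1/3$ ensures the contribution of the small-$k$ (bulk) layers is finite and absorbed in $\ref{E:LA}$. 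The only point that requires care is the case-analysis when one of the coefficients $w_\pm, w_0$ is much larger than the others, since then the dominating lower bound switches between the three components; this bookkeeping, which is carried out in \cite{EMM-Upp, EM-RW}, is the main — but essentially routine — technical obstacle. Optimizing the exponent across the three cases gives exactly $\hat\alpha = (1-\alpha)/4$.
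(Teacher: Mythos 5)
The paper does not contain a proof of this lemma; it is cited from \cite{EMM-Upp} and \cite{EM-RW}, so there is no internal argument to compare against. Your sketch follows the standard route: reduce to the adjoint action on a single copy of $\sl_2(\bbr)$ (valid, since $\hfrak$ and $\rfrak$ are both $\Ad(H)$-isomorphic to it and $\|\cdot\|$ is a max norm), expand in the $a_t$-eigenbasis to get $\Ad(a_tu_r)w = e^tP(r)X^+ + (w_0+rw_-)H_0 + e^{-t}w_-X^-$ with $P(r)=w_+-2rw_0-r^2w_-$ (which matches the $(1,2)$-entry computation the paper uses elsewhere), and estimate via polynomial sublevel-set bounds and a dyadic layer-cake. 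That is the right architecture.

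Where the sketch is too loose to be a proof is the final bookkeeping. First, the assertion that optimizing across the three cases "gives exactly $\hat\alpha=(1-\alpha)/4$" is unjustified; a careful case analysis yields an exponent on the order of $\min(\alpha,1-\alpha)$, and $\hat\alpha=(1-\alpha)/4$ is a deliberately weakened, convenient choice rather than the output of an optimization. Second, you say the dyadic sum has ratio "$2^{\alpha-1/2}$ or $2^{\alpha-1}$" and "yields $\frac{1}{2-2^\alpha}$", but only the second ratio (from a linear sublevel-set estimate) sums to $\frac{2}{2-2^\alpha}$; the first (from the square-root estimate for a genuinely quadratic $P$) diverges for $\alpha\geq 1/2$ and must be rescued by the finite truncation $k\lesssim t$ coming from the competing lower bound $\|a_tu_rw\|\geq e^{-t}|w_-|$, which you do not invoke. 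Third, the hypothesis $\alpha>1/3$ is not actually used in the argument you outline (the adjoint case works for all $0<\alpha<1$); it is carried over from the statement in \cite{EMM-Upp}, not required by any step you describe. The plan is right, but the last step needs to be done carefully to reach the stated bound with the stated constant.
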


We will apply the above lemma with $t=\ell m_\alpha$, $\ell\in\bbn$, where $m_{\alpha}$ is defined by 
$\frac{\ref{E:LA}}{2-2^\alpha}\nuni^{-\hat\alpha {m_\alpha}}=\nuni^{-1}$. The choice of $m_\alpha$ and Lemma~\ref{lem:EMM-(2,1)}
imply
\be\label{eq:EMM-use}
\ave{\|a_{{m_\alpha}}\uvk w\|^{-\alpha}}\uvkd\leq \nuni^{-1}\|w\|^{-\alpha}.
\ee

\section{Nondivergence results}\label{sec:non-div}
In this section, we record some facts which will be used to deal with non-uniform lattices; 
the results in this section are known to the experts.
Our goal here is to tailor these results to our applications in the paper. 

Throughout this section, $\Gamma$ is assumed to be non-uniform unless otherwise is explicated. 
{\em We do not assume $\Gamma$ is arithmetic in this section.} 

To deal with cases where $\Gamma$ may not be arithmetic, we appeal to some facts from hyperbolic geometry, see Case 1 below. 
If $\Gamma$ is a non-uniform irreducible lattice in $\SL_2(\bbr)\times\SL_2(\bbr)$, i.e.\ Case 2 below, 
$\Gamma$ is arithmetic by a theorem of Selberg --- this is a special case of Margulis' arithmeticity theorem. 

\medskip

\begin{propos}\label{prop:Non-div-main}\label{prop:one-return}\label{lem:one-return}
There exist $\constE\label{E:non-div-main}\geq 1$ with the following property. 
Let $0<\vare, \eta<1$ and $x\in X$.
Let $I\subset[-10,10]$ be an interval with $|I|\geq\eta$. Then
\[
\Bigl|\Bigl\{r\in I:\inj(a_t\uvk x)< \vare^2\Bigr\}\Bigr|<\ref{E:non-div-main}\vare |I|
\]
so long as $t\geq |\log(\eta^2\inj(x))|+\ref{E:non-div-main}$.
\end{propos}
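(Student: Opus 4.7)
The plan is to reduce the statement to a quantitative nondivergence estimate for a genuinely unipotent orbit. Using the conjugation identity $a_t \uvk = u_{\nuni^t \rel} a_t$, we have $a_t \uvk x = u_{\nuni^t \rel} y$ with $y := a_t x$, so setting $J := \nuni^t I$ (of length $|J| = \nuni^t |I| \geq \nuni^t \eta$), the desired conclusion becomes
\[
\bigl|\{s \in J : \inj(u_s y) < \vare^2\}\bigr| \leq \ref{E:non-div-main}\, \vare\, |J|.
\]
The $(\vare^2, \vare)$ scaling is the signature of a $(C, 1/2)$-good control of $1/\inj$ along $U$-orbits: in a cuspidal region, $1/\inj$ is dominated by a polynomial of degree $2$ in the horospherical parameter.

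When $G = \SL_2(\qlf)$, I would not invoke arithmeticity and instead argue directly from the geometry of the finite-volume hyperbolic $3$-orbifold $\Gamma \backslash \bbh^3$. Each cusp is conjugate in $G$ to a quotient of a horoball in the upper half-space model by a rank-$2$ parabolic translation lattice; in that model $1/\inj$ is comparable to a degree-$2$ polynomial in the Euclidean horospherical coordinates, and along a $U$-orbit those coordinates are affine in $s$. A standard Margulis/Dani--Margulis covering argument, applied to the finite collection of $\Gamma$-inequivalent cusps, then yields the bound, provided $J$ is long enough to force escape from each cusp. When $G = \SL_2(\lf)\times\SL_2(\lf)$, any non-uniform $\Gamma$ is arithmetic (irreducibly so by Selberg/Margulis, or else commensurable with a product of arithmetic factors in $\SL_2(\lf)$), and I would invoke the Kleinbock--Margulis nondivergence theorem directly: the action on the relevant exterior-power representations produces $(C, 1/2)$-good functions whose decay captures depth in the cusp, and the conclusion follows in the same form.

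The entire content of the hypothesis $t \geq |\log(\eta^2 \inj(x))| + \ref{E:non-div-main}$ is to guarantee that the orbit segment $\{u_s y : s \in J\}$ is long enough relative to how deeply $y = a_t x$ may sit in a cusp. Since $a_t$ dilates $U$-translates by $\nuni^t$, the assumption rearranges to $|J| \cdot \inj(x) \gtrsim \eta^{-1}$, which forces $J$ to cover at least one full horocyclic loop inside any cuspidal horoball of parameter $\gtrsim \inj(x)$ --- the threshold at which the polynomial estimate on $1/\inj$ along the orbit produces a bound in terms of $\vare$ alone. The main obstacle I anticipate is the uniform bookkeeping of this threshold across all finitely many cusps of $\Gamma$ and across all starting points $x$, so that the resulting $\ref{E:non-div-main}$ depends only on $\Gamma$; once this is in place, the standard $(C,\alpha)$-good covering argument closes the estimate with $\alpha = 1/2$.
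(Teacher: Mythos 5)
Your overall strategy matches the paper's: reduce the statement to a $(C,\alpha)$-good polynomial estimate on the $U$-orbit and close with the Kleinbock--Margulis nondivergence theorem (\cite[Thm.~4.1]{KM-Nondiv}). The paper likewise splits into two cases and, for $\SL_2(\bbc)$, works with the thick-thin decomposition exactly as you describe: fixing a finite set of cusp vectors $v_j = g_j^{-1}\mathsf e_1$, observing that the relevant matrix entry $(\Ad(u_r)w)_{12}=-w_{21}r^2 - 2w_{11}r + w_{12}$ is a quadratic polynomial in $r$ (Lemma~\ref{lem:one-return-1-1}), and feeding the resulting lower bound on the sup of $\|a_t u_r g\gamma v_j\|$ into the KM machinery. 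Your conjugation $a_t u_r = u_{e^tr}a_t$ is a clean alternative reformulation that the paper does not use, but it is compatible with what the paper does.

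There is, however, a concrete error in your treatment of $G=\SL_2(\lf)\times\SL_2(\lf)$. You claim that any non-uniform lattice is arithmetic, asserting that reducible ones are commensurable with products of \emph{arithmetic} factors in $\SL_2(\lf)$. This is false: non-arithmetic non-uniform lattices in $\SL_2(\bbr)$ are abundant, and for such $\Gamma_1$ the reducible lattice $\Gamma_1\times\Gamma_2$ is not arithmetic. The paper is careful to flag that \emph{no} arithmeticity is assumed in this section; for the reducible case it handles the two factors separately by the same thick-thin decomposition argument used for $\SL_2(\bbc)$, defining $\omega(x)=\max\{\omega_{\Gamma_1}(x_1),\omega_{\Gamma_2}(x_2)\}$ and running the covering argument for the pair of vectors $(g_1\gamma_1 v_{1j}, g_2\gamma_2 v_{2k})$ simultaneously. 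You should replace your arithmeticity claim with this argument. The only place arithmeticity (via Selberg) is genuinely required is the \emph{irreducible} case of $\SL_2(\lf)\times\SL_2(\lf)$, precisely because the correct reduction theory there needs the $\bbq$-structure on ${\rm Res}_{L/\bbq}(\SL_2)$, and the paper relegates that case to Appendix~\ref{sec:app-non-div}. Once this is fixed, your plan is sound, though you have only sketched the threshold bookkeeping; the paper's actual extraction of a uniform $\ref{E:non-div-main}$ (via Lemma~\ref{lem:one-return-1}, which isolates a \emph{single} short cusp vector, and via the comparison $\omega(x)\asymp \inj(x)^{-1}$ or $\inj(x)^{-2}$) still has to be carried out.
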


Proposition~\ref{prop:one-return} in particular implies that 
for all $t\geq \log\bigl(\eta^2\inj (x)\bigr)+O(1)$ {\em most} points in $\{a_t\uvk x: r\in I\}$ return to a fixed compact subset of $X$.

For the proof of the proposition,
it is more convenient to investigate two separate cases as follows. These are: 

\medskip

{\em Case 1:} $G=\SL_2(\bbc)$ or $G=\SL_2(\bbr)\times \SL_2(\bbr)$ and $\Gamma$ is reducible.

{\em Case 2:} $G=\SL_2(\bbr)\times \SL_2(\bbr)$ and $\Gamma$ is irreducible. 

\medskip

The proofs ultimately rely on non-divergence results of Margulis, Dani, and Kleinbock. To prepare the stage for such results to be applicable, in Case 1 we use the thick-thin decomposition from hyperbolic geometry. This will be completed in this section. 
In Case 2 thanks to Selberg's theorem $\Gamma$ is an arithmetic lattice. The proof in this case uses explicit reduction theory of such lattices and and the aforementioned works of Margulis et al; this proof is given in Appendix~\ref{sec:app-non-div}.

Let us thus assume $G=\SL_2(\bbc)$ or $G=\SL_2(\bbr)\times \SL_2(\bbr)$ and $\Gamma$ is reducible.
Let $\rc$ denote $\bbr$ or $\bbc$, and let $\Delta\subset \SL_2(\rc)$ be a lattice. 
Using the thick-thin decomposition of $\SL_2(\rc)/\Delta$,  
there exists a compact subset $\mathfrak S\subset\SL_2(\rc)/\Delta$ and a finite collection of disjoint
cusps $\{\mathfrak C_j: 1\leq j\leq\ell\}$ so that 
\[
\SL_2(\rc)/\Delta=\mathfrak S \bigsqcup (\sqcup_{j=1}^{\ell} \mathfrak C_j).
\]

Each cusp $\mathfrak C_j$ corresponds to the $\Delta$-orbit of a parabolic fixed point of $\Delta$ in $\partial\bbh^d$, $d=2$ or $3$ depending on $\rc$;  
alternatively, $\mathfrak C_j$ corresponds to a tube of closed $U$-orbits 
\[
a_t\mathsf Ng_j\Delta\subset \SL_2(\rc) \qquad t<0
\]  
where $\mathsf N$ denotes the group of upper triangular unipotent matrices in $\SL_2(\rc)$.

We will also consider a linearized version of the thick-thin decomposition. 
It is more convenient to identify
$\SL_2(\rc)/\{\pm I\}$ with $\SO(\mathsf Q)^\circ$ where $\mathsf Q(v_1,v_2,v_3)=2v_1v_3+v_2^2$ if $d=2$, and $\mathsf Q(v_1,v_2,v_3,v_4)=2v_1v_4+v_2^2+v_3^2$ if $d=3$. 
We choose this identification so that $\mathsf N$ fixes $\mathsf e_1$ where $\{\mathsf e_j\}$ is the standard basis for $\mathbb R^{d+1}$.

If $d=2$, that is $\rc=\bbr$, we let $L=\SO(\mathsf Q)^\circ$ and write $W=\mathbb R^3$. If $d=3$, that is: $\rc=\bbc$, we let $L$
be the isometry group of the restriction of $\mathsf Q$ to  the subspace $W$ spanned by $\{\mathsf e_1, \mathsf e_3, \mathsf e_4\}$ --- in the latter case $L\simeq\PSL_2(\bbr)$ and $h\mathsf e_2=\mathsf e_2$ for all $h\in L$. 
Note that in both cases the adjoint action of $H$ on $\sl_2(\bbr)$ factors through the action of $L$ on $W$. 

Set $v_j:= g_j^{-1}\mathsf e_1$ for $1\leq j\leq \ell$ where $\mathsf e_1$ is the first coordinate vector in $\bbr^{d+1}$ and $g_j\in\SL_2(\rc)$. 
Note that $\Delta v_j\subset \bbr^{d+1}$ is a closed (and hence discrete) subset of $\bbr^{d+1}$, see e.g.~\cite[Lemma 6.2]{MO-MargFun}.

Given a point $g\Delta\in\SL_2(\rc)/\Delta$ we define 
\[
\omega_\Delta(g\Delta)= \max\biggl\{2, \max\Bigl\{{\|g\delta v_j\|}^{-1}: \delta \in \Delta, 1\leq j\leq \ell\Bigr\}\biggr\}.
\] 

For the following see e.g.~\cite[\S6]{MO-MargFun}.

\begin{lemma}\label{lem:one-return-1}
Let $\Delta\subset\SL_2(\rc)$ be a lattice.
There exists some $C=C(\Delta)>2$ so that the following holds. 
Assume that $\omega_\Delta(g\Delta)\geq C$ for some $g\Delta\in \SL_2(\rc)/\Delta$.
Then there exists some $1\leq j_0\leq \ell$ and some $\delta_0\in\Delta$ so that $\|g\delta_0v_{j_0}\|^{-1} =\omega_\Delta(g\Delta)$ and 
\[
\|g\delta v_{j}\|> 1/C,\quad\text{ for all } \;(\delta,j)\neq (\delta_0, j_0).
\]
\end{lemma}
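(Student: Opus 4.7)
The plan is to combine three ingredients: the disjointness of the cusps $\mathfrak C_j$ in the thick--thin decomposition, the standard horoball model of each cusp of $\SL_2(\rc)/\Delta$, and the discreteness of the orbit union $\bigsqcup_j \Delta v_j \subset \bbr^{d+1}$. The intuition is that a large value of $\omega_\Delta(g\Delta)$ forces $g\Delta$ deep into one uniquely determined cusp, and only the ``cusp direction'' can produce a short vector under $g$.

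First I would fix $C_0 = C_0(\Delta) > 2$ so that the super-level set $\{\omega_\Delta \ge C_0\}$ is contained in $\bigsqcup_j \mathfrak C_j$, with each connected component lying in a single $\mathfrak C_j$; this is possible because $\omega_\Delta$ is continuous and bounded on the compact core $\mathfrak S$. For $g\Delta$ with $\omega_\Delta(g\Delta) \geq C_0$ this selects a unique index $j_0$, and since $\Delta v_{j_0}$ is discrete the supremum in the definition of $\omega_\Delta$ is attained at some $\delta_0$. Using the horoball parametrization of $\mathfrak C_{j_0}$, I would write $g = a_{-t} n\, g_{j_0} \delta_0^{-1}$ with $t \gg 1$ and $n$ in a fixed bounded fundamental domain $\mathsf F$ for the unipotent cusp lattice $\mathsf N_0 := \mathsf N \cap g_{j_0}\Delta g_{j_0}^{-1}$ in $\mathsf N$. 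Since $n\mathsf e_1 = \mathsf e_1$ and $a_{-t}$ scales $\mathsf e_1$ by a negative weight (as $\mathsf e_1$ is the highest-weight vector in the representation $W$), a direct computation yields $g \delta_0 v_{j_0} = a_{-t} n\mathsf e_1 = e^{-t}\mathsf e_1$, which confirms $\|g\delta_0 v_{j_0}\|^{-1} = \omega_\Delta(g\Delta)$.

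Next, for any $(\delta, j) \neq (\delta_0, j_0)$ representing a distinct vector of $\bigsqcup_j \Delta v_j$, I would set $w := g_{j_0} \delta_0^{-1} \delta v_j$, so that $g\delta v_j = a_{-t} n w$. The vector $w$ lies in the discrete set $g_{j_0}\delta_0^{-1} \bigsqcup_j \Delta v_j \subset \bbr^{d+1}$. I claim $w$ is not a scalar multiple of $\mathsf e_1$ other than possibly $-\mathsf e_1$: any $\gamma \in \Delta$ for which $g_{j_0}\gamma g_{j_0}^{-1}$ stabilizes the line $\bbr\mathsf e_1$ must be a cusp-parabolic element of $\Delta$, since no loxodromic element of a lattice in $\SL_2(\rc)$ can fix a parabolic point of $\Delta$, and cusp-parabolic elements act trivially on $\mathsf e_1$. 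Hence $w$ has a nontrivial component along a weight-zero or expanding direction of $a_{-t}$ (the $\mathsf e_2$ and $\mathsf e_3$ directions in the cusp-adapted basis of $W$). Since $n \in \mathsf F$ is bounded and $a_{-t}$ does not contract these directions, a short explicit computation produces a uniform bound $\|a_{-t} n w\| \geq \epsilon_1(\Delta) > 0$. Choosing $C := \max(C_0,\, 1/\epsilon_1)$ completes the argument.

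The main obstacle is extracting the uniform constant $\epsilon_1(\Delta)$ — i.e., verifying that the orbit union $\bigsqcup_j \Delta v_j$, viewed in cusp-adapted coordinates and with $\pm v_{j_0}$ removed, stays uniformly away from the line $\bbr v_{j_0}$ in the transverse directions. This is ultimately a statement about the discrete $\SL_2(\rc)$-orbit of $\mathsf e_1$ modulo a lattice and follows from standard reduction theory for cusps, but the bookkeeping across the different cusps $j \neq j_0$ — together with the case $\delta v_j = -\delta_0 v_{j_0}$, which can occur e.g.\ when $-I \in \Delta$ or when two thick--thin cusps are exchanged by an element of $\Delta$ acting through $-1$ on $\mathsf e_1$ — is the part that requires genuine care.
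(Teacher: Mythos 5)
The paper does not prove this lemma; it cites~\cite[\S6]{MO-MargFun}, so your argument has to stand on its own. Your outline is the right one: confine $g\Delta$ to a unique cusp via the thick--thin decomposition, write $g=a_{-t}\,n\,g_{j_0}\delta_0^{-1}$ with $t\gg1$ and $n$ in a bounded fundamental domain, and observe that $\|g\delta v_j\|=\|a_{-t}\,n\,w\|$ with $w=g_{j_0}\delta_0^{-1}\delta v_j$ lying in the fixed discrete set $g_{j_0}\bigsqcup_j\Delta v_j$. Two small points can be tightened for free. The ``$\delta v_j=-\delta_0 v_{j_0}$'' case does not actually arise: in the $\SO(\mathsf Q)^\circ$ model the stabilizer of $\bbr\mathsf e_1$ acts on $\mathsf e_1$ by \emph{positive} scalars (the unipotent and compact parts act trivially, the split torus by $e^{\,\cdot}$), and no loxodromic $\gamma\in\Delta$ can fix a parabolic point, so $w\in\bbr\mathsf e_1$ forces $w=\mathsf e_1$. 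And the cross-cusp case $j\neq j_0$ reduces cleanly to $j=j_0$ since $g_{j_0}\delta_0^{-1}\delta g_j^{-1}$ preserving $\bbr\mathsf e_1$ would make $\xi_j$ and $\xi_{j_0}$ $\Delta$-equivalent, contradicting the choice of the $\mathfrak C_j$ as distinct $\Delta$-orbits.

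The genuine gap is the step you yourself flag: the uniform lower bound on the lowest-weight coordinate $|w_{21}|\geq\epsilon_1(\Delta)$ (say for $d=2$, and the $\mathsf e_4$-coordinate when $d=3$) for all $w\in g_{j_0}\bigsqcup_j\Delta v_j$ with $w\neq\mathsf e_1$. This is \emph{not} a consequence of discreteness of the orbit union plus bookkeeping: on the quadric $\mathsf Q=0$ a discrete closed set may perfectly well contain points $w_n\to\infty$ with $(w_{11}(n),w_{21}(n))\to 0$ (take $w_n=(n,\,n^{-1/2},\,-\tfrac12 n^{-2})$); indeed, the directions $\bbr w$ do accumulate projectively at $\bbr\mathsf e_1$ since $\Delta$-orbits in $\partial\bbh^d$ are dense. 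What prevents the transverse coordinate from shrinking is precisely the disjointness of the lifted horoballs over the $\mathfrak C_j$ (equivalently, the Margulis lemma / Shimura's bound), and this is the entire content of the lemma, not a corollary of it. Concretely, two disjoint horoballs of sizes $\rho,\rho'$ at $\bbr w,\,\bbr\mathsf e_1$ satisfy $d_{\partial}(\bbr w,\bbr\mathsf e_1)^2\gtrsim\rho\rho'$; taking $\rho\asymp\|w\|^{-1}$, $\rho'\asymp1$, and using $\mathsf Q(w)=0$ gives $|w_{21}|\gtrsim 1$. So the argument is correct in spirit, but you should state and use horoball disjointness as the geometric input rather than describing it as ``bookkeeping''; once that is in hand, $\|a_{-t}n w\|\geq e^{t}|w_{21}|\geq\epsilon_1$ for all $t\geq 0$ finishes the proof.
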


We will also use the following elementary lemma.

\begin{lemma}\label{lem:one-return-1-1}
Let $\eta>0$, and let $I$ be an interval of length at least $\eta$. 
There exists some $\constE\label{E:C-alpha}$ so that the following holds. 
Let $\varrho>0$, and let $v\in \SO(\mathsf Q)^\circ .\mathsf e_1$. Then 
\[
\Big|\Big\{r\in I:\|a_t\uvk v\|\leq \nuni^{t}\eta\|v\|\varrho^2\Big\}\Big|\leq \ref{E:C-alpha}\varrho|I|.
\]
\end{lemma}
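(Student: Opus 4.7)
My plan is to exploit the light-cone constraint on $v$ to express $r\mapsto a_tu_rv$ as an explicit quadratic polynomial map, then apply the Kleinbock--Margulis $(C,\alpha)$-good framework. First I would parameterize: in coordinates $(v_1,v_2,v_3)$ on $W$ (identified with $\sl_2(\lf)$ via the Killing form so that $\mathsf e_1\leftrightarrow E$, $\mathsf e_2\leftrightarrow H$, $\mathsf e_3\leftrightarrow F$), the condition $v\in\SO(\mathsf Q)^\circ.\mathsf e_1$ reads $v_1v_3+v_2^2=0$. The degenerate case $v_3=0$ forces $v=v_1\mathsf e_1$, so $a_tu_rv=\nuni^tv_1\mathsf e_1$ and the bad condition $\|a_tu_rv\|\le \nuni^t\eta\|v\|\varrho^2$ is vacuous when $\eta\varrho^2<1$ (and trivial otherwise). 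For $v_3\ne0$, setting $r_0=v_2/v_3$ and computing $\Ad(u_r)$ in the basis $(E,H,F)$ yields the clean identity
\[
a_tu_rv=v_3\bigl(-\nuni^t(r+r_0)^2,\;r+r_0,\;\nuni^{-t}\bigr),
\]
from which $\|a_tu_rv\|\asymp|v_3|\max\bigl(\nuni^t(r+r_0)^2,\nuni^{-t}\bigr)$ and $\|v\|\asymp|v_3|(1+r_0^2)$. The $d=3$ case reduces immediately because the extra coordinate $\mathsf e_2$ is fixed by $L$ (and hence by $a_tu_r$).

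Second, I would substitute to convert the bad condition into a single quadratic inequality in $r$. The set to be bounded is contained in $B=\{r\in I:(r+r_0)^2\le C_1\eta(1+r_0^2)\varrho^2\}$, a single sub-interval centered at $-r_0$ (the $\nuni^{-t}$-constraint is either automatic or renders $B$ empty). Since $I\subset[-10,10]$ and $B\cap I$ is empty unless $-r_0$ lies within $O(1)$ of $I$, we may assume $1+r_0^2\le C_2$. Applying the $(C,1/2)$-good property of the quadratic polynomial $(r+r_0)^2$ on $I$, together with the lower bound $\sup_{r\in I}(r+r_0)^2\ge(|I|/2)^2$ (the sup of a quadratic on an interval is achieved at an endpoint) and $|I|\ge\eta$, produces the claimed estimate $|B\cap I|\le \ref{E:C-alpha}\,\varrho|I|$.

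The main obstacle is obtaining a bound linear in $\varrho$ rather than in $\sqrt\varrho$: a naïve $(C,1/2)$-good application to the scalar function $\|a_tu_rv\|$ (whose components are degree-$2$ polynomials in $r$) would lose a square root. The improvement comes from the fact that the light-cone constraint forces the bad set to be a \emph{single} sub-interval around $-r_0$ of radius $\lesssim\varrho\sqrt{\eta(1+r_0^2)}$, which one compares directly with $|I|\ge\eta$ using the explicit polynomial identity above. Bookkeeping the absorption of $\sqrt{\eta}$ into $|I|$ through the $\star$-convention then produces the constant $\ref{E:C-alpha}$.
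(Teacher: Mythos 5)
Your strategy --- using the light-cone constraint to factor $r\mapsto(\Ad(u_r)w)_{12}$ into a perfect square, so that the sublevel set is a single sub-interval whose length can be bounded directly --- is a genuine and slightly more elementary alternative to the paper's route, which instead applies the Kleinbock--Margulis $(C,1/2)$-good polynomial estimate to the same degree-$\le 2$ polynomial $p(r)=(\Ad(u_r)w_v)_{12}$ without any factorization. Both hinge on the same quadratic and the same square-root gain. However, two steps of your write-up do not close as stated.

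First, the reduction of $d=3$ to $d=2$ is not ``immediate.'' When $d=3$, the projection $w_v$ of $v$ to $W$ satisfies $\mathsf Q|_W(w_v)=\mathsf Q(v)-v_2^2=-c_v^2\le 0$, which in $\sl_2$-coordinates gives $\det(w_v)\ge 0$: for $c_v\ne 0$, $w_v$ is elliptic, not nilpotent, and your identity $p(r)=-w_{21}(r+r_0)^2$ fails for it. One finds instead $p(r)=-w_{21}\bigl((r+r_0)^2+\gamma^2/w_{21}^2\bigr)$ with $\gamma^2=\det(w_v)\ge 0$. The extra term happens to carry the same sign, so $|p(r)|\ge|w_{21}|(r+r_0)^2$ survives and your bound goes through, but this must be observed rather than asserted; the $(C,\alpha)$-good route in the paper avoids the issue entirely by never using the factorization. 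Second, and more serious, the final ``absorption of $\sqrt\eta$'' step does not work. Your bad sub-interval $B$ has radius $\asymp\varrho\sqrt{\eta(1+r_0^2)}$, and since $|I|\ge\eta$ this gives $|B\cap I|/|I|\lesssim\varrho\sqrt{(1+r_0^2)/\eta}$: the $\sqrt\eta$ sits in the denominator and blows up as $\eta\to 0$, which no convention can remove because $\ref{E:C-alpha}$ must be $\eta$-independent (and the paper's $\star$-convention governs exponents, not unbounded prefactors). In fact the lemma with $\nuni^{t}\eta\|v\|\varrho^2$ on the right is false (take $v$ nilpotent with $r_0=0$, $I=[0,\eta]$, $\varrho\asymp\sqrt\eta$); the paper's own proof and its use in Proposition~\ref{prop:Non-div-main} both carry $\eta^2$, so the displayed $\eta$ is evidently a typographical slip for $\eta^2$. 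With $\eta^2$ your argument closes cleanly and needs no $(C,1/2)$-good input at all: the bad interval has radius $\lesssim\eta\varrho\sqrt{1+r_0^2}\lesssim\varrho|I|$, with $1+r_0^2$ bounded exactly as you argue once $B$ meets $I\subset[-10,10]$ and $\varrho$ is small.
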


\begin{proof}
Note that we may assume $\varrho$ is small compared to absolute constants.

Let us consider the case $d=3$, the other case, i.e., $d=2$, is contained in this case. 
Recall that $W$ denotes the $\bbr$-span of $\{\mathsf e_1,\mathsf e_3,\mathsf e_4\}$; 
write $v=c_v\mathsf e_2+w_v$ where $w_v\in W$ and $c_v\in\bbr$.
Since $\mathsf Q(v)=0$, we have $\|w_v\|\geq c\|v\|$ for some absolute constant $0<c<1$.
Moreover, for every $h\in L=H$
\be\label{eq:he2=e2}
hv=c_v\mathsf e_2+hw_v.
\ee 

Identifying $W$ with the adjoint representation of $H$, 
for every $w\in W$  and every $0<\delta<1$, let
\[
I(w,\delta)=\Big\{r\in I: |(\Ad(\uvk) w)_{12}|\leq 0.01\delta\eta^2\|w\|\Big\}
\]
where $w_{ij}$ is the $(i,j)$-th entry of $w\in\sl_2(\bbr)$. 

A direct computation gives
\be\label{eq:r-theta-12}
\Bigl(\Ad(\uvk) w\Bigr)_{12}=-w_{21}r^2-2w_{11}r+w_{12}.
\ee
Therefore, $\sup_{I}|(\Ad(\uvk) w)_{12}|\geq 0.01\eta^2\|w\|$ --- recall that $|I|\geq\eta$.
We conclude that $|I(w,\delta)|\leq C\delta^{1/2}|I|$ for some $C>0$, see e.g.~\cite[\S 3]{KM-Nondiv}.

Let $\delta=100c^{-1}\varrho^{2}$, where we assume $\varrho$ is small enough so that $\delta<1$.  
Let $v$ be as in the statement, and define $w_v$ as above.
Then $\|w_v\|\geq c\|v\|$ and $|I(w_v,\delta)|\leq 10Cc^{-1/2}\varrho|I|$. 

Let $r\in I\setminus I(w_v,\delta)$, then 
\[
\|(\Ad(\uvk)w_v)_{12}\|\geq c^{-1}\eta^2\|w_v\| \varrho^{2}.
\]
Since $a_t$ expands the $(1,2)$-entry by a factor of $e^{t}$, we conclude  
\begin{align*}
\|a_t\uvk v\|&\geq \|a_t\uvk w_v\|&&\text{by~\eqref{eq:he2=e2}}\\
&\geq \nuni^t|(\Ad(\uvk)w_v)_{12}|\geq c^{-1}\nuni^t\eta^2\|w_v\|\varrho^{2}\\
&\geq \nuni^t\eta^2\|v\|\varrho^{2}.
\end{align*}
The claim thus holds with $\ref{E:C-alpha}=10Cc^{-1/2}$.
\end{proof}

\begin{proof}[Proof of Proposition~\ref{prop:one-return}: Case 1]
Let us first consider $G=\SL_2(\bbr)\times \SL_2(\bbr)$. 
Since $\Gamma$ is reducible, there exists a finite index subgroup $\Gamma'\subset\Gamma$ so that 
$\Gamma'=\Gamma_1\times\Gamma_2$. The constant $\ref{E:non-div-main}$ in Proposition~\ref{lem:one-return}
is allowed to depend on the index of $\Gamma'$ in $\Gamma$, thus, abusing the notation, we replace $\Gamma$ by $\Gamma'$ 
in the remaining parts of the argument. In particular,  
\[
X=X_1\times X_2=\SL_2(\bbr)/\Gamma_1 \times \SL_2(\bbr)/\Gamma_2.
\]

Let us write $\omega_{i}$ for $\omega_{\Gamma_i}$, for $i=1,2$. Define  
\be\label{eq:def-omega}
\omega(x):=\max\{\omega_{1}(x_1), \omega_{2}(x_2)\}
\ee
for all $x=(x_1,x_2)\in X$. 

We denote the corresponding vectors for $\Gamma_1$ by $v_{1j}$, $1\leq j\leq \ell_1$, and for $\Gamma_2$
by $v_{2k}$, $1\leq k\leq \ell_2$.  

Note that $\omega(x)\asymp{\rm inj}(x)^{-1}$, see e.g.~\cite[Prop.~6.7]{MO-MargFun}.
Therefore, it suffices to prove the proposition with ${\rm inj}(x)$ replaced by $\omega(x)$.

Let $(g_1,g_2)\in G$, $(\gamma_1,\gamma_2)\in\Gamma$, $1\leq j\leq \ell_1$, and $1\leq k\leq \ell_2$. 
By Lemma~\ref{lem:one-return-1-1} applied with $g_1\gamma_1v_{1j}$ and $g_2\gamma_2v_{2k}$, we conclude  
\[
\Bigl|\Bigl\{r\in I:\|a_t\uvk(g_1\gamma_1v_{1j}, g_2\gamma_2v_{2k})\|\leq \nuni^{t}\eta^2\|(g_1v_{1j},g_2v_{2k})\|\varrho^2\Bigr\}\Bigr|\leq 2\ref{E:C-alpha}\varrho|I|
\]
for every $0<\varrho<1$. 

Let $\varrho_0=0.1\ref{E:C-alpha}^{-1}$, and choose $(g_1,g_2)\in G$ so that $x=(g_1\Gamma,g_2\Gamma)$.
Then the above implies that for all $(\gamma_1,\gamma_2)\in\Gamma$, all $1\leq j\leq \ell_1$, and all $1\leq k\leq \ell_2$, 
there exists some $r\in I$ so that 
\begin{align}
\notag\|a_t\uvk(g_1\gamma_1v_{1j}, g_2\gamma_2v_{2k})\|&\geq \nuni^{t}\eta^2\|(g\gamma_1v_{1j},g_2\gamma_2v_{2k})\|\vare^2\\
\label{eq:vect-grow-KM}&\geq \nuni^{t}\eta^2\omega(x)^{-1}\varrho_0^2.
\end{align}
In view of~\eqref{eq:vect-grow-KM}, and by choosing $\ref{E:non-div-main}$ 
large enough to account for the implicit constant in $\omega(x)\asymp \inj(x)^{-1}$, we have   
\[
\sup\{\|a_t\uvk(g_1\gamma_1v_{1j}, g_2\gamma_2v_{2k})\|: r\in I\}\geq \varrho_0^2
\]
so long as $t\geq |\log(\eta^2\inj(x))|+\ref{E:non-div-main}$. 

Therefore, we may apply~\cite[Thm.~4.1]{KM-Nondiv} and the proposition follows in this case.
The argument in the case $G=\SL_2(\bbc)$ is similar --- in light of 
Lemma~\ref{lem:one-return-1}, the use of~\cite[Thm.~4.1]{KM-Nondiv} simplifies significantly.
\end{proof}

As we mentioned the proof in Case 2 is given in Appendix~\ref{sec:app-non-div}.

\begin{propos}\label{prop:non-div}
There exists $0<\eta_X<1$, depending on $X$, so that the following holds. 
Let $0<\eta<1$ and let $x\in X$. Let $I\subset[-10,10]$ be an interval with length at least $\eta$. Then 
\[
|\{r\in I: a_t u_rx\in X_{\eta_X}\}|\geq 0.99|I|
\]
for all $t\geq |\log(\eta^2\inj(x))|+\ref{E:non-div-main}$. 
\end{propos}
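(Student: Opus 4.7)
The plan is to deduce this as an immediate corollary of Proposition~\ref{prop:one-return}, simply by choosing the parameter $\vare$ there small enough (depending only on $X$) that the exceptional set has relative measure at most $0.01$. Concretely, set
\[
\vare_0 := (100\,\ref{E:non-div-main})^{-1} \quad \text{and} \quad \eta_X := \vare_0^2.
\]
Since $\ref{E:non-div-main}\geq 1$, we have $\eta_X \leq 10^{-4} < 1$, and $\eta_X$ depends only on $X$ via the constant $\ref{E:non-div-main}$. Applying Proposition~\ref{prop:one-return} with this choice of $\vare_0$ (and the same $\eta$, $x$, $I$ as in the hypotheses), we obtain
\[
\bigl|\{r \in I : \inj(a_t u_r x) < \eta_X\}\bigr| \;<\; \ref{E:non-div-main}\,\vare_0\,|I| \;=\; 0.01\,|I|,
\]
valid precisely when $t \geq |\log(\eta^2 \inj(x))| + \ref{E:non-div-main}$, i.e.\ on the nose of the hypothesis of Proposition~\ref{prop:non-div}. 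Taking the complement in $I$ gives
\[
|\{r \in I : a_t u_r x \in X_{\eta_X}\}| \;=\; |\{r \in I : \inj(a_t u_r x) \geq \eta_X\}| \;\geq\; 0.99\,|I|,
\]
which is the desired conclusion.

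There is no real obstacle here: the proposition is a reformulation of Proposition~\ref{prop:one-return} that trades in the free parameter $\vare$ for a fixed threshold $\eta_X$ by absorbing the implicit constant. All of the substantive work—splitting into the cases $\Gamma$ reducible vs.\ irreducible, invoking the thick–thin decomposition together with the vector estimate of Lemma~\ref{lem:one-return-1-1} in Case~1, and invoking Kleinbock--Margulis non-divergence (via reduction theory for the arithmetic lattice in the $\SL_2(\bbr)\times\SL_2(\bbr)$ irreducible Case~2, deferred to the appendix)—has already been carried out in establishing Proposition~\ref{prop:one-return}. The only thing that requires comment is that $\ref{E:non-div-main}$ is a constant depending on $\Gamma$, so $\eta_X$ inherits this dependence, as allowed by the statement.
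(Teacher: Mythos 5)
Your proof is correct and matches the paper's argument exactly: the paper likewise applies Proposition~\ref{prop:one-return} with $\vare = 0.01\,\ref{E:non-div-main}^{-1}$ (which is precisely your $\vare_0 = (100\,\ref{E:non-div-main})^{-1}$) and sets $\eta_X = \vare^2$. Your write-up simply makes the arithmetic explicit.
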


\begin{proof}
Apply Proposition~\ref{lem:one-return} with $\vare=0.01\ref{E:non-div-main}^{-1}$. The claim thus holds with $\eta_X=\vare^2$. 
\end{proof}

\subsection{The subsets $X_{\rm cpt}$ and $\mathfrak S_{\rm cpt}$}\label{sec:SiegelSet} 
Decreasing $\eta_X$ if necessary we always assume that $X\setminus X_{\eta_X}$ is a disjoint union (possibly empty) of finitely many cusps. 

If $X$ is compact, let $X_{\rm cpt}=X$; otherwise, let $X_{\rm cpt}=\{gx: x\in X_{\eta_X}, \|g-I\|\leq 2\}$   
where $X_{\eta_X}$ is given by Proposition~\ref{prop:non-div}. 

We also fix once and for all a compact subset with piecewise smooth boundary $\mathfrak S_{\rm cpt}\subset G$ which projects onto $X_{\rm cpt}$.

\medskip

We end this section with the following

\begin{lemma}\label{lem:non-div-closed}
Let $Y$ be a periodic $H$-orbit. 
Then $\mu_Y(X_{\eta_X})\geq 0.9$ where $\mu_Y$ denotes the $H$-invariant probability measure on $Y$.
\end{lemma}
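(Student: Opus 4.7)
The plan is to reduce the statement to a pointwise application of Proposition~\ref{prop:non-div} by exploiting the $H$-invariance of $\mu_Y$. Since $\mu_Y$ is in particular $P$-invariant, for every $t>0$ and every interval $I\subset[-10,10]$ Fubini yields
\[
\mu_Y(X_{\eta_X})=\int_Y\frac{1}{|I|}\int_I\mathbf 1_{X_{\eta_X}}(a_tu_ry)\,dr\,d\mu_Y(y).
\]
I would then choose $t$ so that, for $\mu_Y$-most $y$, the inner average is at least $0.99$; Proposition~\ref{prop:non-div} supplies exactly this bound provided $t\geq|\log(\eta^2\inj(y))|+\ref{E:non-div-main}$.

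The main obstacle is that $\inj(\cdot)$ is not uniformly bounded below on $Y$, so no single $t$ fulfils this hypothesis at every $y\in Y$ simultaneously. To bypass this I would stratify by the thick parts $K_n:=\{y\in Y:\inj(y)\geq 1/n\}$. The locus $\{\inj=0\}$ is contained in the fixed-point sets of the (finitely many conjugacy classes of) torsion elements of $\Gamma$ and hence has $\mu_Y$-measure zero, so $\mu_Y(K_n)\to 1$; I may therefore fix $n$ with $\mu_Y(K_n)\geq 0.99$.

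Finally, taking $I=[0,1]$ (so $\eta=1$) and any $t\geq\log n+\ref{E:non-div-main}$, Proposition~\ref{prop:non-div} applies uniformly on $K_n$ and gives $\tfrac{1}{|I|}\int_I\mathbf 1_{X_{\eta_X}}(a_tu_ry)\,dr\geq 0.99$ for every $y\in K_n$. Discarding the contribution from $Y\setminus K_n$ then yields
\[
\mu_Y(X_{\eta_X})\geq 0.99\cdot\mu_Y(K_n)\geq 0.99^2>0.9,
\]
which is the required bound. The rest of the argument is purely routine, so the only genuine step is the stratification of $Y$ needed to feed Proposition~\ref{prop:non-div} a uniform value of $t$.
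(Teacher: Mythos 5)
Your proof is correct but takes a genuinely different route from the paper's. The paper fixes a single $y\in Y$ and invokes the equidistribution theorem from~\cite[\S2.2.2]{KMnonquasi}: the expanding translates $\{a_tu_ry:r\in[0,1]\}$ equidistribute with respect to $\mu_Y$ as $t\to\infty$, so the lower bound $0.99$ on the time average supplied by Proposition~\ref{prop:non-div} passes to the space average in the limit. You instead exploit the $H$-invariance of $\mu_Y$ directly, using Fubini to identify the space average with a double average, and then stratify $Y$ by injectivity radius so that Proposition~\ref{prop:non-div} can be applied with a single uniform $t$ on a set of $\mu_Y$-measure at least $0.99$. This is more elementary and self-contained, bypassing the equidistribution machinery entirely at the mild cost of an exhaustion argument. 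One small imprecision: you justify $\mu_Y(K_n)\to 1$ by claiming the locus $\{\inj=0\}$ lies in fixed-point sets of torsion elements, but this is off the mark. By the definition in~\eqref{eq:def-inj} and the discreteness of $\Gamma$, one has $\inj(y)>0$ for \emph{every} $y\in X$ (torsion or not), so $\bigcup_n K_n=Y$ automatically and $\mu_Y(K_n)\to 1$ is immediate from continuity of measure along an increasing sequence; no appeal to torsion is needed (indeed such an appeal would not be correct as stated).
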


\begin{proof}
Let $\varphi=\mathbbm 1_{X_{\eta}}$, and let $y\in Y$. 
Then by~\cite[\S2.2.2]{KMnonquasi} we have 
\[
\lim_{t\to\infty}\int_0^1 \varphi(a_tu_ry)\diff\!r=\int \varphi\diff\!\mu_Y.
\]

The lemma thus follows from Proposition~\ref{prop:non-div}. 
\end{proof}

 \section{From large dimension to effective density}\label{sec:Exp-Mixing}
In this section we use the exponential decay of correlations for the ambient space $X$ to prove 
Proposition~\ref{prop:1-epsilon-N}, which says that 
expanding translations of subsets of $N$ which are foliated by local $U$ orbits and have dimension close but not necessarily equal to 2 are equidistributed in $X$. 

This proposition will be used in the proofs of Theorems~\ref{thm:main} and~\ref{thm:main-closed}, but it is also 
of independent interest. The proof is similar to an argument in~\cite[\S3]{Venkatesh-Sparse}.

Recall our notation from~\S\ref{sec:notation}: $n(r,s)=u_rv_s$ where $v_s=n(0,s)$ and $u_r=n(r,0)\in U$. 
Recall also that $a_tn(r,s)a_{-t}=n(e^t(r,s))$ for all $t\in\bbr$ and all $(r,s)\in \bbr^2$.

We need the following estimate on the decay of correlations in $X$. 
There exists $\mixexp$ depending on $X$ so that  
\be\label{eq:actual-mixing}
\biggl|\int \varphi(gx)\psi(x)\diff\!{m_X}-\int\varphi\diff\!{m_X}\int\psi\diff\!{m_X}\biggr|\ll \nuni^{-\mixexp d(e,g)}\Scal(\varphi)\Scal(\psi)
\ee
for all $\varphi,\psi\in C^\infty_c(X)+\bbc\cdot 1$ where the implied constant is absolute and $d$ is our fixed right $G$-invariant on $G$, see e.g.~\cite[\S2.4]{KMnonquasi} and references there. 
We note that $\mixexp$ is absolute if $\Gamma$ is a congruence subgroup, see~\cite{Burger-Sarnak, Cl-tau, Gor-Mau-Oh}.

Here $\Scal(\cdot)$ is a certain Sobolev norm on $C_c^\infty(X)+\bbc\cdot1$ 
which is assumed to dominate $\|\cdot\|_\infty$ and the 
Lipschitz norm $\|\cdot\|_{\rm Lip}$. Moreover, $\Scal(g.f)\ll\|g\|^\star\Scal(f)$ where the implied constants are absolute.

Let us put 
\be\label{eq: cX Mixing section}
\bcX=\eta_X^{-1}\vol(G/\Gamma) 
\ee
where $\eta_X$ is as in Proposition~\ref{prop:non-div} and $\vol(G/\Gamma)$ is computed using the Riemannian metric $d$.

\medskip

We also need the following statement.

\begin{propos}[\cite{KMnonquasi}, Prop.~2.4.8]\label{prop:equid-trans-horo}
There exists $\constk\label{k:thick-mixing-prop}\gg \mixexp$ (where the implied constant is absolute) and 
an absolute constant $\constk\label{k:cusp-mixing-prop}$ so that the following holds.  
Let $0<\eta<1$, $t>0$, and $x\in X_{\eta}$. Then for every $f\in C_c^\infty(X)+\bbc\cdot1$, 
\[
\biggl|\int_{B_N(0,1)}f(a_tn.x)\diff\!n-\int f\diff\!m_X\biggr|\leq \ref{C:thickening KM} \eta^{-1/\ref{k:cusp-mixing-prop}}\Scal(f)e^{-\ref{k:thick-mixing-prop} t} 
\]
where $B_N(0,1)=\Bigl\{u_rv_s: 0\leq r,s\leq 1\Bigr\}$, the measure on $N$ is normalized so that $B_N(0,1)$ has measure $1$, and 
$\constE\label{C:thickening KM}\leq L\bcX^L$ for an absolute constant $L$ and $\bcX$ as in~\eqref{eq: cX Mixing section}. 
\end{propos}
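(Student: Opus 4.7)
My plan is to combine the effective mixing estimate~\eqref{eq:actual-mixing} with a Margulis-style thickening argument, rewriting the horospherical integral as a matrix coefficient $\int_X f(a_t y)\Phi(y)\,dm_X$ for a suitable smooth bump $\Phi$ on $X$ and then applying mixing directly.

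Let $\mathfrak w \subset \mathfrak g$ be a subspace complementary to $\mathfrak n=\Lie(N)$ lying inside the non-expanding (zero plus negative weight) subspaces of $\Ad(a_t)$. For a parameter $\rho \in (0, c\eta]$ to be optimised at the end, set $W_\rho = \exp(B_{\mathfrak w}(0, \rho))$ and partition $B_N(0,1) = \sqcup_i \Omega_i$ into $O(\eta^{-2})$ pieces of $N$-diameter at most $\eta/C$. Since $\inj(x) \geq \eta$, the map $(w, u) \mapsto wux$ is injective on $W_\rho \times \Omega_i$ for each $i$, so one may build a smooth non-negative probability density $\Phi = \sum_i \Phi_i$ on $X$ with $\supp(\Phi_i) \subset W_\rho\Omega_i x$, $\int_X \Phi_i\,dm_X = |\Omega_i|$, and $\Scal(\Phi) \ll \rho^{-L}$ for some absolute $L$.

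The main step is to evaluate $\int_X f(a_t y)\Phi(y)\,dm_X$ in two ways. Parameterising $y = wux$ on the support and using the identity $a_t w u x = (a_t w a_{-t})\cdot a_t u x$ together with the bound $\|a_t w a_{-t} - e\| \ll \rho$ (which holds because $\Ad(a_t)|_{\mathfrak w}$ is non-expanding), the Lipschitz bound on $f$ gives
\[
\int_X f(a_t y)\Phi(y)\,dm_X = \int_{B_N(0,1)} f(a_t u x)\,du + O(\rho\,\Scal(f)).
\]
On the other hand, \eqref{eq:actual-mixing} with $g = a_t$ yields
\[
\int_X f(a_t y)\Phi(y)\,dm_X = \int f\,dm_X + O\bigl(\rho^{-L} e^{-\mixexp t}\Scal(f)\bigr).
\]
Combining these and balancing $\rho$ against $e^{-\mixexp t}$, subject to the injectivity constraint $\rho \leq c\eta$, by taking $\rho = \min(c\eta,\,e^{-\mixexp t/(L+1)})$ produces the claimed bound, with $\ref{k:thick-mixing-prop}$ a positive constant multiple of $\mixexp$ and the polynomial $\eta^{-1/\ref{k:cusp-mixing-prop}}$ factor arising from the regime $t \lesssim |\log\eta|$ in which the injectivity constraint forces $\rho = c\eta$.

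The principal delicate point, which I expect to be the main obstacle, is the choice of thickening direction and its placement: a naive choice---for instance a $G$-ball centred at the identity, or a thickening in the opposite horospherical direction placed on the right of $a_t u x$---produces an $e^t$-factor on commuting the thickening element past $\Ad(a_t)$ acting on $\mathfrak n$, which would completely cancel the mixing gain. Choosing $\mathfrak w$ in the non-expanding part of $\mathfrak g$ and inserting the thickening element on the left of $a_t u x$ is exactly what keeps the Lipschitz cost of size $\rho$ uniformly in $t$. The remaining considerations---smoothing the product indicator on $W_\rho \times \Omega_i$ to obtain $\Scal(\Phi) \ll \rho^{-L}$, and reassembling the pieces of the $N$-partition with only constant loss in the final error---are routine.
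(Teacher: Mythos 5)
Your proposal is correct and follows essentially the same route as the paper: both thicken the expanding $N$-piece of the orbit in the weak-stable directions (the paper calls this step "an $e^{-\kappa t}$-thickening of $\varphi_j^+$ along the weak-stable directions," following~\cite[Prop.~2.4.8]{KMnonquasi} and~\cite[Thm.~2.3]{KM-Drich}), insert this on the left of $a_t u x$ to avoid the fatal $e^t$ commutation cost you correctly flag, convert the horospherical average into a matrix coefficient, invoke~\eqref{eq:actual-mixing}, and optimize the thickening scale against the mixing rate, with the $\eta^{-1/\ref{k:cusp-mixing-prop}}$ factor absorbing the low-injectivity regime via the same partition-of-unity argument. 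The only cosmetic difference is that the paper first smooths the indicator of $B_N(0,1)$ and then applies a partition of unity, whereas you partition first and smooth afterward; both are routine and produce the same estimate.
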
 

\begin{proof}
This statement is well known to the experts, see e.g.~\cite{KMnonquasi, KM-Drich, McAdam, Katz-Quantitative}; 
we reproduce the argument for the convenience of the reader.

Throughout the argument, the implied exponents are absolute and implied multiplicative constants are $\leq L\bcX^L$ for an absolute $L$. 
Let $0\leq\varphi^+\leq 1$ be a smooth function supported on 
$B_N(0,1)$ so that $\int_{B_N(0,1)}(1-\varphi^+)\diff\!n\leq \nuni^{-\kappa t}$ and $\Scal(\varphi^+)\ll \nuni^{\star \kappa t}$ for some $\kappa$ which will be optimized later. Then 
\be\label{eq:thickening-mixing-1}
\biggl|\int_{B_N(0,1)}f(a_tn.x)\diff\!n-\int_{N}f(a_tn.x)\varphi^+(n)\diff\!n\biggr|\ll \|f\|_\infty\nuni^{-\kappa t}.
\ee

Recall that $B_N(0,1)X_\eta\subset X_{0.1\eta}$; using a smooth partition of unity argument, we can write $\varphi^+=\sum_{j=1}^M \varphi^+_j$ so that $M\ll \eta^{-\star}$, $\Scal(\varphi^+_j)\ll \eta^{-\star}\nuni^{\star\kappa t}$, and the map $g\mapsto gy$ is injective on $\supp(\varphi^+_j)$ for all $y\in B_N(0,1).X_\eta$ and all $j$.

In consequence, we may fix one $\varphi^+_j$ for the rest of the argument. Arguing as in~\cite[Prop.~2.4.8]{KMnonquasi}, see also~\cite[Thm.~2.3]{KM-Drich}, 
there exists a compactly supported smooth function 
$\varphi$ (an $\nuni^{-\kappa t}$-{\em thickening} of $\varphi_j^+$ along the weak-stable directions in $G$) so that $\Scal(\varphi)\ll_X \eta^{-\star}\nuni^{\star\kappa t}$ and
\be\label{eq:thickening-mixing-2}
\biggl|\int_N f(a_tn.x)\varphi_j^+(n)\diff\!n-\int_X f(a_t y)\varphi(y)\diff\!{m_X}(y)\biggr|\ll
\|f\|_{\rm Lip}\nuni^{-\kappa t},
\ee
where $\|f\|_{\rm Lip}$ is the Lipschitz constant of $f$. 

Finally in view of~\eqref{eq:actual-mixing}, we have  
\be\label{eq:thickening-mixing-3}
\begin{aligned}
\biggl|\int f(a_t y)\varphi(y)\diff\!{m_X}(y)-\int f\diff\!{m_X}\int\varphi\diff\!{m_X}\biggr|&\ll \Scal(f)\Scal(\varphi)\nuni^{-\mixexp t}\\
&\ll \eta^{-\star}\nuni^{\star\kappa t}\Scal(f)\nuni^{-\mixexp t}.
\end{aligned}
\ee
The claim follows from~\eqref{eq:thickening-mixing-1},~\eqref{eq:thickening-mixing-2}, 
and~\eqref{eq:thickening-mixing-3} by optimizing $\kappa$.
\end{proof}

The following is a generalization of Proposition~\ref{prop:equid-trans-horo} where one replaces 
the average over $B_N(0,1)$ with an average over certain subsets of dimension close to $2$, but not necessarily equal to $2$.

\begin{propos}\label{prop:1-epsilon-N}
There exist $\constk\label{k:mixing}$ and $\vare_0$ (both $\gg\mixexp^2$ with an absolute implied constant) 
so that the following holds. Let $0\leq\vare\leq \vare_0$ and $0<b\leq 0.1$. Let $\rho$ be a probability measure on 
$[0,1]$ which satisfies 
\be\label{eq:C-rho-reg-N}
\rho(J)\leq C b^{1-\vare}
\ee
for every interval $J$ of length $b$ and a constant $C\geq 1$.

Let $0<\eta<1$, $x\in X_{\eta}$, then  
\[
\biggl|\int_0^1\int_0^1 f(a_tu_rv_s.x)\diff\!r\diff\!\rho(s)-\int f\diff\!{m_X}\biggr|\leq \ref{C:1-epsilon-N} C\eta^{-\frac{1}{2\ref{k:cusp-mixing-prop}}}\Scal(f)e^{-\ref{k:mixing} t}. 
\]
for all $|\log b|/4\leq t\leq |\log b|/2$ and all $f\in C_c^\infty(X)+\bbc\cdot1$, where  $\constE\label{C:1-epsilon-N}\leq L\bcX^L$ for an absolute constant $L$ and $\bcX$ as in~\eqref{eq: cX Mixing section}.
\end{propos}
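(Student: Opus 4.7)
The plan is to partition the $s$-domain at the Frostman scale $b$, approximate $v_s$ by its value at each interval center, apply a one-dimensional horocyclic equidistribution analogous to Proposition~\ref{prop:equid-trans-horo} at each center, and combine Kleinbock--Margulis non-divergence along $v$-orbits with the Frostman hypothesis to control the contribution from centers with small injectivity radius. The constraint $|\log b|/4\le t\le |\log b|/2$ plays a double role: it guarantees $e^tb\le e^{-t}$ in the discretization step, and $b^{-\varepsilon}\le e^{4\varepsilon t}$ in the final combination, making the Frostman loss subordinate to the mixing gain once $\varepsilon_0$ is small enough in terms of $\mixexp$.

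More precisely, I would partition $[0,1]$ into intervals $J_i=[s_i-b/2,s_i+b/2]$ of length $b$ (so $\rho(J_i)\le Cb^{1-\varepsilon}$). Since $N$ is abelian, $a_tv_{s-s_i}a_{-t}=v_{e^t(s-s_i)}$; right-invariance of $d$ on $G$ gives
\[
d_X\bigl(a_tu_rv_sx,\,a_tu_rv_{s_i}x\bigr)\le \ref{E:BCH}e^t|s-s_i|\le \ref{E:BCH}e^tb\le \ref{E:BCH}e^{-t}
\]
for $s\in J_i$. Using that $\Scal(f)$ dominates the Lipschitz norm, replacing $v_s$ by $v_{s_i}$ on $J_i$ costs in total $O(\Scal(f)e^{-t})$, and the problem reduces to estimating
\[
\Bigl|\sum_i\rho(J_i)\Bigl(\int_0^1 f(a_tu_rv_{s_i}x)\,dr-\int f\,dm_X\Bigr)\Bigr|.
\]

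For each $y=v_{s_i}x$ I would invoke the one-dimensional horocyclic equidistribution in the same form as Proposition~\ref{prop:equid-trans-horo}, namely
\[
\Bigl|\int_0^1 f(a_tu_ry)\,dr-\int f\,dm_X\Bigr|\le L\bcX^L\inj(y)^{-1/\kappa_2}\Scal(f)e^{-\kappa_1 t},
\]
which follows from the same smoothing + weak-stable thickening + matrix-coefficient decay scheme underlying Proposition~\ref{prop:equid-trans-horo}; the thickening now covers all directions transverse to $U$, with the $V$-direction (which is unstable under $a_t$) thickened at the smaller scale $\sigma e^{-t}$ so that the Lipschitz error from the $a_t$-expansion is still of order $\sigma$. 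Cauchy--Schwarz with respect to $d\rho$ then bounds the previous sum by
\[
\Scal(f)e^{-\kappa_1 t}\Bigl(\int\inj(v_sx)^{-2/\kappa_2}\,d\rho(s)\Bigr)^{1/2}.
\]

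The crux is to bound this $\rho$-integral. By Kleinbock--Margulis non-divergence for the unipotent polynomial trajectory $\{v_sx:s\in[0,1]\}$ (argued as in Lemma~\ref{lem:one-return-1-1}, using that $s\mapsto v_sg\gamma v_j$ is a polynomial in $s$ of bounded degree), one has $|\{s\in[0,1]:\inj(v_sx)\le\varrho\}|\le L(\varrho/\eta)^\beta$ for an absolute $\beta>0$. Covering such a set by intervals of length $b$ and applying Frostman upgrades this to $\rho(\{s:\inj(v_sx)\le\varrho\})\le L((\varrho/\eta)^\beta b^{-\varepsilon}+b^{1-\varepsilon})$; layer-cake integration then yields $\int\inj(v_sx)^{-2/\kappa_2}\,d\rho(s)\le L\eta^{-1/\kappa_2}Cb^{-\varepsilon}$ provided $\kappa_2$ is chosen large compared to $\beta$. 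Assembling the pieces and using $b^{-\varepsilon/2}\le e^{2\varepsilon t}$ produces a total error bounded by $\Scal(f)\eta^{-1/(2\kappa_2)}Ce^{-(\kappa_1-2\varepsilon)t}$; taking $\kappa_3:=\kappa_1/2$ and $\varepsilon_0$ so that $\kappa_1-2\varepsilon_0\ge\kappa_3$ (both $\gg\mixexp^2$) yields the stated bound. The main obstacle is to set up the one-dimensional horocyclic equidistribution with the correct $\inj$-dependence despite the presence of the unstable direction $V$ transverse to $U$, and to verify that the Frostman loss $b^{-\varepsilon}$ together with the cusp loss $\eta^{-1/(2\kappa_2)}$ are both fully absorbed into the horocyclic-mixing decay $e^{-\kappa_1t}$.
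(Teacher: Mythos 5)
Your plan hinges on a one-dimensional ``$U$-horocyclic'' equidistribution estimate of the form
\[
\Bigl|\int_0^1 f(a_tu_ry)\,\diff r-\int f\,\diff m_X\Bigr|\ll \inj(y)^{-\star}\Scal(f)\nuni^{-\kappa_1 t},
\]
uniformly over $y\in X$, with rate depending only on $\inj(y)$. This estimate is false, and its failure is exactly what makes Theorem~\ref{thm:main} nontrivial. The group $U$ is \emph{not} horospherical in $G$; the full horospherical subgroup of $a_t$ is $N=UV$, of which $U$ is only a one-parameter subgroup. If $y$ lies on (or exponentially close to, say within $\nuni^{-100t}$ of) a periodic $H$-orbit $Y$ of small volume, the orbit segment $\{a_tu_ry:r\in[0,1]\}$ stays in a thin neighbourhood of $Y$ for all $r\in[0,1]$; for $f$ supported away from $Y$, the left side is $\asymp 1$ while $\inj(y)$ can remain bounded below, so no such estimate can hold. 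This is precisely the obstruction encoded by alternative~(2) of Theorem~\ref{thm:main}, and it cannot be wished away by a better injectivity-radius dependence. Relatedly, the thickening scheme you propose to prove it cannot close: if you thicken in the unstable direction $V$ at scale $\sigma\nuni^{-t}$, the smoothed bump $\varphi$ has $\Scal(\varphi)\gg(\sigma\nuni^{-t})^{-\star}\gg\nuni^{\star t}$, while the decay of correlations~\eqref{eq:actual-mixing} only gives $\nuni^{-\mixexp t}$; since $\mixexp$ is small (certainly $<1$), the Sobolev blow-up from thickening at an $e^{-t}$-scale in the single unstable direction you are not averaging over overwhelms the mixing gain. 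In Proposition~\ref{prop:equid-trans-horo} the thickening is only in \emph{weak-stable} directions, at scale $\nuni^{-\kappa t}$ for a freely choosable small $\kappa$, which is why that argument wins.

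The paper's proof avoids any one-parameter $U$-equidistribution input. After the same Frostman discretization, it introduces an extra short averaging over $U$ of length $\tau=\nuni^{-t+t/\ell}$ (Venkatesh's device), applies Cauchy--Schwarz with respect to the \emph{flat} measure $\varphi(n(z))\diff z$ on $N$ rather than with respect to $\diff\rho$, and uses the positivity of $\bigl(\tfrac1\tau\int_0^\tau f(a_t n(z)u_r.x)\,\diff r\bigr)^2$ together with the bound $\varphi\le 2Cb^{-\vare}$ to replace the $\rho$-weighted integral by the full Lebesgue integral over $B_N(0,1)$, at the cost only of the factor $b^{-\vare}$. This reduces matters to the genuine two-dimensional horospherical equidistribution of Proposition~\ref{prop:equid-trans-horo}, applied to the correlated test functions $\hat f_{r_1,r_2}$, with the off-diagonal $|r_1-r_2|>\tau\nuni^{-t/2\ell}$ contribution killed directly by~\eqref{eq:actual-mixing}. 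The $V$-direction is thus handled by honest averaging over $N$ rather than by thickening, and no one-dimensional $U$-estimate is ever required. Your Cauchy--Schwarz in $\diff\rho$ after bounding each term individually loses exactly this mechanism. The discretization and Frostman bookkeeping in your first paragraph are fine; the gap is in the middle step.
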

 
\begin{proof}
We will prove this for the case $G=\SL_2(\bbr)\times\SL_2(\bbr)$; 
the proof in the case $G=\SL_2(\bbc)$ is similar. 

Throughout the argument, the implicit multiplicative constants are $\leq L\bcX^L$ for some absolute $L$. 

Without loss of generality, we may assume $\int_Xf\diff\!{m_X}=0$. 

Let $M\in\bbn$ be so that $1/M\leq b\leq 1/(M-1)$.
For every $1\leq j\leq M$, let $I_j=\Big[\frac{j-1}{M}, \frac{j}{M}\Big)$; also put $s_j=\frac{2j-1}{2M}$ and $c_j=\rho(I_j)$ for all $j$.
Since $I_j$'s are disjoint, we have $\sum_j c_j=1$.

For all such $j$, let
\[
\mathsf B_j=\Bigl\{u_rv_s: r\in[0,1], s\in (s_j-\tfrac{b}{4},s_j+\tfrac{b}{4})\Bigr\}.
\]
In view of the choice of $M$, we have $\mathsf B_j\cap\mathsf B_{j'}=\emptyset$ for all $j\neq j'$.
Let $\varphi=\textstyle\sum_j2b^{-1}c_j\mathbbm 1_{\mathsf B_j}$. Then $\int_N \varphi(n(r,s))\diff\!r\diff\!s=1$.

We make the following observation. Using~\eqref{eq:C-rho-reg-N}, we have $c_j\leq Cb^{1-\vare}$ for all $j$. This and the fact that $\mathsf B_j$'s are disjoint imply that  
\be\label{eq:phi-bound}
\varphi(n(z))\leq \max\{2b^{-1}c_j: 1\leq j\leq M\}\leq 2Cb^{-\vare}
\ee
for all $n(z)\in N$; here and in what follows, $z=(r,s)$ and $\diff\!z=\diff\!r\diff\!s$. 

Using the fact that $I_j$'s are disjoint, we have 
\[
\int_0^1\int_0^1 f(a_tu_rv_s.x)\diff\!r\diff\!\rho(s)=\sum_j\int_{I_j}\int f(a_tu_rv_s.x)\diff\!r\diff\!\rho(s);
\]
thus, we conclude that
\begin{align}
\label{eq:iint-rho-sj}&\bigg|\int_0^1\int_0^1f(a_tu_rv_s.x)\diff\!r\diff\!\rho(s)-\sum_{j}c_j\int f(a_tu_rv_{s_j}.x)\diff\!r\bigg|\\
\notag&\leq \sum_{j}\int_{I_j}\int \Big|f(a_tu_rv_s.x)-f(a_tu_rv_{s_j}.x)\Big|\diff\!r\diff\!\rho(s)\ll  \Scal(f)b^{1/2}
\end{align}
where we used the facts that $|s-s_j|\leq b$ and $t\leq |\log b|/2$ in the last inequality.

In view of~\eqref{eq:iint-rho-sj}, thus, we need to bound $\sum_jc_j\int f(a_tu_rv_{s_j}x)\diff\!r$.
Similar to~\eqref{eq:iint-rho-sj}, we can now make the following computation. 
 \begin{align}
 \label{eq:mix-1st}\biggl|\sum_{j} \int_0^1c_jf(a_tn(r,s_j).x)\diff\!r-\int_{N}\varphi(n(z))f(a_tn(z).x)&\diff\!z\biggr|\leq\\
\notag \sum_{j} \int_0^12b^{-1}c_j\int_{s_j-\tfrac{b}{4}}^{s_j+\tfrac{b}{4}} \Bigl|f(a_tn(r,s_j).x)-f(a_tn(r,s).x)\diff\!s\Bigr|\diff\!r&\ll\Scal(f)b^{1/2}
 \end{align}
where again we used the facts that $|s-s_j|\leq b$ and $t\leq |\log b|/2$.
 
 Thus, it suffices to investigate 
 \[
 A_1=\int \varphi(n(z))f(a_tn(z).x)\diff\!z.
 \] 
 To that end, let $\ell\geq 2$ be a parameter which will be optimized later. Set $\tau=e^{\frac{1-\ell}{\ell}t}= e^{-t+\frac{t}{\ell}}$, and define  
 \[
 A_2:=\frac{1}{\tau}\int_{0}^\tau\int \varphi(n(z))f(a_t u_rn(z).x)\diff\!z\diff\!r;
 \]
roughly speaking, we introduce an extra averaging in the direction of $U$. 
 
For every $0\leq r\leq \tau$, we have 
$|(\mathsf B_j+r)\Delta \mathsf B_j|\ll |\mathsf B_j|\tau$. Hence, 
\begin{align*}
\biggl|\int \varphi(z)&f(a_tu_rn(z).x)\diff\!z-\int \varphi(z)f(a_tn(z).x)\diff\!z\biggr|\\
&\leq\sum_j2b^{-1}c_j\int_{(\mathsf B_j+r)\Delta \mathsf B_j}|f(a_tn(z)x)|\diff\!z\\
&\leq \sum_j2b^{-1}c_j|\mathsf B_j|\tau\|f\|_\infty\\
&\leq \|f\|_\infty\tau\ll \Scal(f)\tau;
\end{align*}
we used $|\mathsf B_j|=b/2$ for every $j$ and $\sum c_j=1$, in the penultimate inequality. 
Averaging the above over $[0,\tau]$, we conclude that 
\be\label{eq:mix-2nd}
|A_1-A_2|\ll \Scal(f)\tau\leq \Scal(f)e^{-t/2}\ll \Scal(f)b^{1/8};
\ee 
recall that $\tau=e^{\frac{1-\ell}{\ell}t}$, $\ell\geq 2$, and $t\geq |\log b|/4$. 
 
In consequence, we have reduced to the study of $A_2$ to which we now turn. By the Cauchy-Schwarz inequality, we have
\[
|A_2|^2\leq \int \biggl(\frac{1}{\tau}\int_{0}^\tau f(a_tu_rn(z).x)\diff\!r\biggr)^2\varphi(n(z))\diff\!z.
\]
Now using $\biggl(\frac{1}{\tau}\int_{0}^\tau f(a_tn(r+z).x)\diff\!r\biggr)^2\geq0$,~\eqref{eq:phi-bound}, and the above estimate, 
we conclude  
\begin{align}
 \notag|A_2|^2&\leq{2Cb^{-\vare}}\int_{B(0,1)}\biggl(\frac{1}{\tau}\int_{0}^\tau f(a_tn(z)u_r.x)\diff\!r\biggr)^2\diff\!z\\
\label{eq:mix-A2-est}&=\frac{1}{\tau^2}\int_0^\tau\int_0^\tau\int_{B(0,1)}{2Cb^{-\vare}}\hat f_{r_1,r_2}(a_tn(z).x)\diff\!z\diff\!r_1\diff\!r_2
 \end{align}
 where $B(0,1)=B_N(0,1)=\{u_rv_s: 0\leq r,s\leq 1\}$ has measure 1 with respect to $\diff\!z$, and for all 
 $r_1,r_2\in [0,\tau]$ we put 
 \[
 \hat f_{r_1,r_2}(y)=f(a_tu(r_1)a_{-t}.y)f(a_tu(r_2)a_{-t}.y).
 \]
 
 Note that $\Scal(\hat f_{r_1,r_2})\ll \Scal(f)^2 (e^t\tau)^\star\ll \Scal(f)^2 e^{\star t/\ell}$. 
 We now choose $\ell\ll 1/\ref{k:thick-mixing-prop}$ large enough so that 
 \be\label{eq:Sob-fhat}
 \Scal(\hat f_{r_1,r_2})\ll \Scal(f)^2 e^{\ref{k:thick-mixing-prop} t/2}.
 \ee
 
 By Proposition~\ref{prop:equid-trans-horo}, we have  
 \begin{multline*}
 \biggl|{b^{-\vare}}\int_{B(0,1)}\hat f_{r_1,r_2}(a_tn(z)x)\diff\!z\biggr|=b^{-\vare}\int_X\hat f_{r_1,r_2}\diff\!{m_X}\\
 + b^{-\vare} \eta^{-1/\ref{k:cusp-mixing-prop}}O(\Scal(\hat f_{r_1,r_2})e^{-\ref{k:thick-mixing-prop} t}).
 \end{multline*}
 Recall from~\eqref{eq:Sob-fhat} that $\Scal(\hat f_{r_1,r_2})e^{-\ref{k:thick-mixing-prop} t}\leq \Scal(f)^2e^{-\ref{k:thick-mixing-prop} t/2}$. 
 Moreover, since $t\geq |\log b|/4$ if we assume $\vare\leq \ref{k:thick-mixing-prop}/16$, then
 $e^{-\ref{k:thick-mixing-prop} t/2}b^{-\vare}\leq b^{\ref{k:thick-mixing-prop}/16}$. Altogether, we conclude that 
 \begin{multline}\label{eq:mixing-ineq-main}
 \biggl|{b^{-\vare}}\int_{B(0,1)}\hat f_{r_1,r_2}(a_tn(z)x)\diff\!z\biggr|=b^{-\vare}\int_X\hat f_{r_1,r_2}\diff\!{m_X} \\+ \Scal(f)^2 \eta^{-1/\ref{k:cusp-mixing-prop}}b^{\ref{k:thick-mixing-prop}/16}.
 \end{multline} 
 
 We now use estimates on the decay of matrix coefficients,~\eqref{eq:actual-mixing}, together with the fact that
 $d(e,u_t)\geq |t|$, and obtain the following bound.
 \be\label{eq:int-fhat}
 \biggl|\int_X\hat f_{r_1,r_2}(x)\diff\!{m_X}\biggr|\ll \Scal(f)^2e^{-\frac{\mixexp}{2\ell}t}\quad\text{if $|r_1-r_2|>e^{-t+\frac{t}{2\ell}}$.}
 \ee

 Divide now the integral $\int_0^{\tau}\int_0^\tau$ in~\eqref{eq:mix-A2-est}
 into terms: one with $|r_1-r_2|>e^{-t+\frac{t}{2\ell}}=\tau e^{-\frac{t}{2\ell}}$ and the other its complement.
 We thus get from~\eqref{eq:mix-A2-est},~\eqref{eq:mixing-ineq-main}, and~\eqref{eq:int-fhat} that
 \[
 |A_2|^2\ll C\eta^{-\frac{1}{\ref{k:cusp-mixing-prop}}}\Scal(f)^2\biggl(b^{-\vare} \biggl(e^{\frac{-\mixexp}{2\ell} t}+ e^{\frac{-1}{2\ell}t}\biggr)+b^{\ref{k:thick-mixing-prop}/16}\biggr).
 \]
 
Recall that $\ell\ll 1/\ref{k:thick-mixing-prop}$ and $\ref{k:thick-mixing-prop}\gg \mixexp$. Thus if $\vare\leq \ref{k:thick-mixing-prop}^2/L$ 
for a large enough $L$, the above, 
together with~\eqref{eq:iint-rho-sj}, \eqref{eq:mix-1st}, and~\eqref{eq:mix-2nd}, finishes the proof.
\end{proof}

\section{A Marstrand type projection theorem}\label{sec:Mars-proj}

In this section, we combine a certain projection theorem with some arguments in homogeneous dynamics 
to prove Proposition~\ref{prop:proj-general}. The outcome of this proposition will serve as an input when we apply Proposition~\ref{prop:1-epsilon-N}. 

\begin{propos}\label{prop:proj-general} 
Let $0<\eta<0.01\eta_X$, and let $0<100\vare< \alpha<1$. 
Suppose there exist $x_1\in X_{\eta}$ and $F\subset B_\rfrak(0,\eta^2)$, containing $0$, so that
\begin{align}
\notag&\mathcal F:=\{\exp(w)x_1: w\in F\}\subset X_\injr\qquad\text{and}\\
\label{eq:energy-F-proj-thm}&\sum_{w'\in F\setminus \{w\}}\|w-w'\|^{-\alpha}\leq D\cdot (\#F)^{1+\vare}\qquad\text{for all $w\in F$},
\end{align}
for some $D\geq 1$.

Assume further that $\#F$ is large enough, depending explicitly on $\eta$ and $\vare$. 
Then exists a finite subset $I\subset [0,1]$, some $\rhsco>0$ with
\be\label{eq:size of b1}
(\#F)^{-\frac{3-\alpha+5\vare}{3-\alpha+20\vare}}\leq \rhsco\leq (\#F)^{-\vare},
\ee 
and some $x_2\in X_\eta \cap \Bigl(a_{|\log(\rhsco)|}\cdot\{u_r: |r|\leq 2\}\Bigr).\mathcal F$ so that both of the following statements hold true. 
\begin{enumerate} 
\item The set $I$ supports a probability measure $\rho$ which satisfies  
\[
\rho(J)\leq C'_\vare\cdot |J|^{\alpha-30\vare}
\]
for all intervals $J$ with $|J|\geq (\#F)^{\frac{-15\vare}{3-\alpha+20\vare}}$, where $C'_\vare\ll\vare^{-\star}$ (with absolute implied constants). 
\item There is an absolute constant $C$, so that for all $s\in I$, we have 
\[
v_sx_2\in \Big(\boxG_{C\rhsco} \cdot a_{|\log(\rhsco)|}\cdot \{u_r: |r|\leq 2\}\Big).\mathcal F.
\]
\end{enumerate}
\end{propos}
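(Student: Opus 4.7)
The plan is to apply the K\"aenm\"aki--Orponen--Venieri-type projection theorem (Theorem~\ref{thm:proj-thm} in the paper) to $F\subset\mathfrak r$, using the one-parameter family of projections arising from the adjoint action of $AU$ on $\mathfrak r$. A direct $\mathfrak{sl}_2$-computation shows that, for $w\in\mathfrak r$, the $V$-component of $\Ad(a_t u_r)w$ equals $e^{t}\pi_r(w)$, where
\[
\pi_r(w)\;=\;w_{12}-2r\,w_{11}-r^{2}\,w_{21},
\]
while the ``diagonal'' component is $w_{11}+r w_{21}$ (fixed by $\Ad(A)$) and the $(2,1)$-component is $e^{-t}w_{21}$. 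Set $b_1=e^{-t}$. The $(2,1)$-direction is automatically $O(b_1)$, the $V$-deficit is zeroed out by choosing $s=e^{t}\pi_{r_0}(w)$, while the diagonal direction must be $O(b_1)$ in order to fit into the $\boxG_{Cb_1}$ factor of condition (2); this forces us to restrict to a codimension-$1$ slab of $F$ on which $|w_{11}+r_0 w_{21}|\leq Cb_1$, and to project this planar slab to $1$D via $\pi_{r_0}$.

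The energy bound \eqref{eq:energy-F-proj-thm} makes the normalized counting measure on $F$ into an $\alpha$-Frostman measure up to a factor $(\#F)^{\varepsilon}$. Since $\{\pi_r\}_{r\in[-2,2]}$ is a non-degenerate polynomial family of linear functionals on $\mathfrak r$, Theorem~\ref{thm:proj-thm}, combined with a pigeonhole on the diagonal slab, produces a scale $b_1$ in the range \eqref{eq:size of b1}, a positive-measure set $E\subset[-2,2]$ of good parameters, and for every $r_0\in E$ a probability measure $\rho$ on the affinely normalized image of $F\cap\{|w_{11}+r_0 w_{21}|\leq Cb_1\}$ under $w\mapsto e^{t}\pi_{r_0}(w)$, satisfying the Frostman bound of (1). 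The specific exponents $\alpha-30\varepsilon$ and those in \eqref{eq:size of b1} are outputs of the energy--incidence balance and the successive pigeonholes in the proof of the projection theorem.

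Next, Proposition~\ref{prop:non-div} applied at $x_1\in X_\eta$ shows that the set of $r\in[-2,2]$ with $a_{|\log b_1|}u_r x_1\notin X_\eta$ has arbitrarily small absolute measure, once $b_1$ is small enough depending on $\eta$ (which is automatic for $\#F$ large). Pick $r_0\in E$ avoiding this exceptional set and set $x_2:=a_{|\log b_1|}u_{r_0}x_1\in X_\eta$; since $0\in F$, $x_2$ has the required form. Let $I$ and $\rho$ be those produced by the projection step, giving (1). For (2), given $s\in I$ with $s=e^{t}\pi_{r_0}(w)$ for some $w$ in the slab, a BCH calculation using the commutation relations among $v_s$, $a_t$, $u_r$ (and $U$, $V$ commuting inside $N$) gives
\[
v_s\,x_2\;=\;g\cdot a_{|\log b_1|}u_{r_0}\exp(w)\,x_1
\]
with all three $\mathfrak r$-components of $g$ of size $O(b_1)$: the $V$-deficit vanishes by the choice of $s$, the $(2,1)$-deficit is $O(b_1)$ by the rescaling under $a_t$, and the diagonal deficit is $O(b_1)$ because $w$ was restricted to the slab. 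Lemmas~\ref{lem:BCH} and~\ref{lem:dist-sheet} then place $g\in\boxG_{Cb_1}$.

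The principal obstacle is the projection-theoretic step: extracting from the $\alpha$-energy bound on the $3$-dimensional set $F\subset\mathfrak r$ the Frostman regularity of a $1$-dimensional projection of a codimension-$1$ slab, with the sharp quantitative exponents in \eqref{eq:size of b1} and the Frostman exponent $\alpha-30\varepsilon$. This is precisely the content of Theorem~\ref{thm:proj-thm}, an adaptation of~\cite{kenmki2017marstrandtype} based on~\cite{Wolff, Schlag}. The surrounding homogeneous-dynamical ingredients---the commutation identities, the non-divergence input from Proposition~\ref{prop:non-div}, and the BCH absorption---are routine once the projection statement is in hand.
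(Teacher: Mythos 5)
Your outline captures the projection-theoretic shape of the argument (apply the K\"aenm\"aki--Orponen--Venieri-type Theorem~\ref{thm:proj-thm} to $F$, push the resulting $1$D data through the dynamics via $\Ad(a_tu_r)$ and BCH, and use Proposition~\ref{prop:non-div} for recurrence into $X_\eta$), but there is a genuine gap at the very start, and it undermines both the Frostman constant in part (1) and the diagonal control in part (2).

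The issue is that you go directly from the energy bound \eqref{eq:energy-F-proj-thm} to ``$\rho$ is $\alpha$-Frostman up to a factor $(\#F)^{\vare}$'' and then feed this into Theorem~\ref{thm:proj-thm}. That theorem requires regularity \emph{at the scale $b_1$ of the localization}: $\#(E\cap B(w,\rhsc))/\#E\le D'\,(\rhsc/\rhsco)^{\alpha}$ with $D'$ controlled, and the output constant $C_\kappa$ depends polynomially on $D'$. Taking $E=F$ and $\rhsco=\eta^2$ forces $D'\sim(\#F)^{\vare}$, which then propagates into $C_\kappa$ and destroys the clean bound $C'_\vare\ll\vare^{-\star}$ required in (1). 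The paper's proof addresses exactly this by first invoking Lemma~\ref{lem:max-ineq-proj-thm}, a multi-scale pigeonhole which produces $w_0\in F$, a scale $\rhsco$ in the range \eqref{eq:size of b1}, and a sub-cloud $F'\subset B_\rfrak(w_0,\rhsco)\cap F$ that is $(\alpha-20\vare)$-regular at scale $\rhsco$ with constant $\ll_D\vare^{-\star}$. Your ``pigeonhole on the diagonal slab'' is not a substitute: the slab $\{|w_{11}+r_0w_{21}|\le C b_1\}$ is a codimension-one restriction that neither supplies the correct Frostman normalization at scale $\rhsco$ nor preserves the pairwise-energy input \eqref{eq:energy-F-proj-thm} on the restricted set, so the hypotheses of Theorem~\ref{thm:proj-thm} are not available there.

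Relatedly, once one localizes to $B_\rfrak(w_0,\rhsco)$, the diagonal control you try to enforce by a slab becomes automatic: for $w\in E\subset B_\rfrak(0,O(\rhsco))$ the $(1,1)$-entry of $\Ad(u_r)w$ is already $O(\rhsco)$, so no extra restriction is needed to land in $\boxG_{C\rhsco}$. The remaining pre-processing the paper uses and you omit is Lemma~\ref{lem:1pt-1-vare-trans}, which replaces $E$ by a subset of comparable size where $|w_{12}|\ge 10^{-3}\|w\|$, guaranteeing that the $(1,2)$-entry dominates after applying $\Ad(u_r)$ with $|r|\le 10^{-4}$; without this, the ratio estimates $|v_{11}|,|v_{22}|\le 10^4 e^{-t}|v_{12}|$ and $|v_{21}|\le 10^4 e^{-2t}|v_{12}|$ that place the error in $\boxG_{C\rhsco}$ are not justified. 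I would recommend reorganizing the proof so that the localization lemma is the first move; the slab device can then be dropped entirely.
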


The proof of Proposition~\ref{prop:proj-general} is based on the following projection theorem.
This theorem may be thought of as a finitary version of the work of K\"{a}enm\"{a}ki, Orponen, and Venieri,~\cite{kenmki2017marstrandtype}. 
Its proof, which is given in Appendix~\ref{sec:proof-proj}, is based on the works of Wolff and Schlag,~\cite{Wolff,Schlag} which in turn relies on a cell decomposition theorem of
Clarkson, Edelsbrunner, Guibas, Sharir, and Welzl~\cite{Cell-Decomposition}.

\begin{thm}\label{thm:proj-thm}
Let $0<\alpha, b_0, b_1<1$ ($\alpha$ should be thought of fixed, and $b_0<b_1$ as small). 
Let $E\subset B_\rfrak(0,\rhsco)$ be so that
\[
\tfrac{\#(E\,\cap\, B_\rfrak(w,\rhsc))}{\#E}\leq D'\cdot (\rhsc/\rhsco)^{\alpha}
\]
for all $w\in\rfrak$ and all $\rhsc\geq b_0$, and some $D'\geq 1$. 
Let $0<\kappa<0.1$, and let $J\subset\bbr$ be an interval.
There exists $J'\subset J$ with $|J'|\geq 0.9|J|$ satisfying the following. Let $r\in J'$, then
there exists a subset $E_r\subset E$ with 
\[
\#E_r\geq 0.9\cdot (\#E)
\]
such that for all $w\in E_r$ and all $\rhsc\geq b_0$, we have 
\[
\tfrac{\#\{w'\in E\,:\, |\xi_r(w')-\xi_r(w)|\leq \rhsc\}}{\#E}\leq C_\kappa\cdot(\rhsc/\rhsco)^{\alpha-7\kappa}
\] 
where $C_\kappa$ is a constant which depends polynomially on $\kappa$, $|J|$, and $D'$, and 
\be\label{eq:w12-xi-w-r-use}
\xi_r(w)=(\Ad(u_r)w)_{12}=-w_{21}r^2-2w_{11}r+w_{12}.
\ee
with $w_{ij}$ denoting the $(i,j)$-th entry of $w\in\rfrak$. 
\end{thm}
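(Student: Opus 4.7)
\smallskip

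\noindent\textbf{Proof plan for Theorem~\ref{thm:proj-thm}.}

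The plan is to argue in the style of Wolff and Schlag. Let $\mu$ denote the normalized counting measure on $E$, so the hypothesis becomes the uniform Frostman bound $\mu(B_\rfrak(w,\rhsc))\leq D'(\rhsc/\rhsco)^\alpha$ for all $w\in\rfrak$ and $\rhsc\geq b_0$. For each $r$, let $\mu_r=(\xi_r)_*\mu$. The goal is to show that for $|J|$-most $r\in J$, the measure $\mu_r$ satisfies the pointwise Frostman bound $\mu_r\bigl(B(\xi_r(w),\rhsc)\bigr)\leq C_\kappa(\rhsc/\rhsco)^{\alpha-7\kappa}$ for every dyadic scale $\rhsc\in[b_0,\rhsco]$ and for $\mu$-most $w$; unraveling the definitions gives the conclusion of the theorem.

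The first ingredient is a transversality estimate for the quadratic family $r\mapsto\xi_r$. Since $\xi_r(w)-\xi_r(w')=\xi_r(w-w')$ is a polynomial in $r$ of degree $\leq 2$ whose coefficient vector has norm comparable to $\|w-w'\|$, Remez' inequality yields the sublevel-set bound
\[
\bigl|\{r\in J:|\xi_r(w)-\xi_r(w')|\leq \rhsc\}\bigr|\leq C|J|\bigl(\rhsc/\|w-w'\|\bigr)^{1/2}
\]
whenever $\rhsc\leq c\|w-w'\|$ (for some absolute constants). This is the non-concentration statement powering the argument, and it is uniform in the direction of $w-w'$ because all three coefficients of the quadratic participate in the supremum of $|\xi_r(w-w')|$ on $J$.

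The second ingredient is an incidence estimate. Fixing a dyadic scale $\rhsc\in[b_0,\rhsco]$ and a threshold $K=C_\kappa(\rhsc/\rhsco)^{\alpha-7\kappa}$, consider the ``bad set''
\[
\mathcal B_\rhsc(r)=\bigl\{w\in E:\mu_r\bigl(B(\xi_r(w),\rhsc)\bigr)>K\bigr\}.
\]
Each $(r,w)$ with $w\in\mathcal B_\rhsc(r)$ produces at least $K\cdot\#E$ other points $w'\in E$ whose graphs $\Gamma_{w'}=\{(r,\xi_r(w')):r\in J\}$ pass within vertical distance $\rhsc$ of $(r,\xi_r(w))$. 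These are pieces of parabolas, so one bounds the total number of such ``rich'' incidences via the Clarkson--Edelsbrunner--Guibas--Sharir--Welzl cell decomposition applied to the family $\{\Gamma_w:w\in E\}$: one partitions the $(r,\xi)$-plane into $\asymp N^2/K^2$ cells, each meeting $\lesssim K$ curves, and uses the Frostman hypothesis on $E$ together with the transversality bound above to bound the number of intersecting curves per cell. A standard refinement and double-counting argument (this is the technical heart of the proof, paralleling K\"aenm\"aki--Orponen--Venieri) then forces $|\{r\in J:\mu(\mathcal B_\rhsc(r))>\delta\}|\leq C_\kappa'|J|\delta^{-\star}\rhsc^{\kappa}$.

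Finally, I will sum over the $\lesssim |\log b_0|$ dyadic scales $\rhsc=2^j b_0\in[b_0,\rhsco]$ and choose the constants in $K$ so that the logarithmic loss is absorbed into the exponent loss from $\alpha$ to $\alpha-7\kappa$. Removing a set of $r\in J$ of measure $\leq 0.1|J|$ and then, for each surviving $r$, removing the set $E\setminus\bigcup_\rhsc\mathcal B_\rhsc(r)\subset E$ of $\mu$-measure $\leq 0.1$, yields the sets $J'$ and $E_r$ claimed. The hard part is the incidence step: one must extract the sharp exponent $\alpha-7\kappa$ rather than a substantially smaller exponent, and this is exactly where the cell decomposition for low-degree algebraic curves replaces the cruder $L^2$/energy approach and gives the near-optimal bound.
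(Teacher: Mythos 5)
Your plan identifies the right framework (Wolff--Schlag via K\"aenm\"aki--Orponen--Venieri, with CEGSW cell decomposition at the algebraic core), and the outer shell --- pass from the Frostman bound on $E$ to a pointwise Frostman bound for $(\xi_r)_*\rho$, then sum over dyadic scales and pigeonhole --- matches the paper's. The gap is in the middle step you yourself flag as the technical heart, and it is a real gap rather than an expository one. The Remez sublevel bound $|\{r : |\xi_r(w)-\xi_r(w')|\leq b\}|\ll (b/\|w-w'\|)^{1/2}$ is correct, but it is sharp only in the near-tangent configuration; in general the $r$-extent of the overlap is $\asymp b/\sqrt{\Delta(w-w')+b}$, where $\Delta(w-w')=|\det(w-w')|$ is the reduced discriminant of the quadratic $r\mapsto \xi_r(w-w')$. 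Both the diameter and the Lebesgue measure of $\Xi^\delta(w)\cap\Xi^\delta(w')$ depend on $\Delta(w-w')$ as well as on $\|w-w'\|$ (Kolasa--Wolff \cite{Kolasa-Wolff}), and tracking $\Delta$ --- distinguishing near-tangent from transversal pairs --- is the entire content of the Wolff/Schlag circular-maximal argument. If one uses Remez alone and integrates over $r$ and over $\rho\times\rho$, one gets an $L^1$/energy estimate whose pointwise Frostman yield for the projected measure is at most $\min(\alpha,\tfrac12)-O(\kappa)$, not $\alpha-7\kappa$; since the theorem is applied in the paper with $\alpha$ close to $1$, that loss is fatal.

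Relatedly, the counting step is not a direct CEGSW/Szemer\'edi--Trotter incidence count on the family $\{\Xi(w)\}$: a cell partition controls transversal crossings, not long near-tangent overlaps, and the latter are precisely the dangerous configuration here. What the proof actually needs, following \cite{Wolff,Schlag,Zahl,kenmki2017marstrandtype}, is to organize the intersection geometry into $(\delta,t)$-tangency rectangles, run a bipartite decomposition of $E$ at scale $t$, and invoke a tangency-rectangle counting lemma (this is Lemma~\ref{lem:Wolff}): the number of pairwise incomparable $(\delta,t)$-rectangles of type $(\geq\mu,\geq\nu)$ obeys Wolff's bound with $3/4$-exponents, and CEGSW is used \emph{inside} the proof of that lemma, not applied directly to bound rich incidences. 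So to make your outline work you would need to (i) introduce $\Delta(w-w')$ and the Kolasa--Wolff intersection estimates, (ii) stratify pairs by both $t\asymp\|w-w'\|$ and the discriminant scale $\delta$, and (iii) replace the naive incidence count with the tangency-rectangle lemma; as written, the argument does not produce the exponent $\alpha-7\kappa$.
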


The proof of Proposition~\ref{prop:proj-general} will also use the following version of \cite[Lemma 5.2]{BFLM}, see also~\cite{Bour-Proj}. 
We reproduce the argument in Appendix~\ref{sec:max-ineq-proj}.

\begin{lemma}\label{lem:max-ineq-proj-thm}
Let $F\subset B_\rfrak(0,1)$ be a subset which satisfies~\eqref{eq:energy-F-proj-thm}. 
Then there exist $w_0\in F$, $\rhsco>0$, with
\[
(\#F)^{-\frac{3-\alpha+5\vare}{3-\alpha+20\vare}}\leq \rhsco\leq (\#F)^{-\vare},
\] 
and a subset $F'\subset B_\rfrak(w_0,\rhsco)\cap F$ so that the following holds. 
Let $w\in \rfrak$, and let $\rhsc\geq(\#F)^{-1}$. Then  
\[
\tfrac{\#(F'\cap B(w,\rhsc))}{\#F'}\leq C'\cdot (\rhsc/\rhsco)^{\alpha-20\vare}
\]
where $C'\ll_D\vare^{-\star}$ with absolute implied constants. 
\end{lemma}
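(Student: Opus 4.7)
The plan is to construct $w_0$ and $\rhsco$ via a pigeonhole argument combining the energy hypothesis~\eqref{eq:energy-F-proj-thm} with volume counting in the three-dimensional space $\rfrak$, in the spirit of~\cite[Lemma 5.2]{BFLM}. As a preliminary, I would extract from~\eqref{eq:energy-F-proj-thm} the pointwise ball bound
$$\#\bigl(F \cap B_\rfrak(w,\rhsc)\bigr) \leq D(\#F)^{1+\vare}\rhsc^{\alpha},$$
valid for every $w\in F$ and $\rhsc > 0$, by isolating in the energy sum the contribution of points lying in $B_\rfrak(w,\rhsc)$; the bound extends to $w \in \rfrak \setminus F$ up to an absolute factor by passing to a nearest neighbor in $F$. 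This already captures the $\alpha$-dimensionality of $F$, but it carries an unwanted amplification by $(\#F)^{\vare}$ that must be absorbed into the loss $(\rhsc/\rhsco)^{-20\vare}$ in the conclusion; absorbing this factor is the entire point of localizing to a well-chosen scale.

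The core of the proof is the selection of $w_0$ and $\rhsco$ by a double pigeonhole. Covering $B_\rfrak(0,1)$ by $O(\rhsco^{-3})$ balls of radius $\rhsco$ produces, for any candidate $\rhsco$, a base point $w_0\in F$ with $\#(F\cap B_\rfrak(w_0,\rhsco)) \gtrsim \rhsco^{3}\#F$. At the same time, at each dyadic sub-scale $\rhsc \in [(\#F)^{-1},\rhsco]$ a Markov-type use of the energy bound shows that the set of $w \in F$ for which $\#(F\cap B_\rfrak(w,\rhsc)) > C\rhsc^{\alpha - c\vare}\#F$ has negligible cardinality. Intersecting these good sets over the $O(\log\#F)$ dyadic sub-scales and running the outer pigeonhole over a dyadic range of candidate $\rhsco$'s yields a valid pair $(w_0,\rhsco)$ and forces the exponent range
$$(\#F)^{-\frac{3-\alpha+5\vare}{3-\alpha+20\vare}} \leq \rhsco \leq (\#F)^{-\vare},$$
the lower bound reflecting the balance between the ambient volume exponent $3$ and the concentration exponent $\alpha$, both inflated by the $\vare$-losses.

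Setting $F' := F \cap B_\rfrak(w_0,\rhsco)$ and using the resulting lower bound $\#F' \gtrsim \rhsco^{\alpha+\kappa}\#F$ for a small $\kappa \ll \vare$ produced by the pigeonhole, the desired Ahlfors-type estimate follows by combining the preliminary bound with the lower bound on $\#F'$:
$$\frac{\#(F'\cap B_\rfrak(w,\rhsc))}{\#F'} \leq \frac{D(\#F)^{1+\vare}\rhsc^{\alpha}}{\rhsco^{\alpha+\kappa}\cdot\#F} = \frac{D(\#F)^{\vare}\rhsc^{\alpha}}{\rhsco^{\alpha+\kappa}}.$$
The right-hand side is rewritten as $C'(\rhsc/\rhsco)^{\alpha - 20\vare}$ by invoking the range constraint on $\rhsco$ together with the standing hypothesis $\rhsc \geq (\#F)^{-1}$, which converts the residual factor $(\#F)^{\vare}\rhsco^{-\kappa}$ into an acceptable polynomial in $\rhsc/\rhsco$. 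The constant $C' \ll_D \vare^{-\star}$ absorbs the polynomial losses incurred in the Markov step and in summing over dyadic scales.

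The main obstacle is the arithmetic of exponents: one must balance the pigeonhole loss $\kappa$, the volumetric lower bound $\rhsco^{\alpha+\kappa}\#F$ on $\#F'$, and the subscale regularity exponent $\alpha-20\vare$ so that the stated numerical losses $5\vare$ and $20\vare$ are reached exactly, while keeping $\rhsco$ in a non-trivial range and $\#F'$ strictly positive. This bookkeeping is routine but delicate, and essentially parallels~\cite[Lemma 5.2]{BFLM}; the only genuinely new ingredient is that the ambient space $\rfrak$ is three-dimensional, which modifies the lower bound on $\rhsco$ as recorded above.
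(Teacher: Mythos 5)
Your opening step --- extracting from the energy bound the pointwise estimate $\#\bigl(F\cap B_\rfrak(w,b)\bigr)\leq D(\#F)^{1+\vare}b^{\alpha}$ --- is correct and matches the paper's~\eqref{eq:rho-regular}. But the core of your argument, the ``double pigeonhole'' combined with setting $F'=F\cap B_\rfrak(w_0,\rhsco)$, has a genuine gap, and as written it cannot close.

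The trouble is that both halves of your estimate are too weak to absorb the $(\#F)^{\vare}$ loss. The volume pigeonhole on a cover by $O(\rhsco^{-3})$ balls gives only $\#F'\gtrsim\rhsco^{3}\#F$, with exponent $3$, not $\alpha+\kappa$; the stronger bound $\#F'\gtrsim\rhsco^{\alpha+\kappa}\#F$ you invoke later is not produced by any pigeonhole you describe. More importantly, \emph{even if} you had $\#F'\gtrsim\rhsco^{\alpha+\kappa}\#F$ for a small $\kappa>0$, combining it with the global upper bound $\#\bigl(F'\cap B(w,b)\bigr)\leq D(\#F)^{1+\vare}b^{\alpha}$ gives
\[
\frac{\#\bigl(F'\cap B(w,b)\bigr)}{\#F'}\lesssim D(\#F)^{\vare}\,\frac{b^{\alpha}}{\rhsco^{\alpha+\kappa}}
= D(\#F)^{\vare}\rhsco^{-\kappa}\Bigl(\tfrac{b}{\rhsco}\Bigr)^{\alpha},
\]
and to dominate this by $C'(b/\rhsco)^{\alpha-20\vare}$ uniformly in $b\le\rhsco$ one needs $D(\#F)^{\vare}\rhsco^{-\kappa}\leq C'(\rhsco/b)^{20\vare}$; taking $b$ comparable to $\rhsco$ this forces $(\#F)^{\vare}\rhsco^{-\kappa}\leq C'$, which fails for $\rhsco$ near the top of the allowed range (indeed for any $\kappa\geq 0$, since the left side grows with $\#F$). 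Put differently, the naive route requires $\#F'\gtrsim(\#F)^{1+\vare}\rhsco^{\alpha}$, which already exceeds $\#F$ when $\rhsco$ is close to $(\#F)^{-\vare}$, so is impossible. The Markov/bad-set removal you sketch also does not save the argument: the bound on the exceptional set at scale $b$ is of size $\sim D(\#F)^{1+\vare}b^{c\vare}/C$, which is $\gg\#F$ precisely at the scales $b$ close to $\rhsco$ that matter most.

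What the paper actually does (following~\cite[Lemma 5.2]{BFLM}) is structurally different and is the ingredient your sketch is missing: after passing to a $b_0$-separated subset by Besicovitch, it \emph{regularizes} $F$ to a subset $F_1$ whose dyadic tree has constant branching numbers $R_1,\dots,R_{k_1}$, and then selects the level $k_2$ as the smallest index at which the running minimum of the average branching rate $M_k=\min_{\ell>k}\tfrac{1}{\ell-k}\sum_{k+1}^{\ell}\log_2 R_i$ reaches $(\alpha-20\vare)T$. The set $F'$ is $D\cap F_1$ for a dyadic $2^{-Tk_2}$-cube $D$, and the Ahlfors estimate follows because the ratio $\#(D'\cap F_1)/\#(D\cap F_1)$ for nested dyadic cubes is literally the product $\prod R_i^{-1}$ over the intervening levels, directly controlled by the definition of $k_2$. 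This bypasses entirely the need for a lower bound on $\#F'$ and an independent upper bound with the $(\#F)^{\vare}$ loss. The upper bound $k_2\leq\tfrac{3-\alpha+5\vare}{3-\alpha+20\vare}k_1$ (hence the lower bound on $\rhsco$) comes from the constraint $\log_2 R_i\leq 3T$ (ambient dimension $3$) and the global branching count $\sum\log_2 R_i\geq(\alpha-2\vare)T(k_1-2)$ --- this is where the dimension $3$ enters, as you guessed, but inside the tree argument, not in a volume pigeonhole.
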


We now begin the proof of the proposition.

\begin{proof}[Proof of Proposition~\ref{prop:proj-general}]
The general strategy is straightforward. 
First we apply Lemma~\ref{lem:max-ineq-proj-thm} to replace the set $F$ with a local version of it,
i.e., we replace $F$ with $F'\subset B_\rfrak(w_0,\rhsco)\cap F$. Then using Theorem~\ref{thm:proj-thm}, we  
project the {\em discretized dimension} in $\rfrak$ to the direction of $\Lie(V)=\rfrak\cap\Lie(N)$. 
Finally, we use the action of $A$ to expand this subset of $V$ to size $1$. 

The details however are a bit more involved, in particular, we need to carefully control the size of various elements; 
we also need to use Proposition~\ref{lem:one-return} (when $X$ is not compact) to ensure returns to $X_\eta$.  

Throughout the proof, we will assume $\#F$ is large enough so that 
\be\label{eq:num-elemnts-F-proj}
(\#F)^{-\vare}\leq (2\ref{E:BCH}\ref{E:non-div-main})^{-1}\eta^3,
\ee 
see Lemma~\ref{lem:BCH} and Proposition~\ref{lem:one-return}.

\subsection*{Localizing the entropy}
Apply Lemma~\ref{lem:max-ineq-proj-thm} with $F$ as in the proposition. 
Let $w_0\in F$, $\rhsco>0$, and $F'\subset B_\rfrak(w_0,\rhsco)\cap F$ be given by that lemma;
in particular, we have 
\be\label{eq:size-r-prj}
(\#F)^{-\frac{3-\alpha+5\vare}{3-\alpha+20\vare}}\leq \rhsco\leq (\#F)^{-\vare}.
\ee

Replacing $w_0$ with a different point in $F$ and increasing $C'$  if necessary, we will assume that 
$F'\subset B_\rfrak(w_0, b_1/(6\ref{E:BCH}))\cap F$. In view of Lemma~\ref{lem:BCH}, for all $w'\in F'$, there exist  $h\in H$ and $w\in \rfrak$ so that
\be \label{eq:h-exp(w)}
\begin{aligned}
    h\exp(w)&=\exp(w')\exp(-w_0)\\
    \|h-I\|\leq \rhsco^2/3\quad &\text{and}\quad \|w\|\leq 2\|w_0-w'\|\leq \rhsco/(3\ref{E:BCH}).
\end{aligned}
\ee

Set 
\be\label{eq:def-E}
E=\Bigl\{w\in\rfrak: \exists h \in H, w'\in F' \text{ so that $h,w,w_0,w'$ satisfy \eqref{eq:h-exp(w)}} \Bigr\}.
\ee 

\begin{lemma}\label{lem:CampHaus-again}
Let the notation be as above. Then 
\be\label{eq:E-regular}
\tfrac{\#(E\,\cap\, B(w,\rhsc))}{\#E}\leq \hat C\cdot (\rhsc/\rhsco)^{\alpha-20\vare}
\ee
for all $w\in\rfrak$ and $\rhsc\geq(\#F)^{-1}$ where 
$\hat C\leq 2C'$.  
\end{lemma}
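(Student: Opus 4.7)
The plan is to show that the map $F' \to E$ sending $w'$ to the unique $w$ with $\exp(w')\exp(-w_0) = h\exp(w)$ is bi-Lipschitz with constants arbitrarily close to $1$, so that the $(\alpha-20\vare)$-regularity of $F'$ given by Lemma~\ref{lem:max-ineq-proj-thm} transfers directly to $E$ (up to a multiplicative factor that can be absorbed in $\hat C \leq 2C'$).

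\textbf{Step 1: bi-Lipschitz comparison.} Let $w_1', w_2' \in F'$ and let $w_1, w_2 \in E$ together with $h_1, h_2 \in H$ be the corresponding data from \eqref{eq:h-exp(w)}. Using the relation $\exp(w_i')\exp(-w_0) = h_i \exp(w_i)$, I will compute
\[
\exp(w_1')\exp(-w_2') = h_1 \exp(w_1)\exp(-w_2) h_2^{-1}.
\]
Applying Lemma~\ref{lem:BCH} to $\exp(w_1)\exp(-w_2)$ gives $\exp(w_1)\exp(-w_2) = h'\exp(u)$ with $\tfrac12\|w_1-w_2\| \leq \|u\| \leq 2\|w_1-w_2\|$, and then since $\rfrak$ is $\Ad(H)$-invariant,
\[
\exp(w_1')\exp(-w_2') = (h_1 h' h_2^{-1})\exp\bigl(\Ad(h_2^{-1})u\bigr).
\]
A second application of Lemma~\ref{lem:BCH} to the left-hand side yields the factorization $\tilde h\exp(\tilde u)$ with $\tfrac12\|w_1'-w_2'\| \leq \|\tilde u\| \leq 2\|w_1'-w_2'\|$. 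By uniqueness of the $H \cdot \exp(\rfrak)$ factorization in a small neighbourhood of the identity, I will conclude $\tilde u = \Ad(h_2^{-1})u$ and $\tilde h = h_1 h' h_2^{-1}$.

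\textbf{Step 2: near-isometry and injectivity.} Because $\|h_i - I\| \leq \rhsco^2/3$ and $\|h' - I\| \ll \rhsco\|u\|$, the adjoint action $\Ad(h_2^{-1})$ distorts norms on $\rfrak$ by a factor $1 + O(\rhsco^2)$. Tracking these factors through a more careful version of the Baker--Campbell--Hausdorff estimate in Lemma~\ref{lem:BCH} (the true distortion is $1 + O(\rhsco)$ rather than the crude $1/2$--$2$ bound), I obtain
\[
(1 - C_0\rhsco)\|w_1'-w_2'\| \;\leq\; \|w_1 - w_2\| \;\leq\; (1 + C_0\rhsco)\|w_1'-w_2'\|
\]
for an absolute constant $C_0$. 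In particular, the lower bound implies $w_1' \neq w_2' \Rightarrow w_1 \neq w_2$, hence the map is a bijection and $\#E = \#F'$.

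\textbf{Step 3: transferring regularity.} For any $w \in \rfrak$ and $\rhsc \geq (\#F)^{-1}$, the preimage of $B_\rfrak(w, \rhsc) \cap E$ under the bijection of Step 1 is contained in some ball $B_\rfrak(\tilde w, (1+C_0\rhsco)\rhsc)$ intersected with $F'$. Applying Lemma~\ref{lem:max-ineq-proj-thm} gives
\[
\#(E \cap B_\rfrak(w,\rhsc)) \;\leq\; C'\bigl((1+C_0\rhsco)\rhsc/\rhsco\bigr)^{\alpha - 20\vare}\cdot \#F' \;\leq\; 2C'\, (\rhsc/\rhsco)^{\alpha - 20\vare}\cdot \#E,
\]
once $\rhsco$ is small enough (which follows from the upper bound $\rhsco \leq (\#F)^{-\vare}$ and the assumption \eqref{eq:num-elemnts-F-proj} that $\#F$ is sufficiently large). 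This yields \eqref{eq:E-regular} with $\hat C \leq 2C'$.

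The only delicate point is the sharpening of Lemma~\ref{lem:BCH} needed in Step~2: its stated constants $0.5$ and $2$ are too loose to give $\hat C \leq 2C'$ directly. However, inspection of the proof of Lemma~\ref{lem:BCH} shows the true estimate reads $\|w\| = (1 + O(\beta))\|w_1 - w_2\|$, which is what the argument requires. Combined with $\beta \leq \rhsco \leq (\#F)^{-\vare}$, this is the main obstacle and its resolution is essentially bookkeeping.
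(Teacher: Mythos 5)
Your overall strategy is the same as the paper's: show that the correspondence $F'\to E$, $w'\mapsto w$ is a near-isometry, so the $(\alpha-20\vare)$-regularity given by Lemma~\ref{lem:max-ineq-proj-thm} transfers with only a bounded loss. Where you and the paper diverge is in how the near-isometry is established. You use two applications of Lemma~\ref{lem:BCH} plus an $\Ad$-conjugation, and then --- correctly noting that the stated $\tfrac12$--$2$ constants are too coarse --- reopen the proof of Lemma~\ref{lem:BCH} to extract the sharper $1+O(\beta)$ estimate. The paper avoids this entirely: it regards $w'\mapsto w$ as a diffeomorphism $f$ on $B_\rfrak(0,\bar\injr)$ and simply records the derivative bound $\|{\rm D}_{w'}(f^{\pm 1})-I\|\le 0.1$, from which the Lipschitz transfer (with a harmless factor of $2$ in the radius) is immediate. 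The derivative route is cleaner: it packages exactly the sharpened BCH estimate you need, without any need to modify a stated lemma.

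Two small inaccuracies in your writeup, neither fatal but worth flagging. First, your bi-Lipschitz constant is mislabelled. The first BCH application is to $\exp(w_1)\exp(-w_2)$ with $w_1,w_2\in E\subset B_\rfrak(0,\rhsco/(3\ref{E:BCH}))$, so its distortion is indeed $1+O(\rhsco)$; but the second is to $\exp(w_1')\exp(-w_2')$ with $w_1',w_2'\in F'\subset B_\rfrak(w_0,\rhsco)\subset B_\rfrak(0,2\eta^2)$, whose distortion is $1+O(\eta^2)$, which dominates since $\rhsco\ll\eta^3$ by~\eqref{eq:num-elemnts-F-proj}. The correct statement is $\|w_1-w_2\|=(1+O(\eta^2))\|w_1'-w_2'\|$. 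This is still good enough because $\eta\le\eta_0=0.01\eta_X$ is bounded by an absolute small constant. Second, in Step 3 the preimage of $B_\rfrak(w,\rhsc)\cap E$ is contained in a ball of radius about $2(1+O(\eta^2))\rhsc$, not $(1+C_0\rhsco)\rhsc$ --- the factor $2$ arises because $w$ need not lie in the image of the map, so one must re-center at a point of $E\cap B_\rfrak(w,\rhsc)$. It is exactly this factor that produces $\hat C\le 2C'$ via $2^{\alpha-20\vare}<2$ (as the paper does); your display drops the $2$, so the appearance of $2C'$ in your final bound does not actually follow from your written inequality. Once corrected, the argument closes and matches the paper's conclusion.
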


This lemma is proved after the completion of the proof of the proposition.

Let $x'_2:=\exp(w_0)x_1$, and let $w'\in F'$. 
Then if $h$ and $w$ are as in \eqref{eq:h-exp(w)},
\be\label{eq:h-expw-Fcal}
h\exp(w)x'_2=\exp(w')\exp(-w_0)\exp(w_0)x_1=\exp(w')x_1\in \mathcal F.
\ee

We also need the following elementary lemma whose proof will be given after the completion of the proof of the proposition. 

\begin{lemma}\label{lem:1pt-1-vare-trans}
There exists $r_0\in[0,1]$ and a subset  
\[
\bar E\subset \Ad(u_{r_0})E\cap \Bigl\{w\in B_\rfrak(0, \injr): |w_{12}|\geq 10^{-3}\|w\|\Bigr\}
\] 
so that $\#\bar E\geq\#E/4$.
\end{lemma}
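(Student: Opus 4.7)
The plan is to exploit the fact that by~\eqref{eq:w12-xi-w-r-use}, for each fixed $w\in\rfrak$ the map $r\mapsto \xi_r(w)=-w_{21}r^2-2w_{11}r+w_{12}$ is a polynomial in $r$ of degree at most two whose three coefficients are, up to a factor of $2$, three of the four entries of the $2\times 2$ matrix $w$. Since $w\in\rfrak$ is traceless one has $\|w\|\asymp\max(|w_{11}|,|w_{12}|,|w_{21}|)$ with absolute implied constants, so at least one of these coefficients has modulus $\geq c_0\|w\|$ for an absolute $c_0>0$. In particular, $\xi_r(w)$ cannot be uniformly small on $[0,1]$: there exists $r_w\in[0,1]$ with $|\xi_{r_w}(w)|\geq c_0\|w\|$.

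I will then invoke the Remez/$(C,\alpha)$-good type estimate for quadratic polynomials, the very same estimate already used in the proof of Lemma~\ref{lem:one-return-1-1} and cited there from~\cite[\S3]{KM-Nondiv}, to conclude that for every $\epsilon>0$ and every nonzero $w\in E$,
\[
\bigl|\{r\in[0,1]:|\xi_r(w)|<\epsilon\|w\|\}\bigr|\leq C_1\epsilon^{1/2},
\]
where $C_1$ is an absolute constant. I then fix $\epsilon\in(0,1)$ small enough that $C_1\epsilon^{1/2}\leq \tfrac12$ and simultaneously $\epsilon\geq 10^{-3}C_{\mathrm{op}}$, where $C_{\mathrm{op}}$ is an absolute upper bound for the operator norm of $\Ad(u_r)$ on $\rfrak$ for $r\in[0,1]$ (such a $C_{\mathrm{op}}$ exists because the entries of $\Ad(u_r)w$ are polynomials in $r$ with coefficients bounded by those of $w$).

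With this choice of $\epsilon$, Fubini yields
\[
\int_0^1\#\bigl\{w\in E:|\xi_r(w)|<\epsilon\|w\|\bigr\}\,\diff r\leq \tfrac12\#E,
\]
so some $r_0\in[0,1]$ satisfies $\#\bigl\{w\in E:|\xi_{r_0}(w)|\geq\epsilon\|w\|\bigr\}\geq\tfrac12\#E$. Define $\bar E:=\{\Ad(u_{r_0})w:w\in E,\ |\xi_{r_0}(w)|\geq\epsilon\|w\|\}$. Injectivity of $\Ad(u_{r_0})$ gives $\#\bar E\geq\#E/2>\#E/4$. For each $\bar w=\Ad(u_{r_0})w\in\bar E$ one has $\bar w_{12}=\xi_{r_0}(w)$ and $\|\bar w\|\leq C_{\mathrm{op}}\|w\|$, so $|\bar w_{12}|=|\xi_{r_0}(w)|\geq\epsilon\|w\|\geq 10^{-3}\|\bar w\|$. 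Finally, by~\eqref{eq:h-exp(w)} every $w\in E$ satisfies $\|w\|\leq\rhsco/(3\ref{E:BCH})$, and~\eqref{eq:num-elemnts-F-proj} combined with $\rhsco\leq(\#F)^{-\vare}$ then forces $\|\bar w\|\leq C_{\mathrm{op}}\|w\|\ll\eta^3\leq\eta=\injr$ with room to spare.

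The only obstacle worth mentioning is constant bookkeeping: $\epsilon$ must be small enough for the Remez estimate to give a bad set of measure $\leq\tfrac12$, yet large enough that $\epsilon/C_{\mathrm{op}}\geq 10^{-3}$. Because $c_0$, $C_1$ and $C_{\mathrm{op}}$ are all absolute constants, both requirements can be met simultaneously by an explicit absolute choice of $\epsilon$. Everything else, namely the polynomial non-degeneracy of $\xi_r$, the Fubini-averaging to produce $r_0$, and the containment in $B_\rfrak(0,\injr)$, is routine.
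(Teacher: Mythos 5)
Your proposal takes a genuinely different route from the paper. The paper's proof is an explicit three-way pigeonhole on the coordinates of $w$: it picks $r_0\in\{0,0.1,0.9\}$ according to which of $|w_{12}|,|w_{11}|,|w_{21}|$ dominates on a quarter of $E$, and then verifies the conclusion by direct computation, with comfortable margin. Your proposal instead averages over $r\in[0,1]$, using the $(C,1/2)$-good property of quadratic polynomials to control, for each $w$, the measure of the bad set of $r$'s, and then applies Fubini to produce a single good $r_0$. The averaging idea is elegant and would generalize more readily if the threshold in the statement were an unspecified absolute constant $c>0$ rather than the explicit $10^{-3}$.

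However, there is a genuine gap precisely at the step you dismiss as ``routine'' constant bookkeeping. You need an $\epsilon$ satisfying simultaneously $C_1\epsilon^{1/2}\leq 1/2$ and $\epsilon\geq 10^{-3}C_{\mathrm{op}}$, and you assert this is possible ``because $c_0$, $C_1$ and $C_{\mathrm{op}}$ are all absolute constants.'' That inference is not valid: absoluteness of the constants does not guarantee they fit inside a fixed numerical window. In fact, with the norm $\|w\|=\max(|w_{11}|,|w_{12}|,|w_{21}|)$ one computes $C_{\mathrm{op}}=4$ (attained by $\Ad(u_1)$ on a suitable $w$), and combining the $(C,1/2)$-good estimate with the sup-to-coefficient comparison on $[0,1]$ gives $C_1$ of size roughly $8$ to $20$ depending on whether one uses the sharp Remez bound or the Kleinbock--Margulis constant cited in the paper. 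Then $(2C_1)^{-2}$ is on the order of $4\cdot10^{-4}$ to $1.5\cdot10^{-3}$, while $10^{-3}C_{\mathrm{op}}=4\cdot10^{-3}$, so the two requirements on $\epsilon$ are incompatible (or at best in conflict up to a factor of a few). The lemma only requires $\#\bar E\geq\#E/4$, so you could relax the Fubini threshold from $1/2$ to $3/4$, and with the sharp Remez constant the inequality becomes marginally satisfiable; but none of this is automatic, and as written (with threshold $1/2$ and the KM constant) the argument does not close. You would need to either carry out the constant tracking carefully and exploit the $3/4$ slack, or replace the averaging step with a direct choice of $r_0$ as the paper does.
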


Thanks to Lemma~\ref{lem:1pt-1-vare-trans}, we may replace $x'_2$ by $u_{r_0} x'_2$ for some $r_0\in[0,1]$ 
and $E$ by a subset $\bar E$ with $\#\bar E\geq\# E/4$ (which we continue to denote by $E$), to ensure that
\be\label{eq:E-general-pos}
E\subset \Bigl\{w\in B_\rfrak(0, \injr): |w_{12}|\geq 10^{-3}\|w\|\Bigr\}
\ee
where $w_{12}$ denotes the $(1,2)$-th entry of $w\in\rfrak$, see~\eqref{eq:w12-xi-w-r-use}. 
Note that~\eqref{eq:E-regular}
holds for the new $E$ with $4\hat C$, we suppress the factor $4$.

\subsection*{Estimates on the size of elements}
Let $t=|\log(\rhsco)|$. By~\eqref{eq:h-expw-Fcal}, for all $r\in [0,1]$, we have 
\be\label{eq:atau-h-expw-Fcal}
a_t u_r h\exp(w).x'_2\in a_t\cdot\{u_r: r\in [0,1]\}.\mathcal F,
\ee
where $w\in E$, i.e, $h\exp(w)=\exp(w')\exp(-w_0)$.

We now investigate properties of the element $a_t u_r h\exp(w)u_{-r}a_{-t}$.
In view of~\eqref{eq:h-exp(w)} and the definition of $t$, for all $r\in[0,1]$, we have  
\begin{subequations}
\begin{align}
\label{eq:choice-t}&\|\Ad(a_t u_r)w\|\leq 1, \quad\text{and}\\
\label{eq:h-exp(w)-final}&\|a_t u_r hu_{-r}a_{-t}-I\|\leq \rhsco;
\end{align}
\end{subequations}
note, moreover, that $a_t u_r h u_{-r}a_{-t}\in H$.

In view of~\eqref{eq:E-general-pos}, for all $|r|\leq 10^{-4}$ we have  
\[
|(\Ad(u_r)w)_{12}|\geq 10^{-4}\|w\|.
\]
Therefore, for all $|r|\leq 10^{-4}$, we have 
\[
\Ad(a_t u_r)w=\begin{pmatrix}v_{11} & v_{12}\\ v_{21} & v_{22}\end{pmatrix}
\]
where $|v_{11}|, |v_{22}|\leq 10^{4}\nuni^{-t}|v_{12}|$ and $|v_{21}|\leq 10^{4}\nuni^{-2t}|v_{12}|$. 
Hence for $|r|\leq 10^{-4}$, we have 
\[
a_t u_r h\exp(w).x'_2=(a_tu_r hu_{-r}a_{-t}) \cdot g \cdot \exp\Bigl(e^t(\Ad(a_t u_r)w)_{12}E_{12}\Bigr) .a_tu_r x'_2;
\]
for some $g\in G$ which in view of the estimate in~\eqref{eq:choice-t} satisfies 
\be\label{eq:est-size-g-proj-thm}
\|g-I\|\ll \rhsco
\ee 
with an absolute implied constant.

Using~\eqref{eq:atau-h-expw-Fcal} and~\eqref{eq:h-exp(w)-final}, 
we conclude that 
\begin{multline}\label{eq:g-w12}
\exp\Bigl(e^t(\Ad(a_t u_r)w)_{12}E_{12}\Bigr).a_tu_r x'_2\in\\ \Big(\boxG_{C\rhsco} \cdot \boxH_{C\rhsco}\cdot a_t\cdot\{u_r: r\in [0,1]\}\Big).\mathcal F,
\end{multline}
where $C$ is an absolute constant.

\subsection*{Applying Theorem~\ref{thm:proj-thm}}
We now choose a particular $|r|\leq 10^{-4}$ in order to the define the set $I$ in Proposition~\ref{prop:proj-general}.
This choice is based on Proposition~\ref{lem:one-return} and Theorem~\ref{thm:proj-thm}. 

Recall that $t=|\log(\rhsco)|$ and 
\be\label{eq:delta0-est-again}
\rhsco\leq (\#F)^{-\vare}\leq (2\ref{E:BCH}\ref{E:non-div-main})^{-1}\eta^3.
\ee
Apply Proposition~\ref{lem:one-return} with $t$, $x'_2=\exp(w_0)x_1\in X_\eta$, and the interval $J=[-10^{-4},10^{-4}]$. Then if we set 
\be\label{eq:recurrence-proj}
J''=\{r: |r|\leq 10^{-4}, a_tu_r .x'_2\in X_{\eta}\}
\ee
by the proposition $|J''|> 0.9\cdot 2\cdot 10^{-4}$.

We also apply Theorem~\ref{thm:proj-thm} with $E$, $J=[-10^{-4},10^{-4}]$, $\alpha-20\vare$, and $\kappa=\vare$. 
Let $J'$ be given by that Theorem. Fix some $r \in J' \cap J''$ 
 for the remainder of the argument. 

Put $x_2:=a_tu_r.x'_2$.
By definition of $J''$ in~\eqref{eq:recurrence-proj}, $x_2\in X_{\eta}$, and by~\eqref{eq:g-w12} \be\label{eq:atau-rtheta-Boxz1}
\exp\Bigl(\nuni^t(\Ad(u_r w)_{12})\Bigr).x_2\in \Big(\boxG_{C\rhsco}\cdot\boxH_{C\rhsco}\cdot a_t\cdot\{u_r: r\in [0,1]\}\Big).\mathcal F.
\ee

In the notation of Theorem~\ref{thm:proj-thm}, put 
\[
I:=\{\nuni^t\xi_r(w):w\in E_r\};
\]
recall that $\xi_r(w)=(\Ad(a_\tau\rot_\theta)w)_{12}$.
We will show that the proposition holds with $x_2$, $I$, and $\rhsco$. 
First note that the claimed bound \eqref{eq:size of b1} on $\rhsco$ in the statement of the proposition holds in view of~\eqref{eq:size-r-prj}.
The assertion in part~(2) of the proposition also holds by~\eqref{eq:atau-rtheta-Boxz1}. 

Thus it only remains to establish~(1) of the proposition. Let $\rho$ be the pushforward of the normalized counting measure on $E_r$ under the map
$w\mapsto\nuni^t\xi_r(w)$. That is, 
\[
\rho(K)=\tfrac{\#\{w\in E_r\;:\; \nuni^t\xi_r(w)\in K\}}{\#E_r}
\] 
for any interval $K\subset \bbr$.

Recall again that $\nuni^{-t}=\rhsco$. 
Let $w\in E_r$, and put $s=\nuni^t\xi_r(w)$. 
By Theorem~\ref{thm:proj-thm}, and in view of the fact that $\#E_r\geq 0.9 \cdot (\#E)$, for every $\rhsc\geq \nuni^{t}\cdot(\#F)^{-1}$, we have that
\be\label{eq:proj-thm-use}
\begin{aligned}
\rho\Big(\{s'\in I: |s-s'|\leq \rhsc\}\Big)&=\frac{\#\Big\{w'\in E_r: |\xi_{r}(w')-\xi_r(w)|\leq \nuni^{-t}\rhsc\Big\}}{\#E_r}\\
&\leq \bar C_\vare\cdot (\nuni^{-t}\rhsc/\rhsco)^{\alpha-27\vare}=\bar C_\vare \rhsc^{\alpha-27\vare}
\end{aligned}
\ee
where $\bar C_\vare\ll \vare^{-\star}$. 

Using the estimate in~\eqref{eq:size-r-prj}, we have 
\[
\nuni^t\cdot(\#F)^{-1}\leq (\#F)^{\frac{-15\vare}{3-\alpha+20\vare}};
\] 
this estimate and~\eqref{eq:proj-thm-use} finish the proof of part~(1).
\end{proof}

\begin{proof}[Proof of Lemma~\ref{lem:CampHaus-again}]
Let $\bar \injr\leq 0.01$, and let $w_0\in B_\rfrak(0,\bar\injr)$.
Define the map $f:B_\rfrak(0,\bar\injr)\to B_\rfrak(0,2\bar\injr)$ by $f(w')=w$ where 
\[
h\exp(w)=\exp(w')\exp(-w_0)\quad\text{ with $h\in \mathsf B_{2\ref{E:BCH}\bar\injr^2}^H$ and $w\in B_{\rfrak}(0,2\bar\injr)$.}
\]

By the Baker-Campel-Hausdorff formula, see Lemma~\ref{lem:BCH}, $f$ is a diffeomorphism. Moreover, we have 
\[
\Bigl\|{\rm D}_{w'}\Bigl(f^{\pm1}\Bigr)-I\Bigr\|\leq 0.1
\]
for all $w'\in B_\rfrak(0,\bar\injr)$, in particular, ${\rm D}_{w'}(f^{\pm1})$ is invertible for all $w'\in B_\rfrak(0,\bar\injr)$. 

We conclude that $\#f(E)=\#E$, and 
\[
\# \Bigl(B_\rfrak(\bar w,\rhsc)\cap f(E)\Bigr)\leq \#\Bigl(B_\rfrak(f^{-1}(\bar w),2\rhsc)\cap E\Bigr)
\]
for all $\rhsc\leq \bar\eta$. The claim follows. 
\end{proof}

\begin{proof}[Proof of Lemma~\ref{lem:1pt-1-vare-trans}]
This is a consequence of the fact that the adjoint action of $H$ on $\rfrak$ is irreducible;
the argument below is based on explicit computations. 

Recall that $\|w\|=\max\{|w_{12}|, |w_{21}|, |w|_{21}\}$; moreover, recall that 
\be\label{eq:Ad-r-theta-12}
\Bigl(\Ad(\uvk)w\Bigr)_{12}=-w_{21}r^2-2w_{11}r+w_{12}.
\ee

Now if
\[
\#\{w\in E: |w_{12}|\geq 0.001\|w\|\}\geq \#E/4,
\]
then the claim holds with $r_0=0$. 

Therefore, we assume $\#\hat{E}\geq \frac{3\cdot(\#E)}{4}$ where $\hat{E}=\{w\in E: |w_{12}|\leq 0.001\|w\|\}$.
If
\[
\#\{w\in\hat{E}: |w_{11}|\geq 0.1\|w\|\}\geq \#E/4,
\]
then the claim holds with $r_0=0.1$ and the set on the left side of the above.

Therefore, we may assume 
\[
\#\{w\in\hat{E}: |w_{11}|\leq 0.1\|w\|\}\geq\#E/2.
\]
For every $w$ in the set on the left side of the above, $\|w\|=|w_{21}|$. 
The claim now holds with $r_0=0.9$ and the set on the left side of the above.
\end{proof}

\section{A closing lemma}\label{sec:closing-lemma}
For the proof of Theorem~\ref{thm:main}, one needs to guarantee that a certain initial separation is satisfied. 
This is the task in this section.        
This initial separation estimate is then bootstrapped in~\S\ref{sec:Marg-func-ini-dim} to give a better (finitary) dimension estimate that is used to conclude the theorem.
Throughout this section, {\em $\Gamma$ is assumed to be arithmetic}.
Indeed, this section is the only place 
where arithmeticity of $\Gamma$  is used in this paper, more specifically Lemma~\ref{lem:non-elementary}. Superficially arithmeticity is also used Lemma~\ref{lem:almost-inv}, but there the usage of arithmeticity is rather mild --- by local rigidity a lattice $\Gamma$ in $\SL(2,\bbc)$ or an irreducible lattice in $\SL(2,\bbr)\times\SL(2,\bbr)$ can be conjugated to have algebraic entries in some number field, which is good enough for our (relatively coarse) purposes. 

\medskip

Recall from~\eqref{eq:def-Ct} the definition 
\[
\coneH_{\eta, t,\beta}=\boxHs_{\beta}\cdot a_t\cdot \big\{u_r: r\in[0,\eta]\big\} \subset H;
\]
recall also that we always assume $\nuni^{-0.01t}<\beta<1$, and in this section we will be mainly interested in the case $\eta=1$; to simplify the notation, we will write $\coneH_t$ for~$\coneH_{1, t,\beta}$.

Let $x\in X$ and $t>0$. For every $z\in\coneH_t.x$, put
\be\label{eq:def-I-e-y}
\margI_t(z):=\Bigl\{w\in \rfrak: 0<\|w\|<\inj(z),\, \exp(w) z\in \coneH_t.x\Bigr\}.
\ee
Note that this is a finite subset of $\rfrak$. In~\eqref{eq:def-I-h-y-C}, we will define $I_\cone(h,z)$ 
for all $h\in H$ and more general sets $\cone$.

Let $0<\alpha<1$. Define the function $f_{t,\alpha}:\coneH_t.x\to [2,\infty)$ (which we will later use as a Margulis function in the bootstrap phase of the proof) as follows
\[
f_{t,\alpha}(z)=\begin{cases} \sum_{w\in I_t(z)}\|w\|^{-\alpha} & \text{if $\margI_t(z)\neq\emptyset $}\\
\inj(z)^{-\alpha}&\text{otherwise}
\end{cases}.
\]

The following is the main result of this section. 

\begin{propos}\label{prop:closing-lemma}
There exists $D_0$ (which depends explicitly on $\Gamma$) satisfying the following. 
Let $D\geq D_0+1$, and let $x_0\in X$. 
Then for all large enough $t$ (depending explicitly on $\inj(x_0)$ and $X$) 
at least one of the following holds.
\begin{enumerate}
\item  There is some $x\in X_{\rm cpt}\cap \{a_{8t}u_r.x_0: r\in[0,1]\}$ such that 
\begin{enumerate}
\item $\sfh\mapsto \sfh x$ is injective over $\coneH_{\rws}$.
\item For all $z\in\coneH_\rws.x$, we have 
\[
f_{t,\alpha}(z)\leq  \nuni^{D\rws}
\]
\end{enumerate}
for all $0<\alpha<1$. 

\item There is $x'\in X$ such that $H.x'$ is periodic with
\[
\vol(H.x')\leq \nuni^{D_0\rws}\quad\text{and}\quad\dist_X(x',x_0)\leq \nuni^{(-D+D_0)\rws}.
\] 
\end{enumerate} 
\end{propos}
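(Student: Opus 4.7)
The plan is to argue by contradiction: assume option~(2) fails for every admissible $x'$, and produce an $x\in X_{\rm cpt}\cap\{a_{8t}u_r x_0: r\in[0,1]\}$ satisfying option~(1).

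First, I would apply Proposition~\ref{prop:non-div} to $x_0$, the interval $I=[0,1]$, and time $8t$: provided $t$ exceeds a threshold depending only on $\inj(x_0)$ and $X$, the set $R:=\{r\in[0,1]: a_{8t}u_r x_0\in X_{\eta_X}\}$ has measure at least $0.99$, and every $x_r := a_{8t}u_r x_0$ with $r\in R$ lies in $X_{\rm cpt}$. It therefore suffices to show that some such $x_r$ verifies both the injectivity and the Margulis function bound of~(1). Fix once and for all lifts $g_r\in\mathfrak S_{\rm cpt}$ of $x_r$, of norm bounded in terms of $X$.

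Suppose for contradiction that option~(1) fails at every $x_r$ with $r\in R$. Each failure produces an element of $\Gamma$ of controlled size with small component transverse to $H$. Explicitly, failure of injectivity on $\coneH_t$ yields distinct $h_1,h_2\in\coneH_t$ with $h_2^{-1}h_1\in\operatorname{Stab}(x_r)$; the corresponding $\gamma_r:=g_r^{-1}(h_2^{-1}h_1)g_r\in\Gamma$ satisfies $\|\gamma_r\|\leq e^{O(t)}$ and lies in $g_r^{-1}Hg_r$. Failure of the Margulis bound at some $\alpha_r\in(0,1)$ yields $z\in\coneH_t.x_r$ and a nonzero $w_r\in\rfrak$ with $\|w_r\|\leq e^{-Dt/\alpha_r}$ such that $\exp(w_r)z\in\coneH_t.x_r$, producing $\gamma_r\in\Gamma$ of norm $\leq e^{O(t)}$ whose transverse-to-$H$ component (after conjugation by the relevant cone element) is bounded by $\|w_r\|$. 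Letting $\alpha_r\to 1^-$ in the Margulis-type failure yields arbitrarily small nontrivial transverse witnesses, while the injectivity-type failure gives elements genuinely sitting in a conjugate of $H$.

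The arithmeticity of $\Gamma$ enters through the non-elementary lemma (Lemma~\ref{lem:non-elementary}), which bounds below the size of generators of non-elementary subgroups of $\Gamma$. From a single $\gamma_r$ I would extract a second element $\gamma_r'$ whose transverse component is linearly independent of that of $\gamma_r$, either by commuting $\gamma_r$ with a suitably chosen element of $\coneH_t$ and exploiting the irreducibility of the adjoint $H$-action on $\rfrak$ (in the spirit of the proof of Lemma~\ref{lem:1pt-1-vare-trans}), or by selecting a second $r'\in R$ with transverse witness independent from that at $r$. For $D\geq D_0+1$ with $D_0$ calibrated to the arithmeticity lemma, the Margulis--Zassenhaus alternative then forces $\langle\gamma_r,\gamma_r'\rangle$ to be elementary, i.e., to lie in the normalizer of a unipotent subgroup; its near-$H$ approximation (via Lemma~\ref{lem:almost-inv}) forces $Hx_r$ to be periodic with $\vol(Hx_r)\leq e^{D_0 t}$. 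Since $a_{8t}u_r\in H$, we have $Hx_0 = Hx_r$, so option~(2) is satisfied with $x'=x_0$ and distance zero, contradicting the standing assumption.

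The main obstacle is extracting, from a single near-$H$ transverse witness, a genuinely independent second witness of comparable smallness, so that the arithmeticity lemma applies with the right parameters. The $H$-irreducibility of $\rfrak$ ensures that conjugation by generic cone elements spreads a one-directional small witness across all of $\rfrak$; controlling this spread quantitatively, and ensuring that both resulting witnesses fall below the arithmeticity threshold simultaneously, is the delicate bookkeeping that pins down the exact exponent $D_0$ and explains why $D$ must be taken at least $D_0+1$.
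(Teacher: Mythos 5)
Your proposal captures the correct starting move (apply Proposition~\ref{prop:non-div} to land many $a_{8t}u_r x_0$ in $X_{\rm cpt}$, then argue by contradiction), and you correctly identify Lemmas~\ref{lem:non-elementary} and~\ref{lem:almost-inv} as the places where arithmeticity enters. But the heart of the argument is missing, and the case you do sketch is run backwards.

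The key difficulty is not producing \emph{a} second lattice element; it is dealing with the possibility that \emph{all} the witnesses $\gamma_r$ commute with each other. Your plan to manufacture an independent $\gamma_r'$ from a single $\gamma_r$ by conjugating with cone elements and appealing to $H$-irreducibility of $\rfrak$ does not deliver non-commutativity: conjugates of a single element by elements of $H$ generate, in the worst case, a subgroup contained in the normalizer of a cyclic group, and this is exactly the scenario one has to rule out. The paper's actual proof first establishes a quantitative counting claim — that the assignment $r\mapsto\gamma_r$ takes $\gg e^{3t}$ distinct values as $r$ ranges over $J_{\rm cpt}$ — by showing that $\gamma_r=\gamma_{r'}$ forces a quadratic polynomial in $e^{7t}(r'-r)$, with at least one coefficient of size $\gg 1$ by~\eqref{eq: a1-a4}, to be tiny. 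It then dichotomizes: either all $\gamma_r$ commute (Case 1), or two of them do not (Case 2). Case 1 is the delicate part: one analyzes the Zariski closure $\bf L$ of $\langle\gamma_r\rangle$, shows the non-central part $\mathbf{L}'$ must be unipotent (because tori contribute at most $(\log R)^2$ lattice points in a ball of radius $R$, far fewer than the $e^{3t}$ distinct elements available), and then derives a contradiction from a quantitative comparison of the entries of $\sfs_r$ against the structure of a unipotent group — this entire sub-argument is absent from your sketch.

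You also have the two cases inverted. Non-commutativity of $\gamma_r,\gamma_{r'}$ does not trigger a ``Margulis--Zassenhaus'' alternative forcing elementarity and a contradiction; rather, it is precisely the case in which option~(2) is \emph{produced}: Lemma~\ref{lem:almost-inv} moves the two approximate fixed vectors to exact ones at a nearby $g_2$, and Lemma~\ref{lem:non-elementary} then says $Hg_2\Gamma$ is periodic with volume $\ll e^{9\ref{k:non-el-2}t}$. In particular your final step — concluding $Hx_r$ itself is periodic, so that $x'=x_0$ works ``with distance zero'' — is wrong: the periodic orbit obtained this way is only $e^{(-D+D'_0)t}$-close to $Hx_1$, which is exactly why option~(2) carries a nontrivial distance bound and why one must then pull back by $(a_t u_{r_0})^{-1}$ to get $x'$ near $x_0$. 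Finally, your treatment of the Margulis-function failure is imprecise: one does not need to let $\alpha\to 1^-$; the bound on the small witness $\|w\|\leq e^{(-D+5)t}$ already follows uniformly in $\alpha\in(0,1)$ from $\#I_t(z)\ll e^{4t}$ (Lemma~\ref{lem: explain 4t}) and the pigeonhole principle.
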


The proof we give here is similar to that of Margulis and the first named author in~\cite[Lemma 5.2]{LM-Oppenheim}. A certain Diophantine condition (namely, {\em inheritable boundedness condition}) is used 
in the formulation of loc.\ cit.\ to guarantee in particular that our initial point is not close to a periodic $U$ orbit. We do not need such a condition here since 
we consider essentially translations of local $U$ orbits by expanding elements in $A$, and not long orbits of $U$ (this is reminiscent of a result of Nimish Shah \cite[Thm.~1.1]{Shah-Expanding}). 
As in \cite{LM-Oppenheim} the argument is elementary; a result of similar spirit to our Proposition~\ref{prop:closing-lemma} is proved by Einsiedler, Margulis, and Venkatesh in~\cite[Prop.~13.1]{EMV} using property-$\tau$, i.e.\ a uniform spectral gap.

Let us begin with some preliminary statements. 
In Proposition~\ref{prop:closing-lemma}, we are allowed to choose $t$ large depending on $\Gamma$.
Therefore, by passing to a finite index subgroup, we will assume that both of the following hold: {\em $\Gamma$ is torsion free} and if $\Gamma\subset\SL_2(\bbr)\times\SL_2(\bbr)$ is reducible, then $\Gamma=\Gamma_1\times\Gamma_2$

It is more convenient to consider $G$ as the set of $\lf$-points of an algebraic group defined over $\lf$ --- this way $H$ can be realized of as an algebraic subgroup of $G$. 
To that end, we let ${\bf G}=\SL_2\times \SL_2$ if $G=\SL_2(\lf)\times\SL_2(\lf)$. 
If $G=\SL_2(\qlf)$, we let ${\bf G}={\rm Res}_{\qlf/\lf}(\SL_2)$.
In either case, ${\bf G}$ is defined over $\lf$ and $G={\bf G}(\lf)$.

Recall that $\Gamma$ is assumed to be arithmetic. 
Therefore, there exists a semisimple $\bbq$-group $\tilde\G\subset\SL_M$, for some $M$, and an epimorphism 
$\rho:\tilde\G(\bbr)\to \G(\bbr)=G$ of $\bbr$-groups with compact kernel so that 
\be\label{eq: dimension of Gamma}
\text{$\Gamma$ is commensurable with $\rho(\tilde\G(\bbz))$}
\ee
where $\tilde\G(\bbz)=\tilde\G(\bbr)\cap\SL_M(\bbz)$.
Note that $\tilde\G$ can be chosen to be $\bbq$-almost simple unless $\Gamma\subset\SL_2(\bbr)\times\SL_2(\bbr)$ is a reducible lattice, in which case $\tilde\G$ can be chosen to have two $\bbq$-almost simple factors.

Let $\tilde\gfrak=\Lie(\tilde\G(\bbr))$, this Lie algebra has a natural $\bbq$-structure. 
Moreover, $\tilde\gfrak_\bbz:=\tilde\gfrak\cap\sl_M(\bbz)$ is a $\tilde\G(\bbz)$-stable lattice in $\tilde\gfrak$.

We continue to write $\Lie(G)=\gfrak$ and $\Lie(H)=\hfrak$; 
these are considered as $6$-dimensional (resp.\ $3$-dimensional) $\lf$-vector spaces. 

Let $v_H$ be a unit vector on the line $\wedge^3\hfrak$. Note that 
\[
N_G(H)=\{g\in G: gv_H=v_H\}
\] 
which contains $H$ as a subgroup of index two.

Recall also that we fixed a compact subset $\mathfrak S_{\rm cpt}\subset G$ which projects onto $X_{\rm cpt}$, see \S\ref{sec:SiegelSet} for the notation.

\begin{lemma}\label{lem:non-elementary}
There exist $\constE\label{E:non-el-1}$ and $\constk\label{k:non-el-2}$ depending on $M$ and $\mathfrak S_{\rm cpt}$, 
so that the following holds.  
Let $\gamma_1,\gamma_2\in\Gamma$ be two non-commuting elements. 
If $g\in \mathfrak S_{\rm cpt}$ is so that $\gamma_i g^{-1}v_H=g^{-1}v_H$ for $i=1,2$, then $Hg\Gamma$ is a closed orbit with 
\[
\vol(Hg\Gamma)\leq\ref{E:non-el-1}  \Bigl(\max\{\|\gamma_1^{\pm1}\|,\|\gamma_2^{\pm1}\|\}\Bigr)^{\ref{k:non-el-2}}.
\] 
\end{lemma}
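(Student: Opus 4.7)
The plan is to apply the standard linearization approach. The condition $\gamma_i g^{-1} v_H = g^{-1} v_H$ says that $g\gamma_i g^{-1}$ stabilizes $v_H$ in the exterior representation, i.e.\ lies in $\mathrm{Stab}_G(v_H) = N_G(H)$. Hence $\Lambda := \langle \gamma_1, \gamma_2\rangle \subset \Gamma \cap g^{-1} N_G(H)g$ is non-abelian by hypothesis. It suffices to show that $\Gamma \cap g^{-1}Hg$ is a lattice in $g^{-1}Hg$ (equivalent to the closedness of $Hg\Gamma$ in $X$) and to bound its covolume polynomially in $\max\|\gamma_i^{\pm 1}\|$.

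First I would analyze the $\bbr$-Zariski closure $\mathbf{L}$ of $\Lambda$ inside $g^{-1}N_{\mathbf{G}}(\mathbf{H})g$; its identity component $\mathbf{L}^\circ$ is a connected $\bbr$-subgroup of $g^{-1}\mathbf{H}g \cong \SL_2$, and the goal is to prove $\mathbf{L}^\circ = g^{-1}\mathbf{H}g$. The proper connected algebraic subgroups of $\SL_2$ are unipotent groups, one-dimensional tori, and Borels, all solvable; if $\mathbf{L}^\circ$ were one of these, then $\Lambda$ would be virtually solvable and contained in $g^{-1} N_{\mathbf{G}}(\mathbf{L}^\circ) g$. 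Lifting through $\rho$ to the $\bbq$-almost simple group $\tilde{\mathbf{G}}$ and using that Zariski closures of subsets of $\tilde{\mathbf{G}}(\bbz)$ are automatically $\bbq$-defined, such an infinite non-abelian virtually solvable subgroup would be contained, after $\tilde\Gamma$-conjugation, in a $\bbq$-parabolic (or rational solvable) subgroup $\tilde{\mathbf{P}}$ of $\tilde{\mathbf{G}}$. This would pin $g^{-1} v_H$ to a rational highest-weight direction in $V := \wedge^{3}\gfrak$, and the reduction theory of $\tilde{\mathbf{G}}(\bbz)$ would then force $g$ deep into the cusp of $X$ associated to $\tilde{\mathbf{P}}$, contradicting $g\in\mathfrak{S}_{\rm cpt}$. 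Thus $\mathbf{L}^\circ = g^{-1}\mathbf{H}g$, so $\Lambda$ is Zariski-dense in $g^{-1}\mathbf{H}g$, and Borel density in the arithmetic setting yields that $\Gamma \cap g^{-1}Hg$ is a lattice in $g^{-1}Hg$; in particular $Hg\Gamma$ is closed in $X$.

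Finally, for the covolume bound I would exhibit explicit generators of (a finite-index subgroup of) $\Gamma \cap g^{-1}Hg$ of controlled norm --- e.g.\ $\gamma_1^2$, $\gamma_2^2$, and $[\gamma_1,\gamma_2]$, together with any additional generators forced by the finite-index passage --- each of operator norm $\leq\|g\|^{O(1)}\cdot\max\|\gamma_i^{\pm 1}\|^{O(1)}$. Bounding the covolume of a lattice in $\SL_2(\bbr)$ by the norms of a small generating set is routine (e.g., via an explicit fundamental domain in $\bbh^2$, or by comparison with integer points in a dilated ball inside the regular representation $V$), and together with $g$ lying in the fixed compact set $\mathfrak{S}_{\rm cpt}$ this gives $\vol(Hg\Gamma) \leq \ref{E:non-el-1}\cdot(\max\|\gamma_i^{\pm 1}\|)^{\ref{k:non-el-2}}$, with constants depending only on $M$ and $\mathfrak{S}_{\rm cpt}$. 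The hard part will be the Zariski-closure step: carefully excluding each of the virtually-solvable intermediate possibilities for $\mathbf{L}^\circ$ using only the compactness of $g$ and the arithmetic structure of $\Gamma$, where explicit reduction theory of $\tilde{\mathbf{G}}(\bbz)$ must be invoked to translate ``$\Lambda$ lies in a $\bbq$-parabolic'' into ``$g$ is deep in a cusp.''
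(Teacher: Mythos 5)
Your high-level plan (linearize via $v_H$, show Zariski density, conclude closedness via Borel--Harish-Chandra, then bound the covolume) matches the paper, but both technical steps you propose differ from the paper's and both have real gaps.

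\textbf{Zariski density.} Your route --- rule out proper connected Zariski closures $\mathbf{L}^\circ\subsetneq g^{-1}\mathbf H g$ by lifting to $\tilde{\mathbf G}(\bbz)$, arguing that a virtually solvable $\Lambda$ must sit in a $\bbq$-parabolic, and deriving a ``$g$ deep in a cusp'' contradiction from $g\in\mathfrak S_{\rm cpt}$ --- does not go through as stated. First, a $\bbq$-defined solvable subgroup of $\tilde{\mathbf G}$ need not lie in a proper $\bbq$-parabolic: a $\bbq$-anisotropic torus is the standard counterexample, and in that case there is no associated cusp. Second, even when $\Lambda\subset P$ for a $\bbq$-parabolic $P$, knowing that $\Lambda$ fixes $g^{-1}v_H$ does not force $g^{-1}v_H$ near the highest-weight direction for $P$, so the reduction-theoretic ``deep in the cusp'' inference does not follow. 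In fact, the paper does \emph{not} use $g\in\mathfrak S_{\rm cpt}$ for this step at all. Its argument is purely group-theoretic and much cleaner: set $\Lambda_1=g\langle\gamma_1,\gamma_2\rangle g^{-1}\subset N_G(H)$ and $\Lambda=\Lambda_1\cap H$; since $\Lambda_1$ is torsion-free, discrete, and non-abelian, and since $N_G(H)/H\simeq\bbz/2\bbz$, the assumption that $\Lambda$ is abelian forces $\Lambda\simeq\bbz$ and $\Lambda_1/\Lambda\simeq\bbz/2\bbz$, so $\Lambda_1$ would be $\bbz$, $\bbz\times\bbz/2\bbz$, or $\bbz/2\bbz\ltimes\bbz$, contradicting non-abelianness plus torsion-freeness. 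Hence $\Lambda$ is non-abelian, torsion-free, and discrete in $\SL_2(\bbr)$, which forces Zariski density (discrete subgroups of Borel subgroups of $\SL_2(\bbr)$ are abelian). The compactness of $g$ plays no role here.

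\textbf{Covolume bound.} The proposed bound via a ``small generating set'' $\gamma_1^2,\gamma_2^2,[\gamma_1,\gamma_2]$ of (a finite-index subgroup of) $\Gamma\cap g^{-1}Hg$ has no justification: $\langle\gamma_1,\gamma_2\rangle\cap g^{-1}Hg$ is only known to be Zariski dense, and it can easily be a Schottky-type subgroup of infinite index (and infinite covolume) in the lattice $\Gamma\cap g^{-1}Hg$. Your hedge about ``additional generators forced by the finite-index passage'' is exactly the uncontrolled part. The paper instead bounds the \emph{height} of $\tilde{\mathfrak l}=\Lie(\tilde{\mathbf L}(\bbr))$ as a $\bbq$-subspace of $\tilde\gfrak$: via Chevalley's theorem one realizes $\tilde{\mathbf L}$ as the stabilizer of a subspace $\Phi^0$ in some $\bbq$-representation, and since $\tilde\gamma_1,\tilde\gamma_2$ are Zariski dense in $\tilde{\mathbf L}$ with controlled norm $\Theta=\max\|\gamma_i^{\pm1}\|$, the height of $\Phi^0$, hence of $\tilde{\mathfrak l}$, is $\ll\Theta^\star$. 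One then appeals to the standard height-to-covolume bound for arithmetic orbits (\cite[\S17]{EMV}, \cite[App.~B]{EMMV}) to get $\vol(\tilde{\mathbf L}(\bbr)\tilde{\mathbf G}(\bbz))\ll\Theta^\star$, and finally uses $g\in\mathfrak S_{\rm cpt}$ and the compactness of $\ker\rho$ to transfer this to $\vol(Hg\Gamma)$. Note that $g\in\mathfrak S_{\rm cpt}$ is used only in this last bookkeeping step, not for Zariski density.

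So the overall skeleton is right, but both the density step and the volume step need to be replaced by arguments that actually close: the index-two trick for density, and the height bound on $\tilde{\mathfrak l}$ for volume.
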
 

\begin{proof}
In view of our assumption in the lemma, we have 
\[
\langle \gamma_1,\gamma_2\rangle\subset {\rm Stab}_G(g^{-1}v_H)=N_G(g^{-1}Hg).
\]

Let $\Lambda_1:=\langle g\gamma_1g^{-1},g\gamma_2g^{-1}\rangle$.
We claim that $\Lambda:=\Lambda_1 \cap H$ is Zariski dense in $H$. Indeed since $\langle \gamma_1,\gamma_2\rangle$ is a torsion free, non-commutative, discrete subgroup of $N_G(g^{-1}Hg)$, we have  $\Lambda$ is discrete and torsion free. This and the fact that $H\simeq \SL_2(\bbr)$ imply that if $\Lambda$ is non-commutative, then it is Zariski dense in $H$. 
Assume thus that $\Lambda$ is commutative, which implies that $\Lambda\simeq \bbz$ and that $\Lambda\subsetneq\Lambda_1$ 
(recall that $\Lambda_1$ is non-commutative).    
Since $N_G(H)=HC$ where $C$ is the center of $G$ if $G=\SL_2(\bbr)\times\SL_2(\bbr)$ and $C=\langle{\rm diag}(i,-i)\rangle$ if $G=\SL_2(\bbc)$, we have $N_G(H)/H\simeq \bbz/2\bbz$; thus  $\Lambda_1/\Lambda\simeq \bbz/2\bbz$. 
This implies that $\Lambda_1$ is isomorphic to $\bbz$ or $\bbz\times\bbz/2\bbz$ or $\bbz/2\bbz\ltimes\bbz$. 
Either possibility leads to a contradiction to $\Lambda_1$ being non-commutative and torsion free.

Let $\bf L$ be the Zariski closure  of $\langle \gamma_1,\gamma_2\rangle$. In view of the above discussion, 
\be\label{eq:L-gHg-1}
g^{-1}Hg\subset {\bf L}(\bbr)\subset N_G(g^{-1}Hg).
\ee 
Since $N_G(H)/H\simeq\bbz/2\bbz$, replacing $\gamma_i$ by $\gamma_i^2$ if necessary we assume that 
${\bf L}(\bbr)=g^{-1}Hg$. 

Let $\tilde\gamma_i\in\tilde\G(\bbz)$ be so that $\rho(\tilde\gamma_i)=\gamma_i$. 
Then the Zariski closure $\tilde{\bf L}$ of $\langle \tilde\gamma_1,\tilde\gamma_2\rangle$ is semisimple and 
$\rho(\tilde{\bf L}(\bbr))={\bf L}(\bbr)$. Therefore, in view of a theorem of Borel and Harish-Chandra~\cite[Thm.~7.8]{Borel-Harish-Chandra}, we have 
$\tilde{\bf L}(\bbr)\cap \tilde\G(\bbz)$ is a lattice in $\tilde{\bf L}(\bbr)$. 

This implies that ${\bf L}(\bbr)\Gamma$ is a periodic orbit, which in view of~\eqref{eq:L-gHg-1}
implies that $Hg\Gamma$ is a periodic orbit. 

\medskip

We now turn to the proof of the second claim. 
Let $\tilde{\mathfrak l}=\Lie(\tilde{\bf L}(\bbr))\subset \tilde\gfrak$. 
Then $\tilde{\mathfrak l}$ is a rational subspace of $\tilde\gfrak$; we will show that 
the height of this subspace is $\ll \Theta^\star$ where $\Theta:=\max\{\|\gamma_1^{\pm1}\|,\|\gamma_2^{\pm1}\|\}$. 
That is to say: $\tilde{\mathfrak l}$ has a basis consisting of vectors in $\tilde\gfrak_\bbz\cap\tilde{\mathfrak l}$ 
with norm $\ll \Theta^\star$, e.g., by Minkowski's second theorem.

Indeed by Chevalley's theorem and the fact that $\tilde{\bf L}(\bbr)$ is semisimple (hence it has no character),   
there exists a finite dimensional $\bbq$-representation of $\tilde\G$ on a space $\Phi$ with the following property.
Let $\Phi^0$ denote the vectors in $\Phi_\bbr$ which are fixed by $\tilde{\bf L}(\bbr)$, then 
\[
\tilde{\bf L}(\bbr)=\{g\in\tilde\G(\bbr): g.q=q,\text{ for all $q\in\Phi^0$}\};
\]
in terms of the Lie algebras, this is $\tilde{\mathfrak l}=\{w\in\tilde\gfrak: w.\Phi^0=0\}$.

Since $\langle \tilde\gamma_1,\tilde\gamma_2\rangle$ is Zariski dense in $\tilde{\bf L}$, we conclude that  
$\Phi^0$ is a rational subspace with height $\ll (\max\{\|\tilde\gamma_1^{\pm1}\|,\|\tilde\gamma_2^{\pm1}\|\})^\star\ll\Theta^\star$; we used the fact
that $\rho(\tilde\gamma_i)=\gamma_i$ to bound $\|\tilde\gamma_i^{\pm1}\|$ from above by 
$\|\gamma_i^{\pm1}\|^\star$ for $i=1,2$. 

Using this and the fact that $\tilde{\mathfrak l}=\{w\in\tilde\gfrak: w.\Phi^0=0\}$, we conclude 
that height of $\tilde{\mathfrak l}$ is $\ll \Theta^\star$ as we claimed. 
This height bound implies that 
\[
\vol\Big(\tilde{\bf L}(\bbr)\tilde\G(\bbz)\Big)\ll \Theta^\star.
\]
see e.g.~\cite[\S 17]{EMV}, or~\cite[App.\ B]{EMMV} (see also~\cite[\S2]{ELMV-1}, which treats the case of tori; the proof there works for the semisimple case as well).

We deduce that $\vol({\bf L}(\bbr)\Gamma)\ll \Theta^\star$; 
recall that the kernel of $\rho$ is compact and $\mathbf {L}(\bbr)=\rho(\tilde{\mathbf{L}}(\bbr))$. 
The claimed bound on $\vol(Hg\Gamma)$ now follows in view of~\eqref{eq:L-gHg-1} and the fact that $g\in\mathfrak S_{\rm cpt}$. 
\end{proof}

We also need the following lemma.

\begin{lemma}\label{lem:almost-inv}
There exist $\constk\label{k:Eq-proj}$, $\constk\label{k:Eq-proj-2}$, and $\constE\label{E:Eq-proj-mul}$ 
so that the following holds. Let $\gamma_1,\gamma_2\in\Gamma$ be two non-commuting elements,
and let 
\[
\delta\leq  \ref{E:Eq-proj-mul}^{-1}\Big(\max\{\|\gamma_1^{\pm1}\|, \|\gamma_2^{\pm1}\|\}\Big)^{-\ref{k:Eq-proj}}.
\]
Suppose there exists some $g\in \mathfrak S_{\rm cpt}$ so that $\gamma_i g^{-1}v_H=\epsilon_ig^{-1}v_H$ for $i=1,2$ 
where $\|\epsilon_i-I\|\leq \delta$. 
Then, there is some $g'\in G$ such that 
\[
\|g'-g^{-1}\|\leq \ref{E:Eq-proj-mul} \delta  \Big(\max\{\|\gamma_1^{\pm1}\|, \|\gamma_2^{\pm1}\|\}\Big)^{\ref{k:Eq-proj-2}}
\] 
and $\gamma_ig'v_H=g'v_H$ for $i=1,2$.

\end{lemma}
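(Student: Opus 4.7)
\medskip\noindent\textbf{Plan of proof.}
The plan is to recast the problem as finding a small perturbation that solves a nonlinear system, and then to invoke a quantitative inverse function theorem / Newton iteration. Set $\tilde\gamma_i:=g\gamma_ig^{-1}$ and $\tilde\epsilon_i:=g\epsilon_ig^{-1}$; since $g\in\mathfrak S_{\rm cpt}$ is bounded, we have $\|\tilde\gamma_i^{\pm 1}\|\ll\Theta$ (where $\Theta:=\max\|\gamma_i^{\pm 1}\|$) and $\|\tilde\epsilon_i-I\|\ll\delta$, and the hypothesis becomes $\tilde\gamma_iv_H=\tilde\epsilon_iv_H$. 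A candidate $g'$ can be written as $g'=g^{-1}\exp(Y)$ with $Y\in\rfrak$ (since the tangent space to $G/N_G(H)$ at the identity coset is $\rfrak$), and the desired identity $\gamma_ig'v_H=g'v_H$ becomes $\tilde\gamma_i\exp(Y)v_H=\exp(Y)v_H$. Define $F\colon\rfrak\to V\oplus V$, where $V=\wedge^3\gfrak\supset\langle v_H\rangle$, by
\[
F(Y)=\bigl((\tilde\gamma_1-1)\exp(Y)v_H,\;(\tilde\gamma_2-1)\exp(Y)v_H\bigr),
\]
so that $\|F(0)\|\leq C\delta$ and we seek a zero of $F$ near $0$.

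Since $\tilde\epsilon_i^{-1}\tilde\gamma_i$ fixes $v_H$, it belongs to $N_G(H)$. Replacing $\gamma_i$ by $\gamma_i^2$ if necessary (which worsens $\delta$ only by a factor $\Theta^{O(1)}$) I may assume $\tilde\gamma_i=\tilde\epsilon_ih_i$ with $h_i\in H$ and $\|h_i^{\pm1}\|\ll\Theta$. The commutator identity $[h_1,h_2]=[\tilde\gamma_1,\tilde\gamma_2]+O(\Theta^{O(1)}\delta)$ together with the nonvanishing of $[\gamma_1,\gamma_2]$ in $\Gamma$ will then provide quantitative noncommutativity. This is the only place arithmeticity is genuinely used: after conjugating $\Gamma$ into a number field by local rigidity, the nonzero algebraic matrix $[\gamma_1,\gamma_2]-I$ of height $\ll\Theta^{O(1)}$ satisfies $\|[\gamma_1,\gamma_2]-I\|\gg\Theta^{-\star}$, hence $\|[h_1,h_2]-I\|\gg\Theta^{-\star}$ provided $\delta\leq\Theta^{-\star}$ with a sufficiently large exponent.

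Compute the differential $dF_0(Y)=\bigl((\tilde\gamma_1-1)Yv_H,(\tilde\gamma_2-1)Yv_H\bigr)$. Using $\tilde\gamma_i=\tilde\epsilon_ih_i$ and that $\Ad(h_i)\rfrak=\rfrak$, one finds
\[
(\tilde\gamma_i-1)Yv_H=(\Ad(h_i)Y-Y)\cdot v_H+O\bigl(\delta\Theta^{O(1)}\|Y\|\bigr),
\]
because $h_iYv_H=(\Ad(h_i)Y)v_H$ and $\tilde\epsilon_i$ is near the identity. The map $\rfrak\to V$, $Z\mapsto Zv_H$, is injective with absolute inverse bound, so a lower bound for $\|dF_0(Y)\|$ reduces to a lower bound for $\max_i\|(\Ad(h_i)-1)Y\|$. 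Because the adjoint representation of $H\simeq\SL_2(\bbr)$ on $\rfrak$ is irreducible and $h_1,h_2$ are quantitatively noncommuting, a direct $\sl_2$-computation yields $\max_i\|(\Ad(h_i)-1)Y\|\gg\Theta^{-\star}\|Y\|$; collecting the error terms gives $\sigma_{\min}(dF_0)\gg\Theta^{-\ref{k:Eq-proj}'}$ for some exponent depending on $\Gamma$.

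With the bounds $\|F(0)\|\ll\delta$, $\sigma_{\min}(dF_0)\gg\Theta^{-\star}$, and a uniform bound on the Lipschitz constant of $dF$ on a ball of radius one in $\rfrak$ (which is polynomial in $\Theta$ because the second derivatives involve at most two $\tilde\gamma_i$-factors), a standard Newton iteration converges to a zero $Y^*\in\rfrak$ of $F$ with $\|Y^*\|\ll\delta\Theta^{\ref{k:Eq-proj-2}}$ provided the smallness assumption $\delta\leq\ref{E:Eq-proj-mul}^{-1}\Theta^{-\ref{k:Eq-proj}}$ holds. Setting $g':=g^{-1}\exp(Y^*)$ gives $\gamma_ig'v_H=g'v_H$ and $\|g'-g^{-1}\|\ll\|Y^*\|\ll\delta\Theta^{\ref{k:Eq-proj-2}}$, as desired. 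The main obstacle is the quantitative invertibility of $dF_0$: it requires combining the (elementary but careful) $\SL_2$-computation that turns quantitative noncommutativity into quantitative triviality of the joint centralizer with the propagation of the various $\delta\Theta^{O(1)}$-errors through the linearization; the exponents $\ref{k:Eq-proj}$ and $\ref{k:Eq-proj-2}$ are the output of that bookkeeping.
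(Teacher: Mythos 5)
Your Newton-iteration strategy has a fundamental gap: it does not establish that $F(Y)=0$ actually has a solution near $Y=0$, and for an overdetermined system this does not follow from $\|F(0)\|$ small plus injectivity of $dF_0$. Your $F$ maps the $3$-dimensional space $\rfrak$ into $V\oplus V$ with $V=\wedge^3\gfrak$, so even after restricting the target to the relevant $6$-dimensional space $\{(Zv_H,Z'v_H):Z,Z'\in\rfrak\}$, the system $F_1(Y)=F_2(Y)=0$ is two transversality conditions cutting down a $3$-dimensional variety. To see that no amount of quantitative invertibility of $dF_0$ can save you: take $\tilde\gamma_i=h_i\in H$ regular semisimple. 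Then the fixed-point locus of $h_i$ on the orbit $G.v_H\cong G/N_G(H)$ has tangent space $\ker(\Ad(h_i)-1)|_\rfrak$, which is $1$-dimensional, so $\mathrm{Fix}(h_1)\cap\mathrm{Fix}(h_2)$ is (generically) a transverse intersection of two curves in a $3$-manifold and is expected to be the isolated point $v_H$. Perturbing $h_1,h_2$ to generic nearby group elements destroys the intersection altogether. A Gauss--Newton iteration applied to your $F$ would converge to a least-squares critical point, not a zero, and you have provided no argument that the minimum is zero.

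The reason the minimum \emph{is} zero in the setting of the lemma, and the step your proposal omits, is arithmeticity at the level of finding the fixed vector, not just in bounding $\|[\gamma_1,\gamma_2]-I\|$ from below. The paper finds an \emph{exact} nonzero vector $w\in\wedge^d\tilde\gfrak$ killed by $(\tilde\gamma_1-I)\oplus(\tilde\gamma_2-I)$ and close to $v_0$ by applying the quantitative linear-algebra Lemma 13.1 of \cite{EMV}, whose proof uses the integrality of $\tilde\gamma_i\in\tilde\G(\bbz)$: the kernel of an integer matrix of controlled height is a rational subspace whose distance from an approximate null vector is controlled by the entries. This is where the smallness threshold $\delta\leq\ref{E:Eq-proj-mul}^{-1}\Theta^{-\ref{k:Eq-proj}}$ enters. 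The second nontrivial input, Lemma 13.2 of \cite{EMV} (a quantitative version of orbit closedness, in the spirit of Luna's slice theorem; see the reference to \cite{EquiProj-Luna} in the paper), then converts the common fixed vector $w$ near $v_0$ into a group element $\tilde g$ near the identity with $\tilde\gamma_i\tilde gv_0=\tilde gv_0$. Your proposal tries to replace both of these with elementary analysis; the first cannot be replaced without importing integrality in an essential way, and the second is exactly the nonlinear existence statement your Newton scheme assumes rather than proves. By contrast, your computations of $dF_0(Y)=\bigl((\Ad(h_i)-1)Y\bigr)v_H+O(\delta\Theta^{O(1)}\|Y\|)$ and the lower bound on $\max_i\|(\Ad(h_i)-1)Y\|$ via quantitative noncommutativity are sound and would serve to show uniqueness (modulo $N_G(H)$) and stability of the fixed point once its existence is established; they do not themselves produce it.
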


\begin{proof}
This is essentially proved in~\cite[\S13.3, \S13.4]{EMV}, we recall parts of the argument for the convenience of the reader.

With a slight change in the notation from the proof of the previous lemma, 
let $\tilde{\bf L}$ be the $\bbr$-group defined by $\tilde{\bf L}(\bbr)=\rho^{-1}(g^{-1}Hg)\subset\tilde\G(\bbr)$, 
and let $d=\dim(\tilde{\bf L}(\bbr))$. Fix a unit vector $v_0$ on the line $\wedge^d(\Lie(\tilde{\bf L}(\bbr)))$.

Let also $\tilde\gamma_i\in\tilde\G(\bbz)$ be so that $\rho(\tilde\gamma_i)=\gamma_i$, for $i=1,2$. 
Then~\cite[Lemma 13.1]{EMV} holds true for linear transformation 
\[
A=(\tilde\gamma_1-I)\oplus(\tilde\gamma_2-I)
\] 
from $\wedge^d\tilde\gfrak$ to $\wedge^d\tilde\gfrak\oplus \wedge^d\tilde\gfrak$. 
Therefore, there exists a vector $w\in \wedge^d\tilde\gfrak$, with 
\be\label{eq:almker-ker}
\|w-v_0\|\leq C\Theta^\kappa\delta
\ee
so that $Aw=0$, where $\Theta:=\max\{\|{\gamma}_1^{\pm1}\|, \|{\gamma}_2^{\pm1}\|\}$, $C$ depends on $\tilde\G$ and $\kappa$ depends on $\dim\tilde\G$. We again used $\rho(\tilde\gamma_i)=\gamma_i$ to bound $\|\tilde\gamma_i^{\pm1}\|$ by a power of 
$\|\gamma_i^{\pm1}\|$. 

This implies that $\tilde\gamma_iw=w$ for $i=1,2$.
By~\cite[Lemma 13.2]{EMV}, there exist $\bar C$ and $\bar \kappa\geq1$ so that if 
\[
\|w-v_0\|\leq \bar C^{-1}\Theta^{-\bar\kappa},
\]
then there exists $\tilde g\in\tilde\G(\bbr)$ satisfying that $\|\tilde g-I\|\leq C'\|w-v_0\|$ and 
\[
\text{$\tilde\gamma_i\tilde gv_0=\tilde gv_0$ for $i=1,2$,}
\] 
see~\cite{EquiProj-Luna} for sharper results concerning equivariant projections.   

Let now $\delta$ satisfy
\[
0<\delta\leq (C\bar C)^{-1}\Theta^{-\kappa'-\kappa}.
\]
Then~\eqref{eq:almker-ker} implies that there exists some $\tilde g\in \tilde\G(\bbr)$ with 
$\|\tilde g-I\|\leq C'C\Theta^\kappa\delta$
so that $\tilde\gamma_i\tilde gv_0=\tilde gv_0$ for $i=1,2$.
This estimate implies that 
\[
\|\rho(\tilde g)g^{-1}-g^{-1}\|\leq C''\Theta^\kappa\delta
\]
for some $C''$ depending on $\tilde\G$.

Let $g'=\rho(\tilde g)g^{-1}$. Then $\gamma_ig'v_H=g'v_H$ and the claim holds for $g'v_H$.
\end{proof}

We need the following lemma, see Lemma~\ref{lem:noI-tri-bd} in the sequel for a more general statement.

\begin{lemma}\label{lem: explain 4t}
Let $x\in X_{\rm cpt}$. Then for every $z\in\coneH_t.x$, we have \[\#\margI_\rws(z)\ll \nuni^{4\rws}.\] 
\end{lemma}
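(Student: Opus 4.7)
The plan is to reduce the count $\#\margI_\rws(z)$ to a lattice-point count in $G$. Write $x = g_0\Gamma$ and fix $h_0 \in \coneH_\rws$ with $h_0 x = z$. For each $w \in \margI_\rws(z)$, pick $h_w \in \coneH_\rws$ with $h_w x = \exp(w) z$; the defining equality $h_w g_0\Gamma = \exp(w) h_0 g_0\Gamma$ is equivalent to
\[
\lambda_w := h_w^{-1}\exp(w) h_0 \in \Lambda := g_0\Gamma g_0^{-1}.
\]
Thus $\lambda_w \in \Lambda\cap \tilde K$ where $\tilde K := \coneH_\rws^{-1}\cdot \exp\bigl(B_\rfrak(0,\inj(z))\bigr)\cdot \coneH_\rws$. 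The assignment $w\mapsto \lambda_w$ is injective: if $\lambda_w = \lambda_{w'}$, rearranging gives $\exp(-w')(h_{w'} h_w^{-1})\exp(w) = e$, and the identity $H\exp(\xi) = \exp(\Ad(H)\xi)H$ (for $\xi \in \rfrak$) combined with the unique local decomposition $G \simeq \exp(\rfrak)\cdot H$ (applicable because $\|w\|,\|w'\|<\inj(z)\leq 0.01$) forces $h_w = h_{w'}$ and $w = w'$. Hence $\#\margI_\rws(z)\leq |\Lambda \cap \tilde K|$.

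To estimate $\vol(\tilde K)$, I would use the identity $h_1^{-1}\exp(w) h_2 = (h_1^{-1}h_2)\exp(\Ad(h_2^{-1})w)$ together with $\|\Ad(h)\|\ll \nuni^\rws$ for $h\in\coneH_\rws$ (which follows from $\Ad(a_\rws)$ having eigenvalues $\nuni^{\pm \rws}, 1$ on $\gfrak$). This yields
\[
\tilde K \subset (\coneH_\rws^{-1}\coneH_\rws)\cdot \exp\bigl(B_\rfrak(0, C\nuni^\rws\inj(z))\bigr).
\]
The key computation is $\vol_H(\coneH_\rws^{-1}\coneH_\rws)\ll \beta^2\nuni^\rws$: writing $\coneH_\rws = \boxH_\beta\cdot a_\rws\cdot\{u_r:r\in[0,1]\}$, one has $\coneH_\rws^{-1}\coneH_\rws \subset \{u_{-r_1}\}\cdot (a_{-\rws}\boxH_{O(\beta)} a_\rws)\cdot \{u_{r_2}\}$, where $a_{-\rws}(\cdot)a_\rws$ stretches the $U^-$-factor by $\nuni^\rws$ and contracts the $U$-factor by $\nuni^{-\rws}$. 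Hence in $U^-AU$ coordinates the set $\coneH_\rws^{-1}\coneH_\rws$ lies in the box $\{u_s^- a_\tau u_r : |s|\ll \beta\nuni^\rws,\; |\tau|\ll \beta,\; r = O(1)\}$, and evaluating the Haar measure (proportional to $e^\tau\,\diff s\,\diff\tau\,\diff r$ in these coordinates) yields the claimed bound. Combining, and using $\inj(z)\leq 0.01$,
\[
\vol(\tilde K)\ll \beta^2\nuni^\rws \cdot (\nuni^\rws \inj(z))^3 \ll \nuni^{4\rws}.
\]

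Since $x\in X_{\rm cpt}$ with $\inj(x)\geq \eta_X$, the lattice $\Lambda$ has no nontrivial elements in a $\boxG_{c}$-neighborhood of $e$ for some $c\asymp \eta_X$. The standard packing argument with disjoint right-translates of $\boxG_{c/2}$ gives
\[
|\Lambda\cap \tilde K|\cdot \vol(\boxG_{c/2}) \leq \vol(\tilde K\cdot \boxG_{c/2}) \ll \eta_X^{-\star}\cdot\vol(\tilde K),
\]
so $|\Lambda\cap \tilde K|\ll \nuni^{4\rws}$ and hence $\#\margI_\rws(z)\ll \nuni^{4\rws}$. The main technical hurdle is the Haar-measure computation for $\coneH_\rws^{-1}\coneH_\rws$: the set is not contained in any natural ball of bounded radius in $H$, and the crude bound $\vol_H(B_H(e,O(\rws)))\ll e^{\star\rws}$ would yield a much weaker exponent. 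The cancellation between $A$-expansion and $A$-contraction in the product $\coneH_\rws^{-1}\coneH_\rws$ is what keeps its Haar measure at $\beta^2\nuni^\rws$ and ultimately produces the exponent $4$ in $\nuni^{4\rws}$.
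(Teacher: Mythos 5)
The approach you take is genuinely different from the paper's: you reduce the count to lattice points of $\Lambda = g_0\Gamma g_0^{-1}$ in the set $\tilde K = \coneH_\rws^{-1}\exp(B_\rfrak(0,\inj(z)))\coneH_\rws\subset G$, whereas the paper packs disjoint translates $\boxH_{ce^{-t}}\exp(w)z$ directly inside $\coneH_{t+}.x$ and compares the pushforward of $m_H|_{\coneH_{t+}}$ (of mass $\ll e^t$) against the measure $\gg e^{-3t}$ of each piece, getting $\ll e^{4t}$ with no need to understand $\coneH_\rws^{-1}\coneH_\rws$.

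Your plan runs into a genuine gap at the Haar-volume estimate $\vol_H(\coneH_\rws^{-1}\coneH_\rws)\ll\beta^2\nuni^\rws$. The containment you claim, that $\coneH_\rws^{-1}\coneH_\rws$ lies in the box $\{u^-_s a_\tau u_r : |s|\ll\beta\nuni^\rws,\ |\tau|\ll\beta,\ r=O(1)\}$, is false: the conjugate $a_{-\rws}\boxH_{O(\beta)}a_\rws$ does lie in such a box, but you then left-multiply by $u_{-r_1}$ with $r_1\in[0,1]$, and $u_{-r_1}$ does not commute with the stretched lower-triangular factor. Concretely, taking $r_1=\tfrac12$, $\tilde s=\beta\nuni^\rws$, $\tau=0$, $r_2=0$ gives
\[
u_{-1/2}\,u^-_{\beta\nuni^\rws} = \begin{pmatrix}1-\tfrac12\beta\nuni^\rws & -\tfrac12\\ \beta\nuni^\rws & 1\end{pmatrix},
\]
whose $U^-AU$ decomposition has $|a_{11}|\asymp\beta\nuni^\rws\gg 1$, hence $|\tau'|\asymp\log(\beta\nuni^\rws)$, far outside the range $|\tau|\ll\beta$. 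A direct computation (parametrizing $g=u_{-r_1}u^-_{\tilde s}a_\tau u_{r_2}$ and using that Haar on $\SL_2(\bbr)$ is $\tfrac{da\,db\,dc}{|a|}$ in the matrix-entry chart $a=1-r_1\tilde s$, $c=\tilde s$, $b=ar_2-r_1$, with Jacobian $|ac|$) gives
\[
\vol_H(\coneH_\rws^{-1}\coneH_\rws)\asymp\int_0^1\!\!\int_0^1\!\!\int_{|\tilde s|\le\beta\nuni^\rws}|\tilde s|\,d\tilde s\,dr_1\,dr_2 \asymp \beta^2\nuni^{2\rws},
\]
a factor $\nuni^\rws$ larger than you claim. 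Feeding this corrected value into your estimate yields $\vol(\tilde K)\ll\beta^2\nuni^{2\rws}\cdot\nuni^{3\rws}=\beta^2\nuni^{5\rws}$, and since the paper only assumes $\beta>\nuni^{-0.01\rws}$, this does not improve to the required $\nuni^{4\rws}$. The underlying loss is precisely what you flagged as the ``main technical hurdle'': the cancellation between $A$-expansion and $A$-contraction in $\coneH_\rws^{-1}\coneH_\rws$ is real but is spent on the $U^-$-- and $U$--factors; the bounded $u_{-r_1}$ and $u_{r_2}$ factors do not tame the resulting set, and in fact inflate it by $\nuni^\rws$.

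The paper sidesteps this entirely: because it works in the orbit with the measure pushed forward from $m_H|_{\coneH_{t+}}$, the product $\coneH^{-1}\coneH$ never appears. Your lattice-point frame could in principle be made to work by exploiting that the elements $\lambda_w=h_w^{-1}\exp(w)h_0$ all share the same \emph{right} factor $h_0$, so they live in the much smaller set $\coneH_\rws^{-1}\exp(B_\rfrak(0,\inj(z)))h_0$, but making the subsequent ball-packing precise there essentially reconstructs the paper's argument.
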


\noindent
For the convenience of the reader, we recall from \eqref{eq:def-I-e-y} that
\[
\margI_t(z):=\Bigl\{w\in \rfrak: 0<\|w\|<\inj(z),\, \exp(w) z\in \coneH_t.x\Bigr\}.
\]
\begin{proof}
Recall from~\eqref{eq:def-inj} that 
\[
\inj(z)=\min\Big\{0.01, \sup\Big\{\vare: \text{ $g\mapsto gz$ is injective on $\boxG_{10\vare}$}\Big\}\Big\}
\]
where for every $0<\vare\leq0.1$, we put $\boxG_{\vare}:=\boxH_\vare\cdot\exp(B_\rfrak(0,\vare))$.

Note that since $x\in X_{\rm cpt}$, we have 
\be\label{eq: inj sfh x}
\inj(\sfh x)> 10ce^{-t}\quad\text{for all $\sfh\in\coneH_t$}
\ee
where $c$ depends only on $X$. 

Let $z\in\coneH_t.x$ and $w\in\margI_{t}(z)$ (hence $\exp(w)z\in \coneH_t.x$). Therefore,
\[
\boxH_{ce^{-t}}\exp(w)z\subset \coneH_{t+}.x
\]
where we define $\coneH_{t+}=\boxH_{\beta+2ce^{-t}}\cdot\coneH_t$. 

In view of~\eqref{eq: inj sfh x} and the definition of $\inj(z)$, the map $(\sfh, w)\mapsto \sfh\exp(w)z$ is injective over 
$\boxH_{ce^{-t}}\times \exp(B_\rfrak(0,\inj(z)))$. Hence we have
\[
\boxH_{ce^{-t}}\exp(w)z\cap \boxH_{ce^{-t}}\exp(w')z=\emptyset \qquad\text{for all distinct $w,w'\in \margI_{t}(z)$}.
\]
Since $m_H(\coneH_{t+})\ll e^{t}$ and $m_H(\boxH_{ce^{-t}})\gg e^{-3t}$, the claim follows. 
\end{proof}

\begin{proof}[Proof of Proposition~\ref{prop:closing-lemma}]
By Proposition~\ref{prop:non-div} if $d\geq |\log (10^{-6}\inj(y))|+\ref{E:non-div-main}$, then  
\be\label{eq:cpct-return}
|\{r\in J: a_{d} u_ry\in X_{\rm cpt}\}|\geq 0.99|J|
\ee
for all $J\subset [0,1]$ with $|J|\geq 10^{-3}$.

Let $\rws\geq |\log(10^{-6}\inj(x_0))|+\ref{E:non-div-main}$ for the rest of the argument. 
Let $r_0\in[0,1/2]$ be so that $x_1=a_tu_{r_0}x_0$ satisfies both of the following: $x_1\in X_{\rm cpt}$ and $a_{7t}x_1\in X_{\rm cpt}$. Write $x_1=g_1\Gamma$ where $g_1\in \mathfrak S_{\rm cpt}$. 

We introduce the shorthand notation $h_r:=a_{7t}u_r$, for any $r\in[0,1]$. Note that for all $r\in[0,1]$, we have $h_rx_1\in \{a_{8t}u_{r'}x_0: r'\in [0,1]\}$. Assume now the claim in part~(1) fails for all $r\in[0,1]$ so that $h_rx_1\in X_{\rm cpt}$. 
That is: for all $r\in[0,1]$ so that $h_rx_1\in X_{\rm cpt}$ 
\begin{itemize}
\item either there exists $z\in \coneH_\rws.h_rx_1$ so that $f_{\rws,\alpha}(z)>\nuni^{D\rws}$,
\item or the map $\sfh\mapsto \sfh h_rx_1$ is not injective on $\coneH_{\rws}$.
\end{itemize}

In what follows all the implied multiplicative constants depend only on~$X$. 

\subsection*{Finding lattice elements $\gamma_r$}
Let us first investigate the former situation. That is: fix $r\in[0,1]$ so that $h_rx_1\in X_{\rm cpt}$ and suppose that for some $z=\sfh_1h_rx_1\in \coneH_\rws.h_rx_1$,
it holds that $f_{\rws,\alpha}(z)>\nuni^{D\rws}$. 
Since $h_rx_1\in X_{\rm cpt}$, we have  
\be\label{eq:Ct-cusp}
\inj(\sfh h_rx_1)\gg \nuni^{-\rws}, \quad\text{for all $\sfh\in\coneH_\rws$}.
\ee
Using the definition of $f_{\rws,\alpha}$, thus, we conclude that 
if $\margI_\rws(z)=\emptyset$, then $f_{t,\alpha}(z)\ll \nuni^\rws$. Hence, assuming $t$ is large enough, $\margI_\rws(z)\neq\emptyset$; recall also from Lemma~\ref{lem: explain 4t} that $\#\margI_\rws(z)\ll \nuni^{4\rws}$.

Altogether, if $D\geq5$ and $t$ is large enough, there exists some $w\in I_\rws(z)$ with 
\[
0<\|w\|\leq \nuni^{(-D+5)\rws}.
\]

The above implies that for some $w\in \rfrak$ with $\|w\|\leq \nuni^{(-D+5)\rws}$ 
and $\sfh_1\neq \sfh_2\in\coneH_\rws$, we have $\exp(w)\sfh_1h_rx_1=\sfh_2h_rx_1$.
Thus 
\be\label{eq:wh-sh}
\exp(w_r)h_r^{-1}\sfs_rh_rx_1=x_1
\ee 
where $\sfs_r=\sfh_2^{-1}\sfh_1$, $w_r=\Ad(h_r^{-1}\sfh_2^{-1})w$. 
In particular, $\|w_r\|\ll \nuni^{(-D+13)\rws}$.
Assuming $t$ is large enough compared to the implied multiplicative constant,
\be\label{eq:wh-est}
0<\|w_r\|\leq \nuni^{(-D+14)\rws}.
\ee
Recall that $x_1=g_1\Gamma$ where $g_1\in\mathfrak S_{\rm cpt}$, thus,~\eqref{eq:wh-sh} implies 
\be\label{eq:gamma-h}
\exp(w_r)h_r^{-1}\sfs_rh_r=g_1\gamma_rg_1^{-1}
\ee
where $1\neq\sfs_r\in H$ with $\|\sfs_r\|\ll \nuni^{\rws}$ and $e\neq\gamma_r\in \Gamma$.

Similarly, if $\sfh\mapsto \sfh h_rx_1$ is not injective, we conclude that
\[
h_r^{-1}\sfs_r h_r=g_1\gamma_rg_1^{-1}\neq e.
\]
In this case we actually have $e\neq \gamma_r\in g_1^{-1}Hg_1$ --- we will not use this extra information in what follows.  

\subsection*{Some properties of the elements $\gamma_r$}
Note that, in either case, we have 
\be\label{eq:size-gammah}
\|\gamma_r^{\pm1}\|\leq \nuni^{9\rws}
\ee
again we assumed $\rws$ is large compared to $\|g_1\|$ hence the estimate $\ll \nuni^{8\rws}$ 
is replaced by $\leq \nuni^{9t}$.

Let $\xi>0$ be so that $\|g\gamma g^{-1}-I\|\geq 20\xi$ for all $\gamma\in\Gamma\setminus \{1\}$ and $g\in\mathfrak{S}_{\rm cpt}$.   
Write $\sfs_r=\begin{pmatrix} a_1 & a_2 \\ a_3 & a_4 \end{pmatrix}\in H$ where $|a_i|\leq 10\nuni^{t}$.
Then by~\eqref{eq:gamma-h}, we have 
\[
\|h_r^{-1}\sfs_r h_r-I\|=\biggl\|u_{-r}\begin{pmatrix} a_1 & e^{-7t}a_2 \\ e^{7t}a_3 & a_4 \end{pmatrix}u_r-I\biggr\|\geq 10\xi
\]
which implies that 
\be\label{eq: closing not unipotent}
\max\{e^{7t}|a_3|, |a_1-1|, |a_4-1|\}\geq \xi \gg 1.
\ee
Note also that if $e^{7t}|a_3|<\xi$, then $|a_2a_3|\leq10\xi e^{-6t}$, thus $|a_1a_4-1|\ll e^{-6t}$. We conclude from~\eqref{eq: closing not unipotent} that $|a_1-a_4|\gg 1$. Altogether, 
\be\label{eq: a1-a4}
\max\{e^{7t}|a_3|, |a_1-a_4|\}\gg 1.
\ee
 
 \medskip

Let $I_{\rm cpt} = \{r\in[0,1]: h_rx_1\in X_{\rm cpt}\}$ and $J_{\rm cpt}=\{r\in[1/2,1]: h_rx_1\in X_{\rm cpt}\}$.

\subsection*{Claim:} There are $\gg e^{3t}$ distinct elements in $\{\gamma_r: r\in J_{\rm cpt}\}$.

 By~\eqref{eq:cpct-return} applied with $y=x_1$, $d=7t$, and $J=[1/2,1]$ we have $|J_{\rm cpt}|\geq 1/4$ (assuming $t$ is large enough).  
Fix $r\in J_{\rm cpt}$ as above, and consider the set of $r' \in J_{\rm cpt}$ so that and $\gamma_r=\gamma_{r'}$.
Then for each such $r'$, 
\begin{align*}
h_r^{-1}\sfs_r h_r&=\exp(-w_r)g_1\gamma_rg_1^{-1}=\exp(-w_r)\exp(w_{r'})h_{r'}^{-1}\sfs_{r'} h_{r'}\\
&=\exp(w_{rr'})h_{r'}^{-1}\sfs_{r'} h_{r'}
\end{align*}
where $w_{rr'}\in \gfrak$ and $\|w_{rr'}\|\ll \nuni^{(-D+14)\rws}$. 

Set $\tau=e^{7t}(r'-r)$. Assuming $D\geq30$, we conclude that
\begin{equation}\label{eq: u_tau equation}
  u_{\tau}\sfs_r u_{-\tau}=h_{r'}h_r^{-1}\,\sfs_r\, h_rh_{r'}^{-1}=\exp(\hat w_{rr'})\sfs_{r'}
\end{equation}
where $\|\hat w_{rr'}\|=\|\Ad(h_{r'})w_{rr'}\|\ll \nuni^{(-D+21)}$.

Finally, we compute 
\[
u_{\tau}\sfs_r u_{-\tau}=\begin{pmatrix} a_1+a_3\tau& a_2+(a_4-a_1)\tau-a_3\tau^2  \\  a_3& a_4-a_3\tau\end{pmatrix}.
\]

In view of~\eqref{eq: a1-a4}, for every $r\in J_{\rm cpt}$ the set of $r'\in J_{\rm cpt}$ so that 
\begin{equation}\label{eq: define J_r}
  |a_2e^{-7t}+(a_4-a_1)(r'-r)-a_3e^{7t}(r'-r)^2|\leq 10^{4} e^{-6t}  
\end{equation}
has measure $\ll e^{-3t}$ since at least one of the coefficients of this quadratic polynomial is of size $\gg1$. Let $J_r$ be the set of $r'\in J_{\rm cpt}$ for which \eqref{eq: define J_r} holds.

If $r'\in J_{\rm cpt} \setminus J_{r}$,
then $|a_2+(a_4-a_1)\tau-a_3\tau^2|>10^{4}e^{t}$ (recall that $\tau=e^{7t}(r'-r)$), thus for all $r'\in J_{\rm cpt} \setminus J_{r}$, we have 
\[
\|u_{\tau}\sfs_r u_{-\tau}\|> 10^4e^{t}> \|\exp(\hat w_{rr'})\sfs_{r'}\|,
\]
in contradiction to \eqref{eq: u_tau equation}.

In other words, for each $\gamma \in \Gamma$ the set of $r \in J_{\rm cpt}$ for which $\gamma_r = \gamma$ has measure $\ll e^{-3t}$ and so the set $\{\gamma_r : r \in J_{\rm cpt}\}$ has at least $\gg e^{3t}$ distinct elements, establishing the claim.

\subsection*{Zariski closure of the group generated by $\{\gamma_r : r \in I_{\rm cpt}\}$} {$\,$}
\\

\noindent
We now consider two possibilities for the elements $\{\gamma_r  : r \in I_{\rm cpt}\}$. 

\subsection*{Case 1} The family $\{\gamma_r: r \in I_{\rm cpt}\}$ is commutative. \\

Let ${\bf L}$ denote the Zariski closure of $\langle \gamma_{r}: r\in I_{\rm cpt}\rangle$. Since $\langle \gamma_{r}\rangle$ is commutative, so is~${\bf L}$. Let $C_{\bf G}$ denote the center of $\bf G$.
We claim that ${\bf L}={\bf L}'{\bf C'}$ where ${\bf C}'\subset C_{\bf G}$ and ${\bf L}'$ is either a unipotent group or a torus. Indeed since ${\bf L}$ is commutative, we have ${\bf L}={\bf T}{\bf V}$ where $\bf T$ is a (possibly finite) algebraic subgroup of a torus, $\bf V$ is a unipotent group and ${\bf T}$ and $\bf V$ commute. Therefore, if both $\bf T$ and ${\bf V}$ are non-central, then  $G=\SL_2(\bbr)\times\SL_2(\bbr)$ and $\Gamma=\Gamma_1\times\Gamma_2$ is reducible.
Moreover, ${\bf T}\subset {\bf T}' C_{\bf G}$ where ${\bf T}'$ is an algebraic subgroup of a torus, and ${\bf T}'$ and ${\bf V}$ belong to different $\SL_2(\bbr)$ factors in $G$. Let us assume $\bf V$ belongs to the second factor. Recall from~\eqref{eq:wh-sh} that 
\be\label{eq:wh-sh'}
\exp(w_r)h_r^{-1}\sfs_rh_r=g_1\gamma_rg_1^{-1}
\ee
where $\|w_r\|\leq e^{(-D+14)t}$ with $D\geq 30$ and 
$h_r^{-1}\sfs_rh_r\in H=\{(h,h): h\in \SL_2(\bbr)\}$. 
Now if $\gamma_r=(\gamma_r^1,\gamma_r^2)$, then~\eqref{eq:wh-sh'} together with the bound $\|h_r^{-1}\sfs_rh_r\|\ll e^{8t}$ implies that  $|{\rm tr}(\gamma_r^1)-{\rm tr}(\gamma_r^2)|\ll e^{(-D+22)t}$; moreover, since $\gamma_r^2\in{\bf V} C_{\bf G}$, we have $|{\rm tr}(\gamma_r^2)|=2$. This and the fact that the length of closed geodesics in (finite volume) hyperbolic surfaces is bounded away from zero imply that $|{\rm tr}(\gamma_r^1)|=2$ if $t$ is large enough. This contradicts the fact that $\bf T$ is a non-central subgroup of a torus. Hence, the claim holds.

We now show that ${\bf L}'$ is indeed a unipotent group.
In view of the above discussion, $\#\{\gamma_r: r\in J_{\rm cpt}\}\geq e^{3t}$.
Note also that that for every torus $T\subset G$, we have  
\[
\#(B_T(e,R)\cap \Gamma)\ll (\log R)^2,
\]
where the implied constant is absolute. These, in view of the bound $\|\gamma_{r}\|\leq e^{9t}$, see~\eqref{eq:size-gammah}, 
imply that ${\bf L}'$ is unipotent.

Since ${\bf L}'$ is a unipotent subgroup of $\bf G$, we have that
\[\#\{\gamma_r: \|\gamma_r\|\leq e^{4t/3}\}\ll e^{8t/3}.\] 
Furthermore, there are $\gg e^{3t}$ distinct elements $\gamma_r$ with $r\in J_{\rm cpt}$.
Thus  
\[
\#\{\gamma_r : \|\gamma_r\|>100e^{4t/3} \text{ and } r\in J_{\rm cpt}\}\gg e^{3t}.
\]

For every $r\in I_{\rm cpt}$, write 
\[
\sfs_{r}=\begin{pmatrix} a_{1,r} & a_{2,r} \\ a_{3,r} & a_{4,r} \end{pmatrix}\in H
\] 
where $|a_{j,r}|\leq 10\nuni^{t}$. 

We will obtain an improvement of~\eqref{eq: closing not unipotent}.  
Let $\xi\leq \Upsilon\leq e^{4t/3}$ and assume that $\|g_1\gamma_rg_1^{-1}-I\|\geq 20 \Upsilon$ --- by definition of $\xi$, this holds with $\Upsilon=\xi$ for all $r\in I_{\rm cpt}$ and as we have just seen this also holds for with $\Upsilon=e^{4t/3}$ for many choices of $r\in J_{\rm cpt}$.
We claim 
\begin{equation}\label{eq: lower bound a(3,r)}
    |a_{3,r}|\geq \Upsilon e^{-7t}.
\end{equation}
Indeed by~\eqref{eq:gamma-h}, we have 
\[
\|h_r^{-1}\sfs_r h_r-I\|=\biggl\|u_{-r}\begin{pmatrix} a_{1,r} & e^{-7t}a_{2,r} \\ e^{7t}a_{3,r} & a_{4,r} \end{pmatrix}u_r-I\biggr\|\geq 10 \Upsilon.
\]
This implies that $\max\{e^{7t}|a_{3,r}|, |a_{1,r}-1|, |a_{3,r}-1|\}\geq \Upsilon$. Assume contrary to our claim that $|a_{3,r}|<\Upsilon e^{-7t}$. 
Then 
\begin{equation}\label{eq:max1,4}
    \max\{|a_{1,r}-1|, |a_{4,r}-1|\}\geq \Upsilon;
\end{equation}
furthermore, we get $|a_{2,r}a_{3,r}|\ll \Upsilon e^{-6t}$. 
Thus, 
\begin{equation}\label{eq:max1*4}
    |a_{1,r}a_{4,r}-1|\ll \Upsilon e^{-6t}\ll e^{-14t/3}.
\end{equation}
Moreover, since $h_r^{-1}\sfs_r h_r$ is very nearly $g_1\gamma_{r}g_1^{-1}$, and the latter is either a unipotent element or its minus, we conclude that 
\begin{equation}\label{eq:trace min}
    \min(|a_{1,r}+a_{4,r}-2|, |a_{1,r}+a_{4,r}+2|)\ll e^{(-D+22)t}.
\end{equation}
Equations~\eqref{eq:max1*4} and~\eqref{eq:trace min} contradict~\eqref{eq:max1,4} if $t$ is large enough, hence necessarily $|a_{3,r}|\geq \Upsilon e^{-7t}$.

Using this, we now show that Case 1 cannot occur. 
Since ${\bf L}'$ is unipotent, there exists some $g$  so that 
${\bf L}'(\bbr)\subset gNg^{-1}$; moreover $g$ can be chosen to be in the maximal compact subgroup of $G$ --- for our purposes, we only need to know that the size of $g$ can be bounded by an absolute constant. 

It follows that
\be\label{eq: us sr and N}
u_{-r}\begin{pmatrix} a_{1,r} & e^{-7t}a_{2,r} \\ e^{7t}a_{3,r} & a_{4,r} \end{pmatrix}u_r\in \exp(-w_r) (gNg^{-1})\cdot C_{\bf G}
\ee
for all $r\in I_{\rm cpt}$.  
We show that this leads to a contradiction when $G=\SL_2(\bbc)$, the proof in the other case is similar by considering first and second coordinates. 

Let us write $g=\begin{pmatrix} a & b \\ c & d \end{pmatrix}$, 
then for all $z\in\bbc$ we have  
\[
g\begin{pmatrix} 1 & z \\ 0 & 1 \end{pmatrix}g^{-1}= \begin{pmatrix}1-acz& a^2z\\ -c^2z & 1+acz\end{pmatrix}.
\]
Recall from the beginning of the proof that $h_0x_1\in X_{\rm cpt}$, i.e., $0\in I_{\rm cpt}$.
It follows that 
for some $z_0\in\bbc$, 
\[
\begin{pmatrix} a_{1,0} & e^{-7t}a_{2,0} \\ e^{7t}a_{3,0} & a_{4,0}\end{pmatrix}=\pm\exp(-w_r)\begin{pmatrix}1-acz_0& a^2z_0\\ -c^2z_0 & 1+acz_0\end{pmatrix}.
\] 
By \eqref{eq: lower bound a(3,r)} applied with $\Upsilon=\xi$, $|{a_{3,0}}|\geq \xi e^{-7t}$. Since $|a|, |b|, |c|, |d|\ll1$, comparing the bottom left entries of the matrices we get $|z_0|\gg 1$. 
Now, since $|a_{2,0}|\leq 10e^{t}$, comparing the top right entries we conclude that $|a|\ll e^{-3t}$. Since $\det(g)=1$, it follows that  $|c|$ is also $\gg 1$.  

Let now $r\in J_{\rm cpt}$ be so that $\|\gamma_r\|\geq 100e^{4t/3}$. We write $a'_{2,r}=e^{-7t}a_{2,r}$ and $a'_{3,r}=e^{7t}a_{3,r}$. By~\eqref{eq: lower bound a(3,r)}, applied this time with $\Upsilon=e^{4t/3}$, we have that $|a'_{3,r}|\geq e^{4t/3}$; note also that $|a'_{2,r}|\ll e^{-6t}$. In view of~\eqref{eq: us sr and N}, there exists $z_r\in\bbc$ so that 
\begin{align*}
u_{-r}\begin{pmatrix} a_{1,r} & a'_{2,r} \\ a'_{3,r} & a_{4,r} \end{pmatrix}u_r&=\begin{pmatrix} a_{1,r}-ra'_{3,r}  & a'_{2,r}+(a_{4,r}-a_{1,r})r-a'_{3,r}r^{2} \\ a'_{3,r} & a_{4,r}+ra'_{3,r} \end{pmatrix}\\
&=\pm\exp(-w_r)\begin{pmatrix}1-acz_r& a^2z_r\\ -c^2z_r & 1+acz_r\end{pmatrix}.
\end{align*}
Since $|a'_{3,r}|\geq e^{4t/3}$, $|a_{1,r}|$ and $|a_{4,r}|$ are $\ll e^{t}$, and $|a'_{2,r}|\ll e^{-6t}$,
and since $r \in [\frac12,1]$, we have that
\[
|a'_{3,r}|/10\leq |a'_{2,r}+(a_{4,r}-a_{1,r})r-a'_{3,r}r^{2}|\leq 2|a'_{3,r}|;
\]
hence, since $w_r$ is small, $a^2z_r$ and $c^2z_r$ should be comparable in size. On the other hand, using $r=0$ we already established $|a| \ll e^{-3t}$ and $|c|\gg1$, thus $|a^2z_r|\ll e^{-3t}|c^2z_r|$, in contradiction.

Altogether, we conclude that Case~1 cannot occur.

\subsection*{Case 2} There are $r,r'\in I_{\rm cpt}$ so that $\gamma_r$ and $\gamma_{r'}$ do not commute. \\ 

Let $v_H$ be as in Lemma~\ref{lem:almost-inv}. Then since $\exp(w_r)h_r^{-1}\sfs_rh_r=g_1\gamma_rg_1^{-1}$
\[
\gamma_r .g_1^{-1}v_H=\exp(\Ad(g_1^{-1})w_r).g_1^{-1}v_H.
\]
Moreover, since $\|w_r\|\leq \nuni^{(-D+14)\rws}$,  
\[
\|\Ad(g_1^{-1})w_r\|\ll \nuni^{(-D+14)\rws};
\]
similar statements also hold for $r'$.

Therefore, if $D$ is large enough, we may apply Lemma~\ref{lem:almost-inv} to conclude that there exists some $g_2\in G$ with 
\[
\|g_1-g_2\|\leq \ref{E:Eq-proj-mul}\nuni^{(-D+14+9\ref{k:Eq-proj-2})\rws},
\] 
so that $\gamma_r .g_2^{-1}v_H=g_2^{-1}v_H$ and 
$\gamma_{r'} .g_2^{-1}v_H=g_2^{-1}v_H$.

In view of Lemma~\ref{lem:non-elementary}, thus, we have $Hg_2\Gamma$ is periodic and 
\[
\vol(Hg_2\Gamma)\leq \ref{E:non-el-1}\Bigl(\max\{\|\gamma_r^{\pm1}\|,\|\gamma_{r'}^{\pm1}\|\}\Bigr)^{\ref{k:non-el-2}}\leq \ref{E:non-el-1}\nuni^{9\ref{k:non-el-2}\rws},
\]
where we used~$\|\gamma_r^{\pm1}\|,\|\gamma_{r'}^{\pm1}\|\leq e^{9t}$.

Then for $t$ large enough,  $\vol(Hg_2\Gamma)\leq\nuni^{D'_0\rws}$ and $d_X(g_1\Gamma,g_2\Gamma)\ll e^{(-D+D_0')t}$ for $D'_0=9\max\{\ref{k:non-el-2}, \ref{k:Eq-proj-2}\}+14$. 

Since $g_1\Gamma=x_1=a_tu_{r_0}x_0$, part~(2) in the proposition holds with $x'=(a_tu_{r_0})^{-1}g_2\Gamma$ and 
$D_0=\max\{D'_0+2, 30\}$ if $t$ is large enough (recall that we already assumed in several places that $D \geq 30$). 
\end{proof}

\section{Margulis functions and random walks}\label{sec:Marg-func-ini-dim}
As was mentioned earlier, the proof of Proposition~\ref{prop:main-prop} relies on two main ingredients: 
evolutions of Margulis functions under a certain random walk, 
and the (finitary) projection theorem, specifically Proposition~\ref{prop:proj-general}, proved in~\S\ref{sec:Mars-proj}. In this section we develop the necessary Margulis function techniques and show how to combine them with the results of \S\ref{sec:Mars-proj} to prove Theorem~\ref{thm:main} in \S\ref{sec:proof-main}.

The following is the main proposition encapsulating what is obtained using Margulis function techniques (and then input into Proposition~\ref{prop:proj-general}).

\begin{propos}\label{prop:dim-1-C-rfrak}\label{prop:dim-1-e}
Let $0<\eta<0.01\eta_X$, $D\geq D_0+1$, and $x_0\in X$, where $D_0$ is as in Proposition~\ref{prop:closing-lemma}, and $\eta_X$ as in~Proposition~\ref{prop:non-div}.
Then there exists $t_0$, depending on $\eta$, $\inj(x_0)$, and $X$, so that if $t\geq t_0$, then at least one of the following holds:

\begin{enumerate}
\item Let $0<\vare<0.1$ and $0<\alpha<1$. 
Then there exist $x_1\in X_{\injr}$, some $\tau$ with
$9t\leq \tau\leq 9t+2m_0 Dt$ \ (for $m_0$ depending on $\alpha$ --- see \eqref{eq:EMM-use'}), and a subset $F\subset B_\rfrak(0,1)$ containing $0$ with 
\[
\nuni^{t/2}\leq \#F\leq \nuni^{5t},
\] 
so that both of the following properties are satisfied:
\begin{itemize}
    \item $\Bigl \{\exp(w)x_1: w\in F\Bigr\}\subset \Bigl(\boxHs_{\nuni^{-t/R}} \cdot a_{\tau}\cdot \{u_rx_0: |r|\leq 4\}\Bigr)\cap X_\injr$, where $R>0$ depends on $D$, $\vare$, and $\alpha$,
    \item $\sum_{w'\neq w}\|w-w'\|^{-\alpha}\leq C\cdot (\#F)^{1+\vare}$ for all $w\in F$ (where the summation is over $w' \in F$ and $C$ is an absolute constant).
\end{itemize}

\item There is $x'\in X$ such that $Hx'$ is periodic with
\[
\vol(Hx')\leq \nuni^{D_0\rws}\quad\text{and}\quad\dist_X(x',x_0)\leq \nuni^{(-D+D_0)\rws}.
\] 
\end{enumerate}
\end{propos}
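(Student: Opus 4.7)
The idea is to combine Proposition~\ref{prop:closing-lemma} (which either produces alternative (2) outright or gives a good initial Margulis function bound) with a random walk along $AU$ that contracts the Margulis function, and then to read off the set $F$ from the near-return set of a walked point.

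First I would apply Proposition~\ref{prop:closing-lemma} with the given $D$. If its alternative~(2) holds, then alternative~(2) of the present proposition follows immediately (up to adjusting constants). Otherwise, one obtains $x = a_{8t}u_{r_0}x_0 \in X_{\rm cpt}$ with $\sfh \mapsto \sfh x$ injective on $\coneH_t$ and $f_{t,\alpha}(z) \leq e^{Dt}$ for every $z \in \coneH_t \cdot x$. Using $a_t u_r a_{8t} = a_{9t}u_{e^{-8t}r}$, every point of $\coneH_t \cdot x$ is automatically contained in $\boxHs_\beta \cdot a_{9t} \cdot \{u_r x_0 : |r| \leq 4\}$, so the location part of conclusion~(1) will follow once one expands further.

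Next I would iterate the random walk whose single step is $z \mapsto a_{m_0} u_r z$ with $r$ uniform in $[0,1]$, where $m_0 = m_\alpha$ is the scale fixed by~\eqref{eq:EMM-use}. For each $w \in I_t(z)$, the vector $\Ad(a_{m_0} u_r)w$ lies in the corresponding $I$-set of $a_{m_0}u_r z$ whenever it is small enough, and~\eqref{eq:EMM-use} gives the one-step contraction $\int_0^1 \|\Ad(a_{m_0}u_r)w\|^{-\alpha}\,dr \leq e^{-1}\|w\|^{-\alpha}$. Summing over $w$ and absorbing the bounded number of new small vectors that can enter the $I$-set per step, one obtains the Margulis inequality
\[
\int_0^1 f_{t+m_0,\alpha}(a_{m_0}u_r z)\,dr \leq e^{-1} f_{t,\alpha}(z) + O(1),
\]
which upon iteration $n$ times yields $\int f_{t+nm_0,\alpha}\, d\mu^{(n)} \leq e^{-n}e^{Dt} + O(1)$ for the convolution $\mu^{(n)}$ of the step. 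Taking $n \in [Dt, 2Dt]$ makes this quantity $O(1)$ and produces the expansion parameter $\tau = 9t + nm_0 \in [9t, 9t + 2 m_0 Dt]$, matching the range stated.

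Finally, by Markov's inequality applied to the walked average, combined with the non-divergence Proposition~\ref{prop:non-div} (to ensure we stay in $X_\eta$), one selects an endpoint $x_1$ of the walk with $x_1 \in X_\eta$ and with $f_{\tau,\alpha}$ bounded by some moderate $M$ not merely at $x_1$ but uniformly over the neighborhood $\coneH_\tau \cdot x$ in which $x_1$ sits. Setting $F := I_\tau(x_1) \cup \{0\}$ then yields the desired set: the inclusion $\exp(w)x_1 \in \boxHs_{e^{-t/R}} a_\tau \{u_r x_0 : |r| \leq 4\}$ is automatic from membership in $\coneH_\tau \cdot x$, the upper bound $\#F \leq e^{5t}$ is a volume packing in $\coneH_\tau$ as in Lemma~\ref{lem: explain 4t}, and the energy bound at each $w \in F$ follows via the approximate symmetry $I_\tau(\exp(w)x_1) \supset I_\tau(x_1) - w$ (up to Baker--Campbell--Hausdorff corrections) so that
\[
\sum_{w' \in F,\, w' \neq w} \|w-w'\|^{-\alpha} \leq f_{\tau,\alpha}(\exp(w)x_1) \leq M \leq C(\#F)^{1+\varepsilon},
\]
provided $M$ is arranged to lie below $(\#F)^{\varepsilon}$ by an appropriate choice of $n$, $R$, and the comparison between $\alpha$ and $\varepsilon$. \textbf{Main obstacle.} The delicate point is upgrading the one-point Markov bound on $f_{\tau,\alpha}$ to a \emph{uniform} bound on the whole cone $\coneH_\tau \cdot x$, which is needed so that the energy estimate holds at every $w \in F$ and not only at $w = 0$; this requires iterating a pigeonhole along the walk and invoking the fact that the closing lemma already provided a (much weaker but uniform) pointwise bound on the initial cone. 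A secondary technical issue is securing the lower bound $\#F \geq e^{t/2}$, which must come from showing the walk genuinely spreads its $U$-orbit out rather than collapsing onto a thin near-periodic set, a scenario that is precisely ruled out by alternative~(2) of Proposition~\ref{prop:closing-lemma}.
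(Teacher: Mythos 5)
Your plan starts the same way as the paper's proof (apply Proposition~\ref{prop:closing-lemma}, then run the random walk $z\mapsto a_{m_0}u_r z$ to contract the Margulis function) and correctly identifies the hard point, but the proposal has a genuine gap precisely at that hard point, and also contains a quantitative error that masks it.

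The quantitative error: you write the one-step Margulis inequality as
\[
\int_0^1 f_{t+m_0,\alpha}(a_{m_0}u_r z)\,dr \;\leq\; e^{-1} f_{t,\alpha}(z) + O(1),
\]
but the lower-order term here is \emph{not} $O(1)$. Because a step of size $a_{m_0}u_r$ can bring many new transversal intersections within injectivity radius, the correct lower-order term is of the form $\int \noI_{\cone}(h,z)\,d\nu(h)$, and Lemma~\ref{lem:noI-tri-bd} shows this is $\ll\beta^{-7}e^{5m}\cdot(\#F)$; already at the base step Lemma~\ref{lem: explain 4t} gives $\#I_t(z)\ll e^{4t}$. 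So after $n\asymp Dt$ steps, absorbing the lower-order contribution into a bounded constant simply fails. Managing the interplay between this growing term and $\#F$ is exactly what the case analysis in Lemma~\ref{lem:noI-inh-C} is for: either $\#F$ is already so large that the Margulis bound is $\leq(\#F)^{1+\vare}$ and you are done, or the iteration is allowed to continue because the lower-order term is dominated.

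The structural gap: you flag the passage from a one-point Markov bound to a uniform bound over a cone as the main obstacle and propose to resolve it by ``iterating a pigeonhole along the walk,'' but this does not produce the uniform bound, and the set you then propose, $F:=I_\tau(x_1)\cup\{0\}$, is the wrong object. In the paper the uniform bound and the set $F$ come from a more involved construction: after a Chebyshev step (Lemma~\ref{lem:EG-Cheby-C}) one covers the compact part of $X$ by small boxes $\umt^G\cdot y_j$ (Lemma~\ref{lem:E-good-h0}), pigeonholes to find a heavy box, and then defines the new set $F_1=\{w_{i1}\}$ as the transversal offsets from one chosen sheet in that heavy box to the other sheets landing in it. The key mechanism is the sublemma in Lemma~\ref{lem:inductive-C}, which shows $\mfht_{\cone_1}(e,\cdot)\leq 2\mfht_{\cone_0}(h_0,z_i)+\beta^{-2}e^{\ell m_0}\#F_1$: the Margulis function for the \emph{new} cone at any point is controlled by the \emph{old} Margulis function at a specific point plus a harmless term, because the sheets of $\cone_1$ all come from near-returns within the same box. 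This is how the pointwise bound on $\cone_1$ is obtained --- not by a direct pigeonhole on $\cone_0$ --- and it must be iterated $O_\alpha(D/\vare)$ times (the quantity $i_{\max}$ in the proof) until the bound drops below $2(\#F_k)^{1+\vare}$. Your scheme skips the cone reconstruction entirely, and consequently has no route to the uniform bound and no argument for the lower bound $\#F\geq e^{t/2}$ either (that bound comes from the heavy-box pigeonhole giving $\#F_{k+1}\geq\beta^{10}\#F_k$, not from the closing lemma as your last paragraph suggests).
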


\medskip
\noindent 
Explicitly, $m_0$ is equal to $m_\alpha$ of~\eqref{eq:EMM-use}, chosen so that for all $w\in\gfrak$, we have 
\begin{equation}\label{eq:EMM-use'}
\ave{\|a_{{m_0}}\uvk w\|^{-\alpha}}\uvkd\leq \nuni^{-1}\|w\|^{-\alpha}.
\end{equation}

\subsection{The definition of a Margulis function}
Throughout this section, $\cone\subset X$ denotes a Borel set which is a disjoint finite union of local 
$H$ orbits. More precisely, there is a finite set $F$ and for every $w\in F$, there exist 
$x_w\in X$ and a bounded Borel set $\coneH_w\subset H$ satisfying the following 
\begin{itemize}
\item the map $\sfh\mapsto \sfh.x_w$ is injective over $\coneH_w$ for all $w\in F$, and 
\item $\coneH_w.x_w\cap \coneH_{w'}.x_{w'}=\emptyset$ for all $w\neq w'$,
\end{itemize}
so that $\cone=\textstyle\bigcup_{w\in F} \coneH_w.x_w$.

For every $w\in F$, let $\mu_{\coneH_w}$ denote the pushforward of the Haar measure $m_H|_{\coneH_w}$ under the map $h \mapsto h.x_w$.
Put
\be\label{eq:def-mu-cone}
\mu_\cone=\frac{1}{\sum_w m_H(\coneH_w)}\sum_w\mu_{\coneH_w}.
\ee

For every $(h,z)\in H\times \cone$, define 
\be\label{eq:def-I-h-y-C}
\margI_{\cone}(h,z):=\Bigl\{w\in \rfrak: 0<\|w\|<\inj(hz),\, \exp(w) h z\in h\cone\Bigr\}.
\ee
Since $\coneH_w$ is bounded for every $w$ and $F$ is finite, $\margI_{\cone}(h,z)$ is a finite set for all $(h,z)\in H\times\cone$. 

Fix some $0<\alpha<1$. Define the Margulis function $\mfht_{\cone}=\mfht_{\cone,\alpha}: H\times \cone\to [1,\infty)$ as follows:
\be\label{eq:def-mfht-C}
\mfht_{\cone}(h,z)=\begin{cases} \sum_{w\in I_{\cone}(h,z)}\|w\|^{-\alpha} & \text{if $\margI_{\cone}(h,z)\neq\emptyset $}\\
\inj(hz)^{-\alpha}&\text{otherwise}
\end{cases}.
\ee

Let $\rwm=\rwm(\alpha)$ be the probability measure on $H$ defined by 
\be\label{eq:def-rwm-C}
\rwm(\varphi)=\ave\varphi(a_{m_0}\uvk)\uvkd\qquad\text{for all $\varphi\in C_c(H)$,}
\ee
where $m_0$ is as in~\eqref{eq:EMM-use'}.

Define $\noI_\cone$ on $H\times \cone$ by  
\be\label{eq:def-noI-alpha-C}
\noI_{\cone}(h,z):=\Bigl(\max\bigl\{\#\margI_{\cone}(h,z),1\bigr\}\Bigr)\cdot\inj(hz)^{-\alpha}.
\ee 
We will use the following lemma to increase the {\em transversal} dimension inductively. 

\begin{lemma}\label{lem:Margulis-inequality} 
There exists some $\constE\label{E:margb-lemma-C}=\ref{E:margb-lemma-C}(\rwm)$ 
so that for all $\ell\in\bbn$ and all $z\in \cone$, we have 
\[
\int\mfht_\cone(h,z)\diff\!\convL(h)\leq \nuni^{-\ell} \mfht_\cone(e, z)+\ref{E:margb-lemma-C}\sum_{j=1}^\ell \nuni^{j-\ell}\int\noI_{\cone}(h,z)\diff\!\rwm^{(j)}(h),
\]
where $\rwm^{(j)}$ denotes the $j$-fold convolution of $\rwm$ for every $j\in\bbn$.
\end{lemma}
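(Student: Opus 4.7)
The plan is to prove a one-step Margulis recursion
\begin{equation}\label{eq:one-step-Marg}
\int \mfht_\cone(gh,z)\,\diff\!\rwm(g)\;\leq\;\nuni^{-1}\mfht_\cone(h,z)\;+\;C\int \noI_\cone(gh,z)\,\diff\!\rwm(g)
\end{equation}
valid for all $h\in H$ and $z\in\cone$, with a constant $C$ depending on $\rwm$ (equivalently on $\alpha$ and $m_0$), and then to iterate it $\ell$ times.

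To prove \eqref{eq:one-step-Marg}, fix $h\in H$, $z\in\cone$, and $g=a_{m_0}u_r$ in the support of $\rwm$. Any $w'\in\margI_\cone(gh,z)$ yields $w:=\Ad(g^{-1})w'\in\rfrak$ satisfying $\exp(w)hz=g^{-1}\exp(w')ghz\in h\cone$. I partition $\margI_\cone(gh,z)=I_{\mathrm{old}}(g)\sqcup I_{\mathrm{new}}(g)$ according as $w\in\margI_\cone(h,z)$ or $\|w\|\geq\inj(hz)$. For the old contribution, swapping sum and integral and applying the averaging inequality~\eqref{eq:EMM-use'} in the adjoint representation of $H$ on $\rfrak$ gives
\[
\int\sum_{w'\in I_{\mathrm{old}}(g)}\|w'\|^{-\alpha}\,\diff\!\rwm(g)\;\leq\;\sum_{w\in\margI_\cone(h,z)}\int\|\Ad(g)w\|^{-\alpha}\,\diff\!\rwm(g)\;\leq\;\nuni^{-1}\mfht_\cone(h,z).
\]
If $\margI_\cone(h,z)=\emptyset$ the old contribution vanishes and the $\inj(ghz)^{-\alpha}$ term possibly appearing in $\mfht_\cone(gh,z)$ is absorbed into $\int\noI_\cone(gh,z)\,\diff\!\rwm(g)$. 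For the new contribution, each relevant $w$ lies in the annulus $\inj(hz)\leq\|w\|\ll \nuni^{m_0}$ since $\|\Ad(g^{\pm 1})\|\ll \nuni^{m_0}$ on the support of $\rwm$ while $\|\Ad(g)w\|<\inj(ghz)\leq 0.01$; combining the resulting lower bound $\|w'\|\geq c\nuni^{-m_0}\inj(hz)$ with the fact that each new $w'$ is counted in $\#\margI_\cone(gh,z)$, integration in $r$ converts the sum into a bound of the form $C\int\noI_\cone(gh,z)\,\diff\!\rwm(g)$ with $C$ absorbing a factor of $\nuni^{m_0\alpha}$.

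Granting \eqref{eq:one-step-Marg}, set $A_j:=\int\mfht_\cone(\cdot,z)\,\diff\!\rwm^{(j)}$ and $B_j:=\int\noI_\cone(\cdot,z)\,\diff\!\rwm^{(j)}$, with the convention $A_0:=\mfht_\cone(e,z)$. Integrating \eqref{eq:one-step-Marg} against $\rwm^{(\ell-1)}$ and using $\rwm^{(\ell)}=\rwm\conv\rwm^{(\ell-1)}$ yields $A_\ell\leq\nuni^{-1}A_{\ell-1}+CB_\ell$, and a routine induction on $\ell$ delivers
\[
A_\ell\;\leq\;\nuni^{-\ell}\mfht_\cone(e,z)\;+\;C\sum_{j=1}^{\ell}\nuni^{j-\ell}B_j,
\]
which is the statement of the lemma with $\ref{E:margb-lemma-C}=C$. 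The main obstacle is the new-part estimate in \eqref{eq:one-step-Marg}: one must convert a sum over pre-images $w$ that only become relevant after acting by $g$ into a multiple of $\#\margI_\cone(gh,z)\cdot\inj(ghz)^{-\alpha}$, carefully balancing the exponential expansion and contraction of $\Ad(a_{m_0}u_r)$ on $\rfrak$ against the $r$-average.
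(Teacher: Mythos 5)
Your proof is correct and follows the same route as the paper's: establish the one-step recursion $\int f_\cone(gh,z)\,\diff\!\rwm(g)\leq \nuni^{-1}f_\cone(h,z)+C\int\psi_\cone(gh,z)\,\diff\!\rwm(g)$ by splitting $\margI_\cone(gh,z)$ into vectors whose $\Ad(g^{-1})$-image lands in $\margI_\cone(h,z)$ (controlled via \eqref{eq:EMM-use'}) and the rest (controlled by a trivial $\#\margI_\cone(gh,z)\cdot\inj(ghz)^{-\alpha}$ bound), then iterate and sum the geometric series. The only substantive difference from the paper is that you partition by the pullback condition $\Ad(g^{-1})w'\in\margI_\cone(h,z)$ while the paper partitions by a norm threshold $\|w'\|<\inj(ghz)/C^2$; these are interchangeable and give the same conclusion with the same constant $C=O_\rwm(1)$.
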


\begin{proof}
Throughout the argument, the set $\cone$ is fixed; 
thus, we drop it from the indices in the notation. 
Note that $\supp (\rwm)\subset\{h\in H: \|h\|\leq \nuni^{2m_0+1}\}$. Let $C\geq 1$ be so that 
\[
\|\Ad(h)w\|\leq C\|w\|
\] 
for all $h$ with $\|h\|\leq \nuni^{2m_0+1}$ and all $w\in\mathfrak g$. Increasing $C$ if necessary, we also assume that
$\inj(z)/C\leq \inj(hz)\leq C\inj(z)$ for all such $h$ and all $z\in X$.

Let $h=a_{{\onst}}\uvk$ for some $r\in[0,1]$. 
Let $z\in \cone$, and let $h'\in H$.
First, let us assume that there exists some $w\in \margI(hh',z)$ with $\|w\|<\inj(hh'z)/C^2$. 
In view of the choice of $C$, this in particular implies that both $\margI(hh',z)$ and $\margI(h',z)$ are non-empty. 
Hence, we have  
\begin{align}
\notag \mfht(hh',z)&=\sum_{w\in \margI(hh',z)}\|w\|^{-\alpha}\\
\notag&=\sum_{\|w\|< \inj(hh'z)/C^2}\|w\|^{-\alpha}+\sum_{\|w\|\geq \inj(hh'z)/C^2}\|w\|^{-\alpha}\\
\notag&\leq \sum_{w\in I(h',z)}\|\Ad(h)w\|^{-\alpha}+C^{2\alpha}\cdot\Bigl(\# \margI(hh',z)\Bigr)\cdot\inj(hh'z)^{-\alpha}\\
\label{eq:MF-1}&=\sum_{w\in I(h',z)}\|\Ad(h)w\|^{-\alpha}+C^{2\alpha}\noI(hh',z).
\end{align}

Note also that if $\|w\|\geq \inj(hh'z)/C^2$ for all $w\in \margI(hh',z)$ 
(which in view of the choice of $C$ includes the case $\margI(h',z)=\emptyset$) 
or if $\margI(hh',z)=\emptyset$, then
\begin{align}
 \label{eq:MF-2}
\mfht(hh',z)&\leq C^{2\alpha}\cdot\Bigl(\max\{\# \margI(hh',z),1\}\Bigr)\cdot\inj(hh'z)^{-\alpha}\\
\notag&=C^{2\alpha}\noI(hh',z).   
\end{align}

We now average~\eqref{eq:MF-1} and~\eqref{eq:MF-2} over $[0,1]$ and conclude hat 
\begin{multline*}
\ave \mfht(a_{{\onst}}\uvk h',z)\uvkd\leq 
\sum_{w\in \margI(h',z)}\ave \|a_{{\onst}}\uvk w\|^{-\alpha}\uvkd\quad+\\ C^{2\alpha}\ave\noI(a_{m_0}u_rh',z)\uvkd,
\end{multline*}
where we replace the summation on the right by $0$ if $\margI(h',z)=\emptyset$. 
Thus by~\eqref{eq:EMM-use'} we may conclude that  
\[
\int\mfht(hh',z)\diff\!\rwm(h)\leq \nuni^{-1}\cdot\mfht(h',z)+C^{2\alpha}\int\noI(hh',z)\diff\!\rwm(h)
\]
for all $h'\in H$. Iterating this estimate, we have 
\[
\int\mfht(h,z)\diff\!\convL(h)\leq \nuni^{-1}\int\mfht(h',z)\diff\!\rwm^{(\ell-1)}(h')+C^{2\alpha}\int\noI(h,z)\diff\!\convL(h).
\]  
The claim in the lemma thus follows from the above by induction if we let~$\ref{E:margb-lemma-C}=C^{2}$ and sum the geometric series.
\end{proof}

\subsection{Incremental dimension increase}\label{sec:def-E-Ecal}
Let $0<\injr\leq0.01\eta_X$ and $0<\beta\leq \eta^2$. Define
\[
\coneH=\boxHs_\beta\cdot\Big\{u_r: |r|\leq 0.1\eta\Big\}.
\]
Let $F\subset B_\rfrak(0,\beta)$ be a finite set, and let $y_0\in X_{2\eta}$. Then for all $w\in F$ \ $\exp(w)y_0\in X_\eta$, and 
$h\mapsto h\exp(w)y_0$ is injective on $\coneH$. Put  
\be\label{eq:def-cone}
\cone=\coneH.\{\exp(w)y_0: w\in F\}.
\ee

Let us begin with the following two elementary lemmas.

\begin{lemma}\label{lem:noI-tri-bd}
There exists $\constE\label{E:noI}>0$ so that the following holds. For every $m\in\bbn$, every $|\rel|\leq 2$, 
and every $z\in\cone$, we have 
\[
\#\margI_{\cone}(a_mu_\rel,z)\leq \ref{E:noI}\beta^{-6}\nuni^{4m}\cdot(\#F)
\]
Moreover, we have 
\[
\noI_{\cone}(a_mu_\rel,z)\leq \ref{E:noI}\beta^{-7}\nuni^{5m}\cdot(\#F).
\]
\end{lemma}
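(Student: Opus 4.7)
The approach is a volume-counting argument in the spirit of Lemma~\ref{lem: explain 4t}, adapted to the multi-component structure of $\cone$. First I bound $\inj(hz)$ from below; then I count the points of $\margI_\cone(h,z)$ by placing disjoint $H$-boxes of radius $r\asymp\inj(hz)$ around each $\exp(w)hz$ and comparing the sum of their Haar measures to the Haar measure of a small $H$-thickening of $h\cone$.

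Because $z=\sfh'\exp(w')y_0$ with $\sfh'\in\coneH$, $w'\in F\subset B_\rfrak(0,\beta)$, and $y_0\in X_{2\eta}$, and since both $\sfh'$ and $\exp(w')$ have operator norm $O(1)$ (using $\coneH\subset\boxH_{0.2\eta}$ and $\beta\leq\eta^2\leq 1$), the standard transfer estimate for injectivity under bounded translation gives $\inj(z)\gtrsim\eta$. Applying $h=a_m u_\rel$ with $|\rel|\leq 2$ degrades injectivity by at most $\|h\|^2\asymp\nuni^m$, so $\inj(hz)\gtrsim\eta\nuni^{-m}\geq\beta\nuni^{-m}$ (recall $\beta\leq\eta$). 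Set $r=c\inj(hz)$ with $c$ small enough that the parameterization $(\sfh,v)\mapsto\sfh\exp(v)hz$ from $\boxH_{10r}\times B_\rfrak(0,10r)$ to $X$ is injective. Then the sets $\boxH_r\exp(w)hz$ are pairwise disjoint as $w$ ranges over $\margI_\cone(h,z)$, each lies in $\boxH_r\cdot h\cone=\bigcup_{w'\in F}(\boxH_r\cdot h\coneH h^{-1})\cdot h\exp(w')y_0$, and summing disjoint sub-pieces (which contribute $m_H(\boxH_r)$ each) within each $H$-orbit piece yields
\[
\#\margI_\cone(h,z)\cdot m_H(\boxH_r)\leq \#F\cdot m_H(\boxH_r\cdot h\coneH h^{-1}).
\]

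Since $\Ad(a_m)$ acts on $\hfrak$ with eigenvalues $\nuni^{-m},1,\nuni^m$ on $\Lie(U^-),\Lie(A),\Lie(U)$ and conjugation by $u_\rel$ with $|\rel|\leq 2$ is bounded, $h\coneH h^{-1}$ has $(u^-,a,u)$-box dimensions at most $\nuni^{-m}\beta,\,\beta,\,\nuni^m\eta$. Taking $r=c\beta\nuni^{-m}$ (which is $\leq\inj(hz)$ by the above), the thickened product obeys $m_H(\boxH_r\cdot h\coneH h^{-1})\lesssim (r+\nuni^{-m}\beta)(r+\beta)(r+\nuni^m\eta)\lesssim\beta^2\nuni^m\eta$, while $m_H(\boxH_r)\asymp\beta^3\nuni^{-3m}$. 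Dividing gives $\#\margI_\cone(h,z)\lesssim(\eta/\beta)\cdot\nuni^{4m}\cdot\#F$, which is absorbed into $\ref{E:noI}\beta^{-6}\nuni^{4m}\#F$ since $\eta\leq 1\leq\beta^{-5}$. For the $\noI$-bound, combine with $\inj(hz)^{-\alpha}\leq\inj(hz)^{-1}\lesssim\nuni^m/\eta\leq\nuni^m/\beta$ (valid as $\alpha<1$, $\inj(hz)\leq 0.01$, and $\beta\leq\eta$), yielding $\noI_\cone(h,z)\lesssim\beta^{-7}\nuni^{5m}\#F$. The only real obstacle is the careful box-dimensional bookkeeping — in particular, verifying that $r$ is simultaneously small enough to preserve injectivity at $hz$ and to keep the distinct pieces of $h\cone$ separated (the latter is automatic from $r\leq\beta\leq\eta^2$); the underlying geometric picture is entirely elementary.
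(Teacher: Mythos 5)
Your volume-comparison argument is essentially the paper's own: both pin a small disjoint $H$-box around each $\exp(w)a_mu_\rel z$ for $w\in\margI_\cone(a_mu_\rel,z)$, observe these all lie in a mild thickening of $a_mu_\rel\cone$, and divide through by the box measure within each of the $\#F$ local $H$-orbit pieces; the only difference is that the paper uses the anisotropic box $\umt_{\beta^2,m}^H$ together with Lemma~\ref{lem:commutation-rel}(2) to keep the thickening inside $a_mu_\rel\cone_+$, whereas you take an isotropic box $\boxH_r$ at scale $r\asymp\beta\nuni^{-m}$ and estimate $m_H\bigl(\boxH_r\cdot h\coneH h^{-1}\bigr)$ directly, which works just as well. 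A minor arithmetic remark: with $r\asymp\beta\nuni^{-m}$ the factor $r+\nuni^{-m}\beta$ is $\asymp\nuni^{-m}\beta$ (you recorded $\beta$, a harmless factor-$\nuni^m$ overestimate), so your method in fact yields the slightly sharper $\ll\eta\beta^{-1}\nuni^{3m}(\#F)$; the $\nuni^{4m}$ you wrote is still a valid upper bound and coincides with the lemma's statement.
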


\begin{proof}
Let $z\in\cone$, and let $w\in\margI_{\cone}(a_mu_r,z)$. Then $\exp(w)a_mu_rz\in a_mu_r\cone$. 
Therefore, using Lemma~\ref{lem:commutation-rel}(2), we have  
\[
\umt_{\beta^2, m}^H.\exp(w)a_mu_rz\subset a_m u_r\cone_+
\]
where $\cone_+=\boxHs_{\beta+100\beta^2}\Big\{u_r\exp(w)y_0: |r|\leq 0.1\eta, w\in F\Big\}$ and
\[
\umt^H_{\beta^2,m}=\Bigl\{u^-_s: |s|\leq \beta^2 \nuni^{- m}\Bigr\}\cdot\{a_t: |t|\leq \beta^2\}\cdot\Bigl\{u_r: |r|\leq \beta^2\Bigr\}.
\]

Note that the map $(\sfh,w')\mapsto \sfh\exp(w')a_mu_rz$ is injective over 
\[
\umt^H_{\inj(a_mu_rz)}\times \exp(B_\rfrak(0,\inj(a_mu_rz))),
\]
and let $\mu_{\cone_+}$ is the probability measure on $\cone_+$ defined as in~\eqref{eq:def-mu-cone}. Then 
\[
a_{m}u_r.\mu_{\cone_+}\Bigl(\umt_{\beta^2, m}^H\exp(w).a_mu_rz\Bigr)\gg(\min\{\beta^2,\inj(a_mu_rz)\})^{3}\nuni^{-m}(\#F)^{-1}
\]
where the implied constant is absolute.   

Recall now that $\cone\subset X_\eta$. 
Thus, $\inj(a_mu_rz)\gg \nuni^{-m}\eta$. Recall also that $\beta\leq \eta^2$, this implies the first claim.

We now show the second claim. The above estimate and the definition of $\noI_{\cone}(h,z)$ thus imply that 
\[
\noI_{\cone}(a_mu_\rel,z)\ll \Bigl(\beta^{-6}\nuni^{4m}\cdot (\#F)\Bigr) \cdot \inj(a_mu_\rel z)^{-1};
\]
we also used $0<\alpha<1$ in the above upper bound. The second claim in the lemma follows.
\end{proof}

\begin{lemma}\label{lem:MargFun-enetrgy-rfrak}
Let the notation be as above. In particular, $y_0\in X_{2\eta}$ and 
\[
\cone=\coneH.\{\exp(w)y_0: w\in F\}
\]
where $F\subset B_\rfrak(0,\beta)$. Let $w_0\in F$, then  
\[
\textstyle\sum_{w\neq w_0}\|w-w_0\|^{-\alpha}\leq 2\mfht_{\cone}(e,z)
\]
where $z=\exp(w_0)y_0$ and the summation is over $w\in F$.
\end{lemma}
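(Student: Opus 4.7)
The plan is to define, for each $w' \in F \setminus \{w_0\}$, an element $w \in \margI_\cone(e,z)$ of norm comparable to $\|w' - w_0\|$ via Baker--Campbell--Hausdorff, and then read off the inequality by comparing sums along this injection.

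First I apply Lemma~\ref{lem:BCH} with $w_1 = w'$ and $w_2 = w_0$ (both lie in $B_\rfrak(0,\beta)$) to factor $\exp(w')\exp(-w_0) = h_{w'}\exp(w)$ with $h_{w'}\in H$, $w\in\rfrak$, $\tfrac12\|w'-w_0\| \le \|w\| \le 2\|w'-w_0\|$, and $\|h_{w'}-I\| \le \ref{E:BCH}\beta\|w\| \le 4\ref{E:BCH}\beta^2$. Rearranging gives $\exp(w)z = h_{w'}^{-1}\exp(w')y_0$. Since $\beta \le \eta^2$ and $\eta < 0.01\eta_X$ is much smaller than absolute constants, the quadratic smallness of $\|h_{w'}^{-1}-I\|$ against the chart size $\beta$ places $h_{w'}^{-1}$ in the product chart $\boxHs_\beta \subseteq \coneH$. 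Consequently
\[
\exp(w).z \;=\; h_{w'}^{-1}\exp(w')y_0 \;\in\; \boxHs_\beta\cdot\exp(w')y_0 \;\subseteq\; \cone,
\]
and combined with $0 < \|w\| \le 2\beta < \inj(z)$ (valid because $z = \exp(w_0)y_0$ is a small perturbation of $y_0\in X_{2\eta}$), this shows $w \in \margI_\cone(e,z)$.

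Next I would verify the assignment $w' \mapsto w$ is injective. If distinct $w'_1, w'_2$ produced the same $w$, then $\exp(w'_1)\exp(-w'_2) = h_{w'_1}h_{w'_2}^{-1}$ would lie in $H$; but a further application of Lemma~\ref{lem:BCH} to the left-hand side gives an $\exp(\tilde w)$ factor with $\|\tilde w\|\asymp \|w'_1-w'_2\|>0$, contradicting the local uniqueness of the $H\cdot\exp(\rfrak)$-decomposition around the identity.

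Finally, the two-sided norm comparison gives $\|w'-w_0\|^{-\alpha} \le 2^\alpha\|w\|^{-\alpha} \le 2\|w\|^{-\alpha}$, so summing along the injection yields
\[
\sum_{w'\in F\setminus\{w_0\}}\|w'-w_0\|^{-\alpha} \;\le\; 2\sum_{w\in\margI_\cone(e,z)}\|w\|^{-\alpha} \;=\; 2\,\mfht_\cone(e,z),
\]
with the edge case $\#F=1$ being vacuous (the left-hand sum is empty). There is no genuine obstacle here; the only piece of book-keeping is ensuring $h_{w'}^{-1}\in\boxHs_\beta$, which reduces to contrasting the quadratic bound $\|h_{w'}-I\|=O(\beta^2)$ against the linear chart size $\beta$ and is comfortably absorbed into the standing hypotheses $\beta \le \eta^2$ and $\eta < 0.01\eta_X$.
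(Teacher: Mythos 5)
Your proof is correct and follows essentially the same route as the paper's: apply Lemma~\ref{lem:BCH} to $\exp(w')\exp(-w_0)$ to produce the factor $\exp(w)$ with $\|w\|\asymp\|w'-w_0\|$, verify that the $H$-part is small enough to land $\exp(w)z$ back in $\cone$ so that $w\in\margI_\cone(e,z)$, observe injectivity, and sum. The only cosmetic difference is that you spell out the injectivity via local uniqueness of the $H\cdot\exp(\rfrak)$-decomposition, whereas the paper simply records it as following from the distinctness of the points $\exp(w)y_0$; both are valid.
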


\begin{proof}
By the definition of $\mfht_{\cone}$, we have 
\[
\mfht_{\cone}(e,z)=\textstyle\sum_{v\in \margI_{\cone}(e,z)}\|v\|^{-\alpha}.
\]

Let $w_0\neq w\in F$. We will find a unique vector $v_w\in \margI_{\cone}(e,z)$ whose length is comparable to $\|w-w_0\|$.
Let us begin with the following computation. 
\begin{align}
\notag\exp(w)y&=\exp(w)\exp(-w_0)\exp(w_0)y_0\\
\notag&=h_{w}\exp(v_{w})\exp(w_0)y_0\\
\notag&=h_{w}\exp(v_w)z,
\end{align}
where $h_w\in H$, $v_w\in\rfrak$, $\|h_{w}-I\|\leq \ref{E:BCH}\beta\|v_w\|$, and 
\be\label{eq:vw-bound-energy}
0.5\|w-w_0\|\leq \|v_w\|\leq2\|w-w_0\|,
\ee
see Lemma~\ref{lem:BCH}. 

In particular, we have $\|h_{w}-I\|\ll\beta^2$; assuming $\beta\leq \injr^2$ is small enough, we conclude that 
$h_w^{\pm1}\in \boxH_{\beta}$. Hence, 
\[
\exp(v_w)z=h_w^{-1}\exp(w)y_0\in\cone.
\]
Moreover, using~\eqref{eq:vw-bound-energy}, we have $\|v_w\|\leq 2\beta\leq \inj(z)$. 
We thus conclude that $v_w\in\margI_{\cone}(e,z)$.  

Since $\exp(w) y_0\neq \exp(w')y_0$ for $w\neq w'\in F\subset B_\rfrak(0,\beta)$, the map $w\mapsto v_w$ is well-defined and one-to-one.
Altogether, we deduce that
\[
\textstyle\sum_{w\neq w_0}\|w-w_0\|^{-\alpha}\leq 2\textstyle\sum_{v\in \margI_{\cone}(e,z)}\|v\|^{-\alpha}=2\mfht_{\cone}(e,z),
\]
as was claimed.
\end{proof}

\begin{lemma}\label{lem:noI-inh-C}
There exist $0<\constk\label{k:epsilon-t}=\ref{k:epsilon-t}(\rwm)\leq \tfrac{1}{4m_0}$ and $n_0$ depending on $X$ so that the following holds.  
Let $\cone$ be defined as in \eqref{eq:def-cone}. 
Assume further that 
\be\label{eq:initial-bd}
f_\cone(e,z)\leq \nuni^{Mn}\qquad \text{for all $z\in \cone$}
\ee
for some $M>0$ and an integer $n\geq n_0$.

Then for all $0<\vare<0.1$ and all $\beta\geq \nuni^{-0.01\vare n}$ at least one of the following holds.

\begin{enumerate}
\item $\nuni^{Mn}< \nuni^{\vare n/2}\cdot(\#F)$, or
\item For all integers $0<\ell\leq \ref{k:epsilon-t}\vare n$ and all $z\in \cone$, we have 
\[
\int \mfht_\cone(h,z)\diff\!\convL(h)\leq 2\nuni^{Mn-\ell}.
\]
\end{enumerate}
\end{lemma}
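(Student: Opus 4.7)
The plan is to apply the Margulis-function inequality of Lemma~\ref{lem:Margulis-inequality}, which for any integer $\ell \ge 1$ and $z \in \cone$ reads
\[
\int f_\cone(h,z) \, d\rwm^{(\ell)}(h) \le \nuni^{-\ell} f_\cone(e,z) + \ref{E:margb-lemma-C} \sum_{j=1}^\ell \nuni^{j-\ell} \int \noI_\cone(h,z) \, d\rwm^{(j)}(h).
\]
The first term is at most $\nuni^{Mn-\ell}$ by hypothesis~\eqref{eq:initial-bd}. It therefore suffices to bound the noise sum by the same quantity under the assumption that~(1) fails, i.e.\ $(\#F) \le \nuni^{Mn - \vare n/2}$.

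The key observation is that $\rwm^{(j)}$ is supported on elements $a_{jm_0} u_r$ with $r \in [0,2]$. Indeed, using $a_{-m_0} u_r a_{m_0} = u_{\nuni^{-m_0} r}$, one verifies by induction that $(a_{m_0} u_{r_1}) \cdots (a_{m_0} u_{r_j}) = a_{jm_0} u_{r'}$ with $r' = \sum_{i=1}^j \nuni^{-(j-i)m_0} r_i$, which lies in $[0, (1-\nuni^{-m_0})^{-1}] \subset [0,2]$; the last inclusion uses $m_0 \ge 4$, as follows from the defining equation for $m_\alpha$ in~\eqref{eq:EMM-use} together with $\hat\alpha < 1/4$. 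Hence Lemma~\ref{lem:noI-tri-bd} applies with $m = jm_0$ and $|\rel| \le 2$, yielding
\[
\int \noI_\cone(h,z) \, d\rwm^{(j)}(h) \le \ref{E:noI}\, \beta^{-7} \nuni^{5 j m_0}(\#F).
\]
The resulting (essentially geometric) sum over $j$ is dominated by its $j=\ell$ term and so bounded by a constant $C_1 = C_1(\rwm,X)$ times $\beta^{-7} \nuni^{5m_0 \ell}(\#F)$.

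The rest is bookkeeping. Using $\beta^{-7} \le \nuni^{0.07 \vare n}$ (from $\beta \ge \nuni^{-0.01 \vare n}$) and $(\#F) \le \nuni^{Mn - \vare n/2}$, the noise contribution is at most $C_1 \nuni^{Mn - \ell + (5m_0 + 1)\ell - 0.43 \vare n}$. I would fix $\ref{k:epsilon-t}$ smaller than both $1/(4m_0)$ and $0.4/(5m_0+1)$, and additionally small enough that $0.03/\ref{k:epsilon-t}$ dominates $\log_\nuni(2C_1)$; since the statement is vacuous unless $\vare n \ge \ref{k:epsilon-t}^{-1}$, this ensures the excess exponent $(5m_0+1)\ell - 0.43 \vare n + \log_\nuni C_1$ is at most $-\log_\nuni 2$ whenever $1 \le \ell \le \ref{k:epsilon-t} \vare n$. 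Adding the two bounds yields $\int f_\cone(h,z)\, d\rwm^{(\ell)}(h) \le 2 \nuni^{Mn - \ell}$, which is~(2). The only real subtlety is the support computation for $\rwm^{(j)}$; the remainder is a direct combination of Lemmas~\ref{lem:Margulis-inequality} and~\ref{lem:noI-tri-bd}.
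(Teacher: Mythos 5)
Your proof is correct and follows essentially the same route as the paper's: apply Lemma~\ref{lem:Margulis-inequality}, bound the noise term via Lemma~\ref{lem:noI-tri-bd} applied at $m=jm_0$ on $\supp(\rwm^{(j)})$, and then use the failure of~(1) together with $\beta\ge\nuni^{-0.01\vare n}$ and a small enough choice of $\ref{k:epsilon-t}$ to dominate the noise by $\nuni^{Mn-\ell}$. The only stylistic difference is that you carry the constants explicitly and make explicit the vacuity observation (the claim in~(2) is empty unless $\vare n\ge 1/\ref{k:epsilon-t}$) which the paper handles more tersely by "assuming $n$ large enough"; your version is arguably a bit cleaner on that point, but the argument is the same.
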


\begin{proof}
By Lemma~\ref{lem:Margulis-inequality}, applied with $\mfht_\cone$, we have 
\[
\int \mfht_\cone(h,z)\diff\!\convL(h)\leq \nuni^{-\ell}\mfht_\cone(e,z)+\ref{E:margb-lemma-C}\sum_{j=1}^\ell \nuni^{j-\ell}\int\noI_\cone(h,z)\diff\!\rwm^{(j)}(h).
\]

Assuming $n$ is large enough, Lemma~\ref{lem:noI-tri-bd} implies that there exists a constant $C$ depending only on $\nu$ so that if
$j\leq \vare n/C$, then 
\[
\noI_\cone(h,z)\leq (2\ref{E:margb-lemma-C})^{-1}\nuni^{\vare n/4}\cdot (\#F),
\]
for all $h\in\supp(\rwm^{(j)})$ --- we used $\beta\geq \nuni^{-0.01 \vare n}$ and assumed $n$ is large enough to account for the factor $\ref{E:noI}\beta^{-7}$ in Lemma~\ref{lem:noI-tri-bd}.

Let $\ref{k:epsilon-t}=(2C)^{-1}$, and let $\ell\leq \ref{k:epsilon-t}\vare n$. Then 
\[
\int \mfht_\cone(h,z)\diff\!\convL(h)\leq \nuni^{-\ell}\mfht_\cone(e,z)+\nuni^{\vare n/4}\cdot(\#F)\leq \nuni^{Mn-\ell}+\nuni^{\vare n/4}\cdot(\#F).
\]
Therefore, either part~(1) holds or $\nuni^{Mn-\ell}\geq \nuni^{(0.5-\ref{k:epsilon-t})\vare n}\cdot(\#F)\geq \nuni^{\vare n/4}\cdot(\#F)$. In the latter case, the above implies that
\[
\int \mfht_\cone(h,z)\diff\!\convL(h)\leq 2\nuni^{Mn-\ell}
\] 
as we claimed in part~(2).
\end{proof}

From this point until the Lemma~\ref{lem:mfht-base-case}, we fix some $0<\vare<0.1$, and let $\beta=\nuni^{-\kappa n/2}$ where 
$0<\kappa\leq 0.02\ref{k:epsilon-t}\vare$ will be explicated later.

The following lemma will convert the estimate we obtained on average in Lemma~\ref{lem:noI-inh-C} into pointwise information at most points. This is done in a fairly straightforward way essentially by using the Chebyshev inequality. Recall from Proposition~\ref{lem:one-return} that for any interval $I\subset \bbr$ of length at least $\eta$ and $t\geq |\log(\eta^2\inj(x))|+\ref{E:non-div-main}$
\[
\Bigl|\Bigl\{r\in I:\inj(a_t\uvk x)< \vare^2\Bigr\}\Bigr|<\ref{E:non-div-main}\vare |I|.
\]

\begin{lemma}\label{lem:EG-Cheby-C}
Let the notation be as in Lemma~\ref{lem:noI-inh-C}. 
Let $0<\vare< 0.1$, and assume that 
\[
\ell=\lfloor\ref{k:epsilon-t}\vare n\rfloor\geq 3|\log\injr|+\ref{E:non-div-main}+6.
\]
Further assume that Lemma~\ref{lem:noI-inh-C}(2) holds for these choices.  

There exists a subset $L_\cone\subset\supp(\convL)$ with 
$\convL(L_\cone)\geq 1-2\nuni^{-\ell/8}$ so that both of the following hold. 
\begin{enumerate}
\item For all $h_0\in L_\cone$ we have 
\[
\int \mfht_\cone(h_0,z)\diff\!\mu_\cone(z)\leq \nuni^{Mn-\frac{7\ell}{8}}.
\]
\item For all $h_0\in L_\cone$, there exists $\cone(h_0)\subset \cone$ 
with $\mu_\cone(\cone(h_0))\geq 1-O(\eta^{1/2})$, so that for all $z\in\cone(h_0)$ we have 
\begin{subequations}
\begin{align}
\label{eq:thickening-cone-h0} \boxH_{100\beta^2}.z &\subset\cone\\
\label{eq:I-cone-isolated-1}h_0z&\in X_{2\injr}\\
\label{eq:I-cone-isolated-2} f(h_0,z)&\leq\nuni^{Mn-\frac{3\ell}{4}}.
\end{align}
\end{subequations}
\end{enumerate}
\end{lemma}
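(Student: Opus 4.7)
The plan is to combine two applications of Chebyshev (first in the $h_0$-variable to produce $L_\cone$, then in the $z$-variable to produce $\cone(h_0)$) with the non-divergence estimate of Proposition~\ref{lem:one-return} to handle the injectivity condition~\eqref{eq:I-cone-isolated-1}. For part~(1), the hypothesis that Lemma~\ref{lem:noI-inh-C}(2) is available supplies $\int \mfht_\cone(h,z)\,d\convL(h)\leq 2\nuni^{Mn-\ell}$ uniformly in $z\in\cone$. Integrating this against $\mu_\cone(z)$, exchanging the order of integration by Fubini, and applying Chebyshev with threshold $\nuni^{Mn-\frac{7\ell}{8}}$ produces a set $L_\cone$ of $\convL$-measure at least $1-2\nuni^{-\ell/8}$ on which $\int \mfht_\cone(h_0,z)\,d\mu_\cone(z)\leq\nuni^{Mn-\frac{7\ell}{8}}$, which is~(1).

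For part~(2), fix $h_0\in L_\cone$. The set $\cone(h_0)$ will be defined as the intersection of three subsets of $\cone$, each capturing one of the three desired conditions. The subset on which \eqref{eq:thickening-cone-h0} holds is $\cone$ minus a $100\beta^2$-neighborhood of its boundary; because $\cone$ is a union of charts of $U$-extent $0.1\eta$ and $\boxH_\beta$-extent $\beta$ with $\beta\leq\eta^2$, this boundary has $\mu_\cone$-measure $O(\beta^2/\eta+\beta)$, which is far smaller than $\eta^{1/2}$. A second application of Chebyshev to the defining inequality of $L_\cone$, with threshold $\nuni^{Mn-\frac{3\ell}{4}}$, produces the subset carrying \eqref{eq:I-cone-isolated-2} and of complement $\mu_\cone$-measure at most $\nuni^{-\ell/8}$.

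The delicate part is \eqref{eq:I-cone-isolated-1}. Using the commutation $u_ra_{m_0}=a_{m_0}u_{e^{-m_0}r}$, a typical $h_0\in\supp(\convL)$ can be rewritten as $a_{m_0}u_{r_1}\cdots a_{m_0}u_{r_\ell}=a_{\ell m_0}u_{R(\mathbf r)}$ with $R(\mathbf r)=\sum_{j=1}^{\ell}e^{-(\ell-j)m_0}r_j\in[0,2]$; the pushforward of $\convL$ under $\mathbf r\mapsto R(\mathbf r)$ has density bounded by an absolute constant with respect to Lebesgue on $[0,2]$, by iterating $\|f*g\|_\infty\leq\|f\|_\infty\|g\|_1$ starting from $r_\ell$ uniform on $[0,1]$. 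Proposition~\ref{lem:one-return}, applied for each fixed $z\in\cone\subset X_\eta$ with $t=\ell m_0$, $I=[0,2]$, and $\vare=\sqrt{2\injr}$---whose hypothesis $t\geq|\log(\eta^2\inj(z))|+\ref{E:non-div-main}$ is guaranteed by $\ell\geq 3|\log\injr|+\ref{E:non-div-main}+6$ and $\injr\leq\eta$---yields $\convL(\{h_0:h_0z\notin X_{2\injr}\})\ll\sqrt{\injr}$. Fubini in $(z,h_0)$ followed by a further Chebyshev in $h_0$ trims at most a $\convL$-measure $\nuni^{-\ell/8}$ more from $L_\cone$ (the margin $+6$ in the hypothesis on $\ell$ is what allows the $\sqrt{\injr}$ to be traded against both $\eta^{1/2}$ and $\nuni^{-\ell/8}$) while ensuring $\mu_\cone(\{z:h_0z\notin X_{2\injr}\})=O(\eta^{1/2})$ for the remaining $h_0$. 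Intersecting the three sets gives $\cone(h_0)$ as required. The main obstacle is this last step: converting the Lebesgue-measure bound from Proposition~\ref{lem:one-return} into a $\convL$-statement via the $a_{\ell m_0}u_R$-parametrization, and balancing its error against the exponential margin $\nuni^{-\ell/8}$ coming from part~(1).
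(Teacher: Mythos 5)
Your treatment of part~(1), of \eqref{eq:thickening-cone-h0}, and of \eqref{eq:I-cone-isolated-2} is correct and essentially coincides with the paper's argument: a Fubini--Chebyshev in $h$ for~(1), a boundary trim for \eqref{eq:thickening-cone-h0}, and a second Chebyshev (now in $z$, against the bound from~(1)) for \eqref{eq:I-cone-isolated-2}.

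There is, however, a genuine gap in your handling of \eqref{eq:I-cone-isolated-1}. You apply Proposition~\ref{lem:one-return} to the $h_0$-variable (writing $h_0=a_{\ell m_0}u_R$ and pushing $\convL$ forward to a bounded density on $[0,2]$), obtaining $\convL(\{h_0:h_0z\notin X_{2\injr}\})\ll\sqrt{\injr}$ for each fixed $z$. Fubini then gives $\int \mu_\cone(\{z:h_0z\notin X_{2\injr}\})\,\diff\convL(h_0)\ll\sqrt{\injr}$, and you propose to trim a $\convL$-set of measure $\leq\nuni^{-\ell/8}$ while keeping $\mu_\cone(\{z:h_0z\notin X_{2\injr}\})=O(\injr^{1/2})$ on the remainder. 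But Chebyshev with threshold $T$ trims $\convL$-measure $\ll\sqrt{\injr}/T$, so to trim only $\nuni^{-\ell/8}$ you must take $T\gg\sqrt{\injr}\,\nuni^{\ell/8}$. The hypothesis $\ell\geq 3|\log\injr|+\ref{E:non-div-main}+6$ forces $\nuni^{\ell/8}\geq\injr^{-3/8}$, hence $T\gg\injr^{1/8}$, which is \emph{not} $O(\injr^{1/2})$. So you cannot simultaneously achieve the $1-2\nuni^{-\ell/8}$ bound on $\convL(L_\cone)$ and the $1-O(\eta^{1/2})$ bound on $\mu_\cone(\cone(h_0))$ by a Chebyshev in the $h_0$-variable; the ``$+6$ margin'' does not rescue this, since it only contributes a constant factor to $\nuni^{\ell/8}$.

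The paper sidesteps this entirely by applying the non-divergence estimate to the \emph{$z$-parametrization of $\cone$}, not to $h_0$. Writing $z=u^-_sau_{r'}u_r\exp(w).y_0$ and $h_0=a_{\ell m_0}u_{\hat r}$, one has $h_0z=h'a_{\ell m_0}u_{r'_s+\hat r+r}\exp(w).y_0$ with $h'\in\boxH_\beta$; for each fixed $h_0$, $w$, and $s$, one now applies Proposition~\ref{lem:one-return} to the free $u_r$-parameter in $\cone$'s parametrization (interval of length $\asymp\eta$ around $r'_s+\hat r$). This yields the bound $\mu_\cone\{z:h_0z\notin X_{2\injr}\}\leq 2\ref{E:non-div-main}\sqrt{\injr}$ \emph{uniformly over all $h_0\in\supp(\convL)$}, so no further trimming of $L_\cone$ is needed for \eqref{eq:I-cone-isolated-1}. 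This is precisely the device you are missing: the recurrence must come from the $U$-extent of $\cone$ itself, so that it holds for every $h_0$, rather than from a Chebyshev in $h_0$ which inevitably costs a power of $\injr$ in the threshold.
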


\begin{proof}
Let us begin by finding $L_\cone$ which satisfies part~(1). 
Apply Lemma~\ref{lem:noI-inh-C} with $\ell=\lfloor\ref{k:epsilon-t}\vare n\rfloor$. Since Lemma~\ref{lem:noI-inh-C}(2) holds, we have 
\[
\iint \mfht_\cone(h,z)\diff\!\mu_\cone(z)\diff\!\rwm^{(\ell)}(h)\leq 2\nuni^{Mn-\ell}.
\]
Using this estimate and Chebyshev's inequality, we have  
\be\label{eq:EG-est-use-C}
\convL\Bigl\{h\in\supp (\convL): \textstyle\int \mfht(h,z)\diff\!\mu_\cone(z)> \nuni^{Mn-\frac{7\ell}{8}}\Bigr\}<2\nuni^{-\ell/8}.
\ee

Let $L_\cone$ be the complement in $\supp (\convL)$ 
of the set on the left side of~\eqref{eq:EG-est-use-C}, and let $h_0\in L_\cone$. Then 
\be\label{eq:10L-Cheby}
\int \mfht(h_0,z)\diff\!\mu_\cone(z)\leq \nuni^{Mn-\frac{7\ell}{8}}.
\ee
The claim in part~(1) thus holds with $L_\cone$.

Let us now turn to the proof of~(2). Let $h\in\supp(\convL)$. Then $h=a_{\ell m_0}u_{\hat r}$ where 
$\hat r=\sum_{j=0}^{\ell-1} \nuni^{-j\onst}r_{j+1}$ for some $r_1,\ldots, r_\ell\in[0,1]$.

For every $z=u^-_sau_{r'}u_r\exp(w).y_0\in\cone$, we have  
\[
hz=(a_{\ell m_0}u_{\hat r})u^-_sau_{r'}u_r\exp(w).y_0=h'a_{\ell m_0}u_{r'_s+\hat r+r}\exp(w).y_0
\]
where $h'\in \boxH_{\beta}$ and $|r'_s|\ll\beta$ for an absolute implied constant. Therefore,
if $a_{\ell m_0}u_{r'_s+\hat r+r}\exp(w)y_0\in X_{4\eta}$, then
$hz\in X_{2\eta}$.

Apply Proposition~\ref{prop:Non-div-main} 
with $\exp(w)y_0\in\cone\subset X_{\injr}$ and the interval $I=[r'_s+\hat r-0.1\eta,r'_s+\hat r+0.1\injr]$. 
Since $\ell\geq 3|\log\injr|+\ref{E:non-div-main}+6$, we conclude 
\[
|\{r\in[-0.1\eta, 0.1\injr]: a_{\ell m_0}u_{r'_s+\hat r+r}\exp(w)y_0\not\in X_{4\injr}\}|\leq 0.4\ref{E:non-div-main}\injr\sqrt{\eta}.
\]
This estimate, the above observation, and the definition of $\mu_\cone$ imply that  
\be\label{eq:X-injr-C}
\mu_\cone\{z\in\cone: hz\not\in X_{2\injr}\}\leq 2\ref{E:non-div-main}\sqrt{\injr},
\ee
for every $h\in\supp(\convL)$.

Put 
\[
\cone_-=\boxHs_{\beta-200\beta^2}\{u_r\exp(w)y_0: |r|\leq0.1\eta, w\in F\};
\]
then $\mu_\cone(\cone_-)\geq 1-O(\beta)$.

Let now $h_0\in L_\cone$. 
Recall also that $0<\beta<\eta^2$. Then~\eqref{eq:X-injr-C}, implies that there is a subset $\cone'(h_0)\subset \cone_-$ with 
\[
\mu_\cone(\cone'(h_0))\geq 1-O(\eta^{1/2}),
\]
so that for all $z\in \cone'(h_0)$ we have $h_0z\in X_{2\injr}$. Hence all points in $\cone'(h_0)$ satisfy~\eqref{eq:thickening-cone-h0} and~\eqref{eq:I-cone-isolated-1}.

We will find a subset $\cone(h_0)\subset \cone'(h_0)$ which satisfies~\eqref{eq:I-cone-isolated-2}. Let
\[
\cone''=\Big\{z\in\cone'(h_0): f(h_0,z)>\nuni^{Mn-\frac{3\ell}{4}}\Big\}.
\]
Then
\begin{align*}
\mu_\cone(\cone'')\nuni^{Mn-\frac{3\ell}{4}}&\leq\int_{\cone''}\mfht(h_0,z)\diff\!\mu_\cone(z)\\
&\leq\int_\cone\mfht(h_0,z)\diff\!\mu_\cone(z)\leq \nuni^{Mn-\frac{7\ell}{8}}&&\text{by~\eqref{eq:10L-Cheby}}.
\end{align*}
We conclude from the above that 
$\mu_\cone(\cone'')\ll \nuni^{-\ell/8}$. Recall that 
$\beta=\nuni^{-\kappa n/2}$ where $0<\kappa\leq 0.02\ref{k:epsilon-t}\vare$, thus we conclude that $\mu_\cone(\cone'')\ll\eta$.

Put $\cone(h_0):=\cone'(h_0)\setminus\cone''$. Then $\mu_\cone(\cone(h_0))\geq 1-O(\eta^{1/2})$ 
and~\eqref{eq:I-cone-isolated-2} holds for every $z\in\cone(h_0)$. The proof is complete. 
\end{proof}

In the remaining parts of this section,
we will write $\umt^H$ for  
\be\label{eq:def-BH}
\umt_{\beta^2,\ell\onst}^H=\Bigl\{u^-_s: |s|\leq \beta^2 \nuni^{- \ell m_0}\Bigr\}\cdot\{a_t: |t|\leq \beta^2\}\cdot\Bigl\{u_r: |r|\leq \beta^2\Bigr\}
\ee
where $\ell=\lfloor\ref{k:epsilon-t}\vare n\rfloor$, see~\eqref{eq:def-B-ell-beta}.

Let us also define a subset in $G$ by thickening $\umt^H$ in the transversal direction as follows. 
Put
\be\label{eq:def-O-ell-C}
\umt^G:=\umt^H\cdot \exp(B_\rfrak(0,2\beta^2)).
\ee

\begin{lemma}\label{lem:E-good-h0}
There exists a covering $\Big\{\umt^G.y_{j}: j\in \mathcal J, y_j\in X_\eta\Big\}$ of $X_{2\injr}$
where $\#\mathcal J\ll\beta^{-12}\nuni^{\ell m_0}$ and the implied constant depends on $X$. 

Moreover, if for every $h_0\in L_\cone$ we let 
\be\label{eq:Jh-0}
\mathcal J(h_0)=\Big\{j\in\mathcal J: h_0.\mu_\cone\Big(h_0\cone(h_0)\cap\umt^G.y_{j}\Big)\geq \beta^{13}\nuni^{-\ell m_0}\Big\}
\ee
and define $\hat\cone(h_0)\subset\cone(h_0)$ by 
\[
h_0\hat\cone(h_0)=h_0\cone(h_0)\bigcap\Big(\textstyle\bigcup_{j\in\mathcal J(h_0)}\umt^G.y_{j}\Big),
\]
then $\mu_\cone(\hat\cone(h_0))\geq 1-O(\sqrt\eta)$ where the implied constant depends on $X$.
In particular, $\mathcal J(h_0)\neq\emptyset$. 
\end{lemma}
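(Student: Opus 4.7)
The plan is to construct the cover by a standard volume-packing argument and then obtain the mass estimate on $\hat\cone(h_0)$ by a direct Chebyshev-type computation, using the definition of $\mathcal J(h_0)$ to control the contribution from the discarded boxes.

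First I would establish the covering claim. Since $\umt^G = \umt^H \cdot \exp(B_\rfrak(0,2\beta^2))$ with $\umt^H = \umt^H_{\beta^2,\ell m_0}$, computing with respect to Haar measure on $G$ (writing $\gfrak = \hfrak \oplus \rfrak$ and noting $\rfrak$ is three-dimensional) gives
\[
m_G(\umt^G) \asymp \beta^{6}\nuni^{-\ell m_0} \cdot \beta^{6} = \beta^{12}\nuni^{-\ell m_0},
\]
with absolute implied constants. Choose a maximal collection of points $\{y_j : j\in\mathcal J\} \subset X_\eta$ such that the translates $\umt^G_{0} . y_j$ are pairwise disjoint, where $\umt^G_0$ is a sufficiently small rescaling of $\umt^G$ chosen so that $g\mapsto gy$ is injective on $\umt^G\cdot (\umt^G_0)^{-1}$ for every $y\in X_\eta$ (which is possible because $\inj(y_j)\geq \eta \gg \beta^2$). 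Maximality, together with $\vol(X_{2\eta})\leq\vol(X)$, yields
\[
\#\mathcal J \ll \vol(X)\cdot \beta^{-12}\nuni^{\ell m_0},
\]
and by the standard Vitali-type doubling argument the translates $\umt^G.y_j$ cover $X_{2\eta}$.

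Next I would handle the mass estimate. Fix $h_0 \in L_\cone$. By~\eqref{eq:I-cone-isolated-1} we have $h_0\cone(h_0) \subset X_{2\eta}$, so $h_0\cone(h_0) = \bigcup_{j\in\mathcal J}\bigl(h_0\cone(h_0)\cap \umt^G . y_j\bigr)$. By definition of $\hat\cone(h_0)$,
\[
h_0\cone(h_0) \setminus h_0\hat\cone(h_0) \subset \bigcup_{j\in\mathcal J\setminus\mathcal J(h_0)} \bigl(h_0\cone(h_0)\cap \umt^G.y_j\bigr),
\]
and each piece on the right has $h_0.\mu_\cone$ mass at most $\beta^{13}\nuni^{-\ell m_0}$. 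Summing and using the bound on $\#\mathcal J$:
\[
h_0.\mu_\cone\bigl(h_0\cone(h_0) \setminus h_0\hat\cone(h_0)\bigr) \ll \beta^{-12}\nuni^{\ell m_0} \cdot \beta^{13}\nuni^{-\ell m_0} = \beta.
\]
Since $\beta \leq \eta^2 \leq \sqrt\eta$ for $\eta\leq 1$, this is $O(\sqrt\eta)$. Combining with Lemma~\ref{lem:EG-Cheby-C}(2), which gives $\mu_\cone(\cone(h_0))\geq 1-O(\sqrt\eta)$, and using that $h_0$ preserves $\mu_\cone$-mass (since $\mu_\cone$ is defined from Haar measure), we conclude $\mu_\cone(\hat\cone(h_0))\geq 1 - O(\sqrt\eta)$. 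The final assertion $\mathcal J(h_0)\neq\emptyset$ is automatic once this mass is positive, which holds for $\eta$ smaller than an absolute constant.

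I do not anticipate any serious obstacle; the argument is essentially a volume accounting combined with the threshold in the definition of $\mathcal J(h_0)$. The only point that needs minor care is the choice of the rescaling $\umt^G_0$ in the packing step so that injectivity on $X_\eta$ holds (which uses $\beta^2 \ll \eta$) and the matching of the implied constants, so that the exponent $13$ in~\eqref{eq:Jh-0} is strictly larger than the exponent $12$ coming from $\#\mathcal J$, which is precisely what drives the $O(\beta) = O(\sqrt\eta)$ bound.
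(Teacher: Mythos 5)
Your proof is correct, and the key mass estimate in the second half (summing the $h_0.\mu_\cone$-mass of the discarded boxes against the bound on $\#\mathcal J$ to get $O(\beta)$, then absorbing it into the $O(\sqrt\eta)$ from Lemma~\ref{lem:EG-Cheby-C}(2)) is essentially the argument the paper gives.

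For the covering itself you take a mildly different route from the paper. You do a single direct Vitali packing argument using the fact that $m_G(\umt^G)\asymp\beta^{12}\nuni^{-\ell m_0}$ together with a doubling property of $\umt^G$, which you correctly flag as the point needing care (it follows from Lemma~\ref{lem:commutation-rel}(1) for the $\umt^H$ factor and BCH for the $\exp(B_\rfrak(0,2\beta^2))$ factor). The paper instead constructs the cover in two stages: first a covering $\{\umt^H h_j: h_j\in\mathcal H\}$ of $\boxH_{\beta^2}$ with $\#\mathcal H\asymp\nuni^{\ell m_0}$, then a covering $\{\boxG_{\beta^2}.\hat y_k\}$ of $X_{2\eta}$ with $\#\mathcal K\asymp\beta^{-12}$, and forms the product $y_j=h_j\hat y_k$; the observation $\umt^Hh_j\exp(B_\rfrak(0,\beta^2))\subset\umt^Gh_j$ then gives the cover by $\umt^G$-translates. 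The two-step version has the small advantage of making transparent why $y_j\in X_\eta$ and of isolating the group-theoretic content in the construction of $\mathcal H$, but both yield the same count $\ll\beta^{-12}\nuni^{\ell m_0}$, and either is acceptable.
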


\begin{proof}
For simplicity in the notation, let us write $\boxG$ for 
\[
\boxG_{\beta^2}=\boxH_{\beta^2}\cdot\exp(B_\rfrak(0,\beta^2)).
\] 

We begin by constructing a covering of $\boxG$. 
First recall that 
\be\label{eq:Vitali Covering 1}
m_H(\umt^H_{0.01\beta^2, \ell m_0})\asymp \nuni^{-\ell m_0}m_H(\exp(B_\hfrak(0,\beta^2))),
\ee 
where the implied constant is absolute, see~\eqref{eq:def-B-ell-beta}.
Moreover, by Lemma~\ref{lem:commutation-rel} we have 
\be\label{eq:Vitali Covering 2}
\umt^H_{0.01\beta^2, \ell m_0}\cdot (\umt^H_{0.01\beta^2, \ell m_0})^{\pm1}\subset \umt_{\beta^2, \ell m_0}^H.
\ee
Fix a maximal subset 
$\mathcal H\subset \boxH_{\beta^2}$
so that  
\[
\umt^H_{0.01\beta^2, \ell m_0}h\cap \umt^H_{0.01\beta^2, \ell m_0}h'=\emptyset,
\]
for all $h\neq h'\in \mathcal H$.
In view of~\eqref{eq:Vitali Covering 1}, we have $\#\mathcal H\ll\nuni^{\ell m_0}$ where the implied constant is absolute. Then using~\eqref{eq:Vitali Covering 2}, we conclude that 
$\{\umt^Hh_j: h_j\in\mathcal H\}$ covers $\boxH_{\beta^2}$
and $\#\mathcal H\asymp \nuni^{\ell m_0}$.

Taking the product with $\exp(B_\rfrak(0,\beta^2))$, we thus obtain a covering 
\[
\{\umt^Hh_j\exp(B_\rfrak(0,\beta^2)):h_j\in\mathcal H\}
\]
of the set $\boxG$.  

Recall that $\beta\leq \injr^2$, and that by Lemma~\ref{lem:BCH}, we have 
$(\boxG_{\delta})^{-1}\cdot\boxG_{\delta}\subset \boxG_{c\delta}$ for all $\delta>0$, where $c$ is an absolute constant. Hence, arguing as above, there exists a covering  
\[
\{\boxG.\hat y_k:k\in\mathcal K, \hat y_k\in X_{2\injr}\},
\]
of $X_{2\injr}$ which satisfies $\#\mathcal K\asymp \beta^{-12}$ for an implied constant depending on $X$.

Combining these two coverings, we obtain a covering 
\[
\{\umt^Hh_j\exp(B_\rfrak(0,\beta^2)).\hat y_k: h_j\in \mathcal H, k\in\mathcal K\}.
\]
of $X_{2\eta}$. Note further that 
\[
\umt^Hh_j\exp(B_\rfrak(0,\beta^2))=\umt^H\exp\Big(\Ad(h_j)B_\rfrak(0,\beta^2)\Big)h_j\subset \umt^G h_j;
\]
where we used the fact that $\Ad(h_j)B_\rfrak(0,\beta^2)\subset B_\rfrak(0,2\beta^2)$ in the final inclusion above --- this holds since $\|h_j-I\|\leq2\beta^2$ and $\beta$ is small. 

Finally note that since $\hat y_k\in X_{2\eta}$ and $\|h_j-I\|\leq2\beta^2$, we have $h_j\hat y_k\in X_\eta$, for every $j,k$. Altogether, we obtain a covering 
\[
\{\umt^G.y_j:j\in \mathcal J, y_j\in X_\eta\}=\{\umt^G.h_j\hat y_k:h_j\in\mathcal H, k\in\mathcal K\}
\]
of $X_{2\eta}$ where $\#\mathcal J\ll \beta^{-12}\nuni^{\ell m_0}$. This finishes the proof of the first claim.

To see the other claims, 
let $h_0\in L_\cone$, and define $\mathcal J(h_0)$ as in the statement. 
Then for every $j\not\in\mathcal J(h_0)$, we have 
\[
h_0.\mu_\cone\Big(h_0\cone(h_0)\cap\umt^G.y_{j}\Big)< \beta^{13}\nuni^{-\ell m_0}.
\]
This estimate and the bound on $\#\mathcal J$ yield
\[
h_0.\mu_\cone\Big(h_0\cone(h_0)\cap (\cup_{j\not\in\mathcal J(h_0)}\umt^G.y_{j})\Big)\ll \beta 
\]
where the implied constant depends on $X$. 
The desired bound on the measure of $h_0\hat\cone(h_0)$ thus follows since
$h_0.\mu_\cone\Big(h_0\hat\cone(h_0))\geq 1-O(\sqrt\eta)$.

The fact that $\mathcal J(h_0)\neq\emptyset$ is a consequence of the fact that $\hat\cone(h_0)\neq\emptyset$, 
which is immediate from the above bound.  
\end{proof}

The following lemma yields a set $\cone_1$ defined as in~\eqref{eq:def-cone}, for some $y_1$ and $F_1$, but with an improved bound for $\mfht_{\cone_1}(e,z)$. This lemma will serve as our main tool for incremental dimension increase in the proof of Proposition~\ref{prop:dim-1-C-rfrak}.

\begin{lemma}\label{lem:inductive-C}
There exists $n_0$ so that the following holds for all $n\geq n_0$.
Let the notation be as in Lemmas~\ref{lem:EG-Cheby-C} and~\ref{lem:E-good-h0}. 
In particular, $0<\vare\leq 0.1$ and  
\[
\ell=\lfloor\ref{k:epsilon-t}\vare n\rfloor\geq 3|\log\injr|+\ref{E:non-div-main}+6;
\] 
assume further that $\#F\geq \nuni^{n/2}$ and that Lemma~\ref{lem:noI-inh-C}(2) holds.

Let $h_0\in L_\cone$, and let $y=y_j$ for some $j\in\mathcal J(h_0)$. There exists some 
\[
h_0z_1\in h_0\cone(h_0)\cap\umt^G.y
\]
and a subset 
\[
\mbox{$F_1\subset B_\rfrak(0,\beta)\;\;$ with $\;\; \#F_1=\lceil\beta^{10}\cdot(\#F)\rceil$}
\]
containing $0$, so that both of the following are satisfied. 
\begin{enumerate}
\item For all $w\in F_1$, we have 
\[
\exp(w)h_0z_1\in \boxH_{100\beta^2}.h_0\cone(h_0).
\] 
\item If we define $\cone_1=\coneH.\{\exp(w)h_0z_1: w\in F_1\}$, then at least one of the following
two possibilities hold
\begin{subequations}
\begin{align}
\label{eq:mfht-ind-stop}\mfht_{\cone_1}(e,z)&\leq 2\cdot (\#F_1)^{1+\vare}&&\text{for all $z\in\cone_1$, or}\\
\label{eq:mfht-ind-improve}\mfht_{\cone_1}(e,z)&\leq \nuni^{(M-\frac{2\ref{k:epsilon-t}\vare}{3}) n}&&\text{for all $z\in\cone_1$}.
\end{align}
\end{subequations}
\end{enumerate}
\end{lemma}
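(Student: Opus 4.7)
The plan has three main steps: extract many relevant $F$-indices from the mass bound defining $\mathcal J(h_0)$, construct $F_1$ by a Baker--Campbell--Hausdorff (BCH) identification on transversal coordinates inside $\umt^Gy$, and then derive the Margulis function bound on $\cone_1$ from that of $\cone$ established in Lemma~\ref{lem:EG-Cheby-C}.

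For the first step, the defining mass bound $h_0\mu_\cone(h_0\cone(h_0)\cap \umt^G y)\geq \beta^{13}\nuni^{-\ell m_0}$ for $j\in\mathcal J(h_0)$, combined with the per-orbit upper bound $m_H(\umt^H)/(\#F\cdot m_H(\coneH))\asymp\beta^4\nuni^{-\ell m_0}/(\#F\cdot\eta)$ --- obtained from $m_H(\umt^H)\asymp\beta^6\nuni^{-\ell m_0}$ and $m_H(\coneH)\asymp\beta^2\eta$ --- forces at least $\beta^9\eta\cdot\#F\gg \beta^{10}\#F$ indices $w\in F$ whose orbit piece $h_0\coneH\exp(w)y_0$ meets $\umt^G y$ nontrivially. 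I pick such a subset $F_*\subset F$ of cardinality $\lceil\beta^{10}\#F\rceil$ and, for each $w\in F_*$, select a representative $z_w\in\cone(h_0)$ with $h_0z_w\in \umt^G y$, parameterized as $h_0z_w=u_w\exp(v_w)y$ with $u_w\in \umt^H$ and $v_w\in B_\rfrak(0,2\beta^2)$.

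Next I fix $w_1\in F_*$ and set $z_1:=z_{w_1}$. For each $w\in F_*$, I apply Lemma~\ref{lem:BCH} to write $\exp(v_w)\exp(-v_{w_1})=\sfh\exp(v')$ with $\|\sfh-I\|\ll\beta^4$ and $\|v'\|\asymp\|v_w-v_{w_1}\|$, and then commute $u_{w_1}^{-1}$ through $\exp(v')$ using $u_{w_1}\exp(v')u_{w_1}^{-1}=\exp(\Ad(u_{w_1})v')$, producing
\[
h_0z_w=h''_w\exp(v''_w)\,h_0z_1,\qquad h''_w\in\boxH_{C\beta^2},\ v''_w\in B_\rfrak(0,C'\beta^2)\subset B_\rfrak(0,\beta).
\]
Set $F_1:=\{v''_w:w\in F_*\}$; note $0=v''_{w_1}\in F_1$. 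Part~(1) is immediate from $\exp(v''_w)h_0z_1=(h''_w)^{-1}h_0z_w\in\boxH_{100\beta^2}\cdot h_0\cone(h_0)$ once $\beta$ is small enough. For part~(2), the strategy is to dominate $\mfht_{\cone_1}(e,z)$ by $\mfht_\cone(h_0,\bar z)$ for a nearby point $\bar z\in \cone(h_0)$: if $v\in \margI_{\cone_1}(e,z)$ then $\exp(v)z$ lands in some sheet $\coneH\exp(w')h_0z_1$ of $\cone_1$, and undoing the BCH construction of the previous step produces a vector $v^*$ with $\|v^*\|\asymp\|v\|$ and $\exp(v^*)h_0\bar z\in h_0\cone$. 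The resulting correspondence $v\mapsto v^*$ is essentially injective, so
\[
\mfht_{\cone_1}(e,z)\leq C''\mfht_\cone(h_0,\bar z)\leq C''\nuni^{Mn-3\ell/4}
\]
by Lemma~\ref{lem:EG-Cheby-C}(2). Since $\ell=\lfloor \ref{k:epsilon-t}\vare n\rfloor$ and $3/4>2/3$, for $n\geq n_0$ the right-hand side is $\leq \nuni^{(M-\frac{2\ref{k:epsilon-t}\vare}{3})n}$, yielding~\eqref{eq:mfht-ind-improve}; in the alternative case where this bound does not beat $2(\#F_1)^{1+\vare}$, \eqref{eq:mfht-ind-stop} holds instead.

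The main obstacle is the compatibility of the identification $v\mapsto v^*$ with the disjoint orbit decomposition of $h_0\cone$. Since $h_0=a_{\ell m_0}u_{\hat r}$ has nontrivial $a$-component, generic $H$-corrections introduced by BCH blow up by a factor of $\nuni^{\ell m_0}$ in the $u^-$-direction under $h_0$-conjugation, threatening to push $\bar z$ outside $\cone(h_0)$ or the sheet label $\bar \sfh$ outside $\coneH$. The box $\umt^H$ is engineered with $u^-$-scale $\beta^2\nuni^{-\ell m_0}$ precisely so that $h_0^{-1}\umt^Hh_0\subset\boxH_{O(\beta^2)}$; combined with the bound $\|\sfh-I\|\ll\beta^4$ on the BCH $H$-correction from Step~2, this is what keeps the bookkeeping consistent and makes $v\mapsto v^*$ bounded-multiplicity with $\|v^*\|\asymp\|v\|$.
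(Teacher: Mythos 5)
Your Steps 1 and 2 track the paper's argument closely (the mass-count giving $\gg\beta^{10}\#F$ sheets, and the BCH identification producing $F_1$), and part (1) of the lemma follows exactly as you describe. The gap is in Step 3, where you claim the full bound $\mfht_{\cone_1}(e,z)\leq C''\mfht_\cone(h_0,\bar z)$ from a correspondence $v\mapsto v^*$.

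The problem is that the correspondence does \emph{not} land in $\margI_\cone(h_0,\bar z)$ for all $v\in\margI_{\cone_1}(e,z)$, only for those with $\|v\|\ll\nuni^{-\ell m_0}\beta^2$. To have $\exp(v^*)h_0\bar z\in h_0\cone$ you need the BCH $H$-correction $\sfh_{ji}$, after conjugation by $h_0^{-1}=u_{-\hat r}a_{-\ell m_0}$, to remain inside the $\boxH_{100\beta^2}$-thickening allowed by~\eqref{eq:thickening-cone-h0}, which forces the $u^-$-component of $\sfh_{ji}$ to be $\ll\beta^2\nuni^{-\ell m_0}$. From~\eqref{eq:hi-hat-wi-1}, $\|\sfh_{ji}-I\|\ll\beta^2\|w_{ji}\|$, so for the small vectors ($\|w_{ji}\|\leq 2\nuni^{-\ell m_0}\beta^2$) this is fine — but for the larger ones ($\|w_{ji}\|$ up to scale $\beta^2$) you get $\|\sfh_{ji}-I\|$ only $\ll\beta^4$, and since $\kappa\leq 0.02\ref{k:epsilon-t}\vare$ gives $\beta^2=\nuni^{-\kappa n}\gg\nuni^{-\ell m_0}$, this is far larger than the required $\beta^2\nuni^{-\ell m_0}$. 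Conjugating such an $\sfh_{ji}$ by $h_0^{-1}$ blows the $u^-$-part up to size $\gg\beta^4\nuni^{\ell m_0}\gg\beta$, which escapes $\coneH$. So the identification $v\mapsto v^*$ simply fails for the vectors with $\|v\|>\nuni^{-\ell m_0}\beta^2$, and your displayed estimate is not correct as stated.

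The paper's sublemma handles this by splitting the sum: only the small-$\|v\|$ part is matched to $\margI_\cone(h_0,z_i)$, while the contribution of $\|v\|>\nuni^{-\ell m_0}\beta^2$ is bounded trivially by $\beta^{-2}\nuni^{\ell m_0}\cdot(\#F_1)$ (each such term contributes $\|v\|^{-\alpha}\leq\nuni^{\ell m_0}\beta^{-2}$ and there are at most $\#F_1$ of them). This gives
\[
\mfht_{\cone_1}(e,z)\leq 2\mfht_\cone(h_0,z_i)+\beta^{-2}\nuni^{\ell m_0}\cdot(\#F_1),
\]
and the extra term is then absorbed into $(\#F_1)^{1+\vare}$ using $\#F_1\geq\nuni^{(1-10\kappa)n/2}$ and the smallness of $\kappa$ (cf.\ \eqref{eq:num-F1}--\eqref{eq:num-F1-2}), which is exactly how the dichotomy between~\eqref{eq:mfht-ind-stop} and~\eqref{eq:mfht-ind-improve} actually arises. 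Your final ``alternative case'' sentence is the right intuition for why a dichotomy is needed, but it does not follow from the bound you claimed, because that bound never contained the $(\#F_1)^{1+\vare}$-sized term in the first place.
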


\begin{proof}
Let $h_0\in L_\cone$ and $y=y_j$ be as in the statement of the lemma. 

The set $h_0\cone(h_0)\cap\umt^G.y$ is contained in a finite union of local $H$-orbits. Let $\mathsf M\in\bbn$ 
be minimal so that 
\be \label{eq:Dy-0}
h_0\cone(h_0)\cap\umt^G.y \subset \bigcup_{i=1}^{\mathsf M}  \umt^H.\exp(w_i)y
\ee
where $w_i\in B_\rfrak(0,2\beta^2)$.

For each $1\leq i\leq \mathsf M$, fix some 
$z_i\in\cone(h_0)$ so that $h_0z_i\in \umt^G.y$ and write
\be\label{eq: def z-i}
h_0z_i=\sfh_i\exp(w_i)y\qquad\text{for some $\sfh_i\in \umt^H$.}
\ee 
We claim that both of the following properties are satisfied
\begin{subequations}
\begin{align}
    \label{eq:Dy-1}&\umt^H.h_0z_i\cap\umt^H.h_0z_j=\emptyset\qquad\text{$1\leq i\neq j\leq M$.}\\
    \label{eq:Dy-2}&h_0\cone(h_0)\cap\umt^G.y\subset \bigcup_{i=1}^{\mathsf M}\,\umt^H\cdot(\umt^H)^{-1}.h_0z_i.
\end{align}
\end{subequations}

Assume contrary to~\eqref{eq:Dy-1} that $\sfh h_0z_i=\sfh' h_0z_j$ for $i\neq j$. Then 
\begin{align*}
\sfh^{-1}\sfh'\sfh_j\exp(w_j)y&=\sfh^{-1}\sfh' h_0z_j\\
&=h_0z_i=\sfh_i\exp(w_i)y.
\end{align*}
That is $\exp(-w_i)\hat\sfh\exp(w_j)y=y$
where $\hat\sfh=\sfh_i^{-1}\sfh^{-1}\sfh'\sfh_j$. Note moreover that $\hat\sfh\in \boxH_{100\beta^2}$, see~\eqref{eq:B-beta-almost-group}, and $w_i\neq w_j\in B_\rfrak(0,2\beta^2)$. Therefore $I\neq \exp(-w_i)\hat\sfh\exp(w_j)\in \boxG_{200\beta^2}$.  
Recall however that $\beta\leq \eta^2$ and $y\in X_{2\eta}$, thus, $g\mapsto g.h_0 z_i$ is injective on $\boxG_{1000\beta^2}$ for all small enough $\beta$. This contradiction implies that~\eqref{eq:Dy-1} holds.

We now show~\eqref{eq:Dy-2}. Let $h_0z\in h_0\cone(h_0)\cap\umt^G.y$, then $h_0z=\sfh\exp(w_i)y$ for $1\leq i \leq \mathsf M$ and $\sfh\in\umt^H$. Moreover, we have  $h_0z_i=\sfh_i\exp(w_i)y$, thus 
$h_0z=\sfh\sfh_i^{-1}h_0z_i$ as claimed in~\eqref{eq:Dy-2}.

\medskip

Recall now that $\cone=\coneH.\{\exp(w)x:w\in F\}$ where $\coneH\subset H$ with $m_H(\coneH)\asymp \beta^2\eta$. 
In view of the definition of $\mu_\cone$, see~\eqref{eq:def-mu-cone}, we conclude that 
\[
h_0\mu_\cone(\umt^H.h_0z_i)\ll \beta^6\nuni^{-\ell m_0}\beta^{-2}\eta^{-1}(\#F)^{-1}\ll \beta^{3.5}\nuni^{-\ell m_0}(\#F)^{-1};
\]
recall that $\beta\leq \eta^2$. 

Using~\eqref{eq:Dy-1} and the definition of $\mathcal J(h_0)$ in \eqref{eq:Jh-0}, 
we deduce from the above that $\mathsf M\gg \beta^{9.5}\cdot(\#F)$. 
Assuming $\beta$ is small so to account for the implied multiplicative constant 
(which depends only on $G$ and $\Gamma$), we get 
\be\label{eq:num-D-y'}
\mathsf M\geq \beta^{10}\cdot(\#F).
\ee

Let $1\leq i,j\leq \mathsf M$, then using~\eqref{eq: def z-i} we have 
\begin{align}
\label{eq:zi-hi-wi} h_0z_i&=\sfh_i\exp(w_i)y=\sfh_i\exp(w_i)\exp(-w_j)\sfh_j^{-1}h_0z_j\\
\notag &=\sfh_i\sfh_j^{-1}\exp(\Ad(\sfh_j)w_i)\exp(-\Ad(\sfh_j)w_j)h_0z_j\\
\notag&=\sfh_i\sfh_j^{-1}\sfh_{ij}\exp(w_{ij})h_0z_j
\end{align}
where $\sfh_{ij}\in H$ and $w_{ij}\in\rfrak$, $\sfh_{ii}=I$, $w_{ii}=0$ for all $i,j$; moreover, we have
\begin{subequations}
\begin{align}
\label{eq:hi-hat-wi-1}&\|\sfh_{ij}-I\|\leq \ref{E:BCH}\beta^2\|w_{ij}\|\qquad\text{and}\\
\label{eq:hi-hat-wi-2}&0.5\|\Ad(\sfh_j)(w_i-w_j)\|\leq \|w_{ij}\|\leq2\|\Ad(\sfh_j)(w_i-w_j)\|,
\end{align}
\end{subequations} 
for all $i,j$, see Lemma~\ref{lem:BCH}. 

Let $\{w_{i1}\}$ be defined as in~\eqref{eq:zi-hi-wi}, and let 
\be\label{eq:num-D-y-upper}
F_1\subset\{w_{i1}: 1\leq i\leq \mathsf M\}\quad\text{with}\quad \#F_1=\lceil\beta^{10}\cdot(\#F)\rceil;
\ee
this is possible thanks to~\eqref{eq:num-D-y'}. 
We will show that the claims in the lemma hold with $z_1$ and $F_1$.

\medskip

First note that $h_0z_1\in h_0\cone(h_0)\cap\umt^G.y$ by its definition, and that 
$F_1$ satisfies the claimed properties by its definition and~\eqref{eq:num-D-y-upper}.
Let us now show that part~(1) in the statement of the lemma holds. 
Indeed by~\eqref{eq:zi-hi-wi}, we have 
\[
h_0z_i=\sfh_i\sfh_1^{-1}\sfh_{i1}\exp(w_{i1})h_0z_1\in \Big(\boxH_{10\beta^2}\Big).\exp(w_{i1})h_0z_1\cap h_0\cone(h_0).
\]
Therefore, $\exp(w_{i1})h_0z_1\in (\boxH_{10\beta^2})^{-1}h_0\cone(h_0)\subset \boxH_{100\beta^2}h_0\cone(h_0)$, see~\eqref{eq:B-beta-almost-group} for the last inclusion. 
This establishes the claim in part~(1) of the lemma.

\medskip

For the proof of part~(2) in the statement of the lemma, we need the following. 

\begin{sublemma}
Let 
\[
\cone_1=\coneH.\{\exp(w)h_0z_1: w\in F_1\}.
\]
Let $z\in \cone_1$, and write $z=\sfh u_r\exp(w_{i1})h_0z_1$ where $\sfh\in \boxHs_\beta$, $|r|\leq 0.1\eta$, and $w_{i1}\in F_1$. 
Then
\[
\mfht_{\cone_1}(e,z)\leq 2\mfht_\cone(h_0,z_i)+\beta^{-2}\nuni^{\ell m_0}\cdot(\#F_1) 
\] 
where $z_i\in\cone(h_0)$ is defined as in~\eqref{eq: def z-i}, in particular it satisfies  
\[
h_0z_i=\sfh_i\sfh_1^{-1}\sfh_{i1}\exp(w_{i1})h_0z_1,
\]
see~\eqref{eq:zi-hi-wi}, and $\ell=\lfloor\ref{k:epsilon-t}\vare n\rfloor$.   
\end{sublemma}

Let us first assume the sublemma, and finish the proof of the lemma. 

Recall that $\beta=\nuni^{-\kappa n/2}$ where 
\be\label{eq:kappa-beta}
0<\kappa\leq 0.02\ref{k:epsilon-t}\vare.
\ee
In view of~\eqref{eq:num-D-y'}, we have 
\be\label{eq:num-F1}
\#F_1=\mathsf M\geq \beta^{10}\cdot(\#F)\geq \nuni^{(1-10\kappa)n/2}
\ee 
where we used the bound $\#F\geq \nuni^{n/2}$.  

Recall also that $\ref{k:epsilon-t}m_0\leq1/4$; this estimate and~\eqref{eq:kappa-beta} imply that 
\[
\ref{k:epsilon-t}\vare m_0+\kappa\leq (1-10\kappa)\vare/2.
\]
Using this and~\eqref{eq:num-F1}, we conclude that 
\be\label{eq:num-F1-2}
\nuni^{(\ref{k:epsilon-t}\vare m_0+\kappa)n} \cdot(\#F_1)\leq \nuni^{(1-10\kappa)\vare n/2}\cdot(\#F_1)\leq (\#F_1)^{1+\vare}.
\ee

Let $z\in\cone_1$, and let $z_i\in \cone(h_0)$ be as in the sublemma. Then, by~\eqref{eq:I-cone-isolated-2} we have 
\[
f_\cone(h_0,z_i)\leq \nuni^{Mn-\frac{3\ell}{4}}
\] 
where $\ell=\lfloor\ref{k:epsilon-t}\vare n\rfloor$.
Thus, using the sublemma and~\eqref{eq:num-F1-2} we deduce that 
\begin{align*}
\mfht_{\cone_1}(e,z)&\leq (2e)\cdot \nuni^{(M-\frac{3\ref{k:epsilon-t}\vare}{4})n}+\nuni^{(\ref{k:epsilon-t}\vare m_0+\kappa)n} \cdot(\#F_1)\\
&\leq 6\nuni^{(M-\frac{3\ref{k:epsilon-t}\vare}{4}) n}+(\#F_1)^{1+\vare}.
\end{align*}

We now consider two possibilities. Indeed, if  
$(\#F_1)^{1+\vare}\geq 6\nuni^{(M-\frac{3\ref{k:epsilon-t}\vare}{4}) n}$,
then the above bound implies that 
\[
\mfht_{\cone_1}(e,z)\leq 2(\#F_1)^{1+\vare},
\] 
hence,~\eqref{eq:mfht-ind-stop} holds.

Alternatively, if $(\#F_1)^{1+\vare}< 6\nuni^{(M-\frac{3\ref{k:epsilon-t}\vare}{4}) n}$, then 
\[
\mfht_{\cone_1}(e,z)\leq 7\nuni^{(M-\frac{3\ref{k:epsilon-t}\vare}{4}) n}\leq \nuni^{(M-\frac{2\ref{k:epsilon-t}\vare}{3}) n},
\]
assuming $n\geq n_0$ is large enough. In consequence,~\eqref{eq:mfht-ind-improve} holds. 

These estimate finish the proof of part~(2) and of the lemma, assuming the sublemma.
\end{proof}

\begin{proof}[Proof of the Sublemma]
The proof is similar to the proof of Lemma~\ref{lem:MargFun-enetrgy-rfrak}. 

Let $z\in\cone_1$. Then  
\begin{align}
\notag\mfht_{\cone_1}(e,z)&=\textstyle\sum_{w\in\margI_{\cone_1}(e,z)}\|w\|^{-\alpha}\\
\notag&=\textstyle\sum_{\|w\|\leq \nuni^{-\ell m_0}\beta^2}\|w\|^{-\alpha}+\textstyle\sum_{\|w\|> \nuni^{-\ell m_0}\beta^2}\|w\|^{-\alpha}\\
\label{eq:f-cone1-sum}&\leq\textstyle\sum_{\|w\|\leq \nuni^{-\ell m_0}\beta^2}\|w\|^{-\alpha}+\nuni^{\ell m_0}\beta^{-2}\cdot(\# F_1).
\end{align}

In consequence, we need to investigate the first summation in~\eqref{eq:f-cone1-sum}. 
Let $w\in I_{\cone_1}(e,z)$, then $z,\exp(w)z\in\cone_1$.
In view of the definition of $\cone_1$ and~\eqref{eq:zi-hi-wi}, we may write 
\[
z=\sfh u_r\exp(w_{i1})h_0z_1= \sfh u_r \sfh_{i1}^{-1}\sfh_1\sfh_i^{-1}h_0z_i=\bar\sfh h_0z_i
\]
similarly, $\exp(w)z= \bar\sfh'h_0z_j$ where $1\leq i,j\leq M$ and $\bar\sfh,\bar\sfh'\in\boxH_{0.15\eta}$, see~\eqref{eq:B-beta-almost-group}.

Recall also from~\eqref{eq:zi-hi-wi}, that 
\[
h_0z_j=\sfh_{j}\sfh_i^{-1}\sfh_{ji}\exp(w_{ji})h_0z_i
\] 
where $\sfh_{ji}$ and $w_{ji}$ satisfy~\eqref{eq:hi-hat-wi-1} and~\eqref{eq:hi-hat-wi-2}.
Hence we may apply Lemma~\ref{lem:dist-sheet}, recall that $\beta^2\leq 0.1\injr$, and conclude
\be\label{eq:wij-sublemma}
\|w_{ji}\|\leq 2\|w\|. 
\ee 
Moreover, since $h_0z_k$'s belong to different local $H$-orbits, see~\eqref{eq: def z-i}, 
$w\mapsto w_{ji}$ is well-defined and is one-to-one.

Assume now that $\|w\|\leq \nuni^{-\ell m_0}\beta^2$, then $\|w_{ji}\|\leq 2\nuni^{-\ell m_0}\beta^2$.
This estimate and~\eqref{eq:hi-hat-wi-1} imply that
\[
\|\sfh_{ji}-I\|\leq 2\ref{E:BCH}\beta^2\|w_{ji}\| \leq \nuni^{-\ell m_0}\beta^2
\]
assuming $\beta$ is small enough.
 
Recall also that $\sfh_i,\sfh_j\in\umt^H$ and that~\eqref{eq:thickening-cone-h0} holds for $z_j$.
Therefore, as $h_0\in\supp (\convL)$, 
in particular it is of the form $h_0=a_{\ell \onst} u_r$ for $|r|<2$, we have by~\eqref{eq:well-rd-tau-1} and~\eqref{eq:thickening-cone-h0} that $\sfh_{ji}^{-1}\sfh_i\sfh_j^{-1}h_0z_i\in h_0\cone$.
That yields
\[
\exp(w_{ji})h_0z_i=\sfh_{ji}^{-1}\sfh_i\sfh_j^{-1}h_0z_i\in h_0\cone
\] 
which implies $w_{ji}\in \margI_\cone(h_0,z_i)$ --- recall that $\|w_{ji}\|\leq 2\nuni^{-\ell m_0}\beta^2<\inj(h_0z_i)$. 
This,~\eqref{eq:wij-sublemma}, and the fact that $w\mapsto w_{ji}$ is one-to-one imply that
\[
\textstyle\sum_{\|w\|\leq \nuni^{-\ell m_0}\beta^2}\|w\|^{-\alpha}\leq 2\mfht_\cone(h_0,z_i).
\]

This estimate and~\eqref{eq:f-cone1-sum} finish the proof of the sublemma.
\end{proof}

We also need a lemma which is based on Proposition~\ref{prop:closing-lemma} 
and will provide the base case for our inductive argument in the proof Proposition~\ref{prop:dim-1-C-rfrak}.

\begin{lemma}\label{lem:mfht-base-case}
Let the notation be as in Proposition~\ref{prop:dim-1-C-rfrak}.
In particular, let $0<\eta<0.01\eta_X$, $D\geq D_0$, and $x_0\in X$. There exists $t_1$, depending on $\eta$, $D$, and the injectivity radius of $x_0$, so that the following holds for all $t\geq t_1$.  

Let $0<\vare<0.1$, and let $\beta=\nuni^{-\kappa (t+1)/2}$ 
where $0<\kappa\leq 0.02\ref{k:epsilon-t}\vare$. 
Then at least one of the following holds.  
\begin{enumerate}
\item There exists a subset $F\subset B_\rfrak(0,\beta)$ 
with 
\[
\nuni^{t-5\kappa(t+1)}\leq \#F\leq \nuni^{4t+0.5\kappa(t+1)}
\]
and some 
$y\in X_{2\eta}\cap \Bigl(\boxHs_{\beta}\cdot a_{9t}\Bigr). \{u_rx_0: r\in[0,1.05]\}$
so that if we put
\[
\cone=\coneH.\{\exp(w)y:w\in F\},
\]
then $\cone\subset\Bigl(\boxHs_{10\beta}\cdot a_{9t}\Bigr). \{u_rx_0: r\in[0,1.1]\}$ and 
\[
\mfht_\cone(e,z)\leq \nuni^{D(t+1)}\qquad\qquad\text{for all $z\in\cone$}.
\]
\item There is $x'\in X$ such that $Hx'$ is periodic with
\[
\vol(Hx')\leq \nuni^{D_0\rws}\quad\text{and}\quad\dist_X(x_0,x')\leq \nuni^{(-D+D_0)\rws}.
\] 
\end{enumerate} 
\end{lemma}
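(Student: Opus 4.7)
My plan is to combine Proposition~\ref{prop:closing-lemma} with a geometric pigeonhole argument in the ambient space $X$. First, I would apply Proposition~\ref{prop:closing-lemma} to $x_0$ with parameter $t$ and the same constant $D$. If its alternative~(2) holds, then alternative~(2) of the present lemma follows directly. Otherwise we obtain some $x=a_{8t}u_{r_0}x_0\in X_{\rm cpt}$ with $r_0\in[0,1]$, the map $\sfh\mapsto \sfh x$ is injective on $\coneH_t$, and $f_{t,\alpha}(z)\le\nuni^{Dt}$ for every $z\in\coneH_t\cdot x$. The commutation identity $a_t u_r a_{8t} u_{r_0}=a_{9t}u_{\nuni^{-8t}r+r_0}$ then shows that $\coneH_t\cdot x\subset \boxHs_{\beta}\cdot a_{9t}\cdot\{u_{r'}x_0:r'\in[0,1.05]\}$ for $t$ large, which matches the shape required in the conclusion.

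Next, I would extract $y$ and $F$ by a pigeonhole on centers. Proposition~\ref{prop:Non-div-main} applied to $x$ and the interval $[0,1]$ guarantees that at least a $0.99$-fraction of the $u_r$-parameter yields $a_t u_r x\in X_{\eta_X}$; thickening by $\boxHs_\beta$ gives that $\coneH_t\cdot x\cap X_{\eta_X}$ has measure at least $0.9$ of $\coneH_t\cdot x$, and since $\eta<0.01\eta_X$ the same holds for $X_{2\eta}$. I select a maximal $\coneH$-separated family of centers $\{y_i\}_{i=1}^N$ inside $\coneH_t\cdot x\cap X_{2\eta}$, staying a distance $0.1\eta$ away from the endpoints of the $u_r$-parameter interval; a direct volume count yields $N\asymp\nuni^t/\eta$. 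Since $X_{\eta_X}$ is compact of bounded volume, I cover it by $\ll\beta^{-6}$ metric balls of radius $\beta$ and apply pigeonhole to locate a ball containing at least $N\beta^{6}/C$ of the centers (for an absolute constant $C$); for $t$ large this exceeds $\nuni^{t-5\kappa(t+1)}$. Picking any center $y$ in this cluster and using the local product chart $H\times\exp(\rfrak)\to G$, every remaining center $y_j$ in the cluster factors uniquely as $y_j=\sfh_j\exp(w_j)y$ with $\|\sfh_j-I\|+\|w_j\|\ll\beta$. I set $F:=\{0\}\cup\{w_j\}$: the lower bound on $\#F$ is the pigeonhole output, and the trivial upper bound $\#F\le N\le\nuni^{t+1}$ sits well inside the prescribed $\nuni^{4t+0.5\kappa(t+1)}$. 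The inclusion $\cone\subset \boxHs_{10\beta}\cdot a_{9t}\cdot\{u_r x_0:r\in[0,1.1]\}$ then follows by commuting the $u$-part of $\coneH$ past $a_{9t}$, which only produces $O(\nuni^{-9t}\eta)$-shifts of the $u_r$-coordinate, and by absorbing the small $\sfh_j^{\pm 1}$ factors into the enlarged $\boxHs_{10\beta}$.

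The main obstacle is the Margulis bound $\mfht_\cone(e,z)\le\nuni^{D(t+1)}$, which is where the closing lemma's bound on $f_{t,\alpha}$ is actually used. My plan here follows the template of the sublemma inside Lemma~\ref{lem:inductive-C}: writing $z=h_z\exp(w_\ast)y$ and $\exp(w')z=h''\exp(w_{\ast\ast})y$ for a return $w'\in\margI_\cone(e,z)$ with $h''\in\coneH$ and $w_{\ast\ast}\in F$, a combination of Lemmas~\ref{lem:BCH} and~\ref{lem:dist-sheet} forces the structural identity $w'=\Ad(h_z)w_{\rm BCH}$ with $\|w_{\rm BCH}\|\asymp\|w_{\ast\ast}-w_\ast\|$. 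This yields an injection $\margI_\cone(e,z)\hookrightarrow F\setminus\{w_\ast\}$ and therefore the comparison $\mfht_\cone(e,z)\ll\sum_{w_{\ast\ast}\in F\setminus\{w_\ast\}}\|w_{\ast\ast}-w_\ast\|^{-\alpha}$. To bound the right-hand side I would match each $w_{\ast\ast}$ to an exact $\rfrak$-return of $\exp(w_\ast)y$ in $\coneH_t\cdot x$ by a second BCH expansion; the crucial point is that since $w_\ast,w_{\ast\ast}\in\rfrak$ and $[\hfrak,\rfrak]\subset\rfrak$, the $\hfrak$-component of the correction is forced to higher order in $\beta$, so the matching is accurate to order $O(\beta^2)$ and does not degrade the energy sum. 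Applying $f_{t,\alpha}\le\nuni^{Dt}$ at a point of $\coneH_t\cdot x$ lying $O(\beta)$ from $\exp(w_\ast)y$ then delivers $\mfht_\cone(e,z)\ll\nuni^{Dt}\le\nuni^{D(t+1)}$ for $t$ large, completing the plan.
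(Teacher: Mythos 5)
Your overall strategy matches the paper's: apply Proposition~\ref{prop:closing-lemma}, and if its non-degenerate alternative holds, extract $y$ and $F$ by pigeonholing the resulting orbit piece into a covering of the thick part, then prove the Margulis bound by injecting $\margI_\cone(e,z)$ into returns controlled by $f_{t,\alpha}$. The decomposition into those three steps, the use of nondivergence to stay in $X_{2\eta}$, and the use of Lemmas~\ref{lem:BCH} and~\ref{lem:dist-sheet} for the injection are all the same as the paper.

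However, there is a genuine quantitative gap in your Margulis-bound step, caused by choosing the pigeonhole covering at scale $\beta$ instead of $\beta^2$. When you cover $X_{\eta_X}$ by metric balls of radius $\beta$, the centres in the winning cluster factor as $y_j=\sfh_j\exp(w_j)y$ with only $\|\sfh_j-I\|\ll\beta$. In the matching step, you must show, for each $w_{\ast\ast}\in F$, that the corresponding shifted point $\exp(v)y_{j_\ast}=\tilde h^{-1}y_{j_{\ast\ast}}$ lands back in $\mathcal C=\coneH_t\cdot x=\boxHs_\beta\cdot a_t\cdot\{u_rx:r\in[0,1]\}$, where $\tilde h=\sfh_{j_{\ast\ast}}h'\sfh_{j_\ast}^{-1}$. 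The BCH factor $h'$ is indeed $O(\beta^2)$, as you say, but $\tilde h$ is dominated by the $\sfh_j$'s, so $\|\tilde h-I\|$ is of order $\beta$ — comparable to the $\boxHs_\beta$-thickness of $\mathcal C$ — and $\tilde h^{-1}y_{j_{\ast\ast}}$ can fall outside $\mathcal C$. Your claim that ``the matching is accurate to order $O(\beta^2)$'' is justified only for the BCH correction, not for the parametrisation corrections $\sfh_j$. The paper avoids this by covering at the finer scale $\boxG_{\beta^2}$ (hence $\asymp\beta^{-12}$ pieces), so the $\sfh_j$'s are $O(\beta^2)$, and by staying in the slightly shrunken $\mathcal C_-=\boxHs_{\beta-100\beta^2}\cdot a_t\cdot\{u_rx:r\in[100\nuni^{-t},1-100\nuni^{-t}]\}$, which makes the key inclusion $\boxH_{10\beta^2}\cdot\mathcal C_-\subset\mathcal C$ (see~\eqref{eq:ratios-in-I}) hold. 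You do stay $0.1\eta$ from the $u_r$-endpoints, but you do not shrink the $\boxHs_\beta$-direction, and no amount of shrinking there can absorb an $O(\beta)$ error inside a $\beta$-thick box.

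A secondary issue: your discrete $\coneH$-separated family has $N\asymp\nuni^t/\eta$ members, and a ball at scale $\beta^2$ would then capture only $\gg\nuni^t\beta^{12}/\eta=\nuni^{t-6\kappa(t+1)}/\eta$ of them, which is smaller than the required $\nuni^{t-5\kappa(t+1)}$ for large $t$. To make the scales consistent one should separate centres at scale $\boxH_{10\beta^2}$ (i.e.\ count local $H$-orbits, as the paper does via the $\mu_{\mathcal C}$-measure), obtaining $\asymp\nuni^t\beta^{-4}$ candidates; a $\beta^2$-ball pigeonhole then yields $\gg\nuni^t\beta^8\geq\nuni^{t-5\kappa(t+1)}$ distinct $w_j$'s, and the Margulis-bound matching survives because $\tilde h\in\boxH_{O(\beta^2)}$.
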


\begin{proof}
Put $\mathcal C_0= \{a_{8t}u_rx_0: r\in[0,1]\}$. 
Apply Proposition~\ref{prop:closing-lemma} with $x_0$ and $t$. 
If part~(2) in that proposition holds, then part~(2) above holds and the proof is complete. 
Therefore, let us assume that Proposition~\ref{prop:closing-lemma}(1) holds. 

Let $x\in X_{\rm cpt}\cap\mathcal C_0$ be a point given by Proposition~\ref{prop:closing-lemma}(1); put 
\[
\mathcal C=\Bigl(\boxHs_{\beta}\cdot a_t\Bigr).\{u_rx: r\in[0,1]\}\subset X;
\]
and let $\mathcal C_-=\boxHs_{\beta-100\beta^2}\cdot a_t\cdot \Big\{u_rx: r\in[100\nuni^{-t},1-100\nuni^{-t}]\Big\}$.

Let $\mu_{\mathcal C}$ denote the pushforward to $\mathcal C$ of the normalized restriction of the Haar measure on $H$ to 
$\mathsf C:=\boxHs_{\beta}\cdot a_t\cdot \{u_r:r\in[0,1]\}\subset H$ --- the set $\mathsf C$ was denoted by $\mathsf E_{1,t,\beta}$ in \eqref{eq:def-Ct}, we will use the notation $\mathsf C$ in this proof to avoid confusion with $\mathsf E=\boxHs_{\beta}\cdot \{u_r:|r|\leq 0.1\eta\}$ from \S\ref{sec:def-E-Ecal}.  

We now use arguments similar to, and simpler than, the ones used in Lemmas \ref{lem:E-good-h0} and~\ref{lem:inductive-C} 
to construct the set $\cone$ as in part~(1).

First note that by Proposition~\ref{lem:one-return}, if $t>|\log\eta|+C$ (where $C$ depends on $X$) we have 
\be\label{eq:recurrence-Ccal}
\mu_{\mathcal C}(\mathcal C_-\cap X_{4\eta})\geq 1-O(\sqrt\eta)
\ee
where the implied constant depends on $G$ and $\Gamma$.

Let $\{\boxG_{\beta^2}.\hat y_j:j\in J\}$ be a covering of $X_{4\injr}$ 
so that $J\asymp \beta^{-12}$ where the implied constant depends on $G$ and $\Gamma$, see Lemma~\ref{lem:E-good-h0}. 
Let $J'$ be the set of those $j \in J$ so that 
\be\label{eq:J'-mathcalC}
\mu_{\mathcal C}(\mathcal C_-\cap X_{4\eta}\cap \boxG_{\beta^2}.\hat y_j)\geq \beta^{13}.
\ee
This definition, the fact that $\mu_{\mathcal C}$ is a probability measure (and moreover by~\eqref{eq:recurrence-Ccal} a probability measure giving large measure to $\mathcal C_-\cap X_{4\eta}$) and the estimate~$J\asymp \beta^{-12} $ imply that 
\[
\mu_{\mathcal C}\biggl(\mathcal C_-\bigcap \biggl(\bigcup_{j\in J'}\boxG_{\beta^2}.\hat y_j\biggr)\biggr)\geq 1-O(\sqrt\eta)
\]
where the implied constant depends on $X$. Moreover, \eqref{eq:J'-mathcalC} implies that for any $j \in J'$, \ $\boxG_{\beta^2}.\hat y_j \subset X_{3\eta}$.

Let $j\in J'$; put $\hat y=\hat y_j$ and $\hat{\mathcal C}=\mathcal C_-\cap \boxG_{\beta^2}.\hat y$. 
Then, there are $w_i\in B_\rfrak(0,\beta^2)$ and $\sfh_i\in \boxH_{\beta^2}$, $i=1,\ldots, \mathsf M$, 
so that $\sfh_i\exp(w_i)\hat y\in\mathcal C_-$ and
\[
\hat{\mathcal C}=\bigcup_{i=1}^{\mathsf M} \mathsf C_i\sfh_i\exp(w_i)\hat y
\]
where $\mathsf C_i\subset\boxH_{10\beta^2}$. 

Recall that $\beta\leq \eta^2$ and that $m_H(\mathsf C)\asymp \nuni^t\beta^2$. In consequence, we have  
\[
\mu_{\mathcal C}(\boxH_{10\beta^2})\ll \beta^6\cdot (\nuni^{t}\beta^{2})^{-1}=\beta^{4}e^{-t}.
\]
This and~\eqref{eq:J'-mathcalC} imply that $\mathsf M\gg\beta^{9}\nuni^t$. 
Assuming that $\beta$ is small enough, to account for the implicit constant, we have 
\be\label{eq: M is large initial}
\mathsf M\geq \beta^{10}\nuni^t.
\ee

We now use $\hat{\mathcal C}$ to define $\cone$ which satisfies the desired properties in part~(1). 
To that end, note that for every $i$ and $j$ we have
\begin{align}
\label{eq:cone-mathcalC} \sfh_i\exp(w_i)\hat y&=\sfh_i\exp( w_i)\exp(-w_j)\sfh_j^{-1}\sfh_j\exp(w_j)\hat y\\
\notag&=\sfh_i\sfh_j^{-1}\sfh_{ij}\exp(w_{ij})\sfh_j\exp(w_j)\hat y
\end{align}
where $\sfh_{ij}\in H$ and $w_{ij}\in\rfrak$, $\sfh_{ii}=1$, $w_{ii}=0$ for all $i,j$; moreover, we have
\begin{subequations}
\begin{align}
\label{eq:hi-hat-wi-1-again}&\|\sfh_{ij}-I\|\leq \ref{E:BCH}\beta^2\|w_{ij}\|\qquad\text{and}\\
\label{eq:hi-hat-wi-2-again}&0.5\|\Ad(\sfh_j)(w_i-w_j)\|\leq \|w_{ij}\|\leq2\|\Ad(\sfh_j)(w_i-w_j)\|,
\end{align}
\end{subequations} 
for all $i,j$, see Lemma~\ref{lem:BCH}. In particular, for all $i,j$ we have
\be\label{eq:est-hi-mathcalC}
\|\sfh_{ij}-I\|\ll\beta^4
\ee
for an absolute implied constant. 

Thus, assuming $\beta$ is small enough, we have $\sfh_i\sfh_j^{-1}\sfh_{ij}\in \boxH_{10\beta^2}$, for all $i,j$.
This and the fact that $\sfh_i\exp(w_i)\hat y\in\mathcal C_-$ imply that
\be\label{eq:ratios-in-I}
\begin{aligned}
\exp(w_{ij})\sfh_j\exp(w_j)\hat y&=(\sfh_i\sfh_j^{-1}\sfh_{ij})^{-1}\sfh_i\exp(w_i)\hat y\\
&\in\boxH_{10\beta^2}.\mathcal C_- \subset\mathcal C,
\end{aligned}
\ee
for all $i$ and $j$. 

Let $y:=\sfh_1\exp(w_1)\hat y\in \mathcal C_-\cap X_{2\eta}$ and $F=\{w_{i1}: i=1,\ldots, \mathsf M\}$. First note that 
by~\eqref{eq:ratios-in-I} and Lemma~\ref{lem: explain 4t}, we have 
\[
\#F\ll e^{4t} \leq \beta^{-1}\nuni^{4t}
\]
where in the last inequality we assume $\beta$ is small to account for
the implied constant.  
This and~\eqref{eq: M is large initial} imply that 
\be\label{eq:No-F-mathcalC}
\nuni^{t-5\kappa(t+1)}=\beta^{10}\nuni^t\leq \#F=\mathsf M\leq \beta^{-1}\nuni^{4t}= \nuni^{4t+0.5\kappa(t+1)}
\ee
which is the bound we claimed in part~(1).

Define $\cone=\coneH.\{\exp(w_{i1})y: w_{i1}\in F\}$.
By~\eqref{eq:ratios-in-I}, we have  
$\{\exp(w_{i1})y: w_{i1}\in F\}\subset \boxH_{10\beta^2}.\mathcal C_-$. Recall also that $\coneH=\boxH_\beta\cdot\{u_r: |r|\leq 0.1\eta\}$ and
\be\label{eq:ur boxH at}
u_r\cdot \boxH_{\beta}\cdot a_{t}\subset \boxH_{2\beta}\cdot a_t\cdot u_{\nuni^{-t}r},
\ee
for all $|r|\leq 0.1\eta$. 
Thus 
\begin{align*}
\cone&=\boxH_\beta\cdot\{u_r: |r|\leq 0.1\eta\}.\{\exp(w_{i1})y: w_{i1}\in F\}\\
&\subset\boxH_\beta\cdot\boxH_{2\beta}\cdot a_{t}. \{u_rx: r\in[0,1]\}\\
&\subset\boxHs_{5\beta}\cdot a_{t}. \{u_rx: r\in[0,1]\}\\
&\subset\Bigl(\boxHs_{5\beta}\cdot a_{t}\cdot \{u_r: r\in[0,1]\}\Bigr)\cdot a_{8t}. \{u_rx_0: r\in[0,1]\}\\
&\subset \boxHs_{5\beta}\cdot a_{t}\cdot\mathsf B^s_{5\beta}\cdot\{u_r: |r|\leq 2\}\cdot a_{8t}. \{u_rx_0: r\in[0,1]\}.
\end{align*}
where $\mathsf B_{\varrho}^s=\{u^-_s: |s|\leq \varrho\}\cdot\{a_d: |d|\leq \varrho\}$ and we use $x\in \mathcal C_0$ in the third line. Using $u_ra_{8t}=a_{8t}u_{e^{-8t}r}$, 
which holds for all $r$ and $t$, we conclude 
\[
\cone\subset \boxHs_{5\beta}\cdot a_t\cdot \mathsf B^s_{5\beta}\cdot a_{8t}. \{u_rx_0: r\in [0,1.1]\},
\]
so long as $t\geq 1$. 

Finally note that 
$a_t\mathsf B^s_{2\beta}a_{-t}=\{u^-_s: |s|\leq 2\nuni^{-t}\beta\}\cdot\{a_\ell: |l|\leq 2\beta\}$
for all $t$. Thus assuming $t$ is large enough, we have 
\[
\cone\subset \boxHs_{10\beta}\cdot a_{9t}\cdot \{u_rx_0: r\in [0,1.1]\}.
\]

We claim  
\be\label{eq:initial-bd-cone-again}
\mfht_{\cone}(e,z)\leq 2\nuni^{Dt}\leq \nuni^{D(t+1)}\qquad\text{for all $z\in\cone$}.
\ee
In view of the above discussion, this estimate finishes the proof of part~(1) and of the lemma modulo~\eqref{eq:initial-bd-cone-again}.  

\medskip

The proof of~\eqref{eq:initial-bd-cone-again} is similar to the proof of Lemma~\ref{lem:MargFun-enetrgy-rfrak}. 
 For every $1\leq i\leq \mathsf M$, put $z_i=\sfh_i\exp(w_i)\hat y$.
Let $w\in I_{\cone}(e,z)$, then $z,\exp(w)z\in\cone$.
In view of the definition of $\cone$ and~\eqref{eq:cone-mathcalC}, we may write 
\[
z=\sfh u_r\exp(w_{i1})y=\sfh u_r (\sfh_i\sfh_1^{-1}\sfh_{i1})^{-1}z_i=\bar\sfh z_i
\]
similarly, $\exp(w)z= \bar\sfh'z_j$ where $1\leq i,j\leq M$ and $\bar\sfh,\bar\sfh'\in\boxH_{0.15\eta}$, see~\eqref{eq:est-hi-mathcalC} and~\eqref{eq:B-beta-almost-group}. Recall also from~\eqref{eq:cone-mathcalC} again that 
\[
z_j=\sfh_j\sfh_i^{-1}\sfh_{ji}\exp(w_{ji})z_i
\] 
where $\sfh_{ji}$ and $w_{ji}$ satisfy~\eqref{eq:hi-hat-wi-1-again} and~\eqref{eq:hi-hat-wi-2-again}.
Hence we may apply Lemma~\ref{lem:dist-sheet}, recall that $\beta\leq \injr^2$, and conclude
\be\label{eq:wij-sublemma-again}
\|w_{ji}\|\leq 2\|w\|. 
\ee 
Moreover, since $\sfh_k\exp(w_k)\hat y$'s belong to different local $H$-orbits, $w\mapsto w_{ji}$ is well-defined and one-to-one.
Recall also from~\eqref{eq:ratios-in-I} that 
\[
(\sfh_j\sfh_i^{-1}\sfh_{ji})^{-1}z_j=\exp(w_{ji})z_i\in \mathcal C,
\]
for all $i,j$. Moreover by~\eqref{eq:hi-hat-wi-2-again}, we have $\|w_{ji}\|\ll \beta^2\leq \inj(z_i)$. 
Altogether, we conclude that $w_{ji}\in \margI_{\mathcal C}(e,z_i)$. 

This,~\eqref{eq:wij-sublemma-again}, and the fact that $w\mapsto\hat w_{ji}$ is one-to-one imply that
\begin{align*}
\mfht_{\cone}(e,z)&=\textstyle\sum_{w\in \margI_{\cone}(e,z)}\|w\|^{-\alpha}\\
&\leq 2\textstyle\sum_{w\in \margI_{\mathcal C}(e,z_i)}\|w\|^{-\alpha}\\
&=2\mfht_{\mathcal C}(e,z_i)\leq 2\nuni^{Dt},
\end{align*}
where the last inequality is a consequence of Proposition~\ref{prop:closing-lemma}(1). 
\end{proof}

\subsection*{Proof of Proposition~\ref{prop:dim-1-C-rfrak}}
We now complete the proof of Proposition~\ref{prop:dim-1-C-rfrak}. 
Roughly speaking, the proof is based on repeatedly applying Lemma~\ref{lem:inductive-C}
to improve the bound on the corresponding Margulis function. 

Let $0<\eta<0.01\eta_X$, $D\geq D_0+1$ (for $D_0$ as in Proposition~\ref{prop:closing-lemma}), $x_0 \in X$, and $t>0$ (large) be as in the statement of Proposition~\ref{prop:dim-1-C-rfrak}.  

Fix some $\kappa$ satisfying 
\be\label{eq:kappa-beta'}
0<\kappa\leq \frac{\ref{k:epsilon-t}\vare}{100D},
\ee
and put $\beta=\nuni^{-\kappa (t+1)/2}$. 

We assume $t$ is large enough so that $\beta\leq \eta^2$;
assume further that $t\geq t_1$ where $t_1$ is as in Lemma~\ref{lem:mfht-base-case}.

\subsection*{Base of the induction}
Apply Lemma~\ref{lem:mfht-base-case} with $\eta$, $\beta$, $D$, $x_0$, and $t$. 
If Lemma~\ref{lem:mfht-base-case}(2) holds, then Proposition~\ref{prop:dim-1-C-rfrak}(2) holds and the proof is complete.
Therefore, we assume that Lemma~\ref{lem:mfht-base-case}(1) holds. Let 
\be\label{eq:cone0-near-orbit}
\cone=\coneH.\{\exp(w)y: w\in F\}\subset\\ \boxHs_{10\beta}\cdot a_{9t}\cdot \{u_rx_0: r\in[0,1.1]\}
\ee
be as in Lemma~\ref{lem:mfht-base-case}(1).  
Put $n=t+1$, $M=D$, $y_0=y$, $F_0=F$, and $\cone_0=\cone$.   
We further assume $t+1\geq 4n_0$ where $n_0$ is as in Lemma~\ref{lem:noI-inh-C}.

Apply Lemma~\ref{lem:noI-inh-C} with this $\cone_0$. 
If Lemma~\ref{lem:noI-inh-C}(1) holds, then $\nuni^{Mn}\leq \nuni^{\vare n/2}\cdot(\#F_0)$.
Since $\#F_0\geq \nuni^{t-5\kappa(t+1)}\geq \nuni^{n/2}$,
we have 
\[
\mfht_{\cone_0}(e,z)\leq \nuni^{Mn}\leq \nuni^{\vare n/2}\cdot(\#F_0)\leq (\#F_0)^{1+\vare}.
\]
Hence by Lemma~\ref{lem:MargFun-enetrgy-rfrak}, for all $w\in F_{0}$,
\[
\textstyle\sum_{w\neq w'}\|w-w'\|^{-\alpha}\leq 4\cdot (\#F_{0})^{1+\vare}.
\]
This estimate together with~\eqref{eq:cone0-near-orbit} implies that part~(1) in the proposition holds with $\tau=9t$, $x_1=y$ and $F=F_{0}$ if we choose $R$ large enough so that
$e^{-t/R}\geq 10\beta$.

\subsection*{The inductive step}
In view of the above discussion, let us assume that Lemma~\ref{lem:noI-inh-C}(2) holds for $\cone_0$. 
Let $L_{\cone_0}$ be as in Lemma~\ref{lem:EG-Cheby-C}.
Let $h_0\in L_{\cone_0}$, and let $y_j$ for some $j\in\mathcal J(h_0)$ be as in Lemma~\ref{lem:E-good-h0}.
Moreover, note that 
\[
\nuni^{n/2}\leq \nuni^{t-5\kappa(t+1)}\leq \#F_0\leq 
\nuni^{4t+0.5\kappa(t+1)}=\beta^{-1}\nuni^{4t},
\] 
and $n>n_0$. 
Therefore, we may apply Lemma~\ref{lem:inductive-C}. By that lemma, there exist $z_1$ with
\[
h_0z_1\in h_0\cone_0(h_0)\cap\umt^G.y_j
\]
and a subset $F_1\subset B_\rfrak(0,\beta)$, containing $0$, with
\[
\# F_1=\lceil\beta^{10}\cdot(\#F_0)\rceil 
\]
so that both of the following are satisfied. 
\begin{enumerate}[label={(I-\arabic*)}]
\item\label{(I-1)} For all $w\in F_1$, we have 
\[
\exp(w)h_0z_1\in \boxH_{100\beta^2}.h_0\cone_0(h_0).
\] 
\item If we put $\cone_1=\coneH.\{\exp(w)h_0z_1: w\in F_1\}$, then at least one of the following properties hold:
\begin{subequations}
\begin{align}
\label{eq:mfht-ind-stop-use}\mfht_{\cone_1}(e,z)&\leq 2\cdot (\#F_1)^{1+\vare}&&\text{for all $z\in\cone_1$, or}\\
\label{eq:mfht-ind-improve-use}\mfht_{\cone_1}(e,z)&\leq \nuni^{(M-\frac{2\ref{k:epsilon-t}\vare}{3}) n}&&\text{for all $z\in\cone_1$}.
\end{align}
\end{subequations}
\end{enumerate}

\medskip

If~\eqref{eq:mfht-ind-stop-use} holds, we set $\cone_{\rm fin}=\cone_1$. 
Otherwise, we repeat the above construction to define sets $F_2,\ldots$ and the corresponding $\cone_2,\ldots$.

Let $i_{\rm max}:=\lfloor\frac{6M-3}{4\ref{k:epsilon-t}\vare}\rfloor+1$, then by the choice of $\kappa$ in~\eqref{eq:kappa-beta'}, we have 
\be\label{eq:why-choose-kappa}
M-\tfrac{2\ref{k:epsilon-t}\vare }{3}i_{\rm max}\leq1/2\quad\text{and}\quad 5\kappa (i_{\rm max}+1)\leq 1/4
\ee

Suppose now that $i\leq i_{\rm max}$,
and we have constructed $\cone_0,\ldots, \cone_i$ so that~\eqref{eq:mfht-ind-stop-use} does {\em not} hold for $\cone_k$, 
for all $0\leq k\leq i$. 
Then~\eqref{eq:mfht-ind-improve-use} holds and we have 
\be\label{eq:eq:mfht-ind-improve-step-i}
\mfht_{\cone_k}(e,z)\leq \nuni^{(M-\frac{2\ref{k:epsilon-t}\vare}{3}k) n}\qquad\text{for all $0\leq k\leq i$ and all $z\in\cone_k$.}
\ee 

By the second estimate in~\eqref{eq:why-choose-kappa}, for all $0\leq k\leq i$, we have 
\begin{align*}
\#F_k\geq\beta^{10k}\cdot(\#F_0)&\geq \nuni^{t-5\kappa(k+1)(t+1)}\\
&\geq \nuni^{(3t-1)/4}\geq \nuni^{2n/3}. 
\end{align*}
Since~\eqref{eq:mfht-ind-stop-use} does not hold for $\cone_k$, but~\eqref{eq:mfht-ind-improve-use} holds,
we have 
\[
\nuni^{\vare n/2}\cdot(\#F_k)\leq (\#F_k)^{1+\vare}\leq \nuni^{(M-\frac{2\ref{k:epsilon-t}\vare}{3}k) n}
\]
for all $0\leq k\leq i$. 

Thus we are in case Lemma~\ref{lem:noI-inh-C}(2) for all these $k$, moreover, 
we have the bound $\#F_k\geq \nuni^{2n/3}$. In consequence, Lemma~\ref{lem:inductive-C} is applicable in every step, and we can define $F_{i+1}$ and $\cone_{i+1}$.

\subsection*{The conclusion of the proof}
We now show that in at most $i_{\rm max}$ many steps, we obtain a set $\cone$ 
which satisfies \ref{(I-1)} above and \eqref{eq:mfht-ind-stop-use}. 
Indeed, in view of the first estimate in~\eqref{eq:why-choose-kappa},   
\[
\nuni^{(M-\frac{2\ref{k:epsilon-t}\vare}{3}i_{\rm max}) n}< \nuni^{n/2}.
\]
As $\#F_k \geq \nuni^{2n/3}$ for all $F_k$'s which are constructed, 
this observation together with~\eqref{eq:eq:mfht-ind-improve-step-i} implies that 
in at most $i_{\rm max}$ number of steps,~\eqref{eq:mfht-ind-stop-use} holds. 

In consequence, we get some $i_{\rm fin}\leq i_{\rm max}$, so that if we put $F_{\rm fin}:=F_{i_{\rm fin}}\subset B_\rfrak(0,\beta)$, then $\#F_{\rm fin}\geq \nuni^{2n/3}$, and the set 
\[
\cone_{\rm fin}= \coneH.\{\exp(w)y_{\rm fin}: w\in F_{\rm fin}\}
\] 
satisfies
\begin{equation}
  \label{eq:mfht-est-final}\mfht_{\cone_{\rm fin}}(e,z)\leq 2\cdot (\#F_{\rm fin})^{1+\vare}  
\end{equation}
for all $z\in\cone_{\rm fin}$ (cf. \eqref{eq:mfht-ind-stop-use}).

\medskip 

We claim that $ F_{\rm fin}$ and $y_{\rm fin}$ also satisfy
\begin{equation}
\label{eq:fiber-near-final}\{\exp(w)y_{\rm fin}: w\in F_{\rm fin}\}\subset \Bigl(\boxH_{100(i_{\rm fin}+10)\beta} \cdot a_\tau\cdot \{u_r: |r|\leq 4\}\Bigr).x_0\cap X_\eta,
\end{equation}
with $\tau$ satisfying
\begin{equation}
    \label{eq:bound on tau}
    9t\leq \tau=9t+i_{\rm fin}\ref{k:epsilon-t}\vare m_0(t+1)\leq 9t+2m_0Mt=9t+2m_0Dt.
\end{equation}
Let us first assume~\eqref{eq:fiber-near-final} and finish the proof 
of the proposition.   

First note that using the above definitions, we have  
\[
\nuni^{t/2}\leq \#F_{\rm fin}\leq \#F_0\leq \beta^{-1}\nuni^{4t}\leq e^{5t}.
\]
The assertion~\eqref{eq:mfht-est-final} and Lemma~\ref{lem:MargFun-enetrgy-rfrak} imply that for all $w\in F_{\rm fin}$,
\[
\textstyle\sum_{w\neq w'}\|w-w'\|^{-\alpha}\leq 4\cdot (\#F_{\rm fin})^{1+\vare}.
\]
This estimate together with~\eqref{eq:fiber-near-final} implies that part~(1) in the proposition holds with $x_1=y_{\rm fin}$ and $F=F_{\rm fin}$ if we choose $R$ large enough so that
$e^{-t/R}\geq100(i_{\rm fin}+10)\beta$.
This concludes the proof of Proposition~\ref{prop:dim-1-C-rfrak} modulo the proof of~\eqref{eq:fiber-near-final}.

\medskip
To see that~\eqref{eq:fiber-near-final} holds, note that
 at every step, the element $h_0$ is of the form $a_{m_0\ell}u_{r_k}$ where $r_k\in [0,1]$ and $\ell=\lfloor \ref{k:epsilon-t}\vare (t+1)\rfloor$.
Now for all $0\leq k<i_{\rm fin}$, we have 
\be\label{eq:commutation-pf-marg-fun}
\cone_{k+1}\subset \mathsf B_{2\beta}^s \cdot a_{m_0\ell}u_{r_k}\cdot\{u_{\bar r}: |\bar r|\leq 2\nuni^{-m_0\ell}\}\cdot \cone_k.
\ee
where $\mathsf B_{\varrho}^s=\{u^-_s: |s|\leq \varrho\}\cdot\{a_d: |d|\leq \varrho\}$.
To see this note that by \ref{(I-1)}, we have 
\[
\{\exp(w)x_1: w\in F_{k+1}\}\subset \boxH_{100\beta^2}\cdot a_{m_0\ell}u_{r_k}.\cone_k.
\]
Now for every $|r|\leq 1$, $\skew{-3}\hat\sfh\in \boxH_{\beta}$ and $\sfh\in \boxHs_{100\beta^2}$, we have $\skew{-3}\hat\sfh u_r\sfh=\sfh'u_{r'}$ 
where $\sfh'\in \mathsf B_{2\beta}^s$
and $|r'|\leq 2$; moreover, $u_{r'}a_{m_0\ell}=a_{m_0\ell} u_{\nuni^{-m_0\ell}r'}$. Assuming $\ell\geq 5$, which may be guaranteed by taking $t$ large, and using the definition 
\[
\cone_{i+1}=\coneH.\{\exp(w)x_1: w\in F_{i+1}\},
\]  
the inclusion in~\eqref{eq:commutation-pf-marg-fun} follows.

Arguing similarly,~\eqref{eq:cone0-near-orbit} implies that 
\[
\cone_0\subset \mathsf B_{10\beta}^s\cdot a_{9t}\cdot \{u_rx_0: r\in[0,1.15]\}.
\] 
Using the fact that $a_{m_0\ell}\mathsf B_{\varrho}^sa_{-m_0\ell}\subset \mathsf B_{\varrho}^s$ and arguing inductively, 
\[
\cone_{i+1}\subset \boxH_{100(i_{\rm fin}+10)\beta} \cdot (a_{m_0\ell}u_{\hat r_{i+1}} \mathsf U_{i+1})\cdots (a_{m_0\ell} u_{\hat r_1}\mathsf U_1)\cdot \{a_{9t}u_r: |r|\leq 2\}.x_0
\]
where $\hat r_k\in[0,1]$ and $\mathsf U_k=\{u_{\bar r}: |\bar r|\leq 100(k+10)\beta\}$. 
Moreover, for every $i\leq i_{\rm max}$,
\[
(a_{m_0\ell}u_{\hat r_{i+1}} \mathsf U_{i+1})\cdots (a_{m_0\ell} u_{\hat r_1}\mathsf U_1)\subset a_{m_0(i+1)\ell}\cdot u_{\hat r}\cdot \{u_{\bar r}: |\bar r|\leq 10^4\beta\}
\]
where $\hat r=\sum \nuni^{-m_0(k-1)\ell}\hat r_k\in [0,1.5]$. 

This implies~\eqref{eq:fiber-near-final} except for the bound \eqref{eq:bound on tau} on~$\tau$. 
To see the claimed bound on $\tau$, note that
\[
i_{\rm max}\ell\leq (\tfrac{6M-3}{4\ref{k:epsilon-t}\vare}+1)\ref{k:epsilon-t}\vare (t+1)\leq 2Mt
\]
which implies the bound on $\tau$.  
\qed

\section{Proof of the main theorem}\label{sec:proof-main-prop}\label{sec:proof-main}
In this section we will complete the proofs of Proposition~\ref{prop:main-prop} and Theorem~\ref{thm:main}.

\subsection{Proof of Proposition~\ref{prop:main-prop}}
Let $D_0$ be as in Proposition~\ref{prop:closing-lemma}, 
and choose $D\geq 2D_0$ so that $\delta/2\leq D_0/(D-D_0)\leq\delta$.

Let $\eta_0=0.01\eta_X$, and let $0<\eta<\eta_0$. 
Let $x_1\in X_\eta$, and let $t_0$ be as in 
Proposition~\ref{prop:dim-1-C-rfrak} applied with $D$ and $\eta$.

Define $t$ by $T=\nuni^{(D-D_0)\rws}$, and let $T_1$ be so that $T\geq T_1$ implies $t\geq t_0$.

We may assume that Proposition~\ref{prop:dim-1-e}(1) holds. Indeed, if 
Proposition~\ref{prop:dim-1-e}(2) holds, then since 
$\nuni^{D_0t}=T^{{D_0}/(D-D_0)}$ and $\delta/2\leq D_0/(D-D_0)\leq\delta$, Proposition~\ref{prop:main-prop}(2) holds and the proof is complete. 

\medskip 

Let $0<\theta<1/2$ be arbitrary. Apply Proposition~\ref{prop:dim-1-e}(1) 
with $\vare=0.01\theta$ and $\alpha=1-\vare$. Without loss of generality, we will further assume that 
$T_1$ is large enough so that $\nuni^{-\vare t/2}\leq (2\ref{E:BCH}\ref{E:non-div-main})^{-1}\eta^3$, this is motivated by~\eqref{eq:num-elemnts-F-proj}.

By Proposition~\ref{prop:dim-1-e}(1), there exists $R>0$, depending on $D$ and $\theta$, so that the following holds.
There exist $x_1\in X_{\injr}$, some $9t\leq \tau\leq 9t+2m_0 D t$ (where $m_0$ depends on $\theta$ as in \eqref{eq:EMM-use'}), and a subset $F\subset B_\rfrak(0,1)$, 
containing $0$, with $\nuni^{t/2}\leq \#F\leq \nuni^{5t}$, 
so that both of the following properties are satisfied. 
\begin{subequations}
\begin{align}
\label{eq:prop-F-final-proof-1}&\{\exp(w)x_1: w\in F\}\subset \Bigl(\boxHs_{\nuni^{-t/R}} \cdot a_{\tau}.\{u_rx_0: |r|\leq 4\}\Bigr)\cap X_\injr\;\;\text{and}\\
\label{eq:prop-F-final-proof-2}&\textstyle\sum_{w'\neq w}\|w-w'\|^{-\alpha}\ll (\#F)^{1+\vare}\qquad\text{for all $w\in F$},
\end{align}
\end{subequations}
where the implied constant depends on $X$.

Now apply Proposition~\ref{prop:proj-general} with $\eta$, $\vare$, $\alpha=1-\vare$, $x_1$, and $F$; 
note that~\eqref{eq:num-elemnts-F-proj} is satisfied since $\#F\geq \nuni^{t/2}$.
Let  
\be\label{eq:where is x2}
x_2 \in X_\eta\cap a_{|\log\rhsco|}.\{u_r\exp(w)x_1: |r|\leq 2, w\in F\},
\ee
$I \subset [0,1]$, $\rhsco>0$, and the probability measure $\rho$ on $I$ be as in that proposition. In particular, we have 
\be\label{eq:b1-est-final-pf}
\nuni^{-5t}\leq (\#F)^{-\frac{2+6\vare}{2+21\vare}}\leq \rhsco\leq (\#F)^{-\vare},
\ee
and the following hold
\begin{subequations}
\begin{align}
\label{eq:prop:proj-general-use-1}&\rho(J)\leq C'_\vare|J|^{\alpha-30\vare}\qquad\text{for all $|J|\geq (\#F)^{\frac{-15\vare}{2+21\vare}}$}\\
\label{eq:prop:proj-general-use-2}&v_sx_2\in \boxG_{C\rhsco} \cdot a_{|\log\rhsco|}.\{u_r\exp(w)x_1: |r|\leq 2, w\in F\}\quad\text{for all $s\in I$},
\end{align}
\end{subequations} 
where $C$ is an absolute constant.

Set $\kappa:=\frac{\vare}{4D_0}=\frac{\theta}{400D_0}$. 
Since $\#F\geq \nuni^{t/2}$, we have 
\be\label{eq:b0-est-final}
(\#F)^{\frac{-15\vare}{2+21\vare}}\leq (\#F)^{-\vare}\leq \nuni^{-\vare t/2}\leq T^{-\delta\vare/4D_0}=T^{-\delta\kappa};
\ee
recall that $\delta/2\leq D_0/(D-D_0)\leq \delta$ and $T=\nuni^{(D-D_0)t}$. 

Combining~\eqref{eq:b0-est-final} and equation~\eqref{eq:prop:proj-general-use-1}, we conclude that 
\be\label{eq:proof-main-prop-1-a}
\rho(J)\leq C'_\vare|J|^{\alpha-30\vare}\leq C'_\vare|J|^{1-\theta},\quad\text{for all intervals $J$ with $|J|\geq T^{-\delta\kappa}$.}
\ee
This establishes Proposition~\ref{prop:main-prop}(1)(a) if we put $C_{\theta}=C'_\vare$.

Let us now turn to the proof of Proposition~\ref{prop:main-prop}(1)(b). 
We first claim that 
\be\label{eq:commutation-pf-main}
\{u_r\exp(w)x_1: |r|\leq 2, w\in F\}\subset \mathsf B_{10\varrho}^s \cdot a_{\tau}\cdot \{u_rx_0: |r|\leq 9/2\}.
\ee
where $\varrho=\nuni^{-t/R}$ and $\mathsf B_{\varrho}^s=\{u^-_d: |d|\leq \varrho\}\cdot\{a_\ell: |\ell|\leq \varrho\}$.
To see this, first note that using~\eqref{eq:prop-F-final-proof-1}, we have 
\[
\{\exp(w)x_1: w\in F\}\subset \boxH_{\varrho}\cdot a_{\tau}\cdot \{u_rx_0: |r|\leq 4\}.
\]
Now for every $|r|\leq 2$ and $h\in \boxHs_{\varrho}$, we have $u_rh=h'u_{r'}$ where $h'\in \mathsf B_{10\varrho}^s$
and $|r'|\leq 3$; moreover, $u_{r'}a_\tau=a_\tau u_{\nuni^{-\tau}r'}$. The claim follows as $\tau\geq 2$.   

Combining~\eqref{eq:commutation-pf-main},~\eqref{eq:prop:proj-general-use-2}, and~\eqref{eq:where is x2} for all $s\in I\cup\{0\}$ we have
\be\label{eq:main-prof-1b-pf}
\begin{aligned}
v_sx_2&\in \boxG_{C\rhsco} \cdot a_{|\log\rhsco|}\cdot \{u_r\exp(w)x_1: |r|\leq 2, w\in F\}\\
&\in \boxG_{C\rhsco} \cdot a_{|\log\rhsco|}\cdot \mathsf B_{10\varrho}^s \cdot a_{\tau}\cdot \{u_rx_0: |r|\leq 9/2\}.
\end{aligned}
\ee

By the definition of $\mathsf B_{10\varrho}^s$ above, we conclude that
\[
a_{|\log\rhsco|}\mathsf B_{10\varrho}^sa_{-|\log\rhsco|}\subset\{u^-_d: |d|\leq b_1\}\cdot\{a_\ell: |\ell|\leq 10\varrho\}.
\]
This and~\eqref{eq:main-prof-1b-pf} imply that 
\be\label{eq:main-prof-1b-pf'}
v_sx_2\in \boxG_{C'b_1}\cdot\Bigl(\{a_\ell: |\ell|\leq 10\varrho\}\cdot a_{\tau+|\log\rhsco|}\cdot \{u_r: |r|\leq 9/2\}\Bigr).x_0.
\ee

Recall that $b_1\leq (\#F)^{-\vare}\leq \nuni^{-\vare t/2}\leq T^{-\vare \delta/4D_0}$ and  $\varrho=\nuni^{-t/R}$. Moreover, note that the bound $\nuni^{-6t}\leq b_1$ in~\eqref{eq:b1-est-final-pf} and $\tau\leq 9t+2m_0 D t$ imply 
\[
\nuni^{(\tau+|\log\rhsco|)/2}\leq \nuni^{\tau}\leq \nuni^{9t+2m_0 Dt}\leq T^{A'-1},
\]
for $A'$ depending only on $\theta$. 
Hence, in view of~\eqref{eq:main-prof-1b-pf'}, we have 
\[
d_X\Big(v_sx_2,B_P\Big(e,\rH^{A'}\Big).x_0\Bigr)\ll_X T^{-\delta\vare/4D_0},
\] 
for all $s\in I\cup\{0\}$.

The above and~\eqref{eq:b0-est-final} finish the proof of the proposition if we let $y_0=x_2$ and $\ref{k:main-prop}=\frac{\vare}{4D_0}=\frac{\theta}{400D_0}$.
\qed

\subsection{Proof of Theorem~\ref{thm:main}}\label{sec:proof-main-1}
Let $\theta=\vare_0/2$ where $\vare_0$ is given by Proposition~\ref{prop:1-epsilon-N}.

Apply Proposition~\ref{prop:main} with $x_0$, $\theta$, $\eta=10^{-4}\eta_X$, and the given $\delta$.
Let $T >T_1$ where $T_1$ is as in Proposition~\ref{prop:main}.

If Proposition~\ref{prop:main}(2) holds, then Theorem~\ref{thm:main}(2) holds and we are done. 
Therefore, let us assume that Proposition~\ref{prop:main}(1) holds. 
Let $y_0$, $I$, and $\rho$ be as in Proposition~\ref{prop:main}(1). 

Let $0<\varrho<0.1\eta_X$, and let $z\in X_{\varrho}$. There is a function $f_{\varrho,z}$ 
supported on $\boxG_{0.1\varrho}.z$ with 
$\int f_{\varrho,z}\diff\!{m_X}=1$ and $\Scal(f_{\varrho,z})\leq \varrho^{-N}$, where $N$ is absolute. 

Let $b=T^{-\delta\ref{k:main-prop}}$, and let $t=|\log b|/4$. 
In view of Proposition~\ref{prop:main}(1), $\rho$ satisfies~\eqref{eq:C-rho-reg-N} with $C_{\theta}$. 

Apply Proposition~\ref{prop:1-epsilon-N}, with $f=f_{\varrho,z}$ for $\varrho=\nuni^{-\ref{k:mixing} t/2N}$. Then
\[
\bigg|\iint f(a_tu_rv_s.y_0)\diff\!\rho(s)\diff\!r-1\bigg|\ll_{C_{\theta}} \Scal(f)e^{-\ref{k:mixing} t}\ll_{C_{\theta}} e^{-\ref{k:mixing} t/2};
\]
where we used $\eta=10^{-4}\eta_X$, hence the dependence on $\eta$ in Proposition~\ref{prop:1-epsilon-N} can be absorbed in the implicit constant.

Assuming $T$ is large enough, depending on $\theta$, the right side of the above is $<1/2$.
Thus $a_tu_rv_s.y_0\in\supp(f)$ for some $r\in[0,1]$ and $s\in I$.    

Let $\constk\label{k:cusp-mixing}=\ref{k:mixing}/8N$. The above thus implies that 
\be\label{eq:y0-dense-final}
d_X\Big(z,a_t.\Bigl\{u_rv_sy_0: r\in[0,1], s\in I\Bigr\}\Big)\ll b^{\ref{k:cusp-mixing}} 
\ee
for all $z\in X_{b^{\ref{k:cusp-mixing}}}$.

Moreover, by Proposition~\ref{prop:main}(1), we have 
\[
d_X\Big(u_rv_s.y_0, \Big(u_r\cdot B_P(e,\rH^{A'})\Big).x_0\Big)\leq \ref{c:main-2}'b,
\]
for all $s\in I\cup\{0\}$ and $r\in [0,1]$.  
Note also that if $z,z'\in X$ satisfy,  
$d(z,z')\leq \ref{c:main-2}'b$, then $d_X(a_tz,a_tz')\ll b^{1/2}$. In consequence,    
\be\label{eq:dist-close-final}
d_X\Big(a_t.\Bigl\{u_rv_sy_0: r\in[0,1], s\in I\Bigr\}, B_P(e,\rH^{A'+1}).x_0\Big)\ll b^{1/2},
\ee
where we used 
\[
a_t\cdot \{u_r: r\in[0,1]\}\cdot B_P(e,\rH^{A'})\subset B_P(e,\rH^{A'+1}),
\]
which in turn follows from  $t=|\log b|/4$ and $b=T^{-\delta\ref{k:main-prop}}$.

Combining~\eqref{eq:y0-dense-final} and~\eqref{eq:dist-close-final}, we conclude that 
\[
d_X(z,B_P(e,\rH^{A'+1}).x_0)\ll b^{\ref{k:cusp-mixing}}=T^{-\delta\ref{k:main-prop}\ref{k:cusp-mixing}}
\] 
for all $z\in X_{b^{\ref{k:cusp-mixing}}}$, where the implied constant depends on $X$. 
This implies Theorem~\ref{thm:main}(1) with $\ref{k:main-1}=\ref{k:main-prop}\ref{k:cusp-mixing}$.

As was remarked in \S\ref{sec:Exp-Mixing}, $\mixexp$ in~\eqref{eq:actual-mixing} is absolute if $\Gamma$ is a congruence subgroup, see~\cite{Burger-Sarnak, Cl-tau, Gor-Mau-Oh}. Hence, if $\Gamma$ is assumed to be a congruence subgroup, then $A$ and $\ref{k:main-1}$ only depend on $\Gamma$ via~\eqref{eq: dimension of Gamma}. 
\qed

\section{Proof of Theorem~\ref{thm:main-closed}}\label{sec:proof-closed-orbit}

Let $\eta_X$ be as in Proposition~\ref{prop:non-div} and  $\ref{E:non-div-main}$ as in Proposition~\ref{prop:one-return}. Define 
\be\label{eq:def cX closed}
\cX=\eta_X^{-1}\,\vol(G/\Gamma)\,\nuni^{\ref{E:non-div-main}}
\ee
where $\vol(G/\Gamma)$ is computed using the Riemannian metric $d$, see also~\eqref{eq: cX Mixing section}.  

For $0<\alpha<1$ choose an $m_\alpha>0$ as in~\eqref{eq:EMM-use}, i.e., $m_\alpha$ satisfies that 
\begin{equation}\label{eq:EMM-use''}
\ave{\|a_{{m_\alpha}}\uvk w\|^{-\alpha}}\uvkd\leq \nuni^{-1}\|w\|^{-\alpha}\qquad\text{for all $w\in\gfrak$}.
\end{equation}

In this section, the notation $a\ll_X b$ means $a\leq L\cX^L\, b$ where $L$ is an absolute constant. Similarly, $a\ll_{X,\alpha}b$ means 
\be\label{eq:a ll X alpha b}
a\leq L\cX^L\nuni^{Lm_\alpha}\, b
\ee 
where $L$ is an absolute constant. Define $a\gg_Xb$ and $a\gg_{X,\alpha}$ accordingly. 

\medskip
Throughout this section, $Y=Hx$ is a periodic orbit. 
Let $\mu_{Hx}$ denote the probability $H$-invariant measure on $Hx$. We put $\vol(Y)=\vY$. In view of Lemma~\ref{lem:non-div-closed}, we have  $\vY\gg_X1$. 
The following proposition is our replacement for Proposition~\ref{prop:dim-1-e} in the setting at hand.

\begin{propos}\label{prop:dim-1-e-closed}
Let $0<\alpha<1$. 
There exists $y_0\in Y$ and  
a subset $F\subset B_\rfrak(0,1)$, containing $0$, with 
$\#F\gg_{X} \vY$ so that both of the following properties are satisfied:
\begin{enumerate}[label=(\ref{prop:dim-1-e-closed}-\alph*)]
    \item \label{item:dim-1-e-closed-a} $\Bigl \{\exp(w)y_0: w\in F\Bigr\}\subset Y\cap X_{\rm cpt}$, see \S\ref{sec:SiegelSet} for the definition of $X_{\rm cpt}$.
    \item $\sum_{w'\neq w}\|w-w'\|^{-\alpha}\ll_{X,\alpha} \#F$ for all $w\in F$ where the summation is over $w' \in F$.
\end{enumerate}
\end{propos}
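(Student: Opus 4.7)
The plan is to replace the closing-lemma/bootstrap chain of Section~\ref{sec:Marg-func-ini-dim} by a direct use of the $H$-invariance of $\mu_Y$, combined with a pigeonhole step locating a $\mu_Y$-dense ball in the compact core. By Lemma~\ref{lem:non-div-closed}, $\mu_Y(X_{\rm cpt})\geq 0.9$; covering $X_{\rm cpt}$ by $O_X(1)$ balls of radius $\eta_X/100$ and pigeonholing, I would locate a ball $B$ with $\mu_Y(B)\gg_X 1$. Since $Y\cap B$ decomposes into local $H$-sheets, each of $\mu_Y$-mass $\asymp \eta_X^3/\vY$, the number of sheets crossing $B$ is $\gg_X\vY$. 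Parameterizing these transversally at the center $y_0'\in B$ produces a preliminary $F_0\subset B_\rfrak(0,\eta_X)$ with $\#F_0\gg_X\vY$ and $\{\exp(w)y_0':w\in F_0\}\subset Y\cap X_{\rm cpt}$.

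Next, I would set $\cone=Y$ and define $f_Y,\psi_Y$ via \eqref{eq:def-mfht-C}--\eqref{eq:def-noI-alpha-C}. The crucial new input is that the $H$-invariance of $\mu_Y$ turns the iterated Margulis inequality of Lemma~\ref{lem:Margulis-inequality} into a self-improvement after integration against $\diff\!\mu_Y$: the identity $\int_Y f_Y(hz)\,\diff\!\mu_Y(z)=\int_Y f_Y\,\diff\!\mu_Y$ for every $h\in H$ lets one rearrange the bound, and sending $\ell\to\infty$ yields $\int_Y f_Y\,\diff\!\mu_Y\ll\int_Y\psi_Y\,\diff\!\mu_Y$. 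A volume count in the injective tube $\boxG_{\inj(y)}.y$ gives $\#I_Y(y)\ll\vY\inj(y)^{-3}$, and a uniform cusp estimate $\int_Y\inj^{-3-\alpha}\,\diff\!\mu_Y\ll_{X,\alpha}1$ (valid for $\alpha<1$) then gives $\int_Y\psi_Y\,\diff\!\mu_Y\ll_{X,\alpha}\vY$. Chebyshev extracts $Y_{\rm good}\subset Y$ with $\mu_Y(Y_{\rm good})\geq 1-\tfrac12\mu_Y(B)$ on which $f_Y\leq C\vY$ for some $C\ll_{X,\alpha}1$.

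Finally, I would pick $y_0\in B\cap Y_{\rm good}$ (nonempty by the mass bound) and set $F=\{0\}\cup\{w\in F_0:\exp(w)y_0\in Y_{\rm good}\}$; the sheets through $B\setminus Y_{\rm good}$ carry at most half the sheet-mass of $B$, so $\#F\gg_X\vY$ is preserved and item~\ref{item:dim-1-e-closed-a} holds. For item~(b), I would imitate Lemma~\ref{lem:MargFun-enetrgy-rfrak}: for $w\in F$, Lemma~\ref{lem:BCH} produces, for each $w'\in F$ with $\|w'-w\|$ at most a small absolute multiple of $\eta_X$, a vector $v_{w,w'}\in I_Y(\exp(w)y_0)$ with $\|v_{w,w'}\|\asymp\|w'-w\|$, and $w'\mapsto v_{w,w'}$ injective; summing gives
\[
\sum_{w'\in F\setminus\{w\}}\|w'-w\|^{-\alpha}\ll f_Y(\exp(w)y_0)+\#F\cdot\eta_X^{-\alpha}\leq C\vY+\#F\cdot\eta_X^{-\alpha}\ll_{X,\alpha}\#F,
\]
which is the desired estimate since $\vY\ll_X\#F$. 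The main obstacle I anticipate is establishing the uniform cusp bound $\int_Y\psi_Y\,\diff\!\mu_Y\ll_{X,\alpha}\vY$ independently of $\vY$: both factors in $\psi_Y$ degrade in the cusp, and one must combine the sharper tube count $\#I_Y(y)\ll\vY\inj(y)^{-3}$ with a uniform-in-$Y$ control of $\int\inj^{-3-\alpha}\,\diff\!\mu_Y$. I would handle this via the Siegel-domain reduction for $\Gamma$ together with the non-divergence estimates of \S\ref{sec:non-div}.
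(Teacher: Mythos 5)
Your overall strategy tracks the paper's rather closely: pigeonhole a ball of definite $\mu_Y$-mass in the compact core, count the local $H$-sheets crossing it to get a preliminary $F$ of cardinality $\gg_X\vY$, bound a Margulis function $f_Y$ uniformly on a set of definite $\mu_Y$-measure, and finish with the comparison of Lemma~\ref{lem:MargFun-enetrgy-rfrak}. The iterated inequality, invariance of $\mu_Y$, and Chebyshev are all present in the paper's proof of Proposition~\ref{prop:Marg-Fun-Y-closed}.

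However, the proof as written has a genuine gap at exactly the place you flag as the ``main obstacle'', and the fix you suggest will not work. You propose to use the untruncated tube count $\#I_Y(y)\ll\vY\,\inj(y)^{-3}$ and to control $\int_Y\inj^{-3-\alpha}\,\diff\mu_Y$. That integral \emph{diverges} when $Y$ has a cusp: for a finite-area hyperbolic surface $Y$ (which $Y=H/\Lambda_Y$ is), the unnormalized measure of $\{\inj<\epsilon\}$ is $\asymp\epsilon$ as $\epsilon\to0$, so $\int_Y\inj^{-s}\diff\mu_Y<\infty$ only when $s<1$, and $3+\alpha\ge3$. No Siegel-domain refinement changes this --- the divergence is intrinsic. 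The paper's key device is different: it defines $I(y)=I(y,\delta_0)$ with vectors truncated at scale $\delta_0\inj(y)$ for a \emph{fixed} $X$-dependent $\delta_0$ (see~\eqref{eq: def delta0 closed orbit}), and then Lemma~\ref{lem:numb-I-closed} proves the uniform bound $\#I(y)\leq \ref{E:numb-I-cl}\vY$ --- with \emph{no} $\inj(y)^{-3}$ loss --- by a renormalization argument: for $y$ deep in the cusp, pick $|r|\leq1$ with $a_tu_ry\in X_{\rm cpt}$ via Proposition~\ref{prop:non-div} and show that $w\mapsto a_tu_rw$ injects the truncated $I(y)$ into $I(a_tu_ry,1)$, which has size $\ll_X\vY$. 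With this, $\noI$ is bounded by $\vY\inj^{-\alpha}$ and one only needs $\int\inj^{-\alpha}\diff\mu_Y\ll_X1$, which is fine for $\alpha<1$. You would need to import this truncation/renormalization step; without it the Margulis-function integral is not under control.

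A secondary issue: the step where you ``rearrange the bound and send $\ell\to\infty$'' silently assumes $\int_Y f_Y\,\diff\mu_Y<\infty$ a priori, which is not given; the paper avoids this by first establishing a uniform $\limsup$ bound on $\convN*f_Y(y)$ using Proposition~\ref{prop:inj-alpha-int}, then invoking the Chacon--Ornstein theorem applied to the truncations $\varphi_k=\min\{f_Y,k\}$ and passing to the monotone limit. This is easy to fix once the first gap is addressed, but it is worth doing the bookkeeping explicitly.
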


The general strategy in proving Proposition~\ref{prop:dim-1-e-closed} is similar to the strategy we used to prove Proposition~\ref{prop:dim-1-e}. 
However, the argument simplifies significantly thanks to the fact that $Y$ is equipped with an $H$-invariant probability measure. In particular, we do not require Proposition~\ref{prop:closing-lemma}, hence $\Gamma$ is {\em not} assumed to be an arithmetic lattice in this section, see Proposition~\ref{prop:Marg-Fun-Y-closed}.

For every $0<\delta\leq 1$ and every $y\in Y$, put
\[
I(y,\delta)=\Bigl\{w\in \mathfrak r: 0<\|w\|<\delta\inj(y)\text{ and }\exp(w)y\in Y\Bigr\}, 
\]
see also~\eqref{eq:def-I-h-y-C}. We will write $I(y)=I(y,\delta_0)$
where 
\be\label{eq: def delta0 closed orbit}
\delta_0=\nuni^{-3-\ref{E:non-div-main}}\min\{\inj(x): x\in X_{\rm cpt}\},
\ee
see~\eqref{eq:def cX closed}; recall also that $\inj(x)\leq 1$ 
for all $x\in X$.  

We need the following lemma.

\begin{lemma}\label{lem:numb-I-closed}
There exists $\constE\label{E:numb-I-cl}\ll_X1$ so that 
\[
\#I(y)\leq \ref{E:numb-I-cl}\vY
\]
for every $y\in Y$. 
\end{lemma}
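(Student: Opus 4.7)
The strategy is a volume packing argument: for each $w \in I(y)$, the translate $p_w := \exp(w)y$ lies in $Y$, and thickening each $p_w$ by a small $H$-ball yields disjoint subsets of $Y$ whose total $H$-volume is at most $\vY$. Concretely I would set $\rho := c_0\,\delta_0\,\inj(y)$ for a small absolute constant $c_0$, and consider the sheets $S_w := \boxHs_\rho \cdot p_w \subset Y$ for $w \in I(y)\cup\{0\}$.

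First I would verify pairwise disjointness via Lemma~\ref{lem:dist-sheet}. An intersection $h_1 p_w = h_2 p_{w'}$ with $h_1,h_2 \in \boxHs_\rho$ and $w\neq w'$ rewrites, after commuting $h_1^{-1}h_2$ past $\exp(w')$, as
\[
\exp(w)\cdot e \cdot y \;=\; \exp(\Ad(h_1^{-1}h_2)w')\cdot (h_1^{-1}h_2)\cdot y.
\]
Applying Lemma~\ref{lem:dist-sheet} with $\beta := \delta_0\,\inj(y)$---whose hypotheses hold because $y \in X_{10\beta}$ (immediate from $\delta_0 \leq 1/10$), $\|w\|,\|w'\| \leq \beta$ (by the definition of $I(y)$), and $h_1^{-1}h_2 \in \boxHs_{2\beta}$ (provided $c_0$ is chosen small enough in absolute terms)---forces $h_1 = h_2$ and $w = w'$, a contradiction.

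Second, each sheet $S_w$ is embedded in $Y$ with $m_H$-volume $\asymp \rho^3 \asymp (\delta_0\,\inj(y))^3$. Summing over $w \in I(y)\cup\{0\}$ and bounding by $\vol_H(Y) = \vY$ yields
\[
(\#I(y) + 1)\cdot c_1\,(\delta_0\,\inj(y))^3 \;\leq\; \vY
\]
for an absolute $c_1 > 0$, hence $\#I(y) \leq C\,\vY/(\delta_0\,\inj(y))^3$. For $y \in X_{\rm cpt}$, the choice $\delta_0 = \nuni^{-3-\ref{E:non-div-main}}\min\{\inj(x): x \in X_{\rm cpt}\}$ ensures $\delta_0\,\inj(y) \geq \delta_0^2\,\nuni^{3+\ref{E:non-div-main}}$, a quantity $\gg_X 1$, so the bound already reads $\#I(y) \ll_X \vY$.

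The main obstacle is extending this to $y \notin X_{\rm cpt}$, where $\inj(y)$ can be arbitrarily small. For such $y$ I would apply Proposition~\ref{prop:one-return} with an absolute $\varepsilon$ and $t \geq |\log(\eta^2\inj(y))| + \ref{E:non-div-main}$ to find $r\in [0,1]$ with $y' := a_tu_ry \in X_{\rm cpt}$, and then transport the count along the bijection $w\leftrightarrow w' = \Ad(a_tu_r)w$ of transversal directions. The exponential factor $\nuni^{-3-\ref{E:non-div-main}}$ built into $\delta_0$ is tailored precisely to absorb the $\nuni^{O(t)}$-distortion of $\Ad(a_t)$ acting on $\mathfrak r$ against the shortening of the constraints $\|w\| < \delta_0\,\inj(y)$ versus $\|w'\| < \delta_0\,\inj(y')$; arranging this compatibility with only an $X$-dependent multiplicative loss is the delicate technical point, after which the compact-part bound transfers to $y$ and yields $\#I(y) \ll_X \vY$ uniformly in $y \in Y$.
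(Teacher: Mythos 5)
Your proposal follows essentially the same two--step strategy as the paper: a volume--packing bound for $y\in X_{\rm cpt}$ (the paper packages this as Lemma~\ref{lem:noI-tri-bd}), followed by a renormalization using nondivergence to transport the count from $y\notin X_{\rm cpt}$ to $a_tu_r y\in X_{\rm cpt}$. The packing step is correct: disjointness of the sheets $\boxHs_\rho\exp(w)y$ for $w\in I(y,\delta_0)$ does follow from Lemma~\ref{lem:dist-sheet} with $\beta=\delta_0\inj(y)$ and $\rho=c_0\beta$ for small absolute $c_0$, and summing $m_H$-volumes against $\vY$ gives $\#I(y,\delta_0)\ll\vY/(\delta_0\inj(y))^3$.

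However, there is a concrete arithmetic gap in the transfer step, and your framing of it is off. You assert that the factor $\nuni^{-3-\ref{E:non-div-main}}$ inside $\delta_0$ is tailored so that $\Ad(a_tu_r)$ sends $\{\,w:\|w\|<\delta_0\inj(y)\,\}$ into $\{\,w':\|w'\|<\delta_0\inj(y')\,\}$. This fails: with $t=|\log\inj(y)|+\ref{E:non-div-main}$ one computes $\|\Ad(a_tu_r)w\|\leq 3\nuni^t\|w\|<3\nuni^{\ref{E:non-div-main}}\delta_0=3\nuni^{-3}\,\min_{X_{\rm cpt}}\inj$, which is approximately $0.15\min_{X_{\rm cpt}}\inj$ and hence well below $0.5\inj(y')$ but \emph{not} below $\delta_0\inj(y')$ (the latter is smaller by an extra factor $\nuni^{-3-\ref{E:non-div-main}}\inj(y')\ll 1$). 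So the image lands in $I(y',c)$ for an absolute constant $c\approx 0.15$, not in $I(y',\delta_0)$, and the paper correctly only claims $I(a_tu_ry,1)$. The residual problem for you is that your compact--case packing was proved for $\#I(\cdot,\delta_0)$, and your disjointness relies on Lemma~\ref{lem:dist-sheet} whose hypothesis $x\in X_{10\beta}$ with $\|w\|\leq\beta$ forces $\delta\leq 1/10$; with $c\approx0.15$ you are just outside that range. The fix is easy --- either shrink $\delta_0$ by an absolute factor so the image lands in $I(y',1/10)$, or bypass Lemma~\ref{lem:dist-sheet} in the compact case and argue disjointness directly from the injectivity of $g\mapsto gy'$ on $\boxG_{10\inj(y')}$ (which is what the paper's Lemma~\ref{lem:noI-tri-bd} does, and which handles the full cutoff $\delta=1$) --- but as written your transfer does not close.
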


\begin{proof}
This is proved for $G=\SL_2(\bbc)$ in~\cite[Lemma 8.13]{MO-MargFun}, see also~\cite[\S8]{EMM-Orbit}.

The same argument applies in the case of $G=\SL_2(\bbr)\times\SL_2(\bbr)$ 
if we replace~\cite[Lemma 8.4]{MO-MargFun} by Proposition~\ref{prop:non-div}. We sketch the proof for the sake of completeness. 

By virtue of Lemma \ref{lem:noI-tri-bd}, for all $y\in X_{\rm cpt}$, we have 
\[
\#I(y,1)\ll_X \vY.
\]

Suppose now that $y\in Y\setminus X_{\rm cpt}$, and  
let $t=|\log\inj(y)|+\ref{E:non-div-main}$. By Proposition \ref{prop:non-div}, there exists $|r|\leq 1$ so that
$a_tu_ry \in X_{\rm cpt}$. Moreover, for all $\|w\|< \delta_0\inj(y)$, see~\eqref{eq: def delta0 closed orbit}, we have 
\[
\|a_tu_rw\| \leq 3 e^t \|w\| = 3\nuni^{\ref{E:non-div-main}}\inj(y)^{-1} \|w\|< 0.5\inj(a_tu_ry).
\]
This and the fact that $Y$ is invariant under $H$ imply that
if $w\in I(y)=I(y,\delta_0)$, then $a_tu_rw \in  I(a_tu_ry, 1)$.

The above estimate also implies that the map 
$w\mapsto a_tu_rw$ 
is an injective map from $I(y)$ into $I(a_tu_ry, 1)$.
Consequently, 
\[
\#I(y)\le \#I(a_tu_ry, 1) \ll_X \vY.
\] 
The proof is complete.
\end{proof}

Let $0<\alpha<1$, and define a Margulis function $f_Y:Y\to [2,\infty)$ by
\[
f_Y(y)=\begin{cases} \sum_{w\in I(y)}\|w\|^{-\alpha} & \text{if $I(y)\neq\emptyset $}\\
\inj(y)^{-\alpha}&\text{otherwise}
\end{cases}.
\]

Let $m_\alpha$ be as in~\eqref{eq:EMM-use''}. Define the probability measure $\rwm$ on $H$ by the property that for every $\varphi\in C_c(X)$
\[
\rwm*\varphi(y)=\ave\varphi(a_{m_\alpha}\uvk y)\uvkd.
\]

The following proposition may be thought of as our replacement for Proposition~\ref{prop:closing-lemma}. 

\begin{propos}\label{prop:Marg-Fun-Y-closed}
There exists $\constE\label{C:margf-closed}\ll_{X,\alpha} 1$ so that 
\[
\int f_Y(y)\diff\!\mu_Y(y)\leq \ref{C:margf-closed}\cdot\vY.
\]
\end{propos}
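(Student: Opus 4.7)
The plan is to follow the Margulis function strategy of Lemma~\ref{lem:Margulis-inequality} and then exploit that $\mu_Y$ is an $H$-invariant probability measure to turn the resulting pointwise recursion into a closed-form integral bound.

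First, I would derive the pointwise inequality
\[
\ave f_Y(a_{m_\alpha}u_r\, y)\,\diff r \;\leq\; \nuni^{-1} f_Y(y) + C_0\, \noI_Y(y) \qquad (y\in Y),
\]
with $\noI_Y(y):=\max(\#I(y),1)\cdot\inj(y)^{-\alpha}$ and $C_0\ll_{X,\alpha}1$. The derivation is verbatim that of Lemma~\ref{lem:Margulis-inequality}: vectors $v\in I(a_{m_\alpha}u_r\, y)$ with $\|v\|$ small compared to $\inj(a_{m_\alpha}u_r\, y)$ arise as $\Ad(a_{m_\alpha}u_r)w$ for some $w\in I(y)$, and~\eqref{eq:EMM-use''} provides the $\nuni^{-1}$ contraction after averaging in $r$; the remaining ``large'' vectors number at most $\ref{E:numb-I-cl}\vY$ by Lemma~\ref{lem:numb-I-closed} and are absorbed into the $\noI_Y$ error term.

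Integrating this pointwise inequality against $\mu_Y$ and exploiting its $H$-invariance (hence $\rwm$-invariance, which gives $\int\rwm\ast f_Y\,\diff\mu_Y = \int f_Y\,\diff\mu_Y$) yields, after rearrangement,
\[
\int_Y f_Y\,\diff\mu_Y \;\leq\; \frac{C_0}{1-\nuni^{-1}}\int_Y \noI_Y\,\diff\mu_Y.
\]
Lemma~\ref{lem:numb-I-closed} then gives $\int\noI_Y\,\diff\mu_Y \ll_X \vY \int\inj^{-\alpha}\,\diff\mu_Y$, and I would bound the latter by $\ll_{X,\alpha}1$ for $\alpha<1$ as follows. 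Apply Proposition~\ref{prop:one-return} with a basepoint $y_0\in Y\cap X_{\eta_X}$ (nonempty by Lemma~\ref{lem:non-div-closed}), and use the $H$-invariance of $\mu_Y$ together with the weak-$\ast$ convergence of the averages over $\{a_tu_r\,y_0\}$ toward $\mu_Y$ (e.g., via Portmanteau applied to the open set $\{\inj<\delta\}$) to convert the pointwise Lebesgue-measure non-divergence into a measure-theoretic tail bound $\mu_Y(\{\inj<\delta\})\ll_X \delta^{c}$ for some $c=c(X)>0$; a dyadic summation then delivers the integral estimate. Chaining these bounds produces $\int f_Y\,\diff\mu_Y\ll_{X,\alpha}\vY$, as required.

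The main obstacle will be the a priori finiteness of $\int f_Y\,\diff\mu_Y$, which is needed to justify the rearrangement above. I plan to handle this by truncation: define $f^{(M)}_Y(y) := \sum_{w\in I(y),\,\|w\|\ge M^{-1}}\|w\|^{-\alpha}$, which is pointwise bounded by $M^\alpha \ref{E:numb-I-cl}\vY$ and hence $\mu_Y$-integrable. The pointwise inequality of Step~1 holds for $f^{(M)}_Y$ at the cost of shifting the truncation scale on the right-hand side from $M$ to $M'=M\cdot\nuni^{O(m_\alpha)}$, since $\Ad(a_{m_\alpha}u_r)$ can contract lengths by this factor. Iterating this shifted recursion along a geometric sequence of scales (carefully balanced against the $\nuni^{-1}$ contraction so that the cumulative scale-shift is absorbed) and then passing to the limit $M\to\infty$ via monotone convergence yields the estimate for $f_Y$ itself.
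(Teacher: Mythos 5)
Your first step — the pointwise recursion for $f_Y$ via Lemma~\ref{lem:Margulis-inequality}, Lemma~\ref{lem:numb-I-closed}, and~\eqref{eq:EMM-use''} — matches Lemma~\ref{lem:Margulis-inequality-closed}, and the overall intent (turn a supermartingale-type recursion into an integral bound, using the $\rwm$-stationarity of $\mu_Y$) is the same as the paper's. The difficulties are in the two places you flag yourself, and both contain genuine gaps.

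\emph{The scale truncation.} You propose $f^{(M)}_Y(y)=\sum_{w\in I(y),\,\|w\|\ge M^{-1}}\|w\|^{-\alpha}$ and the shifted recursion
$\int f^{(M)}_Y(a_{m_\alpha}u_r y)\,\diff r \le \nuni^{-1} f^{(M')}_Y(y) + C_0\noI_Y(y)$
with $M'=M\nuni^{O(m_\alpha)}$. That direction of the shift is correct (since $\Ad(a_{m_\alpha}u_r)^{-1}$ can stretch by $\asymp\nuni^{m_\alpha}$, a vector $v$ with $\|v\|\ge M^{-1}$ pulls back to $w$ with only $\|w\|\ge(M\nuni^{m_\alpha})^{-1}$). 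Integrating against $\mu_Y$ and iterating $n$ times yields
$A(M_0)\le \nuni^{-n}A(M_0\nuni^{Cm_\alpha n})+2E$, and the only a priori control available is $A(M)\le M^\alpha\ref{E:numb-I-cl}\vY$. Plugging this in gives $A(M_0)\le \nuni^{n(\alpha C m_\alpha - 1)}M_0^\alpha\ref{E:numb-I-cl}\vY + 2E$, which converges as $n\to\infty$ only if $\alpha C m_\alpha<1$. But $m_\alpha\asymp\frac{1}{1-\alpha}$, so $\alpha m_\alpha\gg1$ in the regime $\alpha\to1$ that Theorem~\ref{thm:main-closed} actually uses. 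The cumulative scale shift is not absorbed, and the iteration does not close.

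\emph{The tail bound on $\inj$.} Applying Proposition~\ref{prop:one-return} with Portmanteau gives $\mu_Y(\{\inj<\delta\})\ll\delta^{1/2}$ (the proposition gives a measure bound $\lesssim\vare$ for the sublevel set of $\vare^2$), so $\int\inj^{-\alpha}\diff\mu_Y$ converges only for $\alpha<\tfrac12$. For the $\alpha$ close to $1$ that the closed-orbit argument needs, this fails. One needs the sharper input of Proposition~\ref{prop:inj-alpha-int}, which is itself a Margulis-function estimate for $\inj^{-\alpha}$ (via the $(C,\alpha)$-good machinery through~\eqref{eq:EMM-use}), not a raw non-divergence estimate.

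The paper's proof sidesteps both issues. It uses Lemma~\ref{lem:Margulis-inequality-closed} together with Proposition~\ref{prop:inj-alpha-int} to obtain a \emph{pointwise} bound $\limsup_n\rwm^{(n)}*f_Y(y)\le 1+2\ref{E:margb-lemma-cl}\vY B$ valid for every $y\in Y$, with no integrability assumption. It then truncates by \emph{function value} rather than by scale, setting $\varphi_k=\min\{f_Y,k\}$, so $\varphi_k$ is bounded (hence integrable) and $\varphi_k\le f_Y$ without any scale drift. Finally, the Chacon-Ornstein theorem (using $\rwm$-ergodicity of $\mu_Y$) converts the pointwise Ces\`aro bound on $\rwm^{(n)}*f_Y$ into $\int\varphi_k\diff\mu_Y\le 2(1+2\ref{E:margb-lemma-cl}\vY B)$, and monotone convergence finishes. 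This is the step your sketch is missing: some ergodic-theoretic input is needed to legitimize the formal rearrangement, and value truncation (not scale truncation) is the correct way to regularize.
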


The following lemma is analogue of Lemma~\ref{lem:Margulis-inequality}, and will be used in the proof of Proposition~\ref{prop:Marg-Fun-Y-closed}.   

\begin{lemma}\label{lem:Margulis-inequality-closed} 
There exists $\constE\label{E:margb-lemma-cl}\ll_{X,\alpha}1$ so that for all $\ell\in\bbn$ and all $y\in Y$, we have 
\be\label{eq:convLf_Y}
\convL* f_Y(y)\leq \nuni^{-\ell} f_Y(y)+\ref{E:margb-lemma-cl}\vY \textstyle\sum_{j}^\ell\nuni^{j-\ell}\rwm^{(j)}*\inj(y)^{-\alpha}.
\ee
\end{lemma}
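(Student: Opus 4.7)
The plan is to mirror the structure of Lemma~\ref{lem:Margulis-inequality}, exploiting two simplifications specific to the periodic-orbit setting: the $H$-invariance of $Y$, and the uniform bound $\#I(y)\leq\ref{E:numb-I-cl}\vY$ from Lemma~\ref{lem:numb-I-closed}. These let us replace the function $\noI_\cone$ from the earlier argument by the simpler quantity $\vY\cdot\inj(\cdot)^{-\alpha}$, which is why \eqref{eq:convLf_Y} comes out cleaner.

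First I would establish the one-step inequality
\[
\rwm*f_Y(z)\leq \nuni^{-1}f_Y(z)+\ref{E:margb-lemma-cl}\vY\cdot(\rwm*\inj^{-\alpha})(z).
\]
Fix $h=a_{m_\alpha}u_r$ in $\supp(\rwm)$ and $z\in Y$. Since $\supp(\rwm)$ is compact, pick $C_0\ll_{X,\alpha}1$ with $\|\Ad(h)w\|\leq C_0\|w\|$ and $\inj(z)/C_0\leq\inj(hz)\leq C_0\inj(z)$. Split
\[
f_Y(hz)=\sum_{w\in I(hz),\,\|w\|<\delta_0\inj(hz)/C_0^{2}}\|w\|^{-\alpha}+\sum_{w\in I(hz),\,\|w\|\geq\delta_0\inj(hz)/C_0^{2}}\|w\|^{-\alpha},
\]
with the obvious modification if $I(hz)=\emptyset$. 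Because $Y$ is $H$-invariant, $w\mapsto\Ad(h^{-1})w$ sends the first sum injectively into $I(z)$: indeed for $w$ in the small range, $\|\Ad(h^{-1})w\|\leq C_0\|w\|<\delta_0\inj(hz)/C_0\leq\delta_0\inj(z)$, and $\exp(\Ad(h^{-1})w)z=h^{-1}\exp(w)hz\in h^{-1}Y=Y$. Hence the first sum is bounded by $\sum_{w'\in I(z)}\|\Ad(h)w'\|^{-\alpha}$. For the second sum (and for the case $I(hz)=\emptyset$, which gives $f_Y(hz)=\inj(hz)^{-\alpha}$), the trivial bound $\|w\|^{-\alpha}\leq(C_0^{2}/\delta_0)^{\alpha}\inj(hz)^{-\alpha}$ combined with Lemma~\ref{lem:numb-I-closed} yields a bound of the form $C_1\vY\inj(hz)^{-\alpha}$ with $C_1\ll_{X,\alpha}1$. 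Averaging over $r\in[0,1]$ and applying \eqref{eq:EMM-use''} to each $w'\in I(z)$ gives the one-step inequality.

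Finally I iterate. Writing $a_\ell=\rwm^{(\ell)}*f_Y$ and $b_\ell=C_1\vY\cdot\rwm^{(\ell)}*\inj^{-\alpha}$, the one-step bound (applied after convolving with $\rwm^{(\ell-1)}$) reads $a_\ell\leq\nuni^{-1}a_{\ell-1}+b_\ell$, and an easy induction gives
\[
a_\ell\leq\nuni^{-\ell}f_Y+\sum_{j=1}^{\ell}\nuni^{j-\ell}b_j,
\]
which is \eqref{eq:convLf_Y} with $\ref{E:margb-lemma-cl}=C_1$.

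The main obstacle, if any, will be a careful bookkeeping of the two cases ($I(hz)$ small vs.\ $I(hz)$ with only ``large'' elements or empty) so that the bound is uniform in $z$; but since Lemma~\ref{lem:numb-I-closed} already controls $\#I(\cdot)$ uniformly by $\vY$, this is purely cosmetic compared with the Margulis-function estimates carried out in \S\ref{sec:Marg-func-ini-dim}, and no additional ingredients are needed.
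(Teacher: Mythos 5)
Your proof is correct and follows the same route as the paper: the paper's own argument is a terse "argue as in Lemma~\ref{lem:Margulis-inequality}, then invoke Lemma~\ref{lem:numb-I-closed} and iterate," and what you have written out is precisely that argument with the details (the small/large split of $I(hz)$ at scale $\delta_0\inj(hz)/C_0^2$, the injection into $I(z)$ via $H$-invariance of $Y$, the application of~\eqref{eq:EMM-use''}, and the geometric-sum iteration) filled in.
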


\begin{proof}
Note that $\supp (\rwm)\subset\{h\in H: \|h\|\leq \nuni^{2m_\alpha+1}\}$. Let $C\geq 1$ be so that 
\[
\|\Ad(h)w\|\leq C\|w\|
\] 
for all $h$ with $\|h\|\leq \nuni^{2m_\alpha+1}$ and all $w\in\mathfrak g$. Increasing $C$ if necessary, we also assume that
$\inj(z)/C\leq \inj(hz)\leq C\inj(z)$ for all such $h$ and all $z\in X$.
Arguing as in the proof of Lemma~\ref{lem:Margulis-inequality}, there exists some $C$ so that 
\[
\rwm*\mfht_Y(y)\leq \nuni^{-1}\cdot\mfht_Y(y)+C\cdot\rwm*\psi(y)
\]
for all $y\in Y$, where $\psi(y)=\max\{1,\#I(y)\}\cdot\inj(y)^{-\alpha}$. 
This and Lemma~\ref{lem:numb-I-closed} imply that
\be\label{eq:average-f-h-cl}
\rwm*\mfht_Y(y)\leq \nuni^{-1}\cdot\mfht_Y(y)+\ref{E:margb-lemma-cl}\vY \cdot \Bigl(\rwm*\inj(y)^{-\alpha}\Bigr)
\ee 
with $\ref{E:margb-lemma-cl}=C\ref{E:numb-I-cl}$. Iterating~\eqref{eq:average-f-h-cl}, we get \eqref{eq:convLf_Y}. 
\end{proof}

\begin{proof}[Proof of Proposition~\ref{prop:Marg-Fun-Y-closed}]
The fact that estimates similar to Lemma~\ref{lem:Margulis-inequality-closed} imply integrability is by now a standard fact, see e.g.~\cite[\S5]{EMM-Up} or~\cite[Lemma 11.1]{EMM-Orbit}; we recall the argument. 
In view of Proposition~\ref{prop:inj-alpha-int}, we have
\[
\int_H\inj(h x)^{-\alpha}\diff\!\convN(h)\leq \nuni^{-n}\inj^{-\alpha}(x)+B
\]
for all $n\in \bbn$ where $B\ll_X1$. This and Lemma~\ref{lem:Margulis-inequality-closed} imply that 
\be\label{eq:rwmk-B+1 closed}
\limsup \rwm^{(n)}*f_Y(y)\leq 1+2\ref{E:margb-lemma-cl}\vY B.
\ee

Note that $\supp(\convN)\subset \{a_{m_\alpha n}u_r: |r|\leq 4\}$. 
This, together with the fact that $(H,\mu_Y)$ is mixing, implies that $\mu_Y$ is $\rwm$-ergodic. 
Thus by Chacon-Ornstein theorem, for every $\varphi\in L^1(Y,\mu_Y)$ and $\mu_Y$-a.e.\ $y\in Y$, we have $\frac{1}{N+1}\sum_{n=0}^N \convN*\varphi(y)\to \int \varphi\diff\!\mu_Y$. 

For every $k\in\bbn$, put $\varphi_k=\min\{f_Y,k\}$.
There exists a full measure set $Y_0$ so that for every $y\in Y_0$ and every $k$, there exists some $N_{k,y}$
so that if $N\geq N_{k,y}$, then $\frac{1}{N+1}\sum_{n=0}^N \convN*\varphi_k(y)\geq 0.5 \int \varphi_k\diff\!\mu_Y$.


Let $y\in Y_0$, then the above estimate and~\eqref{eq:rwmk-B+1 closed}, applied with $y$, imply that $\int\varphi_k\diff\!\mu_Y\leq 2(1+2\ref{E:margb-lemma-cl}\vY B)$ for all $k$. 
Using Lebesgue's monotone convergence theorem, we conclude that
\[
\int f_Y\diff\!\mu_Y\leq 2(1+2\ref{E:margb-lemma-cl}\vY B).
\]
The claim follows as $\vY\gg_X1$.
\end{proof}

\begin{proof}[Proof of Proposition~\ref{prop:dim-1-e-closed}]
Put $\eta=0.1\eta_X$ where $\eta_X$ is as in Proposition~\ref{prop:non-div}. Recall from Lemma~\ref{lem:non-div-closed} that 
\be\label{eq:non-div-closed}
\mu_Y(X_{2\eta})\geq 0.9.
\ee

As was done in Lemma~\ref{lem:EG-Cheby-C}, we will first convert the information in Proposition~\ref{prop:Marg-Fun-Y-closed} into a pointwise estimate at most points. Let
\be\label{eq: def Y'' closed}
Y''=\{y\in Y: f_Y(y)\leq 100\ref{C:margf-closed}\vY\}.
\ee
Then by Proposition~\ref{prop:Marg-Fun-Y-closed}, we have $\mu_Y(Y\setminus Y'')\leq 0.01$. 

Let $Y'=Y''\cap X_{2\eta}$, and let $\beta=\eta^2=0.01\eta^2_X$. 
The above and~\eqref{eq:non-div-closed} imply that $\mu_Y(Y')\geq 0.9$.
Let $\{\boxG_{\beta^2}.z_j: z_j\in X_{2\eta}, j\in\mathcal J\}$ be a covering of $X_{2\eta}$ so that $\#\mathcal J\ll_X1$. Then there exists some $c\gg_X1$ and some $j_0$ so that 
\be\label{eq:density-pt-closed}
\mu_Y(\boxG_{\beta^2}.z_{j_0}\cap Y')\geq c.
\ee

Recall that $Y$ is $H$-invariant and  
$gz_j\in X_{\rm cpt}$ for all $j$ and $\|g-I\|\leq2$, see
\S\ref{sec:SiegelSet} where $X_{\rm cpt}$ is defined.
Let $y_0\in \boxG_{\beta^2}.z_{j_0}\cap Y'$. 
As was done in Lemma~\ref{lem:inductive-C}, let 
$F_1\subset B_\rfrak(0,2\beta^2)$ be so that  
\[
\boxG_{\beta^2}.z_{j_0}\cap Y'\subset \bigcup_{w\in F_1}\boxH_{\beta}.\exp(w)y_0.
\]
Then $\#F_1\geq c\eta^{-3} \vY$. Put 
\[
\cone_1=\coneH\cdot\{\exp(w)y_0: w\in F_1\}\subset Y\cap X_{\rm cpt};
\] 
recall that $\coneH=\boxHs_\beta\cdot\Big\{u_r: |r|\leq 0.1\eta\Big\}$.

Recall the definition $\mfht_{\cone_1}$ from~\eqref{eq:def-mfht-C}. There exists $C'\ll_{X,\alpha}1$ so that
\be\label{eq:Marg func ptw closed}
\mfht_{\cone_1}(e,z)\leq f_Y(z)\leq C'\vY\quad\text{for all $z\in\cone_1$}
\ee
To see this, note that by the definition of $f_Y$, for every $h\in H$ with $\|h-I\|\leq 1$ and all $y\in X_{\eta}\cap Y$, we have $f_Y(hy)\leq f_Y(y)+\bar C\vY$ where $\bar C\ll_X1$. Now for every $z\in\cone_1$, there exists $y\in Y'\subset Y''$ and some $h\in H$ with $\|h-I\|\leq 10\eta^2$ so that $z=hy$. This implies the claim in view of the definition of $Y''$ in~\eqref{eq: def Y'' closed}. Alternatively,~\eqref{eq:Marg func ptw closed} can be seen by letting $\ell=0$ in the proof of the sublemma in Lemma~\ref{lem:inductive-C}, see in particular~\eqref{eq:f-cone1-sum}.

Now~\eqref{eq:Marg func ptw closed} and Lemma~\ref{lem:MargFun-enetrgy-rfrak} imply that 
\[
\sum_{w'\neq w}\|w-w'\|^{-\alpha}\leq C\vY
\]
where the summation is over $w'\in F_1$ and $C\ll_{X,\alpha}1$.

The proposition holds with $y_0$ and $F=F_1$.  
\end{proof}

\subsection{Proof of Theorem~\ref{thm:main-closed}}\label{sec:proof-main-closed}
The proof goes along the same lines as the proof of Theorem~\ref{thm:main} if we replace Proposition~\ref{prop:dim-1-e} with Proposition~\ref{prop:dim-1-e-closed} as we now explicate. 

Let $\vare=0.0005\vare_0$ and $\alpha=1-\vare$ where $\vare_0$ is given by Proposition~\ref{prop:1-epsilon-N}. 
By Proposition~\ref{prop:dim-1-e-closed}, the conditions in Proposition~\ref{prop:proj-general}
holds with $y_0\in Y\cap X_{\rm cpt}$, $F$, $\alpha$, and $\eta=0.1\eta_X$. 

Recall that $\#F\gg_{X} \vY$. We assume $\vY$ is large enough so that 
\[
(\#F)^{-\vare}\leq (2\ref{E:BCH}\ref{E:non-div-main})^{-1}\eta^3.
\]
Then by Proposition~\ref{prop:proj-general}, there exist $y_1\in X_\eta$, a finite subset $I\subset [0,1]$, and some $\rhsco>0$ with
\be\label{eq:size of b1 closed}
\vY^{-\frac{2+6\vare}{2+21\vare}}\ll_{X}(\#F)^{-\frac{2+6\vare}{2+21\vare}}\leq \rhsco\leq (\#F)^{-\vare}\ll_{X} \vY^{-\vare},
\ee 
so that both of the following two statements hold true:
\begin{enumerate} 
\item The set $I$ supports a probability measure $\rho$ which satisfies  
\[
\rho(J)\leq C'_\vare\cdot |J|^{\alpha-30\vare}
\]
for all intervals $J$ with $|J|\geq (\#F)^{\frac{-15\vare}{2+21\vare}}$, where $C'_\vare\ll\vare^{-\star}$ for absolute implied constants. 
\item There is an absolute constant $C\ll_X1$, so that for all $s\in I$, we have 
\begin{align*}
v_sy_1&\in \boxG_{C\rhsco} \cdot \Bigl(a_{|\log\rhsco|}\cdot \{u_r: |r|\leq 2\}\Big).\{\exp(w)y_0: w\in F\}\\
&\subset \boxG_{C\rhsco}. Y,
\end{align*}
\end{enumerate}
\medskip

\noindent 
For the last inclusion in (2) we used \ref{item:dim-1-e-closed-a} and the $H$-invariance of $Y$. 

\medskip 
In particular, part~(2) and $\rhsco\leq (\#F)^{-\vare}$ imply that   
\be\label{eq:local V orbit close to Y}
\dist_X\Bigl(v(s)y_1, Y\Bigr)\leq C'\vY^{-\vare}\qquad\text{for all $s\in I$},
\ee
where $C'\ll_{X,\alpha}1$.

The proof of Theorem~\ref{thm:main-closed} is now completed as the proof of Theorem~\ref{thm:main}
if we replace Proposition~\ref{prop:main} with part~(1) above and~\eqref{eq:local V orbit close to Y}, see \S\ref{sec:proof-main-1}.

We note that 
\be\label{eq: def C3 and k3}
\ref{c:periodic}\ll_{X,\alpha}1\quad\text{and}\quad\ref{k:periodic}=c\ref{k:mixing}\vare
\ee
where the notation $\ll_{X,\alpha}$ is defined in~\eqref{eq:a ll X alpha b},
$c$ is an absolute constant, and $\ref{k:mixing}$ is as in Proposition~\ref{prop:1-epsilon-N}; we also used the fact that $\ref{C:1-epsilon-N}\ll_X1$, see Proposition~\ref{prop:1-epsilon-N}.

Note that $\mixexp$ in~\eqref{eq:actual-mixing}, and hence $\ref{k:periodic}$, is absolute if $\Gamma$ is congruence. 
\qed

\subsection{Proof of Theorem~\ref{thm:finiteness-intro}}
Let $\Gamma\subset \SL_2(\bbc)$ be as in the statement.
As was mentioned prior to Theorem~\ref{thm:finiteness-intro}, 
a totally geodesic plane in $M$ lifts to a periodic orbit of $H=\SL_2(\bbr)$ in $X=G/\Gamma$. 

Recall from \S\ref{sec:SiegelSet} that $X\setminus X_{\eta_X}$ is a disjoint union of finitely many cusps. Let $M_0\subset M$ denote the image of $X_{\eta_X}$ in $M$. Then $M\setminus M_0$ is a disjoint union of finitely many (possibly none) cusps.

Let $\eta_1>0$ be so that for $i=1,2$ there exists $x_i\in X_{\eta_0}$ such that $\boxG_{\eta_1}.x_i$ projects into the interior of $N_i\cap M_0$. In view of~\cite[Thm.~1.5]{MO-MargFun}, applied with $s=1/2$, we have $\eta_1\gg_X{\rm area}(\Sigma)^{-4}$ where 
$\Sigma=\partial N_1=\partial N_2$.

Thus, Theorem~\ref{thm:main-closed} implies that 
if $Hx$ is a periodic orbit which satisfies  
\be\label{eq: orbits in half manifold}
\ref{c:periodic}\vol(Hx)^{-\ref{k:periodic}}\leq 0.5\min\{\eta_1,\eta_X\},
\ee
then $Hx\cap\boxG_{\eta_1}.x_i\neq \emptyset$, for $i=1,2$. Therefore, the corresponding plane crosses $\Sigma$.

Let us now assume that $S$ is a plane which crosses $\Sigma$. 
By~\cite[Thm.~4.1]{FLMS}, see also~\cite[Prop.~12.1]{BO-GF},
$S$ intersects $\Sigma$ orthogonally. 
It is shown in~\cite[Prop~5.1]{FLMS} that one can construct an explicit open set $O$ of the unit tangent bundle of $M$ which projects into the 1-neighborhood of $M_0$ and does not intersect such an $S$ --- indeed this set is constructed using a tubular neighborhood of $\Sigma \cap M_0$. 

Let $\eta_2$ and $x\in X$ be so that 
$\boxG_{\eta_2}.x$ projects into $O$. In view of~\cite[Thm.~1.5]{MO-MargFun}, applied with $s=1/2$, and the construction in~\cite[Prop~5.1]{FLMS}, we have $\eta_2\gg_X{\rm area}(\Sigma)^{-4}$. 

Note that $Hx\cap \boxG_{\eta_2}.x=\emptyset$. However, by Theorem~\ref{thm:main-closed} again, if  
\[
\ref{c:periodic}\vol(Y)^{-\ref{k:periodic}}\leq 0.5\eta_2,
\] 
then $Hx\cap \boxG_{\eta_2}.x\neq \emptyset$. 

This and~\eqref{eq: orbits in half manifold} thus imply that 
\[
\vol(Hx)\leq \Bigl(\tfrac{2\ref{c:periodic}}{\min\{\eta_X,\eta_1,\eta_2\}}\Bigr)^{1/\ref{k:periodic}}\ll_X {\rm area}(\Sigma)^{4/\ref{k:periodic}}\ref{c:periodic}^{1/\ref{k:periodic}}.
\]
Moreover, in view of~\cite[Cor.~10.7]{MO-MargFun}, the number of periodic $H$-orbits with $\vol(Hx)\leq T$ is $\ll_X T^6$.

When $G=\SL_2(\bbc)$ (which is the case here), $\ref{E:non-div-main}\ll|\log\eta_X|$ for an absolute implied constant; see the proof of Proposition~\ref{prop:one-return}.
Moreover, in view of  Lemma~\ref{lem:EMM-(2,1)} and the fact that $\alpha=1-0.0005\vare_0$, we have $\nuni^{m_\alpha}\ll \mixexp^{-\star}$ for absolute implied constants (see Proposition~\ref{prop:1-epsilon-N}).

The proof is thus complete in view of the above,~\eqref{eq: def C3 and k3}, and~\eqref{eq:a ll X alpha b}. 
\qed

\appendix
\section{Proof of Proposition~\ref{lem:one-return}, Case 2}\label{sec:app-non-div}

In this section we complete the proof of Proposition~\ref{lem:one-return}. Recall that we are left with the case where $G=\SL_2(\bbr)\times \SL_2(\bbr)$ and $\Gamma$ is irreducible.

By a theorem of Selberg~\cite{Selberge-Arith}, we have the following: 
up to automorphisms of $G$, irreducible non-uniform lattices in $\SL_2(\bbr)\times\SL_2(\bbr)$ are 
commensurable to $\SL_2(\mathcal O)$ where $\mathcal O$ is the ring of integers in a totally real quadratic extension $L/\bbq$.

Passing to a finite index subgroup, we may assume that 
$\Gamma\subset\SL_2(\mathcal O)$. Since the statement of Proposition~\ref{lem:one-return} is insensitive to passing to a finite index subgroup 
we may (and will) assume $\Gamma=\SL_2(\mathcal O)$. 
By fixing a $\bbz$-basis for $\mathcal O$ one can now identify  
\[
G={\bf G}(\bbr)\quad\text{and}\quad\Gamma={\bf G}(\bbz).
\]
where ${\bf G}={\rm Res}_{L/\bbq}(\SL_2)$, the restriction of scalars from $L$ to $\bbq$. This choice of $\bbz$ basis induces a canonical identification between ${\bf G}(\bbq)$ and $\SL_2(L)$
and in the sequel we shall implicitly identify these two groups.

Let ${\bf B}\subset \SL_2$ denote the group of upper triangular matrices in $\SL_2$ and put ${\bf P}={\rm Res}_{L/\bbq}({\bf B})$. 
Then ${\bf P}$ is a minimal and maximal $\bbq$-parabolic subgroup of $\bf G$.
By a theorem of Borel and Harish-Chandra, he action of $\Gamma$ on ${\bf P}(\bbq)\backslash\G(\bbq)$ has finitely many orbits; let $\Xi\subset \G(\bbq)$ 
 be a finite subset which contains exactly one representative for each orbit (we always assume $\Xi$ contains the identity element). Then 
\be\label{eq:cusps-rational}
\G(\bbq)={\bf P}(\bbq)\Xi\Gamma,
\ee
and if $\gamma\xi_1 {\bf P}(\bbq)\xi_1^{-1}\gamma^{-1}=\xi_2{\bf P}(\bbq)\xi_2^{-1}$ where 
$\gamma\in\Gamma$ and $\xi_i^{-1}\in \Xi$, then $\xi_1=\xi_2$.

In the case at hand, $\gfrak=\Lie(G)=\mathfrak{sl}_2(\bbr)\oplus\mathfrak{sl}_2(\bbr)$,
moreover, $\gfrak$ is equipped with the $\bbq$-structure: 
\[
\gfrak_\bbq=\sl_2(L)\subset \gfrak.
\]
We will also write $\gfrak_\bbz$ for $\sl_2(\mathcal O)$; then $\gfrak_\bbz$ is a lattice in $\gfrak$.

Note that $\mathcal O^\times \gfrak_\bbz=\gfrak_\bbz$.
Recall the following elementary fact: there exists some $c=c_L$ so that the following holds.
For every $w=(w_1,w_2)\in\gfrak$ with $\|w_1\|\|w_2\|\neq0$, there exists some 
$\mathsf s\in \mathcal O^\times$ so that  
\be\label{eq:units-norm}
c^{-1}\Big(\|w_1\|\|w_2\|\Big)^{1/2}\leq \|p_i(\mathsf s w)\|\leq c\Big(\|w_1\|\|w_2\|\Big)^{1/2},
\ee
for $i=1,2$, where $p_i$ denotes the projection onto the $i$-th components, see e.g.~\cite[Lemma 8.6]{Kleinbock-Tomanov}.  

Let $N=R_u({\bf P}(\bbr))$, i.e.\ $N$ is the unipotent radical of ${\bf P}(\bbr)$.
We fix a basis $\{v_1,v_2\}$ for $\Lie(N)$ consisting of primitive integral vectors as follows. 
Write $L=\bbq[\sqrt{\beta}]$; put $v_1=\Bigl(E_{12}, E_{12}\Bigr)$ and $v_2=\Bigl(\sqrt{\beta}E_{12}, -\sqrt\beta E_{12}\Bigr)$ where $E_{12}$ denotes the elementary matrix with $1$ at the $(1,2)$-entry, and  
define
\[
v:=v_1\wedge v_2\in \wedge^2\gfrak.
\] 

Since $v\in \wedge^2\gfrak_\bbz$, for any $g\in \G(\bbq)$, we have $\Gamma g.v$ is contained in the set of 
rational vectors in $\wedge^2\gfrak$ whose denominators (with respect to the $\bbz$-structure given by $\gfrak_\bbz$) are bounded in terms of $g$. 
In particular, $\Gamma g.v$ is a discrete and closed subset of $\wedge^2\gfrak$.

Note that for any $g=(g_1,g_2)\in G$, we have 
\be\label{eq:gv-E12}
\begin{aligned}
gv&=(gv_1)\wedge (gv_2)\\
&=-2\sqrt{\beta}\Big(g_1E_{12},0\Big)\wedge \Big(0,g_2E_{12}\Big).
\end{aligned}
\ee

Define $\omega:G/\Gamma\to [2,\infty)$ as follows: 
\be\label{eq:def-omega-irred}
\omega(g\Gamma)=\max\biggl\{2, \max\Bigl\{\|g\gamma\xi .v\|^{-1}:\xi\in\Xi^{-1}, \gamma\in \Gamma\Bigr\}\biggr\}.
\ee

We have the following analogue of Lemmas~\ref{lem:one-return-1} and~\ref{lem:one-return-1-1}. 
In the case at hand, this result is a consequence of the fact that the $\bbq$-rank of $\bf G$ is 1 --- recall that $\bf P$ is a minimal and maximal $\bbq$-parabolic subgroup of $\bf G$. 

\begin{lemma}\label{lem:one-return-2}
Let the notation be as above. 
\begin{enumerate}
\item There exists $C=C(\Gamma)\geq2$ so that the following holds. Let $g\Gamma\in X$. 
If $\omega(g\Gamma)\geq C$, then
there is $\xi_0\in\Xi^{-1}$ and $\gamma_0\in\Gamma$ so that 
$\|g\gamma_0\xi_0 .v\|^{-1} =\omega(g\Gamma)$ and 
\[
\|g\gamma\xi .v\|> 1/C,\quad\text{ for all $\;(\xi,\gamma)\;$ so that $\;\gamma\xi.v\neq \gamma_0\xi_0.v$}.
\]
\item 
There exists $\constE\label{E:C-alpha-2}$ so that the following holds. 
Let $0<\varrho,\eta<1$, $t>0$, and $g\in G$. Let $I\subset \bbr$ be an interval of length at least $\eta$. Then 
\[
\Big|\Big\{r\in I:\|a_t\uvk g.v\|\leq \nuni^{2t}\eta^4\varrho^4\|gv\|\Big\}\Big|\leq \ref{E:C-alpha-2}\varrho|I|.
\]
\end{enumerate}
\end{lemma}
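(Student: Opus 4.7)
The plan is to reduce both assertions to rank-one statements in each $\SL_2(\bbr)$ factor. By~\eqref{eq:gv-E12} and the fact that $\Ad(a_tu_r)$ acts diagonally on $\gfrak=\mathfrak g_1\oplus\mathfrak g_2$, for $g=(g_1,g_2)\in G$ and any $r,t$ we have
\[
\|a_tu_rg.v\|\asymp\|\Ad(a_tu_rg_1)E_{12}\|\cdot\|\Ad(a_tu_rg_2)E_{12}\|,\qquad\|g.v\|\asymp\prod_{i=1}^{2}\|\Ad(g_i)E_{12}\|.
\]
Since $E_{12}$ is a null vector in $\sl_2(\bbr)$ for the Killing form, each $\Ad(g_i)E_{12}$ lies on the $H$-orbit $\SO(\mathsf Q)^\circ.\mathsf e_1$ which appears in Lemma~\ref{lem:one-return-1-1}, so that lemma applies factorwise to yield, for $i=1,2$,
\[
\bigl|\bigl\{r\in I:\|a_tu_r\Ad(g_i)E_{12}\|\le e^t\eta^2\|\Ad(g_i)E_{12}\|\varrho^2\bigr\}\bigr|\le \ref{E:C-alpha}\varrho|I|.
\]
Intersecting the two ``good'' sets (of combined defect at most $2\ref{E:C-alpha}\varrho|I|$) and multiplying the resulting factorwise lower bounds yields $\|a_tu_rg.v\|\ge c\, e^{2t}\eta^4\varrho^4\|g.v\|$ for an absolute constant $c>0$. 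Absorbing $c$ into $\varrho$ (equivalently, applying Lemma~\ref{lem:one-return-1-1} with $\varrho'=c^{-1/4}\varrho$) establishes part~(2) with $\ref{E:C-alpha-2}\asymp\ref{E:C-alpha}$.

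For part~(1), I would argue by contradiction: suppose both $\|g\gamma_0\xi_0 v\|<1/C$ and $\|g\gamma\xi v\|<1/C$ hold with $u_0:=\gamma_0\xi_0 v\neq u:=\gamma\xi v$. The set $\Lambda:=\bigcup_{\xi\in\Xi^{-1}}\Gamma\xi v$ is a discrete closed subset of $\wedge^2\gfrak$ (each orbit $\Gamma\xi v$ is discrete by the remarks preceding~\eqref{eq:def-omega-irred}, and $\Xi^{-1}$ is finite). Classical reduction theory for the $\bbq$-rank one group $\mathbf G=\mathrm{Res}_{L/\bbq}(\SL_2)$ (Borel--Harish-Chandra; see also the analogous $\SL_2(\bbc)$ argument in~\cite[\S6]{MO-MargFun}) then produces a uniform threshold $\varepsilon_0=\varepsilon_0(\Gamma)>0$ such that the subsets $\{g\in G:\|gu\|<\varepsilon_0\}$ for distinct $u\in\Lambda$ project to pairwise disjoint subsets of $G/\Gamma$. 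Setting $C\ge 1/\varepsilon_0$ then forces the minimizer $u_0$ to be unique and all other $\|g\gamma\xi v\|>1/C$, completing the proof.

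The principal obstacle is part~(1), namely establishing the horoball-disjointness~$\varepsilon_0$ in the irreducible setting. Smallness of $\|gu\|$ for $u=X\otimes Y\in\mathfrak g_1\otimes\mathfrak g_2$ (a rank-one tensor, by~\eqref{eq:gv-E12}) forces $g=(g_1,g_2)$ to be close either to the $\SL_2(\bbr)$-cusp direction $X$ in the first factor or to the direction $Y$ in the second; because $\Gamma=\SL_2(\mathcal O)$ is irreducible the two factor-wise constraints cannot be decoupled, and one must exploit the global rational structure of $\Lambda$ (a finite union of $\Gamma$-orbits of vectors in $\wedge^2\gfrak_\bbq$ of bounded denominator), together with the unit-balancing estimate~\eqref{eq:units-norm}, to conclude. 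Part~(2) is, by contrast, a direct computation once the tensor decomposition of $g.v$ is in place.
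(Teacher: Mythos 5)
Your treatment of part~(2) is correct and is essentially the same computation the paper carries out, just packaged by applying Lemma~\ref{lem:one-return-1-1} factorwise: writing $g.v$ via~\eqref{eq:gv-E12} as a decomposable wedge of vectors living in the two $\sl_2(\bbr)$ summands, the norm of $a_tu_rg.v$ factors (up to absolute constants) as the product of the two factor norms, and one applies the rank-one estimate to each factor separately. (The paper instead tracks the highest-weight projections $p_i^+$ directly and redoes the one-variable estimate inline, but these are the same idea; note that $\eta^4\le\eta^2$ since $\eta<1$, so you may safely lose a factor of $\eta^2$.)

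Part~(1), however, has a genuine gap, which you yourself flag. The assertion is \emph{not} reduction theory in the ``disjoint horoballs'' sense (that $g\Gamma$ cannot simultaneously be deep in two cusps); it is the finer statement that for a \emph{fixed} $g\in G$ at most one vector in the orbit $\bigcup_\xi\Gamma\xi.v$ can be $\delta$-small, and the proof must produce an explicit $\delta=\delta(\Gamma)$. Appealing to Borel--Harish-Chandra and to the $\SL_2(\bbc)$ case of~\cite[\S6]{MO-MargFun} does not do this, because the $\SL_2(\bbc)$ case is reducible in the relevant sense and exactly the irreducible coupling you describe is what needs to be handled. The missing idea in the paper's proof is a short, entirely local discreteness argument. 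Suppose $\|g\gamma\xi.v\|<\delta$ and $\|g\gamma'\xi'.v\|<\delta$ with $\gamma\xi.v\neq\gamma'\xi'.v$. First one shows these two vectors cannot be proportional: if they were, since $\mathbf P(\bbr)$ is the projective stabilizer of $v$, the choice of $\Xi$ forces $\xi=\xi'$, and then $\gamma^{-1}\gamma'\in\xi\mathbf P(\bbr)\xi^{-1}$, so the scalar would be $N_{L/\bbq}(\sfs^2)=1$ for a unit $\sfs\in\mathcal O^\times$, contradicting distinctness. Consequently, the four vectors $w_i=g\gamma\xi.v_i$ and $w_i'=g\gamma'\xi'.v_i$ ($i=1,2$) span a subspace of dimension $\geq 3$. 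Each $w_i,w_i'$ is a nilpotent element of $\frac1M\Ad(g)\gfrak_\bbz$ whose two factor norms have product $\ll_\beta\delta$; balancing the two factors by units via~\eqref{eq:units-norm}, one obtains nilpotent elements $\sfs_iw_i,\sfs_i'w_i'$ of the \emph{same} lattice, all of norm $\ll_\beta\delta^{1/2}$. Since $\|[\cdot,\cdot]\|\leq\|\cdot\|\,\|\cdot\|$ and $\frac{1}{M^2}\Ad(g)\gfrak_\bbz$ is discrete, for $\delta$ small all these brackets vanish. One thus produces a nilpotent subalgebra of $\sl_2(\bbr)\oplus\sl_2(\bbr)$ of dimension $\geq 3$, which is impossible (the maximal nilpotent subalgebras have dimension $2$). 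This is more elementary than invoking reduction theory as a black box, it quantifies $C(\Gamma)$, and it is precisely the step your sketch leaves unresolved.
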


\begin{proof}
As we mentioned above, there is some $M\in\bbn$ so that $\Gamma\Xi^{-1}.v_i\subset \frac{1}{M}\gfrak_\bbz$.

Let $0<\delta<1$ be a small number which will be explicated later. 
Suppose there are $\gamma\xi.v\neq\gamma'\xi'.v$ so that
\be\label{eq: two small vectors app}
\|g\gamma\xi.v\|<\delta\quad\text{and}\quad\|g\gamma'\xi'.v\|<\delta.
\ee
We first show that $\gamma\xi.v\not\in \bbr.\gamma'\xi' v$.
Assume contrary to this claim that $\gamma\xi.v=\lambda\gamma'\xi' v$ for some $\lambda\in\bbr$.  
Then since ${\bf P}(\bbr)$ is the projective stabilizer of $v$, we conclude that 
\[
\gamma\xi{\bf P}(\bbr)\xi^{-1}\gamma^{-1}=\gamma'\xi'{\bf P}(\bbr)\xi'^{-1}\gamma'^{-1}.
\] 
This in view of the choice of $\Xi$, see the discussion following~\eqref{eq:cusps-rational}, implies that $\xi=\xi'$. 
Thus, since ${\bf P}(\bbr)$ is its own normalizer in $\mathbf G (\bbr)$,\  $\gamma^{-1}\gamma'\in \xi{\bf P}(\bbr)\xi^{-1}$. We conclude that $\lambda=N_{L/\bbq}(\sfs^2)$ for  
a unit in $\sfs\in\mathcal O^\times$ (recall that $\G=R_{L/\bbq}(\SL_2)$).  Hence, $\lambda=1$ which contradicts our assumption. 

Recall that $v=v_1\wedge v_2$ where $v_1=\Bigl(E_{12}, E_{12}\Bigr)$ and $v_2=\Bigl(\sqrt{\beta}E_{12}, -\sqrt\beta E_{12}\Bigr)$.
Since $\gamma\xi.v\not\in \bbr.\gamma'\xi' v$ the subspace generated by the four vectors $w_i=g\gamma\xi.v_i$  $w'_i=g\gamma'\xi'.v_i$, for $i=1,2$ 
has dimension $\geq 3$. We claim this subspace also generates a nilpotent subalgebra of $\gfrak$. 
This contradicts the fact that the dimension of any maximal nilpotent subalgebra in $\gfrak$ is $2$ and finishes the proof of part~(1).

To see the claim, note that~\eqref{eq: two small vectors app} and the identity in~\eqref{eq:gv-E12} imply 
\[
\|p_1(w_1)\|\cdot\|p_2(w_2)\|\leq \delta/2,
\]
similarly for $w'_1$ and $w'_2$. 
In view of the definition of $v_i$ (and $w_i$), therefore, $\|p_1(w_i)\|\cdot\|p_2(w_i)\|\ll_\beta \delta$ for $i=1,2$.
Similarly, we have $w'_1$ and $w'_2$. 

We now apply~\eqref{eq:units-norm} to the four vectors $w_1, w_2, w'_1, w'_2$. 
In consequence, there are $\sfs_i,\sfs'_i\in\mathcal O^{\times}$ so that $\|\sfs_iw_i\|\ll_\beta\delta^{1/2}$ and 
$\|\sfs_i'w_i'\|\ll_\beta\delta^{1/2}$ for $i=1,2$. 

Moreover, $\{\sfs_1w_1,\sfs_2w_2,\sfs_1'w_1', \sfs_2'w_2'\}$ are nilpotent elements in
$\frac{1}{M}\Ad(g)\gfrak_\bbz$. Since $\Bigl\|[w,w']\Bigr\|\leq \|w\|\|w'\|$, 
we get from the discreteness of $\Ad(g)\gfrak_\bbz$ that if $\delta$ is small enough, 
then $\{\sfs_1w_1,\sfs_2w_2,\sfs_1'w_1',\sfs_2'w_2'\}$ generates a nilpotent Lie algebra as we claimed.

\medskip

The argument for part~(2) is similar to the proof of 
Lemma~\ref{lem:one-return-1-1} as we now explain.
For every $g\in G$ and every $\delta>0$, put
\[
I(g,\delta)=\Bigl\{r\in I: \|p_i^+(u_rg.v_i)\|\leq 0.01\delta \eta^2\|p_i(g.v_i)\|\;\text{for $i=1$ or $i=2$}\Bigr\}
\]
where $p_1^+$ denotes the projection from $\gfrak$ onto $\bbr(E_{12},0)$ and $p_2^+$ denotes the projection 
from $\gfrak$ onto $\bbr(0,E_{12})$; recall also that $p_i$ denotes projection onto the $i$-th component. 
As it was observed in Lemma~\ref{lem:one-return-1-1}, we have 
\[
|I(g,\delta)|\leq 2C'\delta^{1/2}|I|.
\]

Let $\delta=100\varrho^{2}$, and let $r\in I\setminus I(g,\delta)$. Then
\be\label{eq:exp-p1-p2}
\|p_i^+(\uvk g.v_i)\|\geq \eta^2\|p_i(g.v_i)\|\varrho^2\quad\text{for $i=1,2$}.
\ee

Using~\eqref{eq:gv-E12}, we have $\|g.v\|=2\|p_1(g.v_1)\|\cdot\|p_2(g.v_2)\|$. 
Since $a_t.w=\nuni^{t}w$ for any $w\in{\rm span}\Bigl\{(E_{12},0),(0,E_{12})\Bigr\}$, using~\eqref{eq:gv-E12} and~\eqref{eq:exp-p1-p2}, 
we conclude that
\begin{align*}
\nuni^{2t}\eta^4 \|g.v\|\varrho^4&=2\nuni^{2t}\eta^4\|p_1(g.v_1)\|\cdot\|p_2(g.v_2)\|\varrho^4\\
&\leq 2\nuni^{2t}\|p_1^+(\uvk g.v_1)\|\cdot\|p_2^+(\uvk g.v_2)\|\\
&= \Bigl\|a_t\Bigl((p_1^+(\uvk g.v_1),0)\wedge (0,p_2^+(\uvk g.v_2))\Bigr)\Bigr\|\leq \| a_t\uvk g.v\|.
\end{align*}

The claim thus holds with $\ref{E:C-alpha-2}=20C'$.
\end{proof}

\begin{lemma}\label{lem:inj-rad-irred}
Let the notation be as above. There exists $\constE\label{E:inj-irred}$ so that 
\[
\ref{E:inj-irred}^{-1}\omega(x)^{-1}\leq\inj(x)^2 \leq \ref{E:inj-irred} \omega(x)^{-1}
\]
for all $x\in X$.
\end{lemma}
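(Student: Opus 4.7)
The plan is to compare both $\omega(x)^{-1}$ and $\inj(x)^2$ with the same geometric quantity, namely the distance from the identity to the shortest nontrivial element of the conjugate lattice $g\Gamma g^{-1}$, using the explicit structure of $\Gamma\cap\gamma_0\xi P\xi^{-1}\gamma_0^{-1}$ for cusp representatives $\xi\in\Xi^{-1}$. The basic computational input is that for $g'=(g'_1,g'_2)\in G$ and $n_\tau := (u_\tau,u_{\bar\tau})\in N(\bbz)$ with $\tau\in\mathcal{O}$, one has \emph{exactly}
\[
g'\,n_\tau\,{g'}^{-1}-I \;=\; \bigl(\tau\,\Ad(g'_1)E_{12},\;\bar\tau\,\Ad(g'_2)E_{12}\bigr),
\]
whose norm is $\asymp \max(|\tau|A,\,|\bar\tau|B)$ with $A:=\|\Ad(g'_1)E_{12}\|$ and $B:=\|\Ad(g'_2)E_{12}\|$, while simultaneously $\|g'v\|\asymp AB$ by~\eqref{eq:gv-E12}. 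Both inequalities will then reduce to an optimization over $\tau$ in a finite-index sublattice of $\mathcal{O}$, using $|\operatorname{N}_{L/\bbq}(\tau)|\in\bbz_{\geq 1}$ for nonzero $\tau\in\mathcal{O}$ and the presence of units in $\mathcal{O}^\times$ to balance $|\tau|$ against $|\bar\tau|$. Note that both $\inj(x)$ and $\omega(x)^{-1}$ are uniformly bounded below away from the cusps, so the lemma is trivial unless $\omega(x)$ is large; henceforth we assume this.

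For the upper bound $\inj(x)^2\leq \ref{E:inj-irred}\,\omega(x)^{-1}$, I would pick $\gamma_0\in\Gamma$ and $\xi_0\in\Xi^{-1}$ attaining the maximum in the definition of $\omega(x)$ and set $g':=g\gamma_0\xi_0$, so $\|g'v\|=\omega(x)^{-1}$. The lattice $\Lambda:=\xi_0^{-1}\Gamma\xi_0\cap N$ is a finite-index sublattice of $N(\bbz)$ and in particular contains $M\mathcal{O}$ for some integer $M$ depending only on $\Gamma$. Letting $\epsilon_0\in\mathcal{O}^\times$ be a fundamental unit, the elements $M\epsilon_0^n\in\Lambda$ take ratios $|M\epsilon_0^n|/|\overline{M\epsilon_0^n}|=|\epsilon_0|^{2n}$ that cover $(0,\infty)$ up to a multiplicative factor $|\epsilon_0|^2$, so for a suitable $n$ the element $\tau:=M\epsilon_0^n$ satisfies $\max(|\tau|A,|\bar\tau|B)\asymp M\sqrt{AB}\asymp \omega(x)^{-1/2}$. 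The corresponding nontrivial element $\gamma_0\xi_0 n_\tau\xi_0^{-1}\gamma_0^{-1}\in\Gamma$ then yields an element of $g\Gamma g^{-1}$ at distance $\asymp \omega(x)^{-1/2}$ from $I$, forcing $\inj(x)\ll\omega(x)^{-1/2}$.

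For the lower bound $\inj(x)^2\geq \ref{E:inj-irred}^{-1}\,\omega(x)^{-1}$, I would take a nontrivial $\gamma\in\Gamma$ realizing (up to an absolute multiplicative constant) the injectivity radius, i.e., with $\delta:=\|g\gamma g^{-1}-I\|\leq C\inj(x)$; such a $\gamma$ exists by the very definition of $\inj(x)$ via injectivity of $g'\mapsto g'x$ on $\boxG_{10\inj(x)}$. First I would argue that $\gamma$ is forced to be unipotent once $\delta$ is small: writing $\tau:=\operatorname{tr}(\gamma_1)\in\mathcal{O}$, trace invariance under conjugation gives $|\tau-2|\leq 2\delta$ and analogously $|\bar\tau-2|\leq 2\delta$, so $|\operatorname{N}_{L/\bbq}(\tau-2)|\leq 4\delta^2<1$ once $\delta<1/2$, which forces $\tau=2$ and hence $\gamma$ unipotent (the $-\gamma$ case is symmetric). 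Since $\mathbf{G}$ has $\bbq$-rank $1$, every proper rational parabolic is $\Gamma$-conjugate to some $\xi P\xi^{-1}$ with $\xi\in\Xi^{-1}$ by~\eqref{eq:cusps-rational} together with the self-normalizing property of $P$, so $\gamma=\gamma_0\xi n_\tau\xi^{-1}\gamma_0^{-1}$ for some $\gamma_0\in\Gamma$, $\xi\in\Xi^{-1}$ and nonzero $\tau\in\mathcal{O}$. Applying the identity from the first paragraph to $g':=g\gamma_0\xi$ and using $|\tau\bar\tau|\geq 1$,
\[
\delta \;\gg\; \max(|\tau|A_0,\,|\bar\tau|B_0) \;\geq\; \sqrt{|\tau\bar\tau|\,A_0 B_0} \;\geq\; \sqrt{A_0 B_0} \;\asymp\; \sqrt{\|g'v\|}\;\geq\; \sqrt{\omega(x)^{-1}},
\]
giving $\omega(x)^{-1}\ll \delta^2\ll \inj(x)^2$ as required.

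The main obstacle will be the step in the lower bound reducing a small-after-conjugation element of $\Gamma$ to a unipotent one lying in the finite list of cusp parabolic conjugacy classes indexed by $\Xi^{-1}$; the trace argument combined with~\eqref{eq:cusps-rational} makes this essentially algebraic, but several constants (the index $[N(\bbz):\Lambda]$, the fundamental unit, the threshold on $\delta$ for unipotency, and the passage between the cusp and non-cusp regimes) will need careful bookkeeping in order to produce the single constant $\ref{E:inj-irred}$ uniformly in $g\in G$.
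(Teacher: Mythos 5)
Your proof is correct and, at bottom, uses the same arithmetic mechanism as the paper: both directions are reduced to comparing $\omega(x)^{-1}\asymp A_0 B_0$ (where $A_0=\|\Ad(g'_1)E_{12}\|$, $B_0=\|\Ad(g'_2)E_{12}\|$) with the size of a shortest unipotent element of $g\Gamma g^{-1}$, and both rely crucially on the two arithmetic inputs that $|\mathrm{N}_{L/\bbq}(\tau)|$ is bounded below for $0\neq\tau$ in a fixed fractional ideal, and that units in $\mathcal{O}^\times$ can be used to balance $|\tau|$ against $|\bar\tau|$ (\eqref{eq:units-norm}). The differences from the paper's exposition are mild and, in a couple of places, arguably cleaner. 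In the direction $\omega(x)^{-1}\ll\inj(x)^2$ you give an explicit trace/norm argument for why a small nontrivial element of $g\Gamma g^{-1}$ must be unipotent, whereas the paper simply asserts this; and in place of the paper's passage through the Iwasawa decomposition $g\gamma^{-1}\xi^{-1}=kan$ and the quantity $e^{t_1+t_2}$, you work directly with $A_0,B_0$ and the inequality $\max(|\tau|A_0,|\bar\tau|B_0)\geq\sqrt{|\tau\bar\tau|A_0B_0}$, which is a bit more transparent. In the direction $\inj(x)^2\ll\omega(x)^{-1}$ you select $\gamma_0,\xi_0$ by maximizing directly and then balance with a power of the fundamental unit; the paper instead first puts $g$ into a Siegel domain via reduction theory and then uses \eqref{eq:units-norm} -- the two routes are equivalent but yours avoids invoking the explicit fundamental domain.

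Two small bookkeeping points you already anticipate in your last paragraph but should be made explicit. First, in the lower bound you write $\tau\in\mathcal{O}$ and use $|\tau\bar\tau|\geq 1$; in fact $n_\tau=\xi^{-1}\gamma_0^{-1}\gamma\gamma_0\xi$ only lies in $\xi^{-1}\Gamma\xi\cap N$, which is commensurable with $N\cap\Gamma$, so $\tau\in\frac{1}{M}\mathcal{O}$ for an integer $M$ depending only on $\Xi$, and the bound is $|\tau\bar\tau|\geq M^{-2}$; this is harmless but should be stated (the paper has the same dependence on $M$). Second, the reduction at the start (``the lemma is trivial unless $\omega(x)$ is large'') tacitly uses that $\omega$ and $\inj$ are both proper on $X$, i.e.\ $\omega(x)\to\infty$ iff $\inj(x)\to 0$ as $x$ leaves compact sets. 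This is standard (the paper handles the same issue implicitly through the Siegel-domain description, which identifies the regime $t+t'$ near the boundary with the regime $\omega$ large) but it is worth a sentence, since otherwise the reduction reads as assuming a qualitative version of the lemma you are proving. With these two clarifications, your argument is a complete and slightly streamlined proof of the lemma.
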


\begin{proof}
Let $g\in G$ and assume that $\inj(g\Gamma)<\delta$. Then 
\[
g\Gamma g^{-1}\cap \boxG_{C\delta}\neq\{e\}
\]  
where $C$ is an absolute constant.
 
If $\delta$ is small enough, then $g\Gamma g^{-1}\cap \boxG_{C\delta}$ consists only of unipotent elements. 
Therefore, there exists some nilpotent element $w\in\gfrak_\bbz$ so that 
\[
\|gw\|\ll \delta
\]
where the implied constant is absolute.

Since all minimal $\bbq$-parabolic subgroups of $\G$ are conjugate to each other by elements in
$\G(\bbq)$, it follows from~\eqref{eq:cusps-rational} that there exists some 
$\gamma\in\Gamma$ and some $\xi\in\Xi$ so that $w\in \gamma^{-1} \xi^{-1}.\Lie(N)$. Therefore, we may write 
\[
w= \gamma^{-1}\xi^{-1}.\Bigl((b+c\sqrt\beta) E_{12}, (b-c\sqrt\beta) E_{12}\Bigr)
\]
where $b,c\in\frac{1}{M}\bbz$ for some $M$ depending on $\Xi$.

Using the Iwasawa decomposition, we write 
$g\gamma^{-1}\xi^{-1}= kan$ where 
$k\in\SO(2)\times \SO(2)$, $n\in N$, and $a=(a_{t_1},a_{t_2})$ is diagonal. Therefore,
\[
\nuni^{t_1+t_2}(b^2+c^2\beta)\ll \delta^2
\]
where the implied constant is absolute. 

Now since $b,c\in\frac{1}{M}\bbz$ are non-zero, we have $b^2+c^2\beta\gg_M1$. 
Altogether, we conclude that 
\begin{align*}
\|g\gamma^{-1}\xi^{-1} .v\|&=2\|p_1(a_{t_1}.v_1)\|\|p_2(a_{t_2}.v_2)\|\\
&\leq 2\sqrt{\beta}\nuni^{t_1+t_2}\leq 2\sqrt{\beta}
\hat C\delta^2
\end{align*} 
where $\hat C$ depends on $\Gamma$.
Since $\omega(g\Gamma)^{-1}\leq \|g\gamma^{-1}\xi^{-1} .v\|$, the lower bound in the lemma follows.

We now turn to the proof of the upper bound. 
Using the reduction theory for arithmetic groups, see e.g.~\cite[Ch.~4]{PlRa}, there exist $t_0,r_0>0$ so that  
\[
\Bigl(\SO(2)\times\SO(2)\Bigr)\cdot\Bigl\{(a_{t},a_{t'}\Bigr): t+t'\leq t_0\Bigr\}\cdot\{n(r,s); |r|, |s|\leq r_0\}\cdot \Xi
\]
is a (generalized) fundamental domain for $\Gamma$ in $G$.

In particular, using Lemma~\ref{lem:one-return-2}(1), 
there exists $t_1\leq t_0$ so that if $g=k(a_t,a_{t'})n(r,s)\xi_0\gamma_0$ for $t+t'\leq t_1$,
then 
\begin{align*}
\omega(g\Gamma)&=\max\Bigl\{\|g\gamma \xi^{-1} .v\|^{-1}: (\xi,\gamma)\in\Xi\times\Gamma\Bigr\}=\|g\gamma_0^{-1}\xi_0^{-1} .v\|^{-1}\\
&=\|k(a_t,a_{t'})n(r,s).v\|^{-1}=\nuni^{-t-t'}\|v\|^{-1}.
\end{align*}
 
Moreover, using ~\eqref{eq:gv-E12} and~\eqref{eq:units-norm}
we conclude that $g\gamma_0^{-1}\xi_0^{-1} (N\cap \Gamma)\xi_0\gamma_0g^{-1}$ contains elements 
of size $\nuni^{(-t-t')/2}$. The upper bound estimate follows.
\end{proof}

\begin{proof}[Proof of Proposition~\ref{lem:one-return}: Case 2]
By Lemma~\ref{lem:inj-rad-irred}, $t\geq |\log(\eta^2\inj(g\Gamma))|+\ref{E:non-div-main}$ implies 
$2t\geq \log(\omega(g\Gamma)/\eta^4)$ if we assume $\ref{E:non-div-main}$ is large enough. 

Let $\varrho_0=0.1\ref{E:C-alpha-2}^{-1}$. In view of Lemma~\ref{lem:one-return-2}(2) we have 
\[
\sup\Big\{\|a_tu_rg\gamma\xi^{-1}.v\|: r\in I\Big\}\geq \varrho_0^4\qquad\text{for all $\gamma\in \Gamma$ and $\xi\in\Xi$}
\]
so long as $2t\geq |\log(\omega(g\Gamma)/\eta^4)|$.

Altogether, the conditions in~\cite[Thm.~4.1]{KM-Nondiv} 
are satisfied so long as $t\geq |\log(\eta^2\inj(g\Gamma))|+\ref{E:non-div-main}$.
Hence, similar to the previous case, the conclusion of the proposition in this case also holds true in view of ~\cite[Thm.~4.1]{KM-Nondiv}
--- in light of Lemma~\ref{lem:one-return-2}(1), the proof simplifies significantly.
\end{proof}

We also record the following which is a special case of the results and techniques developed in~\cite{EMM-Upp} and~\cite{EM-RW}
tailored to our setup here.

\begin{propos}\label{prop:inj-alpha-int}
Let $0<\alpha<1$ and let $\osa$ be as in~\eqref{eq:EMM-use}. 
There exists some $B=B(X,\alpha)\geq 1$ satisfying the following.
For every $x\in X$ and every $n\in\bbn$ we have 
\[
\int_H\inj(h x)^{-\alpha}\diff\!\convN(h)\leq \nuni^{-n}\inj^{-\alpha}(x)+B
\] 
where $\rwm(\varphi)=\ave\varphi(a_{\osa}u_r)\diff\!r$ for every $\varphi\in C_c(H)$ and $\convN$ denotes the $n$-fold convolution of $\rwm$.
\end{propos}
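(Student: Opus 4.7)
The plan is to establish the claim by iterating a single-step inequality of the form
\[
\int_H \inj(hx)^{-\alpha}\,d\rwm(h) \leq \nuni^{-1}\inj(x)^{-\alpha}+B_0,
\]
for a constant $B_0=B_0(X,\alpha)$. Once this is in hand, $n$-fold iteration gives
\[
\int_H \inj(hx)^{-\alpha}\,d\convN(h) \leq \nuni^{-n}\inj(x)^{-\alpha}+B_0\sum_{j=0}^{n-1}\nuni^{-j},
\]
which yields the proposition with $B=B_0/(1-\nuni^{-1})$.

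To prove the single-step inequality, I would split on the injectivity radius of $x$. If $\inj(x)\geq \eta_1$ for a threshold $\eta_1=\eta_1(X)$ to be chosen, then because $\supp(\rwm)\subset\{h:\|h\|\leq \nuni^{2\osa+1}\}$, the quantity $\inj(hx)$ is uniformly bounded below on $\supp(\rwm)$, so the left-hand side is bounded by a constant which can be absorbed into $B_0$. If instead $\inj(x)<\eta_1$, Lemma~\ref{lem:inj-rad-irred} (in Case 2) and its Case 1 analog via Lemmas~\ref{lem:one-return-1} and~\ref{lem:one-return-1-1} allow us to express $\inj(x)^{-\alpha}$ up to a multiplicative constant as $\|g\gamma_0\xi_0.v\|^{-\alpha/d}$ for a \emph{unique} pair $(\gamma_0,\xi_0)$ (with $d$ equal to $2$ in Case 1 and $4$ in Case 2), while $\|g\gamma\xi.v\|\gg 1$ for all other pairs by the separation part of those lemmas.

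Now I would apply Lemma~\ref{lem:EMM-(2,1)} (or rather its consequence \eqref{eq:EMM-use} with $m_\alpha$ chosen for the exponent $\alpha/d$, which is the $\osa$ in the proposition) to the vector $w_0 = g\gamma_0\xi_0.v$:
\[
\ave \|a_{\osa}u_r w_0\|^{-\alpha/d}\,dr \leq \nuni^{-1}\|w_0\|^{-\alpha/d}.
\]
This controls the contribution of the single distinguished vector. The remaining task is to compare the integrand $\inj(a_{\osa}u_r x)^{-\alpha}$ with $\|a_{\osa}u_r w_0\|^{-\alpha/d}$ plus controlled error from the other vectors $g\gamma\xi.v$. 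By the discreteness of $\Gamma\xi.v$ in the relevant representation and the fact that $\supp(\rwm)$ has bounded size, there are at most a uniformly bounded number of pairs $(\gamma,\xi)\neq(\gamma_0,\xi_0)$ for which $\|a_{\osa}u_r g\gamma\xi.v\|$ can drop below a fixed threshold for some $r\in[0,1]$; the contribution of each such vector to the $r$-average is bounded by a constant via another application of Lemma~\ref{lem:EMM-(2,1)} (starting from a vector of size $\gg 1$), yielding a total bounded contribution which is absorbed into $B_0$.

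The main obstacle, as in the Eskin--Margulis--Mozes framework, is controlling the transition: after averaging by $a_{\osa}u_r$, the pair $(\gamma,\xi)$ realizing $\omega$ at the new point $a_{\osa}u_rx$ may differ from $(\gamma_0,\xi_0)$. The separation Lemmas~\ref{lem:one-return-1} and~\ref{lem:one-return-2}(1) ensure that at each point of $X$ at most one such pair realizes a small value, so one can bound $\inj(a_{\osa}u_rx)^{-\alpha}$ by a sum over pairs of $\|a_{\osa}u_r g\gamma\xi.v\|^{-\alpha/d}$ truncated at a fixed threshold, plus a constant. Integrating this sum term-by-term and using \eqref{eq:EMM-use} on each summand yields the single-step inequality with the required contraction factor $\nuni^{-1}$ and a constant $B_0$ collecting the bounded-contribution terms.
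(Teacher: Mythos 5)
Your proposal has the right overall shape — a single-step averaging inequality followed by iteration, with the height function tied to a unique minimizing vector by the strong separation lemmas — and this is indeed how the paper organizes things. But the crucial step, obtaining the contraction factor $\nuni^{-1}$ on a single average, is not correctly set up, and the gap is substantial in Case~2.

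You propose to apply Lemma~\ref{lem:EMM-(2,1)} (equivalently~\eqref{eq:EMM-use}) directly to the vector $w_0=g\gamma_0\xi_0.v$ with exponent $\alpha/d$. This fails for two reasons. First, Lemma~\ref{lem:EMM-(2,1)} is stated for $w\in\gfrak$, whereas in Case~2 the vector $v$ lives in $\wedge^2\gfrak$ (and in Case~1/$\SL_2(\bbc)$ the relevant vectors live in $\bbr^{d+1}$ via the identification with $\SO(\mathsf Q)^\circ$). The lemma does not apply off-the-shelf to these representations, and the exponent constraint $1/3<\alpha<1$ in the lemma would also be violated by $\alpha/2$ for any $\alpha<2/3$. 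Second, you assert that $\osa$ ``is $m_\alpha$ chosen for exponent $\alpha/d$''; but the proposition fixes $\osa$ to be the $m_\alpha$ of~\eqref{eq:EMM-use}, tailored to exponent $\alpha$ on $\gfrak$. If your argument genuinely required the choice $m_{\alpha/d}$, you would be proving a different statement.

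The paper avoids both issues in Case~2 by exploiting the product structure~\eqref{eq:gv-E12}: $gv=-2\sqrt\beta(g_1E_{12},0)\wedge(0,g_2E_{12})$, hence $\|gv\|$ factors as $\|p_1(gv_1)\|\,\|p_2(gv_2)\|$ up to a constant. Applying Cauchy--Schwarz to the $r$-average of the product $\|h_{r,1}g_1E_{12}\|^{-\alpha/2}\|h_{r,2}g_2E_{12}\|^{-\alpha/2}$ decouples the two factors, and each factor is then controlled by~\eqref{eq:EMM-use} with the original exponent $\alpha$ and the original $m_\alpha$, applied to the honest $\gfrak$-vectors $g_iE_{12}$. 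This Cauchy--Schwarz decoupling is the essential ingredient your proposal is missing. (In Case~1 the paper also proceeds differently from your sketch, via the elementary inequality $\omega^\alpha\le\omega_1^\alpha+\omega_2^\alpha$ and the known result for each factor, and it dispatches the compact and $\SL_2(\bbc)$ cases up front by citation.)
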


\begin{proof}
If $X$ is compact, then $\inj: X\to \bbr$ is a bounded function and the result is clear.

Therefore, we may assume $X$ is not compact. 
If $G=\SL_2(\bbc)$, the claim in the proposition is proved in~\cite{MO-MargFun}.

We now consider $G=\SL_2(\bbr)\times\SL_2(\bbr)$ and consider separately the cases where $\Gamma$ is a reducible lattice 
and $\Gamma$ is irreducible.

\medskip

{\em Case 1.} Let use first assume that $\Gamma$ is reducible. 
As was done before, passing to a finite index subgroup, we may assume 
$\Gamma=\Gamma_1\times\Gamma_2$. 

Let $\omega$ be defined as in~\eqref{eq:def-omega}. That is: 
\[
\omega(x)=\max\{\omega_1(x_1),\omega_2(x_2)\}
\]
where $x=(x_1,x_2)$.

By~\cite[Prop.~6.7]{MO-MargFun} we have $\omega(x)\asymp{\rm inj}(x)^{-1}$.
Therefore, it suffices to prove the proposition with ${\rm inj}(x)$ replaced by $\omega(x)$.
The result for $\omega_1$ and $\omega_2$ is well-known, see e.g.~\cite{MO-MargFun, EM-RW, EMM-Upp}. 

The result for $\omega$ thus follows as $\omega^\alpha\leq \omega_1^\alpha+\omega_2^\alpha\leq 2\omega^\alpha$.

\medskip

{\em Case 2.} Assume now that $\Gamma$ is irreducible. 
We will use the notation which we fixed in the beginning of this appendix. 
In particular, as was done in~\eqref{eq:def-omega-irred}, define
\[
\omega(g\Gamma)=\max\biggl\{2, \max\Bigl\{\|g\gamma\xi .v\|^{-1}:\xi\in\Xi^{-1}, \gamma\in \Gamma\Bigr\}\biggr\}.
\]

In view of Lemma~\ref{lem:inj-rad-irred}, we have $\omega(x)\asymp\inj(x)^{-2}$ for all $x\in X$.
Therefore, it suffices to prove the claim for $\omega^{1/2}$ instead if $\inj$. 

Let us recall from~\eqref{eq:gv-E12} that 
\begin{align}\label{eq:gv-E12-again}
gv&=-2(p_1(g.v),0)\wedge (0,p_2(g.v))\\
\notag &=-2\sqrt{\beta}(g_1E_{12},0)\wedge (0,g_2E_{12})
\end{align}
for any $g=(g_1,g_2)$.

Let $x=g\Gamma$. Fix $\gamma\in\Gamma$ and $\xi\in\Xi^{-1}$; for all $r\in[0,1]$ and $\ell\in\bbn$ put $h_r=a_{\ell}\uvk\gamma\xi$.
In view of the Cauchy-Schwarz inequality and~\eqref{eq:gv-E12-again}, applied with $h_rg$, we have  
\begin{multline}\label{eq:wedge-prod}
\left(\int_{0}^1\|h_rgv\|^{-\alpha/2}\uvkd\right)^{2}\leq\\ 
2\sqrt{\beta}\int_{0}^1\|h_{r1}g_1E_{12}\|^{-\alpha}\uvkd \int_{0}^1\|h_{r2}g_2 E_{12}\|^{-\alpha}\uvkd. 
\end{multline}

Then for $i=1,2$, by choice of $\osa$, we have 
\[
\ave\|a_{\osa}\uvk g_i\gamma_i\xi_iE_{12}\|^{-\alpha}\uvkd< \nuni^{-1}\|g_i\gamma_i\xi_iE_{12}\|^{-\alpha},
\]
see~\eqref{eq:EMM-use}.

Using~\eqref{eq:gv-E12-again} in reverse order and~\eqref{eq:wedge-prod}, 
we conclude from the above two estimates that
\be\label{eq:omgea-alpha-irr}
\ave\|a_{\osa}\uvk g\gamma\xi v\|^{-\alpha/2}\uvkd \leq \nuni^{-1} \|g\gamma\xi v\|^{-\alpha/2}.
\ee

Let $C(\Gamma)$ be as in Lemma~\ref{lem:one-return-2}. Then there exists some $B'_{\osa}>0$ 
so that if $\omega(g\Gamma)=\|g\gamma\xi v\|^{-1}\geq C(\Gamma)\cdot B'_{\osa}$, 
then 
\[
\omega(a_{\osa}\uvk g\Gamma)=\|a_{\osa}\uvk g\gamma\xi v\|^{-1}\geq C(\Gamma)
\]
for all $r\in[0,1]$.

This and~\eqref{eq:omgea-alpha-irr} imply that for all $x\in X$, we have 
\[
\int\omega^{\alpha/2}(hx)\diff\!\rwm(h)=\ave\omega^{\alpha/2}(a_{\osa}\uvk x)\uvkd\leq \nuni^{-1}\omega^{\alpha/2}(x)+B''
\]
where $B''=\max\{\omega(a_{\osa}\uvk g\Gamma): r\in[0,1], \omega(g\Gamma)\leq C(\Gamma)\cdot B'_{\osa}\}$. 

Iterating this estimate and summing the geometric sum, we conclude that
\be\label{eq:rec-iteration-all-ell}
\int\omega^{\alpha/2}(hx)\diff\!\rwm^{(n)}(h)\leq \nuni^{-n}\omega^{\alpha/2}(x)+ B
\ee
for all $n\in\bbn$ where $ B=2B''$. The proof is complete.
\end{proof}

\section{Proof of Theorem~\ref{thm:proj-thm}}\label{sec:proof-proj}

Recall that $\rfrak\subset \Lie(G)$ is identified with $\sl_2(\bbr)$ equipped with the adjoint action of $\SL_2(\bbr)$.

\begin{thm}\label{thm:proj-thm-app}
Let $0<\alpha\leq1$, and let  $0<\rhsc_0<\rhsc_1<1$. 
Let $E\subset B_\rfrak(0,\rhsc_1)$ be a finite set, and let $\rho$ denote the uniform measure on $E$. Assume that
\be\label{eq:appendix-regularity-F}
\rho(B_\rfrak(w, \rhsc))\leq \Upsilon\cdot (\rhsc/\rhsc_1)^\alpha\qquad\text{for all $w$ and all $\rhsc\geq \rhsc_0$}
\ee
where $\Upsilon\geq 1$.

Let $0<\vare<0.01\alpha$, and let $J\subset [0,1]$ be an interval with $|J|\geq 10^{-6}$. For every $\rhsc\geq \rhsc_0$, 
there exists a subset $J_{\rhsc}\subset J$ with $|J\setminus J_{\rhsc}|\leq C_\vare(\rhsc/\rhsc_1)^{\vare}$ 
so that the following holds. 
Let $r\in J_{\rhsc}$, then there exists a subset $E_{\rhsc,r}\subset E$ with 
\[
\rho(E\setminus E_{\rhsc,r})\leq C_\vare(\rhsc/\rhsc_1)^{\vare}
\]
such that for all $w\in E_{b,r}$, we have 
\[
\rho\Bigl(\{w'\in E: |\xi_{r}(w')-\xi_r(w)|\leq \rhsc\}\Bigr)\leq C_\vare(\rhsc/\rhsc_1)^{\alpha-7\vare}
\] 
where $C_\vare\ll\vare^{-\star}\Upsilon^\star$ (implied constants are absolute) and 
\[
\xi_r(w)=(\Ad(u_r)w)_{12}=-w_{21}r^2-2w_{11}r+w_{12}.
\]
\end{thm}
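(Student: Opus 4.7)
The plan is to prove Theorem~\ref{thm:proj-thm-app} by an energy/incidence argument in the Wolff--Schlag spirit, localizing to dyadic scales and extracting the pointwise conclusion from an averaged bound by two applications of Chebyshev's inequality.

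First, I would reformulate the statement as an $L^1$-averaged bound on the discretized projection counts. For each $\rhsc\geq \rhsc_0$ and $r\in J$, set
\[
N_\rhsc(r,w):=\rho\bigl(\{w'\in E:|\xi_r(w)-\xi_r(w')|\leq \rhsc\}\bigr).
\]
The goal is to show that
\[
\int_J\!\int_E N_\rhsc(r,w)\,d\rho(w)\,dr \;\ll\; C_\vare\,(\rhsc/\rhsc_1)^{\alpha-6\vare},
\]
from which, by Chebyshev in $r$ on the outer integral and then in $w$ on the inner integral with appropriate thresholds, one extracts the sets $J_\rhsc\subset J$ and $E_{\rhsc,r}\subset E$ with the required exceptional-measure bound $\ll C_\vare(\rhsc/\rhsc_1)^\vare$ and the pointwise estimate with exponent $\alpha-7\vare$.

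Second, to prove the averaged bound, I would exchange orders of integration and study, for each pair $w\neq w'\in E$, the measure of
\[
A_\rhsc(w,w'):=\{r\in J:|\xi_r(w)-\xi_r(w')|\leq \rhsc\}.
\]
Writing $\delta w=w-w'$, this is the sublevel set of the quadratic polynomial
\[
P_{\delta w}(r)=-(\delta w)_{21}r^2-2(\delta w)_{11}r+(\delta w)_{12}
\]
on $J$. Partition pairs $(w,w')$ according to the dyadic sizes $(\|\delta w\|,|P'_{\delta w}|_\infty,|\text{disc}(P_{\delta w})|)$; on each such stratum the classical sublevel-set estimate gives $|A_\rhsc(w,w')|\ll (\rhsc/\|\delta w\|)^{1/2}$ in the non-degenerate regime, with a more delicate analysis near tangencies. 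Summing over pairs, and using the regularity~\eqref{eq:appendix-regularity-F} to count how many $w'$ lie in each shell around $w$, yields the desired averaged estimate modulo the critical tangency contribution.

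Third --- and this is the main obstacle --- the tangency contribution is controlled by an incidence estimate for the curvilinear family $\Gamma_w:=\{(r,\xi_r(w)):r\in J\}$, which is a $3$-parameter family of parabolic arcs in $\bbr^2$. Two curves $\Gamma_w,\Gamma_{w'}$ are $\rhsc$-tangent along an $r$-interval of length $\tau$ iff $(w,w')$ contributes a mass $\asymp \tau$ to the integral under study. Here I would invoke the Wolff--Schlag/K\"aenm\"aki--Orponen--Venieri cell-decomposition machinery, based on the Clarkson--Edelsbrunner--Guibas--Sharir--Welzl theorem, to partition $J\times\bbr$ into $\asymp N^2$ cells each crossed by $O(1)$ of the $N$ representative tubes, then bound the number of $(\rhsc,\tau)$-tangent pairs by an incidence inequality. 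The $\alpha$-regularity~\eqref{eq:appendix-regularity-F} enters as a bound on the number of tubes entering each dyadic region, and one pays a multiplicative loss of $O(\vare)$ in the exponent for the dyadic pigeonholing over all relevant tangency and parameter scales, producing the $-7\vare$ in the final exponent. The routine steps (the two Chebyshev extractions and the sublevel-set computation) are straightforward; the genuine difficulty is packaging the curvilinear incidence count cleanly for a 3-parameter family of parabolas over the parameter interval $J$.
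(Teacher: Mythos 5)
Your overall circle of ideas (Kolasa--Wolff sublevel-set estimates, cinematic curvature, CEGSW cell decomposition, $\alpha$-regularity to control shell counts) is the right one, but the route you propose --- prove an averaged $L^1$ bound on the discretized projection multiplicity and then extract the conclusion by two Chebyshev arguments --- is genuinely different from the paper's, and the step you flag as the main obstacle is where the two routes diverge substantively. The paper does \emph{not} go through an $L^1$ average. Instead it first proves a pointwise multiplicity bound (Lemma~\ref{lem:Schlag}, the Schlag-type lemma): after removing a bad subset $\hat{E}^c\subset E$ with $\rho(\hat E^c)\leq b^\vare$, for every $w\in\hat E$ the portion of the tube $\Xi^b(w)$ where the multiplicity function $m_\rho^b$ exceeds $D b^{\alpha-7\vare}$ has measure $\leq b^{2\vare/\alpha}|\Xi^b(w)|$. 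Theorem~\ref{thm:proj-thm-app} is then deduced from this by a short Fubini/pigeonhole argument (producing a single $w_0\in\hat E$ whose curve would violate Lemma~\ref{lem:Schlag} if the theorem failed).

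The genuine gap in your plan is in the passage from the Wolff-type incidence inequality to the $L^1$ tangency-energy bound
\[
\iint_{E\times E} |A_\rhsc(w,w')|\,d\rho(w)\,d\rho(w')\;\ll\;C_\vare(\rhsc/\rhsc_1)^{\alpha-6\vare}.
\]
Two concrete difficulties. First, the $\alpha$-regularity hypothesis~\eqref{eq:appendix-regularity-F} controls $\rho$ on metric balls only; it gives no direct control on $\rho$ of neighborhoods of the light cone $\{\Delta(w-w')=\lvert\det(w-w')\rvert\leq\delta\}$ inside a $t$-shell --- and that is precisely the set one must dominate in the ``delicate analysis near tangencies'' you allude to. Second, the Wolff incidence lemma (the paper's Lemma~\ref{lem:Wolff}) counts pairwise incomparable $(\delta,t)$-\emph{rectangles} of given bipartite type, not tangent \emph{pairs}. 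Converting a rectangle count to a pair count requires dyadic pigeonholing in the type parameters $(\mu,\nu)$, and this pigeonholing accumulates $\delta^{-\vare}$ and $(\#E)^{\vare}$ losses that are not automatically controlled when summed over all intermediate scales $\delta\in[\rhsc,t^2]$ and $t\in[\rhsc,1]$. The paper's Lemma~\ref{lem:Schlag} handles exactly this issue by running an induction on the scale $b$: one takes the largest dyadic $b$ at which the multiplicity bound fails, and uses the inductive hypothesis at a coarser scale $t>b$ (via the $m^{N\delta}_\rho(q)\leq M_\delta$ constraint in~\eqref{eq: Schlag lemma vare'}) to pin down the type of the rectangles before applying Lemma~\ref{lem:Wolff} on the $t$-bipartite pair $(\white,\black)$. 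Your proposal has no analogue of this inductive self-improvement, and without it the exponent $\alpha-7\vare$ does not follow merely from summing Lemma~\ref{lem:Wolff} over dyadic scales of $\delta$ and $t$. I would not say the $L^1$ route is impossible, but as written it skips the core mechanism; to make it work you would essentially have to reproduce the content of Lemma~\ref{lem:Schlag}.

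One minor remark on your description of CEGSW: cutting the plane into $\asymp N^2$ cells each crossed by $O(1)$ of the tubes is the extreme choice $r\asymp N$ of the parameter in the cutting theorem; in the actual application inside Lemma~\ref{lem:Wolff} (and in Zahl's Lemma 5.18, which the paper cites) one optimizes $r$ rather than taking it maximal. This is a cosmetic point, but worth keeping straight since the choice of $r$ is what produces the $3/4$ exponent in the incidence bound.
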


We need some more notation for the proof. 
First note that the assumption and the conclusion in the theorem are invariant under scaling.
Thus replacing $E$ by $\rhsc_1^{-1}\cdot E$ and $\rhsc_0$ by $\rhsc_0/\rhsc_1$, we may assume $\rhsc_1=1$. 
We prove the theorem for $J=[0,1]$, the proof in general is similar. 

Let
\[
\Xi(w)=\Bigl\{(r,\xi_r(w)):r\in [0,1]\Bigr\}
\]
for every $w\in E$, and let $\Xi=\bigcup_{w} \Xi(w)$. 

For every $\rhsc>0$ and every $w\in E$, let 
\[
\Xi^{\rhsc}(w)=\Bigl\{(q_1,q_2)\in [0,1]\times \bbr: |q_2-\xi_{q_1}(w)|\leq\rhsc\Bigr\}.
\] 
Finally, for all $q\in\bbr^2$ and $\rhsc>0$, define
\be\label{eq:def-trho-delta}
m^\rhsc_{\rho}(q):=\rho\Bigl(\{w'\in\rfrak: q\in \Xi^\rhsc(w')\}\Bigr).
\ee

The assertion in the theorem may be rewritten in terms of the multiplicity function $m^{\rhsc}_{\rho}$ as follows.
We seek the set $J_b\subset [0,1]$, and for every $r\in J_b$, the set $E_{b,r}\subset E$ so that 
\be\label{eq:thm-mult-app}
m^{\rhsc}_{\rho}\Bigl((r,\xi_{r}(w))\Bigr)\leq C_\vare\rhsc^{\alpha-7\vare}\quad\text{for all $w\in E_{b,r}$.}
\ee

The following lemma plays a crucial role in the proof of Theorem~\ref{thm:proj-thm-app}. 
This is a more detailed version of~\cite[Lemma 8]{Schlag} in the setting at hand, see also~\cite[Lemma 1.4]{Wolff} and~\cite[Lemma 2.1]{Zahl}. Indeed, Lemma~\ref{lem:Schlag} is a restatement of~\cite[Lemma 5.1]{kenmki2017marstrandtype} for a family of parabolas; 
similar to loc.\ cit., the regularity of the measure $\rho$,~\eqref{eq:appendix-regularity-F}, is used as a replacement for the assumption in~\cite[Lemma 8]{Schlag} that the family has separated radii.   

\begin{lemma}\label{lem:Schlag}
Let the notation be as in Theorem~\ref{thm:proj-thm-app} with $b_1=1$. 
In particular, $E\subset B_\rfrak(0,1)$ and~\eqref{eq:appendix-regularity-F} is satisfied.  
For every $0<\vare\leq 0.01\alpha$, there exists 
$0<D\ll \vare^{-\star}\Upsilon^{\star}$ (implied constants are absolute) so that the following holds.
Let $\rhsc\geq \rhsc_0$. Then there exists a subset $\hat E=\hat E_b\subset E$
with $\#(E\setminus\hat E)\leq b^{\vare}\cdot(\#E)$ so that for every $w\in\hat E$, we have
\[
\Bigl|\Xi^{\rhsc}(w)\cap\Big\{q\in\bbr^2: m_{\rho}^{\rhsc}(q)\geq D\rhsc^{\alpha-7\vare}\Big\}\Bigr|\leq \rhsc^{2\vare/\alpha}|\Xi^{\rhsc}(w)|.
\]
\end{lemma}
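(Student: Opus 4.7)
The strategy is to bound the $L^2$-norm of the multiplicity function $m_\rho^b$ and then extract the exceptional set via a Chebyshev-type argument. Concretely, I would aim to show
\[
\iint_{[0,1]\times\bbr} (m_\rho^b(q))^2 \, dq \ll_{\vare,\Upsilon} b^{1+\alpha - O(\vare)},
\]
which by unfolding the definition \eqref{eq:def-trho-delta} is equivalent to the pairwise tube intersection estimate
\[
\iint |\Xi^b(w)\cap \Xi^b(w')| \, d\rho(w)\,d\rho(w') \ll_{\vare,\Upsilon} b^{1+\alpha - O(\vare)}.
\]
Given this, the double integral on the left is $\int_{w} \bigl(\int_{\Xi^b(w)} m_\rho^b(q)\,dq\bigr)\,d\rho(w)$. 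Discarding a $b^\vare$-fraction of the worst $w$'s via Markov defines the set $\hat E$; on the remaining $w$'s, $\int_{\Xi^b(w)} m_\rho^b \ll C_\vare b^{1+\alpha-6\vare}$, and a second application of Chebyshev inside each $\Xi^b(w)$ (noting $|\Xi^b(w)|\asymp b$) shows that the super-level set $\{q\in\Xi^b(w): m_\rho^b(q)\geq Db^{\alpha-7\vare}\}$ has measure $\leq b^{2\vare/\alpha}|\Xi^b(w)|$, which is the conclusion.

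The heart of the argument is then to estimate the pairwise intersection. Two parabolic $b$-tubes $\Xi^b(w)$ and $\Xi^b(w')$ have intersection area controlled by
\[
|\Xi^b(w)\cap \Xi^b(w')| \ll b\cdot \bigl|\{r\in[0,1]: |Q_\Delta(r)|\leq 2b\}\bigr|,\qquad Q_\Delta(r)=-\Delta_{21}r^2 -2\Delta_{11}r+\Delta_{12},
\]
where $\Delta=w-w'$. By standard sublevel-set estimates for quadratics, this sublevel set has measure $\ll b^{1/2}/\|\Delta\|^{1/2}$ on the non-degenerate locus, but can be much larger (order $b/\|\Delta\|$ or even $1$) when $Q_\Delta$ is nearly linear or nearly constant, i.e.\ when the discriminant of $Q_\Delta$ is small. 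Integrating the generic bound against $\rho\otimes\rho$ and using the $\alpha$-regularity hypothesis \eqref{eq:appendix-regularity-F} directly yields the desired estimate, so the whole task comes down to handling the near-degenerate pairs.

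The main obstacle, and the reason for the $7\vare$ loss in the exponent, is exactly the control of these near-degenerate pairs. This is where the Wolff--Schlag incidence machinery enters: one applies the cell-decomposition theorem of Clarkson--Edelsbrunner--Guibas--Sharir--Welzl to the family of parabolas $\{\Xi(w): w\in E\}$, which are algebraic curves of bounded degree (so any two of them meet in at most $2$ points). This partitions $[0,1]\times\bbr$ into $M$ cells such that each cell meets $\ll (\#E)/M^{1/2}$ of the parabolas, which after summing over cells bounds the number of ``rich'' intersection points. Unlike the separated-radii setting of Schlag, here the radii are not separated and one has to dyadically decompose in the scale of $\|\Delta\|$ (as well as in the discriminant parameter) and invoke the $\alpha$-regularity of $\rho$ at each scale, following K\"aenm\"aki--Orponen--Venieri~\cite{kenmki2017marstrandtype}; each dyadic step costs a factor of $b^{-O(\vare)}$, and carefully summing over $O(\log(1/b))$ scales absorbs into the $b^{-7\vare}$ loss. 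This adaptation of the incidence bound to a non-separated $\alpha$-regular family of parabolas is the main technical step; once it is established, the Chebyshev extraction described above completes the proof with $D\ll \vare^{-\star}\Upsilon^\star$.
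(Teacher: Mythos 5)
There is a genuine gap in the final Chebyshev step, and the gap is not cosmetic: it is an exponent mismatch that widens as $\alpha$ decreases and cannot be repaired inside the $L^2$ framework.

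Concretely, your plan extracts $\hat E$ by Markov so that $\int_{\Xi^b(w)} m_\rho^b \ll C_\vare b^{1+\alpha-6\vare}$ for $w\in\hat E$, and then applies Chebyshev inside $\Xi^b(w)$ at the threshold $D b^{\alpha-7\vare}$. This gives
\[
\Bigl|\Xi^{\rhsc}(w)\cap\{m_{\rho}^{\rhsc}\geq D\rhsc^{\alpha-7\vare}\}\Bigr|
\;\leq\; \frac{\int_{\Xi^b(w)}m_\rho^b}{Db^{\alpha-7\vare}}
\;\ll\; \frac{C_\vare}{D}\, b^{1+\vare},
\]
whereas the lemma demands a bound of size $b^{2\vare/\alpha}\,|\Xi^b(w)|\asymp b^{1+2\vare/\alpha}$. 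Since $0<\alpha\leq 1$, one has $2\vare/\alpha\geq 2\vare>\vare$, so $b^{1+\vare}\gg b^{1+2\vare/\alpha}$ for small $b$. To bridge this you would need $D\gtrsim b^{\vare-2\vare/\alpha}\to\infty$ as $b\to 0$, contradicting the requirement that a single $D\ll\vare^{-\star}\Upsilon^\star$ work uniformly for all $b\geq b_0$. The situation is worse for small $\alpha$: even if you could prove the best conceivable $L^2$ bound $\iint(m_\rho^b)^2\ll b^{1+\alpha}$ (no $\vare$-loss at all), Markov plus Chebyshev can produce at most $\ll D^{-1}b^{1+6\vare}$, which falls short of $b^{1+2\vare/\alpha}$ whenever $\alpha<1/3$. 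In short, the $b^{2\vare/\alpha}$ exponent in the conclusion is simply out of reach of a one-shot second-moment argument.

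The reason is structural, and it is exactly where your route diverges from the paper's. The paper's proof of Lemma~\ref{lem:Schlag} (following K\"aenm\"aki--Orponen--Venieri and, behind that, Schlag and Wolff) is an induction over dyadic scales $b$: it takes the largest dyadic $b$ at which the lemma fails, pigeonholes in both $\|w-w'\|$ and the tangency parameter $\Delta(w-w')$, invokes the inductive hypothesis at the finer scale $\delta$ to upgrade the lower bound on multiplicities to a two-sided bound (this is where the $b^{2\vare/\alpha}$ level $\lambda$ is carried along and calibrated against $A=(Db^{-3\vare})^{1/\alpha}$ so that $\mu= A^\alpha\lambda^{-2\alpha}b^\alpha=Db^{\alpha-7\vare}$), forms a $t$-bipartite pair, and derives a contradiction against the rectangle-counting bound of Lemma~\ref{lem:Wolff} (where the CEGSW cell decomposition lives). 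The level $\lambda=b^{2\vare/\alpha}$ is not produced by an integral bound followed by Chebyshev; it is the quantity that the multi-scale bootstrap improves at each step. Your identification of Fubini, of the quadratic sublevel-set estimate, and of the near-degenerate pairs as the obstruction is all correct, and proving any useful $L^2$ bound would in the end require the same incidence machinery; but a second-moment estimate, however sharp, cannot recover the good-$\lambda$ form of the conclusion with the required uniformity in $b$. If you want to keep a moment-type framing you would need at least higher moments and a scale-by-scale bootstrap, at which point you have essentially reproduced the Wolff--Schlag induction.
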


The proof of this lemma is mutatis mutandis of the argument in~\cite[Lemma 5.1]{kenmki2017marstrandtype} where one replaces the use of~\cite[Lemma 1.4]{Wolff} with~\cite[Lemma 5.18]{Zahl}. We explicate the notation and the main steps for the convenience of the reader. 

Define $\Phi:\lf^2\times\lf^2\to \lf$ by 
\[
\Phi(x,y)=y_2+2x_1y_1+x_2y_1^2.
\]
Given $x_0\in\bbr^2$ and $r_0\in\bbr$, the set 
$\{y\in \bbr^2:\Phi(x_0,y)=r_0\}$ is a special example of a $\Phi$-circle in~\cite{Kolasa-Wolff,Zahl}.

Note that 
$\Xi(w)=\Bigl\{y\in \bbr^2:y_1\in[0,1], \Phi\Bigl((w_{11},w_{21}),y\Bigr)=w_{12}\Bigr\}$.
The family $\Xi$ satisfies the {\em cinematic} curvature conditions~\cite[Eq.~(1.5) and~(1.6)]{Zahl}. 
Indeed in the case at hand, these conditions follow from the following estimate  
\be\label{eq:cinematic curvature}
\tfrac{1}{3}\max\{|x_1|,|x_2|\}\leq|\tfrac{\partial \Phi}{\partial y_1}|+|\tfrac{\partial^2 \Phi}{\partial y_1^2}|\leq 3\max\{|x_1|, |x_2|\}; 
\ee 
we remark that when $\Phi(0,y)=y_2$, as is the case here,~\eqref{eq:cinematic curvature} (with $3$ replaced by a constant $C$) may be taken as the definition of the cinematic curvature conditions, see~\cite[Eq.~(21)]{Kolasa-Wolff}.

Let $w,w'\in B_\rfrak(0,1)$; define 
\[
\Delta(w-w')=\Bigl|\det(w-w')\Bigr|.
\]
The function $\Delta$ may be used to quantitatively measure the tangency of $\Xi(w)$ and $\Xi(w')$.
Our choice of $\Delta$ is different from $\Delta_{B_\rfrak(0,2)}$ which is defined in \cite[Def.\ 2.2]{Zahl}, 
however, in the case at hand $\Delta\asymp\Delta_{B_\rfrak(0,2)}$ --- indeed, the (reduced) discriminant of $\xi_r(w)-\xi_r(w')$ equals $-\det(w-w')$.   

By~\cite[Lemma 3.1]{Kolasa-Wolff}, for all $0<\delta<0.1$ and all $w,w'\in B_\rfrak(0,1)$, we have 
\begin{subequations}
\begin{align}
\label{eq:Schlag lemma Xi vare'}
&{\rm diam}\Bigl(\Xi^{\delta}(w)\cap \Xi^{\delta}(w')\Bigr)\ll \frac{\sqrt{\Delta(w-w')+\delta}}{\sqrt{\|w-w'\|+\delta}}\\
\label{eq:Schlag lemma Xi vare}
&|\Xi^{\delta}(w)\cap \Xi^{\delta}(w')|\ll \frac{\delta^2}{\sqrt{(\|w-w'\|+\delta)(\Delta(w-w')+\delta)}},
\end{align}
\end{subequations}
here and in the remaining parts of the argument, the implied constants are absolute unless otherwise is stated explicitly.

Let $\white,\black\subset B_\rfrak(0,1)$. We say $(\white,\black)$ is $t$-bipartite if
\be\label{def:bipart}
\max\{{\rm diam}(\white), {\rm diam}(\black)\}\leq t\leq d(\white,\black).
\ee

Let $0<\delta\leq t\leq 1$. A $(\delta,t)$-rectangle $R\subset \lf^2$ is a $\delta$-neighborhood 
of a piece of a parabola $\Xi(w)$, $w\in B_\rfrak(0,1)$, with length $\sqrt{\delta/t}$. We say that two $(\delta,t)$-rectangles are $C$-comparable 
if there is a $(C\delta,t)$-rectangle which contains both of them. Otherwise, they are $C$-incomparable. 
Let $w\in B_\rfrak(0,1)$, the parabola $\Xi(w)$ is $C$-tangent to a $(\delta, t)$-rectangle $R$, if $\Xi^{C\delta}(w)$ contains $R$. Finally, fixing some large absolute constant $\hat C \geq 1$, we say that two rectangles 
are comparable, if they are $\hat C$-comparable. Similarly, $\Xi(w)$ is said to be tangent to a
rectangle $R$ if $\Xi(w)$ is $\hat C$-tangent to $R$.

Let $0<\delta\leq t\leq 1$, and let $(\white,\black)$ be $t$-bipartite. Let $R$ be a $(\delta,t)$-rectangle. Put $\white_R=\{w\in\white: \Xi(w)$ is tangent to $R\}$; define $\black_R$ analogously. We say $R$ is of type $(\geq\mu,\geq\nu)$ with respect to $\rho,\white$, and $\black$ if 
\[
\rho(\white_R)\geq \mu\quad\text{and} \quad \rho(\black_R)\geq\nu.
\]
We say $R$ is of type $(\mu,\nu)$ if $\mu\leq \rho(\white_R)<2\mu$ and $\nu\leq \rho(\black_R)< 2\nu$. 

The following is an analogue of \cite[Lemma 1.4]{Wolff} tailored to our setting here; see also \cite[Lemma 5.18]{Zahl} and \cite[Lemma 4.4]{kenmki2017marstrandtype}. 

\begin{lemma}\label{lem:Wolff}
Let $0<\delta\leq t\leq 1$, and let $(\white,\black)$ be $t$-bipartite. Let $\vare>0$. Then the number of pairwise incomparable $(\delta,t)$-rectangles of type $(\geq\mu,\geq\nu)$ with respect to $\rho,\white$, and $\black$ is at most 
\[
D_\vare (\mu\nu\delta)^{-\vare}\biggl(\biggl(\tfrac{\rho(\white)\rho(\black)}{\mu\nu}\biggr)^{3/4}+\tfrac{\rho(\white)}{\mu}+\tfrac{\rho(\black)}{\nu}\biggr)
\]
where $D_\vare\ll\vare^{-\star}$ and the implied constants are absolute.
\end{lemma}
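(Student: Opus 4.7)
The plan is to adapt the Wolff--Schlag tangency counting argument to the cinematic family of parabolas $\{\Xi(w) : w \in B_\rfrak(0,1)\}$, making use of the curvature estimate \eqref{eq:cinematic curvature} and the intersection bounds \eqref{eq:Schlag lemma Xi vare'}--\eqref{eq:Schlag lemma Xi vare}. Ultimately the desired count will come from comparing two estimates of the total number of ``tangency incidences'' between parabolas indexed by $\white$ and parabolas indexed by $\black$: one from below using the incomparable rectangles, the other from above using a cell decomposition.

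First I would perform a standard dyadic pigeonholing on the rectangle type. By subdividing the regimes $\rho(\white_R) \in [2^j\mu, 2^{j+1}\mu)$ and $\rho(\black_R) \in [2^k\nu, 2^{k+1}\nu)$ for $j,k \geq 0$, and accepting a loss of $O(\log(1/(\mu\nu\delta))^2)$ which can be absorbed into $(\mu\nu\delta)^{-\vare}$, I may reduce to bounding the number $N$ of pairwise incomparable $(\delta,t)$-rectangles of exact type $(\mu,\nu)$.

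Second I would quantify tangency pairs. Fix an incomparable family $\{R_1,\dots,R_N\}$ of type $(\mu,\nu)$-rectangles and for each $i$ let $\white_i = \white_{R_i}$, $\black_i = \black_{R_i}$. For any pair $(w,w') \in \white_i \times \black_i$ both parabolas $\Xi(w),\Xi(w')$ are $\hat C$-tangent to $R_i$; using \eqref{eq:Schlag lemma Xi vare'} combined with the bipartite hypothesis \eqref{def:bipart} (which gives $\|w-w'\| \asymp t$), one deduces $\Delta(w-w') \ll \delta$. Conversely, two parabolas with $\|w-w'\|\asymp t$ and $\Delta(w-w')\ll\delta$ are jointly tangent to an $O(\delta,t)$-rectangle which, by the incomparability of the $R_i$, is comparable to at most $O(1)$ of them. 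Calling a pair $(w,w')\in\white\times\black$ \emph{tangent} if $\|w-w'\|\asymp t$ and $\Delta(w-w')\le C\delta$, this gives the weighted lower bound
\[
\int\int \mathbf 1_{\text{tangent}}(w,w')\,\diff\!\rho(w)\diff\!\rho(w') \;\gtrsim\; N\mu\nu.
\]

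Third comes the heart of the argument: bounding the total weighted tangent mass from above by a Wolff-type incidence inequality. Here I follow the cell-decomposition scheme of Clarkson--Edelsbrunner--Guibas--Sharir--Welzl as used in \cite{Wolff,Schlag,Zahl}: one treats each parabola as a point in the dual parameter space and the tangency condition as an incidence with a dual surface; a random sampling / cutting argument then produces cells each containing few parabolas, and one runs induction on the number of parabolas together with a trivial bound $O(M^2)$ in each cell to obtain the standard $M^{3/2+\vare}$ bound for the number of $\delta$-tangent pairs among $M$ parabolas in the cinematic family. For the weighted version, the regularity \eqref{eq:appendix-regularity-F} allows replacing cardinalities by the ``effective counts'' $M_\white := \rho(\white)/\mu$ and $M_\black := \rho(\black)/\nu$ at loss of $(\mu\nu\delta)^{-O(\vare)}$, yielding
\[
\int\int \mathbf 1_{\text{tangent}}(w,w')\,\diff\!\rho(w)\diff\!\rho(w') \;\le\; D_\vare(\mu\nu\delta)^{-\vare}\mu\nu\Bigl((M_\white M_\black)^{3/4} + M_\white + M_\black\Bigr).
\]
Combining with the lower bound from Step 2 and dividing by $\mu\nu$ gives the stated inequality.

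The main obstacle I anticipate is Step 3: carefully transferring the unweighted incidence bound to the measure-theoretic, $\rho$-weighted setting. The classical Wolff and Zahl arguments count incidences between $M$ discrete objects; here one must discretize $\rho$ (for instance by replacing $\white$ with a maximal $\delta/t$-separated subset and weighting by $\rho$-mass of balls), then show that the regularity hypothesis \eqref{eq:appendix-regularity-F} translates the discrete count back into the $\rho$-weighted one with the right powers of $\Upsilon$ and only an acceptable $(\mu\nu\delta)^{-O(\vare)}$ loss. The cinematic curvature \eqref{eq:cinematic curvature} and the explicit intersection estimates \eqref{eq:Schlag lemma Xi vare'}--\eqref{eq:Schlag lemma Xi vare}, together with the fact that $\Delta \asymp \Delta_{B_\rfrak(0,2)}$, ensure the geometric inputs to the cell decomposition are in place, so only this bookkeeping step requires genuine new care.
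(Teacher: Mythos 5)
Your Step~3 contains a genuine gap, and it is fatal to the approach. You propose to bound the $\rho$-weighted mass of $\delta$-tangent pairs by
\[
\iint \mathbf 1_{\text{tangent}}(w,w')\,\diff\!\rho(w)\diff\!\rho(w') \;\le\; D_\vare(\mu\nu\delta)^{-\vare}\mu\nu\Bigl((M_\white M_\black)^{3/4} + M_\white + M_\black\Bigr),
\]
with $M_\white=\rho(\white)/\mu$, $M_\black=\rho(\black)/\nu$. But the left-hand side does not depend on $\mu,\nu$, while the leading term on the right is $(\mu\nu)^{1/4-\vare}(\rho(\white)\rho(\black))^{3/4}$, which can be driven to zero by shrinking $\mu,\nu$; so the inequality is false as written. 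Even setting this aside, the double-counting framework you set up cannot produce the claimed bound on the number of rectangles. Your Step~2 lower bound $N\mu\nu\lesssim (\text{tangent pairs})$ is fine, but any correct $\mu,\nu$-independent upper bound on tangent pairs in terms of $\rho(\white)$, $\rho(\black)$, $\delta$ (even the optimal one) would then give $N\lesssim (\rho(\white)\rho(\black))^{3/4}/(\mu\nu)$, i.e.\ a full power of $(\mu\nu)^{-1}$ rather than the exponent $(\mu\nu)^{-3/4}$ in the statement. The $3/4$-power gain on $\mu\nu$ is the whole point of Wolff's Lemma~1.4 and Zahl's Lemma~5.18, and it is not accessible by naive double-counting of tangent pairs: the cell decomposition has to be applied to the family of rectangles, exploiting that each rich rectangle concentrates $\mu$ of the $\white$-mass and $\nu$ of the $\black$-mass in a single $(\delta,t)$-rectangle. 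That is a genuinely different (and harder) argument than counting tangent pairs.

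The paper takes a different and much lighter route: it reduces the $\rho$-weighted statement to the unweighted one and then cites Zahl's Lemma~5.18 as a black box. Concretely, after a dyadic decomposition one finds integers $i,j$, a subfamily $\mathcal R'\subset\mathcal R$ with $\#\mathcal R'\gg \vare^{-\star}(\#\mathcal R)\delta^{6\vare}\mu^{\vare}\nu^{\vare}$, and $\delta$-separated sets $\white'\subset\white$, $\black'\subset\black$ with $\#\white'\ll 2^i\rho(\white)$, $\#\black'\ll 2^j\rho(\black)$, so that every $R\in\mathcal R'$ has type $(\geq D'_\vare2^i\mu^{1+\vare}\delta^{3\vare},\;\geq D'_\vare 2^j\nu^{1+\vare}\delta^{3\vare})$ with respect to counting measure. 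Applying Zahl's Lemma~5.18 to $(\white',\black',\mathcal R')$ and unwinding the dyadic loss gives the result. If you wish to salvage your approach, replace the tangent-pair double-counting with this discretization-and-cite scheme, or reproduce Zahl's rectangle-counting cell-decomposition argument in full; the latter is nontrivial and is exactly what the paper avoids.
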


\begin{proof}
Replacing the use of~\cite[Lemma 1.4]{Wolff} with~\cite[Lemma 5.18]{Zahl}, the same proof as in~\cite[Lemma 4.4]{kenmki2017marstrandtype} applies here. The argument is standard: given $(\white,\black)$ and a collection $\mathcal R$ of incomparable $(\delta,t)$-rectangles, one uses a dyadic decomposition argument to find $i,j\in\bbn$ with 
\[
\text{$2^i/i^2\leq\delta^{-3}\mu^{-1}\quad$ and $\quad2^j/j^2\leq\delta^{-3}\nu^{-1}$,}
\]
a subset $\mathcal R'\subset\mathcal R$ with $\#\mathcal R'\gg \vare^{-\star}(\#\mathcal R)\delta^{6\vare}\mu^\vare\nu^{\vare}$, and a $t$-bipartite $(\white',\black')$ where $\white',\black'\subset B_\rfrak(0,1)$ are $\delta$-separated with $\#\white'\ll 2^i\rho(\white)$ and $\#\black'\ll 2^j\rho(\black)$, so that every $R\in\mathcal R'$ is of type 
\[
(\geq D'_\vare 2^i\mu^{1+\vare}\delta^{3\vare},\geq D'_\vare2^j\nu^{1+\vare}\delta^{3\vare})
\]
with respect to the counting measure, $\white'$, and $\black'$ for some $D'_\vare\ll\vare^{-\star}$. One then applies~\cite[Lemma 5.18]{Zahl} to $(\white',\black')$ and $\mathcal R'$ and obtains a bound for $\#\mathcal R'$ which implies the desired bound for $\#\mathcal R$. We note that the definition of a $t$-bipartite family in~\cite{Zahl} requires the radii are $\delta$-separated,~\cite[Def.\ 2.3]{Zahl}; this assumption however is not used in the proof of~\cite[Lemma 5.18]{Zahl}. Indeed as in~\cite[Lemma 1.4]{Wolff}, one only needs $\delta$-separation is the parameter space, i.e.\ $\|w-w'\|\geq \delta$ in the case at hand.     

The final estimate $D_\vare\ll\vare^{-\star}$ follows from $D'_\vare\ll\vare^{-\star}$ and the fact that the implied constant in~\cite[Lemma 5.18]{Zahl} is $\ll \vare^{-\star}$. This follows from the proof of~\cite[Lemma 5.18]{Zahl}, see in particular~\cite[pp.~1252--1253]{Wolff}.
\end{proof}

\begin{proof}[Proof of Lemma~\ref{lem:Schlag}]
Throughout the argument, $D$ will be assumed to be a large constant which is allowed to depend (polynomially) on $1/\vare$ and $\Upsilon$. 

Let $b\geq b_0$ be the largest dyadic number where the lemma fails; taking $D$ large enough, 
we assume that $b$ is small compared to absolute constants whenever necessary. 
Let $A=(Db^{-3\vare})^{1/\alpha}$ and $\lambda=\rhsc^{2\vare/\alpha}$.
By the choice of $b$, there exists $\mu\geq D\rhsc^{\alpha-7\vare}=A^\alpha\lambda^{-2\alpha}b^\alpha$ and 
a subset $E'\subset E$ with $\# E'> \rhsc^{\vare}\cdot(\#E)=D^{1/3}A^{-\alpha/3}\cdot(\#E)$ so that for all $w\in E'$, we have 
\[
\Bigl|\Xi^{\rhsc}(w)\cap\Big\{q\in\bbr^2: m_{\rho}^{\rhsc}(q)\geq \mu\Big\}\Bigr|\geq \lambda|\Xi^{\rhsc}(w)|.
\]

For every $w\in\rfrak$ and dyadic numbers $t,\delta\in (b,1]$, define 
\[
E_{\delta,t}(w)=\biggl\{w'\in E: \Xi^{b}(w)\cap\Xi^{b}(w')\neq \emptyset, \begin{array}{l}t\leq \|w-w'\|< 2t,\\ 
\delta\leq \Delta(w-w')<2\delta\end{array}\biggr\}.
\]
Define $E_{b,t}(w)$ similarly, except in this case no lower bound is assumed for $\Delta$, that is, we only assume $\Delta(w-w')<2b$. 

For every $F\subset E$, define $m^{{\bigcdot}}_{\rho}(q|F)=\rho\Bigl(\{w'\in F: q\in \Xi^{\,\bigcdot}(w')\}\Bigr)$. 
Replacing the use of~\cite[Lemma 3.6]{kenmki2017marstrandtype} with~\eqref{eq:Schlag lemma Xi vare'} and~\eqref{eq:Schlag lemma Xi vare}, one may argue as in the proof of~\cite[Eq.~(5.4)]{kenmki2017marstrandtype} and conclude the following. 
There exist absolute constants $C, C_1\geq 1$, $\bar E\subset E'$ with $\#\bar E\geq C^{-1}|\log b|^{-C}\cdot(\#E')$, and 
some dyadic number $n\in\{1,\ldots, \delta/b\}$, so that if we put
\be\label{eq:Schlag lemma def lambda A}
\lambda_\delta=|\log b|^{-C}\cdot\frac{\lambda \delta}{Cn b},\qquad A_\delta=C|\log b|^{C}\cdot\frac{A\delta}{n b},
\ee  
and $\mu_\delta=|\log b|^{-C}\cdot\frac{n \mu}{C}$, then for all $w\in \bar E$ we have  
\be\label{eq: Schlag lemma vare}
|\Xi^{\delta}(w)\cap\{q\in\bbr^2: m^{C_1\delta}_{\rho}(q|E_{\delta,t}(w))\geq \mu_\delta\}|\geq 2\lambda_\delta|\Xi^{\delta}(w)|,
\ee 
see~\cite[Eq.~(5.12)]{kenmki2017marstrandtype}. 
Note also that $\mu_\delta\gg |\log b|^{-\star}A_\delta^\alpha\lambda_\delta^{-2\alpha}\delta^{\alpha}$.

Fix a large dyadic number $N\geq 2$, in particular, $N\delta\geq 2b$. Now~\eqref{eq: Schlag lemma vare} and the inductive hypothesis (recall the choice of $b$), imply that
there exists a subset $\bar E'\subset \bar E$ with $\#\bar E'\gg \#\bar E$ so that for all $w\in\bar E'$, we have 
\begin{multline}\label{eq: Schlag lemma vare'}
\Bigl|\Xi^{\delta}(w)\cap\Bigl\{q\in\bbr^2: \mu_\delta\leq m^{C_1\delta}_{\rho}(q|E_{\delta,t}(w))\leq m^{N\delta}_\rho(q)\leq M_\delta\Bigr\}\Bigr|\\\geq \lambda_\delta\Bigl|\Xi^{\delta}(w)\Bigr|
\end{multline}
where $M_\delta=A_\delta^\alpha (\lambda_\delta/ CN)^{-2\alpha}\delta^{\alpha}\ll |\log b|^{\star} \mu_\delta$, see~\cite[Eq.~(5.14)]{kenmki2017marstrandtype}.   

Let $\{B_\rfrak(w_i,0.1t)\}$ be a covering of $\bar E'$ chosen so that $\{B_\rfrak(w_i,2.1t)\}$ has bounded multiplicity. Replacing $\bar E'$ with a subset whose $\rho$ measure is $\geq 0.5\rho(\bar E')$, we assume that $\rho(B_\rfrak(w_{i},0.1t)\cap \bar E')\gg t^3\rho(\bar E')$ for all $w_i\in\bar E'$.

Let $i_0$ be so that $\rho(B_\rfrak(w_{i_0},0.1t)\cap \bar E')/\rho(B_\rfrak(w_{i_0},2.1t))$ is maximized.
Put $\mathcal W':=B_\rfrak(w_{i_0},0.1t)\cap \bar E'$ and $\mathcal B:=B_\rfrak(w_{i_0},2.1t)\setminus B_\rfrak(w_{i_0},0.9t)$. 

Replacing $\mathcal W'$ by a subset $\mathcal W\subset\mathcal W'$ with $\rho(\mathcal W)\geq 0.5\rho(\mathcal W')$, we may assume that for all $z\in \mathcal W$, there is a dyadic cube $Q(z)$ of side-length $\delta$ which contains $z$ and $\rho(Q(z)\cap \mathcal W)\gg (\delta/t)^3\rho(\mathcal W)\gg |\log b|^{-\star}A^{-\alpha/3}\delta^3$. 
Note also that since the covering $\{B_\rfrak(w_{i_0},2.1t)\}$ has bounded multiplicity, we have 
\[
\rho(\mathcal W)\geq 0.5\rho(\mathcal W')\gg |\log b|^{-\star}A^{-\alpha/3}\rho(\mathcal B).
\]

By the definition, $(\mathcal W,\mathcal B)$ is $t$-bipartite, see~\eqref{def:bipart}. 
Moreover, for all $w\in\mathcal W$, we have $E_{\delta, t}(w)\subset\mathcal B$. Hence,   
\be\label{eq:w in W E(w) in B}
m_\rho^{C_1\delta}(q|E_{\delta, t}(w)\cap\mathcal B)=m_\rho^{C_1\delta}(q|E_{\delta, t}(w)),
\ee
for all $w\in\mathcal W$ and $q\in\mathbb R^2$.
We conclude from~\eqref{eq:w in W E(w) in B},~\eqref{eq: Schlag lemma vare'}, and~\eqref{eq:appendix-regularity-F} that
\[
|\log b|^{-\star}A_\delta^\alpha\lambda_\delta^{-2\alpha}\delta^{\alpha}\ll  \mu_\delta\leq m_\rho^{C_1\delta}(q|E_{\delta, t}(w)\cap\mathcal B)\leq \rho(\mathcal B)\ll t^\alpha;
\]
therefore, $\delta$ is much smaller than $t$ if $D$ is large enough, see~\eqref{eq:Schlag lemma def lambda A} and recall that $A=(Db^{-3\vare})^{1/\alpha}$ and $0<\lambda_\delta\leq 1$.

Since $\mathcal W\subset \bar{E}'$,~\eqref{eq: Schlag lemma vare'} and~\eqref{eq:w in W E(w) in B} imply that for all $w\in\mathcal W$, we have  
\begin{align}
\notag\Bigl|\Xi^{\delta}(w)\cap\Bigl\{q\in\bbr^2: \mu_\delta\leq m^{C_1\delta}_{\rho}(q|E_{\delta,t}(w)\cap \mathcal B)\leq m^{N\delta}_\rho(q)&\leq M_\delta\Bigr\}\Bigr|\\
\label{eq: Schlag lemma vare''}&\geq \lambda_\delta\Bigl|\Xi^{\delta}(w)\Bigr|.
\end{align}

Assuming $N$ is large enough, depending on $C_1$,~\eqref{eq: Schlag lemma vare''} implies that every
$w\in\white$ supplies $\gg \lambda_\delta\sqrt{t/\delta}$ incomparable $(\delta,t)$-rectangles each of which is
$N/2$-tangent to $\Xi(w)$ and has type $\geq \mu_\delta$ with respect to $\black$ where the type refers to $N$-tangency. 
From this, we conclude that there are 
\[
\gg |\log b|^{-\star}\rho(\white)\lambda_\delta\sqrt{t/\delta}/\nu_\delta\]
incomparable $(\delta,t)$-rectangles of type $(\geq \nu_\delta, \geq \mu_\delta)$ with respect to $\rho$, $\white$, and $\black$ where
$b^4\leq \nu_\delta\leq M_\delta$ is a dyadic number and type refers to $N$-tangency. Comparing this bound with the bound given by Lemma~\ref{lem:Wolff} yields a contradiction and finishes the proof, see~\cite[pp.\ 20--21]{kenmki2017marstrandtype}.

The assertion $D\ll \vare^{-\star}\Upsilon^{-\star}$ follows from the above outline, together with the fact $D_\vare$ in Lemma~\ref{lem:Wolff} is $\ll\vare^{-\star}$. 
\end{proof}

We now turn to the proof of Theorem~\ref{thm:proj-thm-app}. The argument is a slight modification of the proof of~\cite[Thm.~7.2]{kenmki2017marstrandtype}. 

\begin{proof}[Proof of Theorem~\ref{thm:proj-thm-app}]
Assume that the conclusion of the theorem fails for some $C$. That is, there exists a subset 
$\bar J\subset [0,1]$ with $|\bar J|> C\rhsc^\vare$ so that for all $r\in \bar J$ we have 
\be\label{eq:bad-L-rho}
\rho(E'_r)\geq C\rhsc^{\vare}
\ee
where $E'_r=\Bigl\{w\in E: m^{\rhsc}_\rho\Bigl((r,\xi_r(w))\Bigr)> C\rhsc^{\alpha-7\vare}\Bigr\}$. 
We will get a contradiction if $C$ is large enough. 

Let $\hat E$ be as in Lemma~\ref{lem:Schlag} applied with $8b$, then $\rho(\hat E)\geq 1-(8\rhsc)^{\vare}$. 
This and~\eqref{eq:bad-L-rho} now imply that for every $r\in \bar J$, we have $\rho(\hat E\cap E'_r)\geq C\rhsc^{\vare}/2$ so long as 
$C\geq16$. 

We conclude that
\begin{align*}
0.5C^2\rhsc^{2\vare}&\leq \int_{\bar J}\rho(\hat E\cap E'_r)\diff\! r\\
\notag&\leq \int_{\hat E}|\{r: m^{\rhsc}_\rho(r,\xi_r(w))> C\rhsc^{\alpha-7\vare}\}|\diff\!\rho.
\end{align*}
Therefore, there exists some $w_0\in \hat E$ so that 
\be\label{eq:w-in-Fhat}
\Bigl|\Bigl\{r\in [0,1]: m^{\rhsc}_\rho\Bigl((r,\xi_r(w_0))\Bigr)> C\rhsc^{\alpha-7\vare}\Bigr\}\Bigr|\geq 0.5C^2\rhsc^{2\vare}.
\ee

For every $r\in[0,1]$, let $L_r:=\{(r, s): s\in\bbr\}$ be a vertical line, and
let $I\subset L_r$ be an interval of length $\rhsc$ containing $(r,\xi_r(w_0))$. Put
\[
I_{+,\rhsc}=\Big\{(q_1,q_2)\in [r-\rhsc,r+\rhsc]\times \bbr: \exists (r, s)\in I, |q_2-s|\leq \rhsc\Big\}.
\]
If $(q_1,q_2)\in I_{+,\rhsc}$, then $|q_1-r|\leq \rhsc$ and $|q_2-\xi_r(w_0)|\leq 2\rhsc$. Therefore, 
\[
|q_2-\xi_{q_1}(w_0)|\leq |q_2-\xi_r(w_0)|+|\xi_r(w_0)-\xi_{q_1}(w_0)|\leq 8\rhsc.
\]
We conclude that $(q_1,q_2)\in \Xi^{8\rhsc}(w_0)$.  
This and $m^{\rhsc}_\rho\Bigl((r,\xi_r(w_0))\Bigr)> C\rhsc^{\alpha-7\vare}$ imply that for every 
$q\in I_{+,\rhsc}$, we have 
\be\label{eq:I+delta-m}
m^{8\rhsc}_{\rho}(q)\geq \rho\Bigl(\{w'\in E: (r,\xi_{r}(w'))\in I\}\Bigr)\geq C\rhsc^{\alpha-7\vare}.
\ee

Combining~\eqref{eq:w-in-Fhat} and~\eqref{eq:I+delta-m}, we obtain that 
\begin{align*}
\Bigl|\Xi^{8\rhsc}(w_0)\cap\{q\in\bbr^2: m^{8\rhsc}_{\rho}(q)\geq C\rhsc^{\alpha-7\vare}\}\}\Bigr|&\gg C^2\rhsc^{1+2\vare}\\
\gg C^2\rhsc^{2\vare}|\Xi^{8\rhsc}(w_0)|&>\rhsc^{2\vare/\alpha}|\Xi^{8\rhsc}(w_0)|.
\end{align*}
where the implied constant is absolute,
and we assume $C$ is large enough so that the final estimate holds --- recall that $0<\alpha\leq 1$.

This contradicts the fact that $w_0\in \hat E$ and finishes the proof. 
\end{proof}

\begin{proof}[Proof of Theorem~\ref{thm:proj-thm}]
Fix some $\kappa$. We may assume $b$'s are dyadic numbers, in particular $b_i=2^{-\ell_i}$, for $i=0,1$. 
Let $\ell_2$ be so that 
\[
\sum_{\ell=\ell_2}^{\infty} C_\kappa 2^{-\kappa\ell}< 0.1\min\{|J|, 1\}.
\]
Let $J'=\bigcap_{\ell=\ell_2}^{\ell_0} J_{2^{-\ell}}$. Then the choice of $\ell_2$ and Theorem~\ref{thm:proj-thm-app} imply that $|J'|\geq 0.9|J|$.

For every $r\in J'$, let $E_r=\bigcap_{\ell=\ell_2}^{\ell_0}E_{2^{-\ell},r}$.
Then by Theorem~\ref{thm:proj-thm-app}, $\rho(E_r)\geq 0.9$.
Moreover, for all $w\in E_r$ and all $\ell_2\leq \ell\leq \ell_0$
we have
\[
\rho(\{w'\in E: |\xi_{r}(w')-\xi_r(w)|\leq 2^{-\ell}\})\leq C_\kappa 2^{(\alpha-7\kappa)(\ell_1-\ell)}.
\]

The above implies that Theorem~\ref{thm:proj-thm} holds true with $J'$ and $E_r$ if we increase $C_\kappa$ to account for all $\rhsc\geq2^{-\ell_2}$. 
\end{proof}

\section{Proof of Lemma~\ref{lem:max-ineq-proj-thm}}\label{sec:max-ineq-proj}

We will prove Lemma~\ref{lem:max-ineq-proj-thm} in this section. 
As was mentioned before, the proof is taken from \cite[Lemma 5.2]{BFLM}, see also~\cite{Bour-Proj}; 
we reproduce the argument to explicate the stated bounds on $b_1$.  

\begin{proof}[Proof of Lemma~\ref{lem:max-ineq-proj-thm}]
We identify $\rfrak$ with $\bbr^3$. 
By a dyadic cube we mean a cube 
\[
[\tfrac{n_1}{2^k},\tfrac{n_1+1}{2^k})\times[\tfrac{n_2}{2^k},\tfrac{n_2+1}{2^k})\times[\tfrac{n_3}{2^k},\tfrac{n_3+1}{2^k})
\]
for an integer $k\geq 0$ and $0\leq n_i<2^k$. 

Let $\rho$ denote the uniform measure on $F$. 
Let $b\geq (\#F)^{-(1+\vare)/\alpha}$ and $w\in\bbr^3$, then 
\begin{equation}\label{eq:rho-regular}
    \begin{aligned}
    b^{-\alpha}\rho\Big(B(w,b)\Big)&\leq \frac{1}{\#F}\biggl(b^{-\alpha}+\sum_{w'\in B(w,b), w'\neq w}\|w-w'\|^{-\alpha}\biggr)\\
&\leq  \frac{1}{\#F}\Big(b^{-\alpha}+D(\#F)^{(1+\vare)}\Big)
\\
&\leq (D+1)\cdot(\#F)^\vare.
    \end{aligned}
\end{equation}

We will absorb the constant $D$ using the notation $\gg$ and $\ll$ in what follows. Let $b_0=(\#F)^{-1}$. 
Using the Besicovitch covering lemma and the fact that $\rho$ is probability measure, we conclude from~\eqref{eq:rho-regular} that $F$ contains a subset $\hat F$ of $b_0$-separated points with 
\[
\#\hat F\gg b_0^{\vare-\alpha}
\]
where the implied constant is absolute. 

Arguing as in the proof \cite[Lemma 5.2]{BFLM}, see also~\cite{Bour-Proj}, 
with $\hat F$ and $\alpha-\vare$, there exists some $T$, depending on $\vare$, and a subset $F_1\subset \hat F$,
with 
\be\label{eq:delta-separated}
\#F_1\geq \hat Cb_0^{2\vare-\alpha}
\ee 
so that the following holds. Let $k_1=\lceil -\log_2(b_0)/T\rceil$, then 
there exist integers $R_1,\ldots, R_{k_1}$ with $1\leq R_\ell\leq 2^{3T}$ so that every $2^{-\ell T}$-cube which intersects $F_1$ 
contains exactly $R_{\ell+1}$, $2^{-(\ell+1)T}$-cubes which intersect $F_1$.  

Since each remaining $2^{-k_1T}$-cube contains exactly one point, we have
\be\label{maximal-ineq-1}
\sum_{\ell=1}^{k_1} \log_2R_\ell=\log_2(\#F_1)\geq (\alpha-2\vare)T(k_1-2)
\ee
where we assume $T$ is large enough to account for the constant $\hat C$. 

For every $k>\lfloor k_1\vare\rfloor=:k_0$, let 
\[
M_k=\min_{k< \ell\leq k_1}\frac{1}{\ell-k}\sum_{k+1}^\ell\log_2 R_i.
\] 
Let $k_2$ be the smallest integer so that $M_{k_2}\geq (\alpha-20\vare) T$
if such exists, else let $k_2=k_1$. We claim 
\be\label{eq:maximal-ineq-1-1}
\vare k_1\leq k_2\leq \tfrac{3-\alpha+5\vare}{3-\alpha+20\vare}k_1
\ee
The lower bound follows from the definition of $k_2$, we show the upper bound.
First note that if $k_2=k_0+1$, there is nothing to prove; suppose thus that $k_2>k_0+1$. 
Then for every $k_0< i<k_2$, there is some $i<i'\leq k_1$ so that $\sum_{\ell=i}^{i'}\log_2 R_\ell\leq (\alpha-s+\vare) T(i-i')$; thus 
there is $k_2\leq k\leq k_1$, so that
\[
\sum_{\ell=k_0+1}^k\log_2R_\ell\leq (\alpha-20\vare) T(k-k_0).
\]
This,~\eqref{maximal-ineq-1}, and the fact that $\log_2R_\ell\leq 3T$ for all $\ell$ imply that 
\begin{multline}\label{eq:maximal-ineq-2}
\notag 3Tk_0+(\alpha-20\vare) T(k-k_0)+3T(k_1-k)\geq 
3Tk_0+\sum_{\ell=k_0+1}^k\log_2R_\ell \quad +\\3T(k_1-k)
\geq \sum_{\ell=1}^{k_1} \log_2R_\ell\geq (\alpha-2\vare)T(k_1-2);
\end{multline}
we conclude that $k(3-\alpha+20\vare)\leq k_1\Bigl(3-\alpha+5\vare\Bigr)$. 
This finishes the proof of~\eqref{eq:maximal-ineq-1-1} as $k_2\leq k$.  

Let now $D$ be any $2^{-k_2T}$-cube which intersects $F_1$. Let $k_2< \ell\leq k_1$, and let 
$D'\subset D$ be a $2^{-\ell T}$-cube. Then 
\[
\#(D'\cap F_1)\leq \Bigl(\#(D\cap F_1)\Bigr)\cdot \prod_{i=k_2+1}^{\ell} R_i^{-1}.
\]
Since $\sum_{k_2}^{\ell}\log_2 R_i\geq (\alpha-20\vare) T(\ell-k_2)$, we conclude that 
\[
\tfrac{\#(B(w,b)\cap D\cap F_1)}{\#(D\cap F_1)}\leq C' \Bigl(b/2^{-Tk_2}\Bigr)^{\alpha-20\vare}
\]
for all $b\geq (\#F)^{-1}$ where $C'\ll\vare^{-\star}$ with absolute implied constants.  

Let $F'=D\cap F_1$, and let $w_0\in D\cap F_1$. The lemma holds with $w_0$, $b_1=2^{1-Tk_2}$, and $F'=D\cap F_1\subset B(w_0,b_1)$. 
\end{proof}

\bibliographystyle{amsplain}
\bibliography{papers}

\providecommand{\bysame}{\leavevmode\hbox to3em{\hrulefill}\thinspace}
\providecommand{\MR}{\relax\ifhmode\unskip\space\fi MR }
\providecommand{\MRhref}[2]{%
  \href{http://www.ams.org/mathscinet-getitem?mr=#1}{#2}
}
\providecommand{\href}[2]{#2}
\begin{thebibliography}{10}

\bibitem{BFMS}
Uri Bader, David Fisher, Nick Miller, and Matthew Stover, \emph{Arithmeticity,
  superrigidity, and totally geodesic submanifolds}, 2019.

\bibitem{BO-Counting}
Yves Benoist and Hee Oh, \emph{Effective equidistribution of {$S$}-integral
  points on symmetric varieties}, Ann. Inst. Fourier (Grenoble) \textbf{62}
  (2012), no.~5, 1889--1942. \MR{3025156}

\bibitem{BO-GF}
Yves Benoist and Hee Oh, \emph{Geodesic planes in geometrically finite
  acylindrical 3-manifolds}, 2018.

\bibitem{Borel-Harish-Chandra}
Armand Borel and Harish-Chandra, \emph{Arithmetic subgroups of algebraic
  groups}, Ann. of Math. (2) \textbf{75} (1962), 485--535. \MR{147566}

\bibitem{Bour-Proj}
Jean Bourgain, \emph{The discretized sum-product and projection theorems}, J.
  Anal. Math. \textbf{112} (2010), 193--236. \MR{2763000}

\bibitem{BFLM}
Jean Bourgain, Alex Furman, Elon Lindenstrauss, and Shahar Mozes,
  \emph{Stationary measures and equidistribution for orbits of nonabelian
  semigroups on the torus}, J. Amer. Math. Soc. \textbf{24} (2011), no.~1,
  231--280. \MR{2726604}

\bibitem{BG-SU2}
Jean Bourgain and Alex Gamburd, \emph{On the spectral gap for
  finitely-generated subgroups of {$\rm SU(2)$}}, Invent. Math. \textbf{171}
  (2008), no.~1, 83--121. \MR{2358056}

\bibitem{BG-SL2}
\bysame, \emph{Uniform expansion bounds for {C}ayley graphs of {${\rm
  SL}_2(\mathbb F_p)$}}, Ann. of Math. (2) \textbf{167} (2008), no.~2,
  625--642. \MR{2415383}

\bibitem{Burger-Sarnak}
M.~Burger and P.~Sarnak, \emph{Ramanujan duals. {II}}, Invent. Math.
  \textbf{106} (1991), no.~1, 1--11. \MR{1123369}

\bibitem{Burger-Horo}
Marc Burger, \emph{Horocycle flow on geometrically finite surfaces}, Duke Math.
  J. \textbf{61} (1990), no.~3, 779--803. \MR{1084459}

\bibitem{Chow-Yang}
Sam Chow and Lei Yang, \emph{An effective ratner equidistribution theorem for
  multiplicative diophantine approximation on planar lines}, 2019.

\bibitem{Cell-Decomposition}
Kenneth~L. Clarkson, Herbert Edelsbrunner, Leonidas~J. Guibas, Micha Sharir,
  and Emo Welzl, \emph{Combinatorial complexity bounds for arrangements of
  curves and spheres}, Discrete Comput. Geom. \textbf{5} (1990), no.~2,
  99--160. \MR{1032370}

\bibitem{Cl-tau}
Laurent Clozel, \emph{D\'emonstration de la conjecture {$\tau$}}, Invent. Math.
  \textbf{151} (2003), no.~2, 297--328. \MR{1953260}

\bibitem{DM-Oppenheim}
S.~G. Dani and G.~A. Margulis, \emph{Values of quadratic forms at primitive
  integral points}, Invent. Math. \textbf{98} (1989), no.~2, 405--424.
  \MR{1016271}

\bibitem{DM-MathAnn}
\bysame, \emph{Orbit closures of generic unipotent flows on homogeneous spaces
  of {${\rm SL}(3,{\bf R})$}}, Math. Ann. \textbf{286} (1990), no.~1-3,
  101--128. \MR{1032925}

\bibitem{DRS-Counting}
W.~Duke, Z.~Rudnick, and P.~Sarnak, \emph{Density of integer points on affine
  homogeneous varieties}, Duke Math. J. \textbf{71} (1993), no.~1, 143--179.
  \MR{1230289}

\bibitem{EMMV}
M.~Einsiedler, G.~Margulis, A.~Mohammadi, and A.~Venkatesh, \emph{Effective
  equidistribution and property {$(\tau)$}}, J. Amer. Math. Soc. \textbf{33}
  (2020), no.~1, 223--289. \MR{4066475}

\bibitem{EMV}
M.~Einsiedler, G.~Margulis, and A.~Venkatesh, \emph{Effective equidistribution
  for closed orbits of semisimple groups on homogeneous spaces}, Invent. Math.
  \textbf{177} (2009), no.~1, 137--212. \MR{2507639}

\bibitem{ELMV-1}
Manfred Einsiedler, Elon Lindenstrauss, Philippe Michel, and Akshay Venkatesh,
  \emph{Distribution of periodic torus orbits on homogeneous spaces}, Duke
  Math. J. \textbf{148} (2009), no.~1, 119--174.

\bibitem{EM-RW}
Alex Eskin and Gregory Margulis, \emph{Recurrence properties of random walks on
  finite volume homogeneous manifolds}, Random walks and geometry, Walter de
  Gruyter, Berlin, 2004, pp.~431--444. \MR{2087794}

\bibitem{EMM-Up}
Alex Eskin, Gregory Margulis, and Shahar Mozes, \emph{On a quantitative version
  of the {O}ppenheim conjecture}, Electron. Res. Announc. Amer. Math. Soc.
  \textbf{1} (1995), no.~3, 124--130. \MR{1369644}

\bibitem{EMM-Upp}
\bysame, \emph{Upper bounds and asymptotics in a quantitative version of the
  {O}ppenheim conjecture}, Ann. of Math. (2) \textbf{147} (1998), no.~1,
  93--141. \MR{1609447}

\bibitem{Eskin-McMullen-1993}
Alex Eskin and Curt McMullen, \emph{Mixing, counting, and equidistribution in
  lie groups}, Duke Math. J. \textbf{71} (1993), no.~1, 181--209.

\bibitem{EMM-Orbit}
Alex Eskin, Maryam Mirzakhani, and Amir Mohammadi, \emph{Isolation,
  equidistribution, and orbit closures for the {${\rm SL}(2,\mathbb R)$} action
  on moduli space}, Ann. of Math. (2) \textbf{182} (2015), no.~2, 673--721.
  \MR{3418528}

\bibitem{FLMS}
David Fisher, Jean-Fran\c{c}ois Lafont, Nicholas Miller, and Matthew Stover,
  \emph{Finiteness of maximal geodesic submanifolds in hyperbolic hybrids},
  2018.

\bibitem{FlFor}
Livio Flaminio and Giovanni Forni, \emph{Invariant distributions and time
  averages for horocycle flows}, Duke Math. J. \textbf{119} (2003), no.~3,
  465--526. \MR{2003124}

\bibitem{FFT-twisted}
Livio Flaminio, Giovanni Forni, and James Tanis, \emph{Effective
  equidistribution of twisted horocycle flows and horocycle maps}, Geom. Funct.
  Anal. \textbf{26} (2016), no.~5, 1359--1448. \MR{3568034}

\bibitem{GJS-SU2}
Alex Gamburd, Dmitry Jakobson, and Peter Sarnak, \emph{Spectra of elements in
  the group ring of {${\rm SU}(2)$}}, J. Eur. Math. Soc. (JEMS) \textbf{1}
  (1999), no.~1, 51--85. \MR{1677685}

\bibitem{Gor-Mau-Oh}
Alex Gorodnik, Fran\c{c}ois Maucourant, and Hee Oh, \emph{Manin's and {P}eyre's
  conjectures on rational points and adelic mixing}, Ann. Sci. \'{E}c. Norm.
  Sup\'{e}r. (4) \textbf{41} (2008), no.~3, 383--435. \MR{2482443}

\bibitem{GrTao-Nil}
Ben Green and Terence Tao, \emph{The quantitative behaviour of polynomial
  orbits on nilmanifolds}, Ann. of Math. (2) \textbf{175} (2012), no.~2,
  465--540. \MR{2877065}

\bibitem{GrPS-Non-arith}
Mikhael Gromov and Ilya~Iosifovich Piatetski-Shapiro, \emph{Non-arithmetic
  groups in lobachevsky spaces}, Publications Math\'ematiques de l'IH\'ES
  \textbf{66} (1987), 93--103 (en). \MR{89j:22019}

\bibitem{He-Saxce}
Weikun He and Nicolas de~Saxc\'{e}, \emph{Linear random walks on the torus},
  2019.

\bibitem{kenmki2017marstrandtype}
Antti K\"{a}enm\"{a}ki, Tuomas Orponen, and Laura Venieri, \emph{A
  marstrand-type restricted projection theorem in $\mathbb{R}^{3}$}, 2017.

\bibitem{Katz-Quantitative}
Asaf Katz, \emph{Quantitative disjointness of nilflows from horospherical
  flows}, 2019.

\bibitem{ASLn-Kim}
Wooyeon Kim, \emph{Effective equidistribution of expanding translates in the
  space of affine lattices}, 2021.

\bibitem{KM-Drich}
D.~Kleinbock and G.~Margulis, \emph{On effective equidistribution of expanding
  translates of certain orbits in the space of lattices}, arXiv: Dynamical
  Systems (2012), 385--396.

\bibitem{KMnonquasi}
D.~Y. Kleinbock and G.~A. Margulis, \emph{Bounded orbits of nonquasiunipotent
  flows on homogeneous spaces}, Sina\u{i}'s {M}oscow {S}eminar on {D}ynamical
  {S}ystems, Amer. Math. Soc. Transl. Ser. 2, vol. 171, Amer. Math. Soc.,
  Providence, RI, 1996, pp.~141--172.

\bibitem{KM-Nondiv}
\bysame, \emph{Flows on homogeneous spaces and {D}iophantine approximation on
  manifolds}, Ann. of Math. (2) \textbf{148} (1998), no.~1, 339--360.
  \MR{1652916}

\bibitem{Kleinbock-Tomanov}
Dmitry Kleinbock and Georges~Metodiev Tomanov, \emph{Flows on s-arithmetic
  homogeneous spaces and applications to metric diophantine approximation},
  Comment. Math. Helv. \textbf{82} (2007), no.~3, 519--581.

\bibitem{Kolasa-Wolff}
Lawrence Kolasa and Thomas Wolff, \emph{On some variants of the {K}akeya
  problem}, Pacific J. Math. \textbf{190} (1999), no.~1, 111--154. \MR{1722768}

\bibitem{LM-Oppenheim}
Elon Lindenstrauss and Gregory Margulis, \emph{Effective estimates on
  indefinite ternary forms}, Israel J. Math. \textbf{203} (2014), no.~1,
  445--499. \MR{3273448}

\bibitem{LMMS}
Elon Lindenstrauss, Amir Mohammadi, Gregory Margulis, and Nimish Shah,
  \emph{Quantitative behavior of unipotent flows and an effective avoidance
  principle}, 2019.

\bibitem{EquiProj-Luna}
D~Luna, \emph{Sur certaines operations differentiables des groupes de lie}, Am.
  J. Math \textbf{97} (1975), 172--181.

\bibitem{Margulis-Oppenheim}
G.~A Margulis, \emph{Indefinite quadratic forms and unipotent flows on
  homogeneous spaces}, Dynamical systems and ergodic theory (Warsaw, 1986)
  \textbf{23} (1989), 399--409.

\bibitem{Margulis-Book}
G.~A. Margulis, \emph{Discrete subgroups of semisimple {L}ie groups},
  Ergebnisse der Mathematik und ihrer Grenzgebiete (3) [Results in Mathematics
  and Related Areas (3)], vol.~17, Springer-Verlag, Berlin, 1991. \MR{1090825
  (92h:22021)}

\bibitem{MM}
Gregory Margulis and Amir Mohammadi, \emph{Arithmeticity of hyperbolic
  3-manifolds containing infinitely many totally geodesic surfaces}, 2019.

\bibitem{McAdam}
Taylor McAdam, \emph{Almost-prime times in horospherical flows on the space of
  lattices}, J. Mod. Dyn. \textbf{15} (2019), 277--327. \MR{4042163}

\bibitem{MO-MargFun}
Amir Mohammadi and Hee Oh, \emph{Isolations of geodesic planes in the frame
  bundle of a hyperbolic $3$-manifold}, 2020.

\bibitem{Mozes-Shah}
Shahar Mozes and Nimish Shah, \emph{On the space of ergodic invariant measures
  of unipotent flows}, Ergodic Theory Dynam. Systems \textbf{15} (1995), no.~1,
  149--159. \MR{1314973}

\bibitem{PlRa}
Vladimir Platonov and Andrei Rapinchuk, \emph{Algebraic groups and number
  theory}, Pure and Applied Mathematics, vol. 139, Academic Press, Inc.,
  Boston, MA, 1994, Translated from the 1991 Russian original by Rachel Rowen.
  \MR{1278263}

\bibitem{Ratner-Acta}
Marina Ratner, \emph{On measure rigidity of unipotent subgroups of semisimple
  groups}, Acta Math. \textbf{165} (1990), no.~3-4, 229--309.

\bibitem{Ratner-measure}
\bysame, \emph{On {R}aghunathan's measure conjecture}, Ann. of Math. (2)
  \textbf{134} (1991), no.~3, 545--607.

\bibitem{Ratner-topological}
\bysame, \emph{Raghunathan's topological conjecture and distributions of
  unipotent flows}, Duke Math. J. \textbf{63} (1991), no.~1, 235--280.

\bibitem{Sarnak-Thesis}
Peter Sarnak, \emph{Asymptotic behavior of periodic orbits of the horocycle
  flow and {E}isenstein series}, Comm. Pure Appl. Math. \textbf{34} (1981),
  no.~6, 719--739. \MR{634284}

\bibitem{Sarnak-Ubis}
Peter Sarnak and Adri\'an Ubis, \emph{The horocycle flow at prime times}, J.
  Math. Pures Appl. (9) \textbf{103} (2015), no.~2, 575--618. \MR{3298371}

\bibitem{Schlag}
Wilhelm Schlag, \emph{On continuum incidence problems related to harmonic
  analysis}, Journal of Functional Analysis \textbf{201} (2003), 480--521.

\bibitem{Selberge-Arith}
Atle Selberg, \emph{Recent developments in the theory of discontinuous groups
  of motions of symmetric spaces}, Proceedings of the {F}ifteenth
  {S}candinavian {C}ongress ({O}slo, 1968) {L}ecture {N}otes in {M}athematics,
  {V}ol. 118, Springer, Berlin, 1970, pp.~99--120. \MR{0263996}

\bibitem{Shah-Expanding}
Nimish~A. Shah, \emph{Limit distributions of expanding translates of certain
  orbits on homogeneous spaces}, Proc. Indian Acad. Sci. Math. Sci.
  \textbf{106} (1996), no.~2, 105--125. \MR{1403756}

\bibitem{Shah-Gen-Uni}
\bysame, \emph{Invariant measures and orbit closures on homogeneous spaces for
  actions of subgroups generated by unipotent elements}, Lie groups and ergodic
  theory ({M}umbai, 1996), Tata Inst. Fund. Res. Stud. Math., vol.~14, Tata
  Inst. Fund. Res., Bombay, 1998, pp.~229--271. \MR{1699367}

\bibitem{Strom-Horo}
Andreas Str\"ombergsson, \emph{On the uniform equidistribution of long closed
  horocycles}, Duke Math. J. \textbf{123} (2004), no.~3, 507--547. \MR{2068968}

\bibitem{Strom-Semi}
\bysame, \emph{An effective {R}atner equidistribution result for {${\rm
  SL}(2,\mathbb R)\ltimes\mathbb R^2$}}, Duke Math. J. \textbf{164} (2015),
  no.~5, 843--902. \MR{3332893}

\bibitem{Strombergsson-Vishe}
Andreas Str\"{o}mbergsson and Pankaj Vishe, \emph{An effective equidistribution
  result for {$\rm SL(2, \mathbb{R}) \ltimes(\mathbb{R}^2)^{\oplus k}$} and
  application to inhomogeneous quadratic forms}, J. Lond. Math. Soc. (2)
  \textbf{102} (2020), no.~1, 143--204. \MR{4143730}

\bibitem{Tanis-Vishe}
James Tanis and Pankaj Vishe, \emph{Uniform bounds for period integrals and
  sparse equidistribution}, Int. Math. Res. Not. IMRN (2015), no.~24,
  13728--13756. \MR{3436162}

\bibitem{Venkatesh-Sparse}
Akshay Venkatesh, \emph{Sparse equidistribution problems, period bounds and
  subconvexity}, Ann. of Math. (2) \textbf{172} (2010), no.~2, 989--1094.
  \MR{2680486}

\bibitem{Wolff}
T.~Wolff, \emph{Local smoothing type estimates on {$L^p$} for large {$p$}},
  Geom. Funct. Anal. \textbf{10} (2000), no.~5, 1237--1288. \MR{1800068}

\bibitem{Zahl}
Joshua Zahl, \emph{{$L^3$} estimates for an algebraic variable coefficient
  {W}olff circular maximal function}, Rev. Mat. Iberoam. \textbf{28} (2012),
  no.~4, 1061--1090. \MR{2990134}

\end{thebibliography}

\end{document}